\numberwithin{equation}{section}
\newcommand{\be}{\begin{equation}}
\newcommand{\ee}{\end{equation}}
\newtheorem{theorem}{Theorem}[section]
\newtheorem{lemma}[theorem]{Lemma}
\newtheorem{proposition}[theorem]{Proposition}
\newtheorem{corollary}[theorem]{Corollary}
\theoremstyle{definition}
\def\sideremark#1{\ifvmode\leavevmode\fi\vadjust{\vbox to0pt{\vss% the remark
 \hbox to 0pt{\hskip\hsize\hskip1em%                          will appear only
\vbox{\hsize2cm\tiny\raggedright\pretolerance10000%          on the side
 \noindent #1\hfill}\hss}\vbox to8pt{\vfil}\vss}}}%
\title[On the standing waves for the XFEL]{On the standing waves for the X-ray free electron laser Schr\"{o}dinger equation}
\author{Daomin Cao}
\address{School of Mathematics and Information Sciences, Guangzhou University, Guangzhou 510006; Institute of Applied Mathematics, Chinese Academy of Sciences, Beijing 100190, P.R. China}
\email{dmcao@amt.ac.cn}
\author{Binhua Feng}
\address{Department of Mathematics, Northwest Normal University, Lanzhou, 730070, China}
\email{binhuaf@163.com}
\author{Tingjian Luo}
\address{School of Mathematics and Information Sciences, Guangzhou University, Guangzhou 510006, China}
\email{luotj@gzhu.edu.cn}
\subjclass[2010]{35Q55, 35J50, 37K45}
\keywords{XFEL Schr\"{o}dinger equation; Ground state standing waves; Normalized solutions; Orbital stability; Strong instability}
\begin{document}
	
\begin{abstract} In this paper, we are concerned with the standing waves for the following nonlinear Schr\"{o}dinger equation
\[
i\partial_{t}\psi=-\Delta \psi+b^2(x_1^2+x_2^2)\psi+\frac{\lambda_1}{|x|}\psi+ \lambda_2(|\cdot|^{-1}\ast |\psi|^2)\psi-
 \lambda_3|\psi|^p \psi,~~~
(t,x)\in \mathbb{R}^+\times \mathbb{R}^3,
\]
where $0<p<4$. This equation arises as an effective single particle model in X-ray Free Electron Lasers. We mainly study the existence and stability/instability properties of standing waves for this equation, in two cases: the first one is that no magnetic potential is involved, (i.e. $b=0$ in the equation) and the second one is that $b\neq 0$. To be precise, in the first case, by considering a minimization problem on a suitable Pohozaev manifold, we prove the existence of radial solutions, and show further that the corresponding standing waves are strongly unstable by blow-up in finite time. Moreover, by making use of the ideas of these proofs, we are able to prove the existence and instability of normalized solutions, whose proofs seem to be new, compared with the studies of normalized solutions in the existing literature. This study also indicates that there is a close connection  between the study of the strong instability and the one of the existence of normalized solutions. In the second case, the situation is more difficult to be treated, due to the additional term of the partial harmonic potential. We manage to prove the existence of stable standing waves for $p\in (0,4)$ and with some assumptions on the coefficients, solutions are obtained as global minimizers if $p\in (0,\frac{4}{3}]$, and as local minimizers if $p\in [\frac{4}{3}, 4)$. In the mass-critical and supercritical cases $p\in [\frac{4}{3}, 4)$, we also establish the variational characterization of the ground states on a new manifold which is different from the one neither of the Nehari type nor of the Pohozaev type, and then prove the existence of ground states. Finally under some assumptions on the coefficients, we prove that the ground state standing waves are strongly unstable.

\end{abstract}	
	
\maketitle
\newpage
\tableofcontents

\section{Introduction}

In this paper we consider a first-principles model for beam-matter interaction in X-ray free electron lasers
(XFEL) \cite{ch,fr}. The fundamental model for XFEL
is the following nonlinear Schr\"{o}dinger equation (NLS)
\begin{equation}\label{e0}
\left\{
\begin{array}{l}
i\hbar \partial_{t}\psi=(i \hbar\nabla -A)^2\psi+\frac{\lambda_1}{|x|}\psi+\lambda_2 (|\cdot|^{-1}\ast |\psi|^2)\psi-\lambda_3
 |\psi|^p \psi,\ (t,x)\in \mathbb{R}^+\times \mathbb{R}^3, \\
\psi(0,x) = \psi_0 (x) .%
\end{array}%
\right.
\end{equation}
The coefficients $\lambda_1, \lambda_2, \lambda_3 \in \mathbb{R}$ and the exponent $p\in (0,4)$. $\hbar$
is supposed to be a scaled version of the Planck constant, which, without loss of
generality, shall be set to be $1$ in the sequel. A solution $\psi$ of this NLS can be considered as the wave function of an electron beam, interacting
self-consistently through the repulsive Coulomb (Hartree) force with strength  $\lambda_2$,
the attractive local Fock approximation with strength  $\lambda_3$ and interacting repulsively with an atomic nucleus, located
at the origin, of interaction strength  $\lambda_1$, see \cite{ch,fr}.
%For physical reasons we shall only
%consider the three-dimensional case here, that is the spatial variable $x$ is assumed
%to be in $\mathbb{R}^3$.
Since XFEL is more
powerful by several orders of magnitude than more conventional lasers, the systematic investigation of many
of the standard assumptions and approximations has attracted increased attention.

However, to the best of our knowledge, there are only a few mathematical results on equation \eqref{e0}, despite the important significance of the involved applications (see \cite{ch,fr} and their references given there).
Recently, when the magnetic potential $A$ depends only on time $t$, Antonelli et al. in \cite{an} investigated the asymptotic behavior of solution of equation \eqref{e0} with $0<p<\frac{4}{3}$ in the highly oscillating regime. Antonelli et al. in \cite{aah} considered similar problems by numerical simulations. The first author has extended the study of Antonelli et al. in \cite{an} to power nonlinearity with exponent $0<p <4$ and proved the stronger form of
convergence, see \cite{fna2017}.
In \cite{f18dcds}, the first author and Zhao systematically investigated the local and global well-posedness, the $H_A^2$ regularity for equation \eqref{e0} with general time-dependent electromagnetic field and Coulomb potentials. Based on these results, optimal bilinear control problem for \eqref{e0} has been investigated in \cite{fcontrol}.

In this paper, we consider the potential $A$ defined by
\begin{equation}\label{magnetic}
A(x)=b(-x_2,x_1,0),~~\mbox{for}~~x=(x_1,x_2,x_3)\in \mathbb{R}^3,\quad b\in \mathbb{R},
\end{equation}
which is the vector potential of the constant magnetic field
 $B=rot A(x)=(0,0,2b)$. Due to $divA(x)=0$, it follows that
\[
(i \nabla -A)^2\psi=-\Delta \psi-2iA(x)\cdot \nabla \psi+|A(x)|^2\psi=-\Delta \psi-2i \frac{\partial \psi}{\partial \theta}+b^2\rho^2 \psi,
\]
where the following cylindrical coordinates $(\rho,\theta,z)$ in $\mathbb{R}^3$ are used
\[
x_1=\rho\cos \theta,~~~x_2=\rho\sin \theta,~~~x_3=z.
\]
Then \eqref{e0} leads to the following equations:
 \begin{equation}\label{equation partial}
\left\{
\begin{array}{l}
i\partial_{t}\psi=-\Delta \psi+b^2(x_1^2+x_2^2)\psi+\frac{\lambda_1}{|x|}\psi+ \lambda_2(|\cdot|^{-1}\ast |\psi|^2)\psi-
 \lambda_3|\psi|^p \psi,~~
(t,x)\in \mathbb{R}^+\times \mathbb{R}^3, \\
\psi(0,x) = \psi_0 (x) .%
\end{array}%
\right.
\end{equation}%
%\begin{equation}\label{equation partial}
%i\partial_{t}\psi=-\Delta \psi+b^2(x_1^2+x_2^2)\psi+\frac{\lambda_1}{|x|}\psi+ \lambda_2(|\cdot|^{-1}\ast |\psi|^2)\psi-
% \lambda_3|\psi|^p \psi,~~
%(t,x)\in [0,\infty)\times \mathbb{R}^3.
%\end{equation}
The Cauchy problem for \eqref{equation partial} is locally well-posed in the energy space $X$, see the details in Proposition \ref{pro2.1}, or Theorem 9.1.5 in \cite{ca2003} and Theorem 3.1 in \cite{f18dcds}. Here, the energy space $X$ for \eqref{equation partial} is defined by
\[
X:=\left\{u \in H^1(\mathbb{R}^3): ~~\int_{\mathbb{R}^{3}}(x_1^2+x_2^2)|u|^2dx <\infty\right\},
\]
with the norm
\[
\|u\|_{X}^2:=\|\nabla u\|_{L^2}^2+\|u\|_{L^2}^2+\int_{\mathbb{R}^3}(x_1^2+x_2^2)|u(x)|^2dx.
\]
%Note that for $\forall  u\in X$,
%\begin{equation*}
%2\int_{\mathbb{R}^3}|u(x)|^2dx=
%-\sum_{j=1}^{j=2}\int_{\mathbb{R}^3}x_j\partial_{x_j}|u(x)|^2dx\leq 2\sum_{j=1}^{j=2}\|x_ju\|_{L^2}\|\partial_{x_j}u\|_{L^2},
%\end{equation*}
%then  when $\omega\in (-2,\infty)$, there exist positive constants $C_1(\omega)$ and $C_2(\omega)$ such that
%\begin{align}\label{norm equavien}
%C_1(\omega)\|u\|_{X}^2\leq\|\nabla u\|_{L^2}^2+\omega\|u\|_{L^2}^2+\int_{\mathbb{R}^3}(x_1^2+x_2^2)|u(x)|^2dx \leq C_2(\omega)\|u\|_{X}^2,\ \forall u \in X.
%\end{align}

In this paper, we will systematically study the standing waves of \eqref{equation partial}, which are of the form $\psi(t,x):=e^{i \omega t}u(x)$, $\omega \in \mathbb{R}$. Then equivalently we turn to study the following stationary equation:
\begin{equation}\label{elliptic partial}
-\Delta u+\omega u+b^2(x_1^2+x_2^2)u+\frac{\lambda_1}{|x|}u+ \lambda_2(|\cdot|^{-1}\ast |u|^2)u-
 \lambda_3|u|^p u=0, ~\ x\in \mathbb{R}^3.
\end{equation}
The equation \eqref{elliptic partial} is variational, whose action functional is defined by
\begin{eqnarray}\label{action partial}
S_{b,\omega}(u):= E_b(u)+\frac{\omega}{2}\| u\|_{L^2}^2,
\end{eqnarray}
where the corresponding energy $E_b(u)$ is defined by
\begin{align}\label{energyE}
E_b(u):= &\frac{1}{2}\|\nabla u\|_{L^2}^2+\frac{b^2}{2}\int_{\mathbb{R}^3} (x_1^2+x_2^2)|u|^{2}dx+\frac{\lambda_1}{2}\int_{\mathbb{R}^3} \frac{|u|^{2}}{|x|}dx  \nonumber\\&
+\frac{\lambda_2}{4}\int_{\mathbb{R}^3} (|x|^{-1}\ast|u|^{2})|u|^{2}dx-\frac{\lambda_3}{p+2}\|u\|_{L^{p+2}}^{p+2}.
\end{align}
Clearly $S_{b,\omega}(u)$ is well-defined and of class $C^1$ in $X$.
The set of all ground states for \eqref{elliptic partial} is defined by
\begin{equation}\label{G}
\mathcal{G_\omega}:=\{u\in \mathcal{F_\omega}:~~S_{b,\omega}(u)\leq S_\omega(v),\ \forall~v\in \mathcal{F_\omega}\},
\end{equation}
where
\[
\mathcal{F_\omega}:=\{v\in X\setminus \{0\}:~S'_{b,\omega}(v)=0\},
\]
is the set of all nontrivial solutions for \eqref{elliptic partial}.

%We shall focus on the existence, stability and strong instability of ground state standing waves.
% Since the action functional for \eqref{elliptic partial} includes six terms with different scaling rates, there are several
%essential difficulties in the analysis of these problems. Indeed, action functionals only include four terms in many papers about the study of strong instability or normalized solutions, see \cite{jeanCMP2017,jean16siam,jean-luo,jean-goutrans,jean-gouinstablility,
%jean19siam,f20jdde,f20na,ohta03die,ohta18doublepower,ohta18inversepotential,ohta18cpaa,ohtaharmonic,
%ohta95die,zhangcpde}.
%In the study of these problems, there exist some essential differences
%between $b\neq 0$ and $b=0$.
%We consequently divide this paper into two parts.

We shall focus on the existence, stability and strong instability of ground state standing waves.
Note that since the action functional for \eqref{elliptic partial} includes six terms with different scaling rates, there
are several essential difficulties in the analysis. Recall that action functionals only including four
terms, which corresponds on some of the coefficients in \eqref{elliptic partial} vanishing, the standing waves involved
normalized solutions and stability/instability, have been studied by \cite{jeanCMP2017,jean16siam,jean-luo,jean-goutrans,jean-gouinstablility,
jean19siam,f20jdde,f20na,ohta03die,ohta18doublepower,ohta18inversepotential,ohtaharmonic,ohta18cpaa,
ohta95die,zhangcpde}. In these studies, we see particularly that there are some essential differences
between $b=0$ and $b\neq 0$, where the latter is more difficult to treat in analysis. Consequently, in
what follows we divide this paper into two parts.

In the first part of the paper, we consider the situation where the magnetic potential is
not involved, namely, $A=0$ or equivalently $b=0$ in \eqref{equation partial}. Then the corresponding stationary equation is
\begin{equation}\label{elliptic partial12}
-\Delta u+\omega u+\frac{\lambda_1}{|x|}u+ \lambda_2(|\cdot|^{-1}\ast |u|^2)u-
 \lambda_3|u|^p u=0,\quad  x\in \mathbb{R}^3,
\end{equation}
and the action functional is reduced to
\begin{eqnarray}\label{action}
S_{\omega}(u):= E(u)+\frac{\omega}{2}\| u\|_{L^2}^2,\quad \forall u\in H^1,
\end{eqnarray}
where the energy $E$ is defined by
\begin{eqnarray}\label{energy}
E(u):= \frac{1}{2}\|\nabla u\|_{L^2}^2+\frac{\lambda_1}{2}\int_{\mathbb{R}^3} \frac{|u|^{2}}{|x|}dx
+\frac{\lambda_2}{4}\int_{\mathbb{R}^3} (|x|^{-1}\ast|u|^{2})|u|^{2}dx-\frac{\lambda_3}{p+2}\|u\|_{L^{p+2}}^{p+2}.
\end{eqnarray}

Recall that in the mass-subcritical case $0<p<\frac{4}{3}$, the solution of \eqref{equation partial} exists globally, see \cite[Theorem 3.5]{f18dcds}. Then Li and Zhao \cite{licma} studied the existence of stable standing waves of \eqref{equation partial}. In the mass-critical and supercritical cases $\frac{4}{3}\leq p<4$,
the solution $\psi(t)$ of \eqref{equation partial} may blow up in finite time, see \cite[Remark 9.2.10]{ca2003}. Consequently we shall establish the strong instability of standing waves and also the existence of normalized solutions in the cases $\frac{4}{3}\leq p<4$. Before giving our results in this case, we remark that due to the appearance of Coulomb potential, the functional related with the equation \eqref{elliptic partial12} (also \eqref{elliptic partial}) does not keep invariant by translation, and also the technique of symmetric rearrangement can not be applied because of the nonlocal term. All these cause a difficulty in establishing the existence result. Hence, technically we restrict our analyses in the space of radially symmetric functions. However, as one will see that, most of our arguments in the proofs are general and could be applied to other problems whose functionals are invariant by translation, even in some direction, see the proofs of Proposition \ref{proposition ground state no} and Lemma \ref{lemma minimization problem}.

We first establish the following variational characterization of standing waves related to \eqref{elliptic partial12} on the Pohozaev manifold.
\begin{proposition}\label{proposition ground state no}
Let $\lambda_1\geq 0$, $\lambda_2,\lambda_3>0$, $\omega>0$ and $\frac{4}{3}\leq p<4$. Then, there exists a solution $u\in H^1_r\backslash \{0\}$ related to \eqref{elliptic partial12}. Moreover, $u$ solves the following minimization problem:
\begin{equation}\label{minimization ground no}
d(\omega):=\inf\{S_\omega(v):~v\in H^1_r\backslash \{0\},~~Q(v)= 0\},
\end{equation}
where $Q(v)$ is the Pohozaev functional of \eqref{elliptic partial12} defined by
\begin{align}\label{qu}
Q(v):=\|\nabla v\|_{L^2}^2+\frac{\lambda_1}{2}\int_{\mathbb{R}^3} \frac{|v|^{2}}{|x|}dx+\frac{\lambda_2}{4}\int_{\mathbb{R}^3} (|x|^{-1}\ast|v|^{2})|v|^{2}dx-\frac{3\lambda_3p}{2(p+2)}\|v\|_{L^{p+2}}^{p+2}.
\end{align}
\end{proposition}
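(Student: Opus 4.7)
The plan is to exploit the fact that $Q$ is the derivative of $S_\omega$ along the $L^2$-preserving dilation $v_t(x) := t^{3/2}v(tx)$, namely $Q(v) = \frac{d}{dt}S_\omega(v_t)|_{t=1}$. For any $v \in H^1_r \setminus \{0\}$, the fiber $\phi_v(t) := S_\omega(v_t)$ is an explicit combination of a constant term, $t$-terms, a $t^2$-term and a $-t^{3p/2}$-term; since $3p/2 \geq 2$ for $p \geq 4/3$, I would show that $\phi_v$ has a unique global maximum at some $t^*(v) > 0$ where $Q(v_{t^*}) = 0$. This makes the Pohozaev manifold non-empty, gives the min-max identity $d(\omega) = \inf_v \max_{t > 0} S_\omega(v_t)$, and is the source of all the compactness-type estimates to follow.

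The central tool for the existence argument is the combination $J(v) := S_\omega(v) - \frac{2}{3p}Q(v)$, which a direct computation expresses as $\frac{3p-4}{6p}\|\nabla v\|_{L^2}^2 + \frac{\omega}{2}\|v\|_{L^2}^2 + \frac{(3p-2)\lambda_1}{6p}\int\frac{|v|^2}{|x|}\,dx + \frac{(3p-2)\lambda_2}{12p}\int(|x|^{-1}\ast|v|^2)|v|^2\,dx$. Under the standing hypotheses all coefficients are non-negative, and since $J = S_\omega$ on $\{Q=0\}$ and $J$ is weakly lower semicontinuous, this at once yields $d(\omega) > 0$ and $H^1$-boundedness of any minimizing sequence $\{u_n\}\subset H^1_r$ when $p > 4/3$; at the mass-critical $p = 4/3$ the kinetic coefficient degenerates, and I would instead appeal to the sharp Gagliardo-Nirenberg inequality combined with $Q(u_n)=0$ to extract a uniform mass bound $\|u_n\|_{L^2}^2 \geq M_\ast > 0$ from which $H^1$-control is recovered. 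Radial Strauss compactness then provides $u_n \rightharpoonup u$ in $H^1_r$ with strong convergence in every $L^q$, $q\in(2,6)$, so the Coulomb integral, the Hartree integral and $\|u_n\|_{L^{p+2}}^{p+2}$ all pass to the limit.

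To show $u \neq 0$, I would combine $Q(u_n)=0$ with $\lambda_1, \lambda_2 \geq 0$ to obtain $\|\nabla u_n\|_{L^2}^2 \leq \frac{3p\lambda_3}{2(p+2)}\|u_n\|_{L^{p+2}}^{p+2}$, and Gagliardo-Nirenberg together with the a priori bound on $\|u_n\|_{L^2}$ then forces $\|\nabla u_n\|_{L^2} \geq c > 0$; hence $\|u\|_{L^{p+2}} > 0$ by the strong $L^{p+2}$-convergence. Weak lower semicontinuity of $J$ gives $J(u) \leq d(\omega)$ and $Q(u) \leq 0$; if $Q(u) < 0$, the intermediate value theorem applied to $t\mapsto Q(u_t)$ produces $t^* \in (0,1)$ with $Q(u_{t^*})=0$, and the strict monotonicity of $t\mapsto J(u_t)$ gives $S_\omega(u_{t^*}) = J(u_{t^*}) < J(u) \leq d(\omega)$, contradicting the definition of $d(\omega)$; so $Q(u) = 0$ and $u$ attains the infimum. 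Finally, the Lagrange equation $S'_\omega(u) = \mu Q'(u)$ is paired with the scaling direction $\xi := \frac{3}{2}u + x\cdot \nabla u = \frac{d}{dt}u_t|_{t=1}$: the left-hand side equals $Q(u) = 0$, while an explicit calculation using $Q(u) = 0$ shows $\langle Q'(u),\xi\rangle$ is strictly nonzero for $p\in[4/3,4)$ and $u\neq 0$ (the $p>4/3$ case uses the negative coefficient of the nonlinear term, while at $p=4/3$ it uses the strictly positive Hartree piece since $\lambda_2 > 0$), so $\mu = 0$ and $u$ solves \eqref{elliptic partial12}. I expect the main obstacle to be the nontriviality of $u$ at the mass-critical value $p = 4/3$, where the direct Gagliardo-Nirenberg lower bound on the kinetic energy degenerates and one must instead proceed via the mass threshold $M_\ast$ on the Pohozaev manifold.
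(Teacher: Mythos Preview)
Your overall strategy matches the paper's closely: the auxiliary functional $J = S_\omega - \tfrac{2}{3p}Q$ (which the paper calls $\widetilde{S}_\omega$), the fiber-map geometry, and the Lagrange-multiplier argument tested in the scaling direction $\xi = \tfrac{3}{2}u + x\cdot\nabla u$ are exactly what the paper does. One minor difference: you invoke radial Strauss compactness to pass to the limit, whereas the paper deliberately uses Brezis--Lieb splitting so that the argument transfers to non-radial settings with translation invariance (as they exploit later for the partial-confinement case). Your route is simpler here but less portable.

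There is, however, a genuine gap at $p = \tfrac{4}{3}$. The mass lower bound $\|u_n\|_{L^2}^2 \geq M_\ast$ that you extract from sharp Gagliardo--Nirenberg is correct, but it does \emph{not} yield $H^1$-boundedness: the chain $\|\nabla u_n\|_{L^2}^2 \leq \tfrac{3\lambda_3}{5}\|u_n\|_{L^{10/3}}^{10/3} \leq C\|\nabla u_n\|_{L^2}^2\|u_n\|_{L^2}^{4/3}$ is circular in $\|\nabla u_n\|_{L^2}$. Likewise, for the nontriviality of the weak limit the mass threshold does not rule out a scenario where $\|u_n\|_{L^{10/3}} \to 0$ and $\|\nabla u_n\|_{L^2}\to 0$ while $\|u_n\|_{L^2}$ stays positive. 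The paper closes both gaps with a tool you do not mention: since $V_n := |x|^{-1}\ast|u_n|^2$ solves $-\Delta V_n = 4\pi|u_n|^2$, a variational comparison gives
\[
\|u_n\|_{L^3} \;\leq\; C\,\|\nabla u_n\|_{L^2}^{1/3}\Big(\int_{\mathbb{R}^3} (|x|^{-1}\ast|u_n|^2)|u_n|^2\,dx\Big)^{1/6}.
\]
Feeding this into $Q(u_n)\le 0$ yields $\|\nabla u_n\|_{L^2}^{4/9} \leq C\,(\text{Hartree term})^{4/9}$, and the Hartree term is already bounded from $J(u_n)$; this supplies the missing gradient bound. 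The same estimate, together with interpolating $L^{10/3}$ between $L^3$ and $L^5$, is what rules out $\|u_n\|_{L^{p+2}}\to 0$ at $p=\tfrac{4}{3}$. The key point is that at the mass-critical exponent it is the defocusing Hartree energy (hence the hypothesis $\lambda_2>0$), not the mass threshold, that provides the needed coercivity.
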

\textbf{Remark 1.1.}
When $\lambda_1=0$, this result has been established in \cite[Lemma 3.7]{f20jdde}. Here, we extend \cite[Lemma 3.7]{f20jdde} to the case $\lambda_1>0$ and provide a simpler proof. In particular, the proof of
$S'_\omega(u)=0$ (see Lemma \ref{lemma daoshuweiling}) contains some very
general methods that may be useful for other related problems, see e.g. Lemma \ref{lemma daoshuweiling partial}. Moreover, our method also allows to simplify the proof for \cite[Theorem 3.2]{ruiz}. In order to solve the minimization problem \eqref{minimization ground no}, we consider an equivalent minimization problem \eqref{minimization dengjia no}, which can be easily solved by using the Brezis-Lieb lemma. Based on the minimization problem \eqref{minimization dengjia no}, we can easily obtain the sharp threshold of global
existence and blow-up, the strong instability of standing waves.
For this aim, we introduce the following invariant sets:
\[
\mathcal{A}_\omega:=\{v\in H_r^1:~~Q(v)>0~\mbox{and}~S_\omega(v)< S_\omega(u)\},
\]
\[
\mathcal{B}_\omega:=\{v\in H_r^1:~~Q(v)<0~\mbox{and}~S_\omega(v)<S_\omega(u)\},
\]
where $u$ is a minimizer of the minimization problem \eqref{minimization ground no}.
\begin{theorem}\label{theorem sharp}(Global versus blow-up dichotomy)
Let $\lambda_1\geq 0$, $\lambda_2,\lambda_3>0$, $\omega>0$ and $\frac{4}{3}\leq p<4$.
Then, $\mathcal{A}_\omega$ and $\mathcal{B}_\omega$ are invariant under the flow of \eqref{equation partial} with $b=0$. More precisely, if $\psi_0\in \mathcal{A}_\omega$ or $\psi_0\in \mathcal{B}_\omega$, then the corresponding  solution $\psi(t)\in \mathcal{A}_\omega$ or $\psi(t)\in \mathcal{B}_\omega$ for any $t\in [0,T^*)$, respectively. Moreover, we can obtain the following sharp threshold of global
existence and blow-up for \eqref{equation partial} with $b=0$.

\begin{enumerate}
\item  If $\psi_0\in \mathcal{A}_\omega$ and $\frac{4}{3}< p<4$, then the corresponding solution $\psi(t)$ of \eqref{equation partial} with $b=0$ exists globally in time.
\item  If $\psi_0 \in \mathcal{B}_\omega$ and $x\psi_0\in L^2$, then the  corresponding solution $\psi(t)$ of \eqref{equation partial} with $b=0$ blows up in finite time.
    \end{enumerate}
\end{theorem}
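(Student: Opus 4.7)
The plan splits naturally into three steps: invariance of the sets, global existence in case (1), and blow-up in case (2).

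First, for invariance of $\mathcal{A}_\omega$ and $\mathcal{B}_\omega$ I would use a conservation--continuity argument. Since $S_\omega$ and $\|\psi\|_{L^2}$ are conserved by the flow of \eqref{equation partial} with $b=0$ and radial symmetry is preserved, one has $S_\omega(\psi(t))=S_\omega(\psi_0)<d(\omega)$ and $\psi(t)\in H^1_r\setminus\{0\}$ throughout the lifespan $[0,T^*)$. The map $t\mapsto Q(\psi(t))$ is continuous, since $\psi\in C([0,T^*);H^1)$ and each of the three integrals in $Q$ is continuous on $H^1$ (the Coulomb term via Hardy). Hence if $\psi_0\in\mathcal{A}_\omega$ and $Q(\psi(t_1))=0$ at some first time $t_1$, then $\psi(t_1)$ is admissible in \eqref{minimization ground no}, forcing $S_\omega(\psi(t_1))\geq d(\omega)$, a contradiction; the case of $\mathcal{B}_\omega$ is identical.

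For item (1), with $Q(\psi(t))>0$ already known, I would eliminate the $L^{p+2}$ term by forming the combination
\[S_\omega(\psi)-\tfrac{2}{3p}Q(\psi)=\tfrac{3p-4}{6p}\|\nabla\psi\|_{L^2}^2+\tfrac{\omega}{2}\|\psi\|_{L^2}^2+\tfrac{(3p-2)\lambda_1}{6p}\int_{\mathbb{R}^3}\tfrac{|\psi|^2}{|x|}\,dx+\tfrac{(3p-2)\lambda_2}{12p}\int_{\mathbb{R}^3}(|x|^{-1}\ast|\psi|^2)|\psi|^2\,dx,\]
where the choice $2/(3p)$ is exactly what cancels the $L^{p+2}$ term. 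For $p>4/3$ all four coefficients on the right are non-negative (using $\lambda_1\geq 0$, $\lambda_2,\omega>0$), so dropping the lower-order terms and using $Q(\psi(t))>0$ together with conservation gives
\[\tfrac{3p-4}{6p}\|\nabla\psi(t)\|_{L^2}^2\leq S_\omega(\psi_0)<d(\omega),\]
a uniform-in-$t$ $H^1$-bound, which together with $L^2$-conservation and the standard blow-up alternative yields global existence.

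For item (2), the task reduces to producing a uniform negative upper bound on $Q(\psi(t))$ and then invoking the virial identity. Along the $L^2$-invariant scaling $v_\lambda(x):=\lambda^{3/2}v(\lambda x)$ the same cancellation gives
\[H(\lambda):=S_\omega(v_\lambda)-\tfrac{2}{3p}Q(v_\lambda)=\tfrac{\omega}{2}\|v\|_{L^2}^2+\tfrac{3p-4}{6p}\lambda^2\|\nabla v\|_{L^2}^2+\tfrac{3p-2}{3p}\lambda\Bigl[\tfrac{\lambda_1}{2}\int_{\mathbb{R}^3}\tfrac{|v|^2}{|x|}dx+\tfrac{\lambda_2}{4}\int_{\mathbb{R}^3}(|x|^{-1}\ast|v|^2)|v|^2\,dx\Bigr],\]
which is strictly increasing in $\lambda>0$ for $p\geq 4/3$ (the $\lambda$-coefficient being positive thanks to $\lambda_2>0$, even at $p=4/3$ where the $\lambda^2$-coefficient vanishes). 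Taking $v=\psi(t)$ with $Q(v)<0$, the function $\lambda\mapsto Q(v_\lambda)$ is positive for small $\lambda$ and negative at $\lambda=1$, so there exists $\lambda_0\in(0,1)$ with $Q(v_{\lambda_0})=0$; then $S_\omega(v_{\lambda_0})\geq d(\omega)$, and the monotonicity $H(1)\geq H(\lambda_0)=S_\omega(v_{\lambda_0})$ yields
\[Q(\psi(t))\leq \tfrac{3p}{2}\bigl(S_\omega(\psi_0)-d(\omega)\bigr)=:-c<0,\quad\text{uniformly in }t.\]
Inserting this into the virial identity $\frac{d^2}{dt^2}\int_{\mathbb{R}^3}|x|^2|\psi(t,x)|^2\,dx=8Q(\psi(t))\leq -8c$ shows that the nonnegative quantity $\int|x|^2|\psi|^2 dx$ would become negative in finite time, forcing $T^*<\infty$.

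The principal obstacle is the last step: the virial identity has to be rigorously justified for $x\psi_0\in L^2$ in the presence of the singular Coulomb term $\lambda_1/|x|$ and the nonlocal Hartree convolution, which requires a truncation/approximation scheme propagating $x\psi(t)\in L^2$ along the flow in the spirit of Cazenave's treatment. The derivation of the uniform bound on $Q(\psi(t))$ also hinges on the precise algebraic choice of the constant $2/(3p)$ that removes the dangerous concave $\lambda^{3p/2}$ contribution from $H$.
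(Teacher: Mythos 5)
Your proof is correct and follows essentially the same route as the paper: invariance via conservation together with the continuity of $t\mapsto Q(\psi(t))$ and the variational characterization of $d(\omega)$, global existence via the combination $S_\omega-\frac{2}{3p}Q$, and blow-up via a uniform negative bound on $Q(\psi(t))$ obtained by rescaling onto the Pohozaev manifold, followed by the virial identity of Lemma \ref{lemma blowup}. The only (immaterial) difference is the auxiliary function used for the key estimate in part (2): you exploit the monotonicity in $\lambda$ of $\widetilde{S}_\omega(v_\lambda)=S_\omega(v_\lambda)-\frac{2}{3p}Q(v_\lambda)$, which gives the constant $\frac{3p}{2}$, whereas the paper maximizes $S_\omega(v_\lambda)-\frac{\lambda^2}{2}Q(v)$ at $\lambda=1$ and gets the constant $2$; both yield $Q(\psi(t))\le C\,\bigl(S_\omega(\psi_0)-d(\omega)\bigr)<0$ uniformly in $t$.
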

\textbf{Remark 1.2.} Since $\psi_0$ is radial, by using the method from the proof of \cite[Theorem 6.5.10]{ca2003} (see also \cite[Theorem 1.3]{dinh}), we can prove the corresponding solution $\psi(t)$ of \eqref{equation partial} with $b=0$ blows up in finite time without the assumption $x\psi_0\in L^2$.

In the mass-critical case $p=\frac{4}{3}$, the argument for global existence in Theorem 1.2 (1) does not work any more. However, we can obtain the following sharp threshold of global existence and blow-up in this case.
	\begin{theorem} \label{theorem sharp criteria critical}
Let $\lambda_1\geq0$, $\lambda_2,\lambda_3>0$, and $p=\frac{4}{3}$.
 Assume that $R$ is the unique ground state of the Schr\"odinger equation \begin{equation}\label{elliptic classical}
-\Delta w+ w+|w|^{\frac{4}{3}} w=0\quad \mbox{in }~ \mathbb{R}^3.
\end{equation}
  Then we have the following sharp criteria for global existence and blow-up of \eqref{equation partial} with $b=0$.
		\begin{enumerate}
			\item If the initial data $\psi_0$ satisfies $\|\psi_0\|_{L^2}\leq \lambda_3^{-\frac{3}{4}}\|R\|_{L^2}$, then the corresponding solution $\psi(t)$ exists globally in time.
			\item For any $\rho> \lambda_3^{-\frac{3}{4}}\|R\|_{L^2}$, there exists $\psi_0\in H^1$ such that $\|\psi_0\|_{L^2}=\rho$ and such that the corresponding solution $\psi(t)$ blows up in finite time.
	\end{enumerate}
	\end{theorem}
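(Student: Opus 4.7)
The plan is to follow the classical mass-critical NLS strategy of Weinstein and Glassey: part (1) via the sharp Gagliardo--Nirenberg inequality and energy conservation, and part (2) via a rescaled ground state of negative energy combined with the virial identity. A key input is the sharp Weinstein bound for the mass-critical exponent in $\R^{3}$,
\begin{equation*}
\|u\|_{L^{10/3}}^{10/3}\leq \frac{5}{3\|R\|_{L^{2}}^{4/3}}\,\|\nabla u\|_{L^{2}}^{2}\,\|u\|_{L^{2}}^{4/3},\qquad u\in H^{1}(\R^{3}),
\end{equation*}
attained only by translates and dilates of $R$; the sharp constant is determined by the identity $\|R\|_{L^{10/3}}^{10/3}=\frac{5}{3}\|\nabla R\|_{L^{2}}^{2}$, itself a consequence of combining the Nehari and Pohozaev identities for \eqref{elliptic classical}.

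For part (1), using $\lambda_{1}\geq 0$ and $\lambda_{2}>0$ to discard the nonnegative Coulomb and Hartree contributions in $E$, then inserting the sharp bound into the focusing term and invoking mass conservation, one obtains
\begin{equation*}
E(\psi(t))\geq \frac{1}{2}\|\nabla \psi(t)\|_{L^{2}}^{2}\left(1-\frac{\lambda_{3}\|\psi_{0}\|_{L^{2}}^{4/3}}{\|R\|_{L^{2}}^{4/3}}\right).
\end{equation*}
The hypothesis $\|\psi_{0}\|_{L^{2}}\leq \lambda_{3}^{-3/4}\|R\|_{L^{2}}$ renders the parenthesis nonnegative, so energy conservation $E(\psi(t))=E(\psi_{0})$ yields a uniform bound on $\|\nabla \psi(t)\|_{L^{2}}$, whence global existence by the blow-up alternative of Proposition \ref{pro2.1}.

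For part (2), given $\rho>\lambda_{3}^{-3/4}\|R\|_{L^{2}}$, set
\begin{equation*}
\psi_{0}^{\alpha}(x):=\frac{\rho}{\|R\|_{L^{2}}}\,\alpha^{3/2}R(\alpha x),\qquad \alpha>0,
\end{equation*}
so that $\|\psi_{0}^{\alpha}\|_{L^{2}}=\rho$ and $|x|\psi_{0}^{\alpha}\in L^{2}$ by the exponential decay of $R$. A direct scaling computation, using $\|R\|_{L^{10/3}}^{10/3}=\frac{5}{3}\|\nabla R\|_{L^{2}}^{2}$, gives
\begin{equation*}
E(\psi_{0}^{\alpha})=\frac{\alpha^{2}\rho^{2}\|\nabla R\|_{L^{2}}^{2}}{2\|R\|_{L^{2}}^{2}}\left(1-\frac{\lambda_{3}\rho^{4/3}}{\|R\|_{L^{2}}^{4/3}}\right)+O(\alpha),
\end{equation*}
and the leading bracket is strictly negative by the choice of $\rho$, so $E(\psi_{0}^{\alpha})<0$ once $\alpha$ is large enough. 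At the mass-critical exponent $p=\frac{4}{3}$, the nonlinear contribution in the virial identity for \eqref{equation partial} with $b=0$ vanishes exactly, leaving
\begin{equation*}
\frac{d^{2}}{dt^{2}}\int_{\R^{3}}|x|^{2}|\psi|^{2}dx=16E(\psi_{0})-4\lambda_{1}\int_{\R^{3}}\frac{|\psi|^{2}}{|x|}dx-2\lambda_{2}\int_{\R^{3}}(|x|^{-1}\ast|\psi|^{2})|\psi|^{2}dx\leq 16E(\psi_{0})<0,
\end{equation*}
where the nonnegativity of the two remaining integrals was used. The standard convexity argument then forces $\int_{\R^{3}}|x|^{2}|\psi|^{2}dx$ to become negative in finite time, proving blow-up.

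The main technical obstacle I anticipate is the borderline equality case $\|\psi_{0}\|_{L^{2}}=\lambda_{3}^{-3/4}\|R\|_{L^{2}}$ in part (1), where the parenthesis above vanishes and one cannot bound $\|\nabla \psi(t)\|_{L^{2}}$ directly from $E(\psi_{0})$; ruling out blow-up there will require retaining the Coulomb and Hartree terms and excluding ground-state-type concentration, presumably via a concentration--compactness style argument tailored to the radial setting.
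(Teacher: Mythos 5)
Your part (2) and the strict-inequality half of part (1) match the paper's argument essentially verbatim: the same rescaled initial datum $\frac{\rho}{\|R\|_{L^2}}\alpha^{3/2}R(\alpha\cdot)$, the same use of the Pohozaev identity $\frac12\|\nabla R\|_{L^2}^2=\frac{3}{10}\|R\|_{L^{10/3}}^{10/3}$ to make the energy negative for large $\alpha$, the same virial inequality $J''\leq 16E(\psi_0)<0$ exploiting $\lambda_1\geq0$, $\lambda_2>0$; and for $\|\psi_0\|_{L^2}<\lambda_3^{-3/4}\|R\|_{L^2}$ the standard Weinstein/Gagliardo--Nirenberg coercivity argument, which the paper also simply declares standard and omits.

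However, there is a genuine gap in part (1): the theorem asserts global existence up to and \emph{including} the threshold mass $\|\psi_0\|_{L^2}=\lambda_3^{-3/4}\|R\|_{L^2}$, and at equality your coercivity factor $1-\lambda_3\|\psi_0\|_{L^2}^{4/3}/\|R\|_{L^2}^{4/3}$ vanishes, so energy conservation gives no bound on $\|\nabla\psi(t)\|_{L^2}$. This borderline case is precisely the new content of the statement (cf.\ Remark 1.3: when $\lambda_2=0$ threshold-mass blow-up does occur, so the defocusing Hartree term must be used in an essential way), and you correctly flag it but do not resolve it. The paper's resolution is a contradiction argument rather than concentration--compactness in the usual sense: assuming $\|\nabla\psi(t)\|_{L^2}\to\infty$ as $t\to T^*$, one sets $\rho(t)=\|\nabla R\|_{L^2}/\|\nabla\psi(t)\|_{L^2}$ and $v_n=\rho_n^{3/2}\psi(t_n,\rho_n\cdot)$; energy conservation forces $\frac12\|\nabla v_n\|_{L^2}^2-\frac{3\lambda_3}{10}\|v_n\|_{L^{10/3}}^{10/3}\to0$, hence $\|v_n\|_{L^{10/3}}$ stays bounded away from zero, and the interpolation inequality \eqref{inequality} then yields a uniform positive lower bound $\int(|x|^{-1}\ast|v_n|^2)|v_n|^2\,dx\geq C_0>0$. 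On the other hand, the threshold mass makes $\frac12\|\nabla\psi\|_{L^2}^2-\frac{3\lambda_3}{10}\|\psi\|_{L^{10/3}}^{10/3}+\lambda_1\int|x|^{-1}|\psi|^2\,dx\geq0$, so the Hartree term of $\psi(t)$ is bounded by $E(\psi_0)/\lambda_2$ uniformly in $t$, and by scaling $\int(|x|^{-1}\ast|v_n|^2)|v_n|^2\,dx=\rho_n\int(|x|^{-1}\ast|\psi(t_n)|^2)|\psi(t_n)|^2\,dx\to0$ since $\rho_n\to0$ --- a contradiction. You would need to supply an argument of this kind (or an equivalent one) to justify the $\leq$ in the hypothesis of part (1).
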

\textbf{Remark 1.3.}
When $\lambda_1=\lambda_2=0$, there exists an initial data $\psi_0$
such that $\|\psi_0\|_{L^2}= \lambda_3^{-\frac{3}{4}}\|R\|_{L^2}$ and such that the corresponding solution $\psi(t)$ blows up in finite time, see \cite[Remark 6.7.3]{ca2003}.
When $\lambda_2>0$, i.e., in the defocusing case, we prove that all solutions of \eqref{equation partial} with $\|\psi_0\|_{L^2}= \lambda_3^{-\frac{3}{4}}\|R\|_{L^2}$ exist globally.

\begin{theorem}\label{th instability no partial}
Let $\lambda_1\geq0$, $\lambda_2,\lambda_3>0$, $\omega>0$, $\frac{4}{3}\leq p<4$, and $u$ is a solution of the minimization problem \eqref{minimization ground no}.
Then, the standing wave $\psi(t,x)=e^{i\omega t}u(x)$ is
strongly unstable in the following sense: there exists $\{\psi_{0,n}\}\subset H^1$ such that
$\psi_{0,n}\rightarrow u$ in $H^1$ as $n \rightarrow \infty$ and the corresponding solution $\psi_n(t)$ of \eqref{equation partial} with initial data $\psi_{0,n}$
blows up in finite time for any $n\geq 1$.
\end{theorem}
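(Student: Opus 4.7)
The plan is to construct the instability sequence via the $L^2$-invariant dilation $u^\lambda(x) := \lambda^{3/2} u(\lambda x)$ and then combine variational and virial arguments. Under this scaling the six terms of $S_\omega$ are homogeneous (respectively of degrees $\lambda^2$, $\lambda$, $\lambda$, $\lambda^{3p/2}$ and $1$), and a direct calculation yields the identity $Q(u^\lambda)=\lambda\, \frac{d}{d\lambda}S_\omega(u^\lambda)$. Since $Q(u)=0$ from Proposition \ref{proposition ground state no}, $\lambda=1$ is a critical point of $h(\lambda):=S_\omega(u^\lambda)$; using $Q(u)=0$ to eliminate $\|\nabla u\|_{L^2}^2$ from the second-derivative expression gives
\[
h''(1)=\tfrac{3p\lambda_3}{2(p+2)}\bigl(2-\tfrac{3p}{2}\bigr)\|u\|_{L^{p+2}}^{p+2}-\tfrac{\lambda_1}{2}\int_{\mathbb{R}^3}\tfrac{|u|^2}{|x|}dx-\tfrac{\lambda_2}{4}\int_{\mathbb{R}^3}(|x|^{-1}\ast|u|^2)|u|^2 dx<0
\]
under the standing assumptions $p\geq 4/3$, $\lambda_1\geq 0$, $\lambda_2>0$. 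Hence $\lambda=1$ is a strict local maximum of $h$. Choosing $\lambda_n\downarrow 1$ and setting $\psi_{0,n}:=u^{\lambda_n}$, we obtain $\psi_{0,n}\to u$ in $H^1$, $\|\psi_{0,n}\|_{L^2}=\|u\|_{L^2}$, and for $n$ large $S_\omega(\psi_{0,n})<S_\omega(u)=d(\omega)$ together with $Q(\psi_{0,n})<0$, so $\psi_{0,n}\in\mathcal{B}_\omega$. Theorem \ref{theorem sharp} then ensures that the corresponding solution $\psi_n(t)$ of \eqref{equation partial} with $b=0$ remains in $\mathcal{B}_\omega$ (and in particular radial) on its whole lifespan $[0,T_n^*)$.

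To upgrade the sign condition $Q(\psi_n(t))<0$ into a uniform negative bound, set $\beta:=3p/2\geq 2$ and consider the auxiliary functional $F(v):=\beta S_\omega(v)-Q(v)$. The value $\beta$ is tailored so that the focusing $L^{p+2}$ contribution cancels exactly; what remains is a sum of non-negative terms, with strictly positive coefficients in front of the Coulomb, Hartree and mass pieces. Consequently $\mu\mapsto F(v^\mu)$ is strictly increasing on $(0,\infty)$. For any radial $v\in\mathcal{B}_\omega$, continuity in $\mu$ combined with the signs $Q(v^\mu)>0$ for small $\mu$ and $Q(v)<0$ at $\mu=1$ produces $\mu^*\in(0,1)$ with $Q(v^{\mu^*})=0$; since $v^{\mu^*}$ is radial and lies on the Pohozaev manifold, Proposition \ref{proposition ground state no} yields $S_\omega(v^{\mu^*})\geq d(\omega)$. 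Combining $F(v^{\mu^*})=\beta S_\omega(v^{\mu^*})\geq\beta d(\omega)$ with the strict monotonicity $F(v^{\mu^*})<F(v)$ gives the key inequality
\[
Q(v)<\beta\bigl(S_\omega(v)-d(\omega)\bigr).
\]
Evaluating at $v=\psi_n(t)$ and invoking the conservation of $S_\omega$ along the flow produces the uniform-in-time bound $Q(\psi_n(t))\leq -\delta_n<0$, where $\delta_n:=\beta(d(\omega)-S_\omega(\psi_{0,n}))>0$.

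The final step is the radial virial blow-up argument. Since each $\psi_n$ stays radial, we invoke the localized virial identity for radial $H^1(\mathbb{R}^3)$ solutions (as in Cazenave \cite[Theorem 6.5.10]{ca2003}; see also \cite{dinh} and Remark 1.2): for a radial truncation $\phi_R$ of $|x|^2$, the quantity $V_R(t):=\int_{\mathbb{R}^3}\phi_R|\psi_n|^2 dx$ obeys an inequality of the form $V_R''(t)\leq 8Q(\psi_n(t))+\eta_R(t)$, where $\eta_R$ gathers the cutoff errors. These errors are controlled by means of the Strauss radial Sobolev embedding together with conservation of mass and energy; the repulsive signs $\lambda_1\geq 0$ and $\lambda_2>0$ are essential so that the Coulomb and Hartree contributions fall on the correct side of the estimate. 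Choosing $R$ large enough that $|\eta_R(t)|\leq 4\delta_n$ uniformly in $t$ yields $V_R''(t)\leq -4\delta_n$, forcing $V_R$ to become negative in finite time, which contradicts $V_R\geq 0$; hence $T_n^*<\infty$ and $\psi_n$ blows up. The main technical obstacle lies in this last step: the nonlocal Hartree tail and the mass-supercritical focusing tail created by the truncation must be simultaneously balanced against the conservation laws and the radial decay in order to secure the $R$-uniform error bound independent of the (possibly unbounded) kinetic energy $\|\nabla\psi_n(t)\|_{L^2}^2$.
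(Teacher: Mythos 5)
Your construction of the perturbed data and the uniform negativity of $Q$ along the flow are correct and essentially reproduce the paper's argument: the dilation $u^{\lambda}$ with $\lambda>1$, the computation $Q(u^\lambda)=\lambda\frac{d}{d\lambda}S_\omega(u^\lambda)$, membership in $\mathcal{B}_\omega$, and the coercivity inequality obtained by rescaling back to the Pohozaev manifold all match (the paper proves $Q(v)\leq 2(S_\omega(v)-S_\omega(u))$ inside the proof of Theorem \ref{theorem sharp}(2) using the auxiliary function $S_\omega(v_\lambda)-\frac{\lambda^2}{2}Q(v)$; your variant with $\beta=\frac{3p}{2}$ is equally valid). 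The problem is in your final step.

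The paper does not run a localized virial argument at all: it multiplies $u_{\lambda_0}$ by a cutoff $\chi_{M_0}$ so that the initial datum $\chi_{M_0}u_{\lambda_0}$ lies in $\Sigma=\{v\in H^1,\ |x|v\in L^2\}$ while remaining in $\mathcal{B}_\omega$ (by continuity of $S_\omega$ and $Q$) and $\varepsilon$-close to $u$ in $H^1$. Then Lemma \ref{lemma blowup} gives the \emph{exact} identity $J''(t)=8Q(\psi(t))\leq 16(S_\omega(\psi_{0,n})-S_\omega(u))<0$, and concavity finishes the proof with no error term to control. By keeping $\psi_{0,n}=u^{\lambda_n}$ (possibly of infinite variance) you are forced into the truncated virial identity, and there your argument has a genuine gap: the cutoff error $\eta_R(t)$ contains, among other things, the tail term $\int_{|x|>R}|\psi_n|^{p+2}dx$, which by the radial Strauss estimate is controlled only by a positive power of $\|\nabla\psi_n(t)\|_{L^2}$ — a quantity that is unbounded precisely because the solution is meant to blow up. Hence the statement ``choose $R$ large enough that $|\eta_R(t)|\leq 4\delta_n$ uniformly in $t$'' cannot be obtained from conservation laws alone; one must absorb a small multiple of $\|\nabla\psi_n(t)\|_{L^2}^2$ into a \emph{refined} coercivity bound of the form $Q(\psi_n(t))\leq -\delta_n-\eta\|\nabla\psi_n(t)\|_{L^2}^2$, and your key inequality $Q(v)<\beta(S_\omega(v)-d(\omega))$ carries no such gradient gain (for $p=\frac{4}{3}$ the relevant coefficient in your functional $F$ even vanishes). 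This is exactly the content the paper defers to Remark 1.2 as a separate, more delicate argument. To close your proof either supply that refined estimate and carry out the Ogawa--Tsutsumi/Dinh-type absorption in detail, or simply adopt the paper's truncation of the initial data, which removes the difficulty entirely.
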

\textbf{Remark 1.4.}
To prove this theorem, our argument relies heavily on the property of function
$f(\lambda):=S_\omega(v_\lambda)$ with $v_\lambda(x):=\lambda^{\frac{3}{2}}v(\lambda x)$. That is, $f(\lambda)$ has an unique critical point on $(0,\infty)$, which is the maximum point of $f(\lambda)$ on $(0,\infty)$ in the mass-supercritical case, see Lemma \ref{lemma jihexingzhi}. In fact, many known
physical models enjoy this property. For example, NLS or Davey-Stewartson system with combined focusing mass-supercritical and defocusing mass-subcritical nonlinearities (see \cite{ohta95die}), the mixed
dispersion NLS (see \cite{jean-goutrans,jean-gouinstablility}) and so on.
Moreover, we think that our method is robust and can be applied to many other models, only if the corresponding function $f(\lambda)=S_\omega(v_\lambda)$ has an unique critical point on $(0,\infty)$.
In particular, the idea of this proof can be also applied to prove an existence result of normalized
solutions in the mass-supercritical case, see the following Theorem \ref{theorem existence normalized}, which shares a common character that the scaling function $f(\lambda)=S_\omega(v_\lambda)$ has an unique critical point of maximum type on $(0,\infty)$. This indicates that there is a close connection between the study of the strong instability and the one of the existence of normalized solutions.

Next, we consider the existence and properties of solutions to \eqref{elliptic partial12} having prescribed $L^2$-norm. That is, for any given $c>0$, we study solutions of \eqref{elliptic partial12} satisfying the
$L^2$-norm constraint
\begin{equation}\label{mass constraint}
S_r(c):=\{u\in H_r^1:~\|u\|_{L^2}^2=c\},\quad c>0.
\end{equation}
Physically, such solutions are called normalized solutions of \eqref{elliptic partial12}, which formally
corresponds to critical points of the energy functional $E(u)$ restricted on $S_r(c)$.
In particular, in this situation, the frequency $\omega\in \mathbb{R}$ is an unknown part, which appears as the associated Lagrange multiplier.

In the mass-subcritical case, i.e. $0<p<\frac{4}{3}$, $E(u)$ is bounded from
below on $S_r(c)$. Thus, for every $c>0$, ground states can be found as global minimizers of $E|_{S_r(c)}$, see \cite{licma}. In the mass-supercritical case, i.e. $\frac{4}{3}<p<4$, on the contrary, $E|_{S_r(c)}$ is unbounded from below for any $c>0$.
 For this reason, it is unlikely to obtain a solution
to \eqref{elliptic partial12}-\eqref{mass constraint} by developing a global minimizing problem. Instead, we minimize the functional on a Pohozaev manifold, which is indeed a submanifold of $S_r(c)$. On this submanifold, $E(u)$ is bounded from below and coercive, and then we look for minimizers of $E(u)$ on it, to obtain the existence of a ground state. This idea has been carried out in many cases, see for example \cite{jean-luo,J}. Here precisely, we introduce the following minimizing problem
\begin{equation}\label{minimization problem normalzied}
\gamma(c):=\inf_{u\in V_r(c)}E(u),
\end{equation}
where the constraint $V_r(c)$ is defined by
\begin{equation}\label{submanifold}
V_r(c):=\{u\in S_r(c):~Q(u)=0\}.
\end{equation}
The identity $Q(u)=0$ is the Pohozaev identity related to \eqref{elliptic partial12}, namely any solution $u$ to \eqref{elliptic partial12} must satisfy that $Q(u)=0$, see Lemma \ref{lemma pohozaev identity}.
Then the set $V_r(c)$ is called normally a Pohozaev manifold. Note that $V_r(c)$ is indeed a natural constraint in the sense that any minimizer of \eqref{minimization problem normalzied}
 is a ground state to \eqref{elliptic partial12}, where $\omega\in \mathbb{R}$ appears as the corresponding Lagrange multiplier, see Lemma \ref{lm-lagrange}.
%we find a critical point of $E$ restricted to $V_r(c)$ and prove that it is indeed a critical point of $E$ restricted to $S_r(c)$.
Our results are as follows:
\begin{theorem}\label{theorem existence normalized}
Let $\lambda_1\geq 0, \lambda_2,\lambda_3>0$, and $\frac{4}{3}\leq p<4$. Then we have
\begin{enumerate}
  \item [(1)] When $\frac{4}{3}< p<4$, there exists a $c_0>0$ such that for any $c\in (0,c_0)$, we have that $\gamma(c)>0$ and $\gamma(c)$ has a minimizer $u_c\in V_r(c)$. In particular, there exists a $\omega_c>0$ such that $u_c$ is a radial solution to \eqref{elliptic partial12} with $\omega=\omega_c$. Moreover, we have
       $$\|\nabla u_c\|_{L^2}\rightarrow +\infty,~~\omega_c\rightarrow +\infty,~~E(u_c)=\gamma(c)\rightarrow +\infty,\quad \mbox{as $c\rightarrow 0^+$};$$
  \item [(2)] When $p=\frac{4}{3}$, then for all $c\in \Big(\lambda_3^{-\frac{3}{2}}\|R\|_{L^2}^2, (\frac{9}{7})^{\frac{3}{2}}\lambda_3^{-\frac{3}{2}}\|R\|_{L^2}^2\Big)$, we have that $\gamma(c)>0$ and $\gamma(c)$ has a minimizer $u_c\in V_r(c)$; for all $c\in (0, \lambda_3^{-\frac{3}{2}}\|R\|_{L^2}^2]$, the functional $E$ has no any critical point on $S_r(c)$, where $R$ is the unique ground state of \eqref{elliptic classical};
  \item [(3)] For any $u_c\in V_r(c)$ with $E(u_c)=\gamma(c)$, the standing wave $\psi(t,x)=e^{i\omega_c t}u_c(x)$ is strongly unstable.
\end{enumerate}
\end{theorem}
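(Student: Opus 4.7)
The plan is to treat parts (1) and (2) via a constrained minimization on the Pohozaev manifold $V_r(c)$, and then to deduce part (3) from the scaling analysis already developed in the proof of Theorem \ref{th instability no partial}.

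\textbf{Parts (1) and (2): existence of minimizers.} For any nonzero $v\in S_r(c)$, the key tool is the fibre map $f_v(\lambda):=E(v_\lambda)$, with $v_\lambda(x)=\lambda^{3/2}v(\lambda x)$. Computing each of the six terms in $E$ under this mass-preserving scaling yields $f_v(\lambda)=A\lambda^{2}+B\lambda-C\lambda^{3p/2}$, where $A=\tfrac{1}{2}\|\nabla v\|_{L^2}^2$, $B$ is a nonnegative combination of the Coulomb and Hartree terms, and $C=\tfrac{\lambda_3}{p+2}\|v\|_{L^{p+2}}^{p+2}>0$; moreover $\lambda f_v'(\lambda)|_{\lambda=1}=Q(v)$, so $v\in V_r(c)\Leftrightarrow f_v'(1)=0$. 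When $\tfrac{4}{3}<p<4$ the exponent $3p/2>2$ makes $f_v$ strictly concave for large $\lambda$ with a unique maximum point $\lambda(v)>0$, so $V_r(c)$ is nonempty. When $p=\tfrac{4}{3}$, we have $3p/2=2$ and $f_v(\lambda)=(A-C)\lambda^2+B\lambda$ is a parabola whose behaviour is dictated by the sign of $A-C$; the sharp Gagliardo-Nirenberg inequality built on the ground state $R$ of \eqref{elliptic classical} pins the sign threshold at $c=\lambda_3^{-3/2}\|R\|_{L^2}^2$, which also forces the nonexistence claim in part (2) via the Pohozaev identity (inserting $Q(u)=0$ into the $L^{10/3}$ bound yields $\|\nabla u\|^2 \le \tfrac{\lambda_3 c^{2/3}}{\|R\|_{L^2}^{4/3}}\|\nabla u\|^2$, contradicting $u\neq 0$ below threshold). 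The explicit upper bound $(\tfrac{9}{7})^{3/2}\lambda_3^{-3/2}\|R\|_{L^2}^2$ emerges from optimising $f_v(\lambda(v))$ against $c$ and requiring both $V_r(c)\neq\emptyset$ and the compactness of minimising sequences.

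Once $V_r(c)$ is known to be nonempty, substituting $Q(u)=0$ into $E(u)$ produces the identity $E(u)=\tfrac{3p-4}{3p}A+\tfrac{3p-2}{3p}B$ on $V_r(c)$, which gives $\gamma(c)\ge 0$ together with coercivity in $H^1_r$. Take a minimising sequence $\{u_n\}$: its $H^1$-boundedness, the compact embedding $H^1_r\hookrightarrow L^{p+2}(\mathbb{R}^3)$ for $p+2<6$, and the weak continuity of the nonlocal Coulomb and Hartree terms deliver a weak limit $u_c\in V_r(c)$ with $E(u_c)=\gamma(c)$. The natural-constraint property (Lemma \ref{lm-lagrange}) then provides a Lagrange multiplier $\omega_c\in\mathbb{R}$ with $S'_{\omega_c}(u_c)=0$. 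Combining $\la S'_{\omega_c}(u_c),u_c\ra=0$ with $Q(u_c)=0$ isolates $\omega_c$ as a linear combination of $\|\nabla u_c\|_{L^2}^2$ and the defocusing terms; since the latter are subordinate to the gradient (by Hardy for the Coulomb term and Hardy-Littlewood-Sobolev for the Hartree term), for small $c$ the gradient contribution dominates and $\omega_c>0$. Finally, in part (1), letting $c\to 0^+$ the fibre analysis forces the critical scaling $\lambda(v)\to\infty$, which translates into $\|\nabla u_c\|_{L^2}\to\infty$, $E(u_c)=\gamma(c)\to\infty$, and thereby $\omega_c\to\infty$.

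\textbf{Part (3): strong instability.} Exactly as in the proof of Theorem \ref{th instability no partial}, for a minimizer $u_c$ the scaling $(u_c)_\lambda(x):=\lambda^{3/2}u_c(\lambda x)$ enjoys the property (Lemma \ref{lemma jihexingzhi}) that $\lambda\mapsto S_{\omega_c}((u_c)_\lambda)$ has its unique positive critical point at $\lambda=1$, a strict maximum. Consequently, for $\lambda_n\downarrow 1$, the initial data $\psi_{0,n}:=(u_c)_{\lambda_n}$ satisfy $\psi_{0,n}\to u_c$ in $H^1$, together with $Q(\psi_{0,n})<0$ and $S_{\omega_c}(\psi_{0,n})<S_{\omega_c}(u_c)$. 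These are precisely the conditions defining the invariant set $\mathcal{B}_{\omega_c}$ of Theorem \ref{theorem sharp}; since $u_c$ (hence $\psi_{0,n}$) is radial, Remark 1.2 delivers finite-time blow-up for each solution $\psi_n(t)$, which is the required instability.

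\textbf{Main obstacle.} The delicate step is pinning down the existence window in part (2). Both the lower threshold $\lambda_3^{-3/2}\|R\|_{L^2}^2$ and the precise upper bound $(\tfrac{9}{7})^{3/2}\lambda_3^{-3/2}\|R\|_{L^2}^2$ demand a sharp analysis of the fibre map $f_v$, balancing the $\lambda^2$-competition between the kinetic and focusing $L^{p+2}$ terms against the $\lambda$-contribution of the defocusing Coulomb and Hartree terms, all while preserving the compactness of minimising sequences (which fails outside this interval). Making the constant $(\tfrac{9}{7})^{3/2}$ sharp requires testing the fibre map on a suitably rescaled copy of $R$ and optimising the scaling parameter.
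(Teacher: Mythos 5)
Your overall architecture (minimize $E$ on the Pohozaev manifold $V_r(c)$, use the natural-constraint Lemma \ref{lm-lagrange} to produce $\omega_c$, and recycle the scaling/blow-up mechanism of Theorem \ref{th instability no partial} for part (3)) matches the paper, and your fibre-map discussion and the nonexistence argument below $\lambda_3^{-3/2}\|R\|_{L^2}^2$ are essentially the paper's Lemma \ref{lm-nonexistence11}. But there is a genuine gap at the heart of the existence step. You assert that compactness of $H^1_r\hookrightarrow L^{p+2}$ plus weak continuity of the nonlocal terms "deliver a weak limit $u_c\in V_r(c)$." Neither constraint defining $V_r(c)$ passes to the weak limit: the $L^2$ norm is only weakly lower semicontinuous, so the limit may satisfy $\|u_c\|_{L^2}^2<c$, and $Q$ is likewise only lower semicontinuous along the sequence, so one only gets $Q(u_c)\le 0$ (and rescaling to restore $Q=0$ does not decrease $E$ in the right direction, since on $\{Q<0\}$ the fibre map increases as one scales down toward the zero of $Q$). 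The paper's entire Section 4 machinery exists to close exactly this gap: the auxiliary functional $\widetilde{E}=E-\frac{2}{3p}Q$ and the equivalent problem \eqref{minimization dengjia normal} (which is monotone under that rescaling), the non-increase of $c\mapsto\gamma(c)$ (Lemma \ref{lemma dandiaoxing}), and — crucially — the \emph{strict} decrease of $\gamma$ on the stated parameter ranges (Lemma \ref{lemma smonotonicity}), which is what finally rules out $\|u_c\|_{L^2}^2<c$. Your proposal never mentions any monotonicity of $\gamma(c)$, so the argument as written does not produce a minimizer on $S_r(c)$.

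Relatedly, your account of where the upper bound $(\tfrac{9}{7})^{3/2}\lambda_3^{-3/2}\|R\|_{L^2}^2$ comes from ("optimising the fibre map against $c$", "testing on a rescaled copy of $R$") is not how the constant arises and would not reproduce it. In the paper it comes from Lemma \ref{lm-lambda}: combining the two Pohozaev-type identities \eqref{pohozaev identity0}–\eqref{pohozaev identity1} gives $\omega\|u\|_{L^2}^2=3\|\nabla u\|_{L^2}^2-\frac{7\lambda_3}{5}\|u\|_{L^{10/3}}^{10/3}+\frac{\lambda_1}{4}\int\frac{|u|^2}{|x|}dx$, and the sharp Gagliardo–Nirenberg inequality \eqref{sharp gn} turns this into $\omega\|u\|_{L^2}^2\ge\bigl(9\|R\|_{L^2}^{4/3}-7\lambda_3\|u\|_{L^2}^{4/3}\bigr)\|\nabla u\|_{L^2}^2/\bigl(9\|R\|_{L^2}^{4/3}\bigr)$, so $\omega>0$ precisely when $c<(\tfrac{9}{7})^{3/2}\lambda_3^{-3/2}\|R\|_{L^2}^2$. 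The positivity of the multiplier is then the engine driving the strict monotonicity of $\gamma$ (a constant stretch of $\gamma$ would force a minimizer with $\omega_0=0$), which in turn is what pins the minimizer to the sphere $S_r(c)$. So the threshold is tied to the sign of the Lagrange multiplier and to compactness of minimizing sequences, not to an optimization of the fibre map. Your part (3) and the asymptotics in part (1) are acceptable in spirit (the paper itself argues the growth of $\|\nabla u_c\|_{L^2}$ directly from $Q(u_c)=0$ and Gagliardo–Nirenberg), but the existence step needs the monotonicity apparatus to be a proof.
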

\textbf{Remark 1.5.} We remark that when $\lambda_1=0$, the same existence result has been obtained by Bellazzini et al \cite[Theorem 1.1]{jean-luo} for $\frac{4}{3}<p<4$, and by Ye \cite[Theorem 1.2]{Ye} for $p=\frac{4}{3}$. In \cite{jean-luo} and \cite{Ye}, the existence result was established for suitable $c>0$ , by considering a related minimization problem as \eqref{minimization problem normalzied} in the space $H^1$, and using a delicate mountain pass argument. However, compared with their proofs, we provide here a simpler proof and avoid using the mountain pass argument. In particular, we point out that though our proof is restricted on the radial space $H_r^1$, as we clarified above, when $\lambda_1=0$, the functional $E$ is invariant by translation, then the arguments can be carried out in the space $H^1$, see Proposition \ref{proposition existence} and Remark 4.5.

Checking the proof of Theorem \ref{theorem existence normalized}, one will find that we only need the following two points: the special geometry of the scaling function $f(\lambda)=E(v_\lambda)$, namely the $f(\lambda)$ has an unique critical point of maximum point at $(0,\infty)$ (see Lemma \ref{lemma jihexingzhi}), and the strict decrease property of $\gamma(c)$ (see Lemma \ref{lemma smonotonicity}). Note that the first point is
important for establishing the existence of a critical point for the functional, namely a solution, see also the case in Theorem \ref{th instability no partial}. The second one is essential to ensure that the critical point we find is indeed on the $L^2$ constraint $S_r(c)$. We believe that our method is somehow quite general and can be applied to similar minimization problems in other models, for example \cite{jean-goutrans,gou-zhang,soave}. Notice that a function $u_c\in V_r(c)$ with $E(u_c)=\gamma(c)$ is indeed a ground state, corresponding to some $\omega_c\in \mathbb{R}$, at least for $c>0$ small, $\omega_c>0$, then the strong instability in this theorem is naturally a consequence of Theorem \ref{th instability no partial}.  \\

In a second part of the paper, we analyze the situation when the magnetic potential $A\neq0$ (namely $b\neq 0$ in \eqref{equation partial}) is considered. Since the parameter $b\neq 0$ does not enter essentially into the  play in our analysis, we shall not clarify the assumption of $b\neq 0$ if $b$ appears in the sequel. To begin with, we establish the following the global existence of the solution of \eqref{equation partial}, which is prerequisite for the orbital stability.
\begin{theorem}\label{theorem global existence}
Let $\psi_0 \in X$, and $\psi(t)$ be the unique solution of \eqref{equation partial}, with the initial data $\psi(0)=\psi_0$. Then $\psi(t) $ exists globally in time, under each of the following assumptions:
\begin{enumerate}
\item[(1)]  $\lambda_1,\lambda_2\in \mathbb{R}$, $\lambda_3<0$, $0< p<4$;
\item[(2)]  $\lambda_1,\lambda_2\in \mathbb{R}$, $\lambda_3>0$, $0< p<\frac{4}{3}$;
\item[(3)]  $\lambda_1,\lambda_2\in \mathbb{R}$, $\lambda_3>0$, $ p=\frac{4}{3}$, $\|\psi_0\|_{L^2}<\lambda_3^{-\frac{3}{4}}\|R\|_{L^2}$, where $R$ is the unique ground state of \eqref{elliptic classical};
\item[(4)]  $\lambda_1,\lambda_2\in \mathbb{R}$, $\lambda_3>0$, $\frac{4}{3}<p<4$. For every given $\kappa>0$, there exists $0<c_{\kappa}<1$ such that $\|\psi_0\|_{\tilde{X}}<\kappa$ and $\|\psi_0\|_{L^2}< c_{\kappa}$, where the norm $\|u\|_{\tilde{X}}$ is defined by
    \begin{eqnarray}\label{Xnorm00}
\|u\|_{\tilde{X}}^2:=\|\nabla u\|_{L^2}^2+b^2\int_{\mathbb{R}^3} (x_1^2+x_2^2)|u|^{2}dx,\ b\in \mathbb{R}.
    \end{eqnarray}
\end{enumerate}
%    Moreover, for all compact intervals $I\subseteq [0,\infty )$, the
%global solution satisfies the following bound:
%\begin{equation}\label{bd}
%\|\psi\|_{L^\infty(I;\tilde{X})}\leq C(|I|,\|\psi_0 \|_{\tilde{X}}).
%\end{equation}
\end{theorem}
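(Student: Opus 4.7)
The strategy is standard and relies on the local theory together with the conservation laws and the blow-up alternative. By Proposition~\ref{pro2.1} the Cauchy problem~\eqref{equation partial} is locally well-posed in $X$, both the mass $\|\psi(t)\|_{L^2}$ and the energy $E_b(\psi(t))$ are conserved on the maximal existence interval $[0,T^*)$, and $T^*<\infty$ would force $\|\psi(t)\|_X\to\infty$. Since the $L^2$-mass is conserved, it suffices to show that $\|\psi(t)\|_{\tilde X}$ stays bounded on $[0,T^*)$. Solving the energy identity for $\tfrac12\|\psi\|_{\tilde X}^2$, the problem reduces to estimating the Coulomb, Hartree and power-type contributions to $E_b$ in terms of $\|\psi\|_{\tilde X}$ and the conserved quantity $\|\psi_0\|_{L^2}$.

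First I would record three a priori estimates that handle the three perturbative terms uniformly. Hardy's inequality in $\R^3$ combined with Cauchy--Schwarz yields
\[
\int_{\R^3}\frac{|u|^2}{|x|}\,dx \leq 2\|u\|_{L^2}\|\nabla u\|_{L^2};
\]
the Hardy--Littlewood--Sobolev inequality combined with a Gagliardo--Nirenberg interpolation at $L^{12/5}$ gives
\[
\int_{\R^3}(|x|^{-1}\ast|u|^2)|u|^2\,dx \leq C\|u\|_{L^{12/5}}^{4} \leq C\|u\|_{L^2}^{3}\|\nabla u\|_{L^2};
\]
and the standard Gagliardo--Nirenberg inequality produces
\[
\|u\|_{L^{p+2}}^{p+2} \leq C\|u\|_{L^2}^{(p+2)-3p/2}\|\nabla u\|_{L^2}^{3p/2}.
\]
Each right-hand side depends only on the conserved $L^2$-mass and on $\|\nabla u\|_{L^2}\leq\|u\|_{\tilde X}$.

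Cases~(1)--(3) then follow immediately. In case~(1), $\lambda_3<0$ makes the power-type term non-negative in the energy identity; the remaining Hardy and Hartree terms grow at most linearly in $\|\nabla\psi\|_{L^2}$, so Young's inequality absorbs them into $\tfrac12\|\nabla\psi\|_{L^2}^2$. In case~(2), $3p/2<2$, hence all three perturbative terms are strictly subquadratic in $\|\nabla\psi\|_{L^2}$ and are again absorbed by Young. In case~(3), $p=4/3$ gives exactly $3p/2=2$; here I would invoke the sharp Gagliardo--Nirenberg inequality
\[
\|u\|_{L^{10/3}}^{10/3} \leq \frac{5}{3\|R\|_{L^2}^{4/3}}\|\nabla u\|_{L^2}^{2}\|u\|_{L^2}^{4/3},
\]
and observe that the hypothesis $\|\psi_0\|_{L^2}<\lambda_3^{-3/4}\|R\|_{L^2}$ forces the resulting coefficient in front of $\|\nabla\psi\|_{L^2}^2$ to be strictly less than $1/2$, so the energy identity delivers a uniform bound on $\|\psi(t)\|_{\tilde X}$.

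The main obstacle is case~(4), the mass-supercritical regime $4/3<p<4$, where $3p/2>2$ precludes any direct absorption by Young. I would run a continuity/bootstrap argument. Setting $M(t):=\|\psi(t)\|_{\tilde X}$, so that $M(0)<\kappa$, the energy identity combined with the three estimates above yields
\[
\tfrac12 M(t)^2 \leq E_b(\psi_0) + C_1\|\psi_0\|_{L^2}M(t) + C_2\|\psi_0\|_{L^2}^{3}M(t) + C_3\|\psi_0\|_{L^2}^{(p+2)-3p/2}M(t)^{3p/2}.
\]
Because the last exponent exceeds~$2$, I would tune $c_\kappa\in(0,1)$ depending only on $\kappa$ and on the fixed $\lambda_i$, so small that whenever $\|\psi_0\|_{L^2}<c_\kappa$ and $M(t)\leq 2\kappa$ the right-hand side stays strictly less than $\tfrac12(2\kappa)^2$. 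The set $\{t\in[0,T^*):M(t)\leq 2\kappa\}$ is then open by continuity of $M$, closed by the same inequality, and contains $0$, hence equals $[0,T^*)$. This uniform bound contradicts the blow-up alternative, forcing $T^*=\infty$. The only delicate point is the calibration of $c_\kappa$ that produces the strict contraction of the supercritical term; once that is done, the conclusion is routine.
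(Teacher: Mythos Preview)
Your proposal is correct and follows essentially the same strategy as the paper: energy conservation plus the blow-up alternative, with the Coulomb and Hartree terms controlled via Hardy and Hardy--Littlewood--Sobolev (both linear in $\|\nabla\psi\|_{L^2}$), the power term via Gagliardo--Nirenberg, and the sharp constant involving $R$ in case~(3). For case~(4) the paper absorbs the linear terms via Young's inequality with $\varepsilon$ and then invokes an abstract bootstrap lemma (Lemma~\ref{lemma Bootstrap}) on the resulting inequality $\tfrac14 M(t)^2\le C(\kappa)+C\|\psi_0\|_{L^2}^{(4-p)/2}M(t)^{3p/2}$, whereas you run the equivalent open--closed continuity argument directly; both routes yield the same conclusion. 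One minor wording point: in your connectedness step the set $\{M(t)\le 2\kappa\}$ is \emph{closed} by continuity of $M$ and \emph{open} because the strict inequality forces $M(t)<2\kappa$ whenever $M(t)\le 2\kappa$; you have the labels reversed, but the argument is sound.
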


\textbf{Remark 1.6.}
We remark that, the global existences in Theorem \ref{theorem global existence} $(1)(2)(3)$, namely for the mass-subcritical and critical case, seem to hold reasonably by the standard way.
 However, generally in the mass-supercritical case, global existence holds
  for small initial data, see for example \cite[Remark 6.2.3]{ca2003}, or
  also \cite[Cororally 2.6]{RC}, but for large initial data,
   blow-up in finite time may possibly occur.
   Our Theorem \ref{theorem global existence} $(4)$ actually extends to
    the case that for any given $\tilde{X}$-norm of the initial data, we only
     need its mass being small. In particular, having this global existence for the time-depending solutions, one could then consider further the possible stability of solutions in the mass-supercritical case.
% \textbf{This may give an inspiration for the study of stability of standing waves in the the mass-supercritical case, see \cite{jeanCMP2017}.}
 Also we point out that, Theorem \ref{theorem global existence} $(4)$ covers the case $\lambda_1=\lambda_2=0, \lambda_3>0$ and $\frac{4}{3}<p<4$. Such result has already been proved in \cite[Theorem 1.1]{jeanCMP2017}, by a local minima construction of the corresponding functional. Here our proof is completely different.

In what follows, we investigate the existence of ground state standing waves and their stability and strong instability to the equation \eqref{equation partial}.

Firstly, we consider the mass-subcritical and critical cases $0<p\leq\frac{4}{3}$. By Lemma \ref{lm3.1}, under suitable assumptions on $p$ and $c>0$, $E_b(u)$ is bounded from
below on
\begin{equation}\label{mass constraint1}
\bar{S}(c)=\{u\in X:~\|u\|_{L^2}^2=c\}.
\end{equation}
Therefore, we can study the existence and stability of standing waves for \eqref{equation partial} by considering the following global minimization problem:
\begin{equation}\label{minimization subcritical}
m(c):=\inf_{u\in \bar{S}(c)}E_b(u),
\end{equation}
where $E_b(u)$ is defined by \eqref{energyE}. Denote the set of all minimizers of $m(c)$ by
$$\mathcal{M}_c:=\{u\in \bar{S}(c):~ E_b(u)=m(c) \}.$$
It is standard that for any $u_c\in \mathcal{M}_c$, there exists a $\omega_c \in  \mathbb{R}$, such that $( u_c, \omega_c )$ solves the stationary equation \eqref{elliptic partial}.
%We remark that since the parameter $b\neq 0$ does not enter essentially in the play of the analysis, we shall not clarify the assumption of $b\neq 0$ any more in the sequel.
Our results are as follows:
\begin{theorem}\label{Theorem globalmini}
Assume that $\lambda_1\leq0, \lambda_2\leq 0, \lambda_3>0$. Let $R$ be the unique ground state of \eqref{elliptic classical}. Then for any $c>0$ if $0<p<\frac{4}{3}$ or $0<c<\lambda_3^{-\frac{3}{2}}\|R\|_{L^2}^2$ if $p=\frac{4}{3}$, $\mathcal{M}_c\neq \emptyset$, and is orbitally stable in the sense that: for any given $\varepsilon>0$, there exists a $\delta>0$ such that for any initial data $\psi_0$ satisfying
$$\inf_{u\in \mathcal{M}_c}\|\psi_0-u\|_{X}<\delta,$$
the corresponding solution $\psi(t)$ of \eqref{equation partial} satisfies
$$\inf_{u\in \mathcal{M}_c}\|\psi(t)-u\|_{X}<\varepsilon,\ \forall t\geq0.$$
\end{theorem}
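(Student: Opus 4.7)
\bigskip\noindent\textbf{Proof plan for Theorem \ref{Theorem globalmini}.}
The plan is the Cazenave--Lions global minimization framework. The partial harmonic potential $b^2(x_1^2+x_2^2)$ confines the $L^2$-mass in the $(x_1,x_2)$-plane, so the only direction in which a minimizing sequence can lose compactness is $x_3$, and the sign hypotheses $\lambda_1,\lambda_2\le 0$, $\lambda_3>0$ are precisely what is needed to rule out dichotomy and vanishing. First, $m(c)>-\infty$ and the coercivity of $E_b$ on $\bar S(c)$ come from Lemma~\ref{lm3.1}: the only term that could drive $E_b$ to $-\infty$ is $-\frac{\lambda_3}{p+2}\|u\|_{L^{p+2}}^{p+2}$, which by Gagliardo--Nirenberg satisfies $\|u\|_{L^{p+2}}^{p+2}\le C_p\|\nabla u\|_{L^2}^{3p/2}\|u\|_{L^2}^{p+2-3p/2}$. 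For $0<p<4/3$ the exponent $3p/2<2$ makes this sub-quadratic in $\|\nabla u\|_{L^2}$ at fixed mass; for $p=4/3$ the sharp GN constant together with $c<\lambda_3^{-3/2}\|R\|_{L^2}^2$ leaves a strictly positive residual in front of $\|\nabla u\|_{L^2}^2$. Combining this with Hardy's inequality for the Coulomb term and HLS for the Hartree term yields a uniform bound $E_b(u)\ge\eta\|u\|_X^2-K(c)$ with $\eta>0$ on $\bar S(c)$. Thus any minimizing sequence $\{u_n\}\subset\bar S(c)$ with $E_b(u_n)\to m(c)$ is bounded in $X$, and up to a subsequence $u_n\rightharpoonup u^\star$ weakly in $X$, strongly in $L^q_{\mathrm{loc}}(\mathbb{R}^3)$ for $q\in[2,6)$, and pointwise a.e.

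The crux is to show $\|u^\star\|_{L^2}^2=c$. Using the Brezis--Lieb decomposition $u_n=u^\star+w_n$ with $w_n\rightharpoonup 0$, the $L^2$, $L^{p+2}$ and nonlocal Hartree norms split additively, and the quadratic forms $\|\nabla\cdot\|_{L^2}^2$, $\int(x_1^2+x_2^2)|\cdot|^2\,dx$ and $\int|\cdot|^2/|x|\,dx$ split as well by weak convergence in $X$ together with Hardy. The Coulomb piece of $w_n$ actually vanishes in the limit, because the trapping $\int(x_1^2+x_2^2)|w_n|^2\,dx\le C$ forces any non-compact component of $|w_n|^2$ to slide out to $|x_3|=\infty$, where $|x|\to\infty$ on its support. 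This produces $E_b(u_n)=E_b(u^\star)+E_b^{\infty}(w_n)+o(1)$, where $E_b^\infty$ denotes $E_b$ with the $\lambda_1/|x|$-term removed. Writing $\alpha:=\|u^\star\|_{L^2}^2$ and $m^\infty(\beta):=\inf\{E_b^{\infty}(v):\|v\|_{L^2}^2=\beta\}$, one finds $m(c)\ge m(\alpha)+m^\infty(c-\alpha)$, with $m^\infty\ge m$ from $\lambda_1\le 0$. I would then test $m(c)$ against a rescaling of a fixed profile concentrated at the origin to produce $m(c)<0$ (which combined with $E_b^\infty(w_n)\ge 0$ in the vanishing scenario rules out $\alpha=0$), and establish the strict binding $m(c)<m(\alpha)+m^\infty(c-\alpha)$ for every $\alpha\in(0,c)$ by a scaling argument: the focusing piece $-\frac{\lambda_3}{p+2}\|u\|_{L^{p+2}}^{p+2}$ is super-quadratic under $u\mapsto \sqrt{\theta}\,u$ while the attractive Coulomb/Hartree and the partial-harmonic terms are quadratic or quartic in $u$, so the map $s\mapsto m(s)/s$ is strictly decreasing. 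Together these force $\alpha=c$; strong convergence in $L^2$ follows, then strong convergence of the $L^{p+2}$ and Hartree terms by interpolation and HLS, and finally $\|u_n\|_X\to \|u^\star\|_X$ combined with weak convergence yields strong convergence of $u_n$ to $u^\star$ in $X$. Hence $\mathcal{M}_c\ne \emptyset$.

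Orbital stability then follows by the standard contradiction argument. If it failed, there would exist $\varepsilon_0>0$, initial data $\psi_{0,n}\in X$ with $\inf_{u\in\mathcal{M}_c}\|\psi_{0,n}-u\|_X\to 0$, and times $t_n>0$ with $\inf_{u\in\mathcal{M}_c}\|\psi_n(t_n)-u\|_X\ge \varepsilon_0$. Conservation of mass and energy from Proposition~\ref{pro2.1}, together with the $L^2$-renormalization $v_n:=\sqrt{c/\|\psi_n(t_n)\|_{L^2}^2}\,\psi_n(t_n)\in\bar S(c)$, makes $\{v_n\}$ a minimizing sequence for $m(c)$; the compactness just proved extracts a subsequence converging in $X$ to some $v\in\mathcal{M}_c$, contradicting the choice of $t_n$. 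The main obstacle in the whole plan is the strict binding inequality: because $E_b$ has no global translation invariance (the Coulomb term breaks every translation and the partial harmonic potential breaks $(x_1,x_2)$-translations), the textbook ``place two profiles far apart'' recipe is unavailable, and the strict gap between $m$ and $m^\infty$ must be produced from the sign conditions $\lambda_1,\lambda_2\le 0$ together with the focusing coefficient $\lambda_3>0$.
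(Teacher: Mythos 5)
Your overall architecture coincides with the paper's (coercivity from Lemma \ref{lm3.1}, Brezis--Lieb splitting, comparison with the limit functional $E_b^{\infty}$, strict subadditivity, and the standard contradiction argument for stability via Proposition \ref{pro2.1}), but there is a genuine gap in how you exclude total loss of mass, i.e.\ the case $\alpha=\|u^\star\|_{L^2}^2=0$. Your mechanism is ``$m(c)<0$ together with $E_b^{\infty}(w_n)\ge 0$ in the vanishing scenario.'' Neither half survives scrutiny. Because of the partial confinement, $E_b(u)\ge \tfrac12\|u\|_{\tilde X}^2+(\text{sign-indefinite terms})\ge \tfrac{c\Lambda_0}{2}-(\cdots)$ on $\bar S(c)$ with $\Lambda_0>0$ as in Lemma \ref{lemma Jeanjean}, and no rescaling $u_\lambda=\lambda^{3/2}u(\lambda\cdot)$ drives the energy below $0$ (as $\lambda\to\infty$ the kinetic term dominates, as $\lambda\to 0$ the harmonic term blows up); for $\lambda_1=\lambda_2=0$ and $\lambda_3$ small one in fact has $m(c)>0$. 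The correct comparison level is $\tfrac{c\Lambda_0}{2}$, not $0$: the paper shows $m^{\infty}(c)<\tfrac{c\Lambda_0}{2}$ by testing with a product profile $w(x')\varphi^{\lambda}(x_3)$, $\lambda\to 0^+$, where the focusing term scales like $\lambda^{p/2}$ and beats the $x_3$-kinetic term $\lambda^2$ (see \eqref{xiajie estimate}--\eqref{shangjie estimate}). Moreover, $\alpha=0$ does not imply $\|w_n\|_{L^{p+2}}\to 0$: the sequence can translate to $|x_3|=\infty$ carrying its full $L^{p+2}$ mass, in which case $E_b^{\infty}(w_n)$ is only bounded below by $m^{\infty}(c)$, which may well be negative. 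The paper excludes $\alpha=0$ differently: for $\lambda_1<0$ by first proving that $m^{\infty}(c)$ is attained (Proposition \ref{prop-limit1}) and deducing the \emph{strict} gap $m(c)<m^{\infty}(c)$ (Lemma \ref{lm-limit-nonlimit}), which is incompatible with $u_n\rightharpoonup 0$ since then the Coulomb term vanishes and $m(c)\ge m^{\infty}(c)$; for $\lambda_1=0$ the problem is genuinely $x_3$-translation invariant, $m(c)=m^{\infty}(c)$, and one must recenter via Lemma \ref{lemma compactness lemma I} before taking the weak limit --- your fixed-frame $u^\star$ can legitimately be $0$ there, and compactness only holds up to translation. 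Your proposal does not supply either the strict inequality $m(c)<m^{\infty}(c)$ or the recentering step.

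A secondary, fixable issue: the strictness in $m(tc)<t\,m(c)$ (hence in $m(c)<m(\alpha)+m^{\infty}(c-\alpha)$, which does follow from $m^{\infty}\ge m$ once subadditivity for $m$ is strict) requires $\liminf_n\|u_n\|_{L^{p+2}}\ge\delta>0$ along minimizing sequences; otherwise the super-quadratic gain of the focusing term under $u\mapsto\sqrt{t}\,u$ degenerates and one only gets $m(tc)\le t\,m(c)$. This non-degeneracy is again exactly the $m(c)<\tfrac{c\Lambda_0}{2}$ comparison, so the same missing ingredient appears twice. Once these two points are repaired, the remaining steps you outline (vanishing of $\int|w_n|^2/|x|$ for $w_n\rightharpoonup 0$, upgrade from $L^2$- to $X$-convergence, and the renormalized-sequence contradiction for stability) are sound and match the paper.
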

\textbf{Remark 1.7.} We can prove that if $\lambda_1>0, \lambda_2\leq 0, \lambda_3>0$, then for any $c>0$ if $0<p<\frac{4}{3}$ or $0<c<\lambda_3^{-\frac{3}{2}}\|R\|_{L^2}^2$ if $p=\frac{4}{3}$, $\mathcal{M}_c=\emptyset$, see Theorem \ref{th-nonexistence1}. In this process, since the energy functional does not keep invariant by translation, we need to compare with the limit minimization problem (i.e. $\lambda_1=0$):
\begin{eqnarray}\label{limitmini}
m^{\infty}(c):=\inf_{u\in \Bar{S}(c)}E^{\infty}_{b}(u),
\end{eqnarray}
where the limit energy $E^{\infty}_{b}$ is given by
\begin{eqnarray}\label{limitenergy}
E^{\infty}_{b}(u):=\frac{1}{2}\|\nabla u\|_{L^2}^2+\frac{b^2}{2}\int_{\mathbb{R}^3} (x_1^2+x_2^2)|u|^{2}dx+\frac{\lambda_2}{4}\int_{\mathbb{R}^3} (|x|^{-1}\ast|u|^{2})|u|^{2}dx -\frac{\lambda_3}{p+2}\|u\|_{L^{p+2}}^{p+2}.
\end{eqnarray}
To prove this theorem on both the existence and orbital stability, the key is to show that any minimizing sequence of $m(c)$ is pre-compact. For this aim, we first prove the pre-compact property of a minimizing sequence of $m^{\infty}(c)$. Note that when $\lambda_1=0$, the functional $E_b$ is invariant by a translation at the $x_3$ direction. By Lemma \ref{lemma Jeanjean} we prove the non-vanishing of a minimizing sequence, and then to show its compactness, we provide new arguments, which are basically the concentration compactness principle, but compared with its standard form, ours are based on the Brezis-Lieb lemma, which look more direct and simpler. See the details in the proofs of Proposition \ref{prop-limit1}. To treat the case $\lambda_1<0$, based on the result obtained in $\lambda_1=0$, we establish in Lemma \ref{lm-limit-nonlimit} the following two strict inequalities:
\begin{eqnarray*}
m(c)<m^{\infty}(c),
\end{eqnarray*}
and
\begin{eqnarray*}
m(c)<m(c-\mu)+m(\mu),\quad \forall \mu\in (0,c).
\end{eqnarray*}
The former helps to remove the vanishing of a minimizing sequence and the latter to remove the dichotomy, then we finally reach the aim. See Proposition \ref{prop-globalmini}.
In addition, we point out that our proof can not be directly applied on the case $\lambda_2>0$, which is indeed more complicated to treat. Recall that when $\lambda_1=0,\lambda_2>0$ in the equation \eqref{elliptic partial12} (without the partial confine), existence only was established under different conditions on $c>0$ and $p>0$, precisely, existence holds for $c>0$ sufficiently small if $0<p<1$, and if $1<p<\frac{4}{3}$, there exists a $c_0>0$ which separates sharply the range of $c$  for the existence and non-existence (see \cite{BS} and \cite{JL}). If a partial confine is involved additionally, then the situation will become more complicated. Hence we shall not pay more attention on this direction in this paper.

In the mass-critical and supercritical cases $\frac{4}{3}\leq p<4$, we observe that though the functional $E_b$ is unbounded from below on $\bar{S}(c)$, due to the partial harmonic potential, the functional $E_b$ apparently has a concave-convex construct on $\bar{S}(c)$. Similar situation is verified by Bellazzini et al \cite{jeanCMP2017} where the case $\lambda_1=\lambda_2=0$ in \eqref{elliptic partial} is studied. Hence, in order to construct a stable standing wave in this case, inspirited by the work in \cite{jeanCMP2017}, we consider the following local minimization problem:
for any given $r>0$, defining
\begin{eqnarray}\label{localmini}
\bar{m}_c^r:=\inf_{u\in \bar{S}(c)\cap B(r)}E_b(u),\ c>0,
\end{eqnarray}
where $B(r):=\{u\in X:~ \|u\|_{\tilde{X}}^2\leq r \}$, and $\|\cdot\|_{\tilde{X}}$ is given in \eqref{Xnorm00}.
%\begin{eqnarray}\label{Xnorm}
%\|u\|_{X_0}^2:=\|\nabla u\|_{L^2}^2+b^2\int_{\mathbb{R}^3} (x_1^2+x_2^2)|u|^{2}dx.
%\end{eqnarray}}

It can be proved that for any given $r>r_0$ with some $r_0>0$, there exists a $c_r>0$ , such that $\bar{S}(c)\cap B(r)\neq \emptyset, \forall c<c_r$, see Lemma \ref{lmwelldefine}. Also it is obvious that $\bar{m}_c^r>-\infty$. Thus we can expect the existence of a minimizer of $\bar{m}_c^r$. Denote the set of all minimizers of $\bar{m}_c^r$ by
$$\mathcal{M}_c^r:=\{u\in \bar{S}(c)\cap B(r):~ E_b(u)=\bar{m}_c^r \}.$$
 Then we prove that

\begin{theorem}\label{th-localmini}
Assume that $\lambda_1\leq 0, \lambda_2\leq 0, \lambda_3>0$ and $\frac{4}{3}\leq p<4$, all being fixed, then there exists a $r_0>0$, such that for every given $r>r_0$, there exists a $c_r$ with $0<c_r<1$, we have for any $c\in (0,c_r)$ that,
\begin{enumerate}
  \item [(i)] $\emptyset \neq \mathcal{M}_c^r \subset \bar{S}(c)\cap B(\frac{rc}{2})$;
  \item [(ii)] The set $\mathcal{M}_c^r$ is orbitally stable in the sense that: for any given $\varepsilon>0$, there exists $\delta>0$ such that for any initial data $\psi_0$ satisfying
$$\inf_{u\in \mathcal{M}_c^r}\|\psi_0-u\|_{X}<\delta,$$
the corresponding solution $\psi(t)$ of \eqref{equation partial} satisfies
$$\inf_{u\in \mathcal{M}_c^r}\|\psi(t)-u\|_{X}<\varepsilon,\ \forall t>0.$$
\end{enumerate}

\end{theorem}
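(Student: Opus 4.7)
My plan is to follow the local-minima strategy developed by Bellazzini--Jeanjean--Luo (as adapted to the partial confinement setting), with the Coulomb term $\lambda_1/|x|$ and the Hartree term $\lambda_2(|\cdot|^{-1}\ast|u|^2)u$ treated as perturbations whose signs ($\lambda_1,\lambda_2\leq 0$) cooperate with the analysis.

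\emph{Step 1: Setting up the local minimum.} Since every $u\in \bar{S}(c)\cap B(r)$ satisfies $\|u\|_{\tilde X}^2\leq r$ and $\|u\|_{L^2}^2=c$, the Gagliardo--Nirenberg inequality bounds $\|u\|_{L^{p+2}}^{p+2}$ by $C\,\|\nabla u\|_{L^2}^{3p/2}\|u\|_{L^2}^{(4-p)/2}\leq C\,r^{3p/4}c^{(4-p)/4}$. Since $\lambda_1,\lambda_2\leq 0$ give non-positive contributions (which incidentally imply coercivity in $X$), one obtains the two-sided bound
\[
\tfrac{1}{2}\|u\|_{\tilde X}^2-\tfrac{1}{2}c-K r^{3p/4}c^{(4-p)/4}\leq E_b(u)\leq \tfrac{1}{2}\|u\|_{\tilde X}^2.
\]
Choosing $r>r_0$ large and then $c_r$ so small that $K r^{3p/4}c_r^{(4-p)/4}\ll r$, a direct scaling argument (using $u_\lambda(x)=\lambda^{3/2}u(\lambda x_1,\lambda x_2,x_3)$-type rescalings that respect the partial harmonic well) produces a test function with energy below $r/4$, so $\bar m_c^r < r/4$. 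Combined with the lower bound, any minimizing sequence satisfies $\|u_n\|_{\tilde X}^2\leq rc/2$ (for $c$ small), giving (i): $\mathcal M_c^r\subset \bar S(c)\cap B(rc/2)$ once existence is established.

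\emph{Step 2: Compactness of minimizing sequences.} Let $\{u_n\}\subset \bar S(c)\cap B(r)$ with $E_b(u_n)\to \bar m_c^r$. The sequence is bounded in $X$, so $u_n\rightharpoonup u$ weakly in $X$ and, by the compactness of the embedding $X\hookrightarrow L^q(\mathbb R^3)$ for $2\leq q<6$ in the $(x_1,x_2)$-directions combined with the attractive Coulomb tail in $x_3$, one shows $u\not\equiv 0$. The obstruction is possible translation along $x_3$ (the only direction left invariant by the partial confinement); here the sign assumption $\lambda_1\leq 0$ (together with $\lambda_2\leq 0$) is crucial, because translating mass off to $x_3=\pm\infty$ strictly increases $E_b$ by losing the negative Coulomb contribution. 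Concretely, mimicking Lemma \ref{lm-limit-nonlimit} and Proposition \ref{prop-limit1} in the local-constraint setting, one establishes the strict subadditivity
\[
\bar m_c^r<\bar m_{c-\mu}^r+m^\infty(\mu),\qquad \forall\,\mu\in(0,c),
\]
with $m^\infty$ denoting the translation-invariant (Coulomb-free) analogue; this rules out both vanishing and dichotomy via a Brezis--Lieb splitting. Hence $u_n\to u$ strongly in $X$, $u\in \mathcal M_c^r$ and the Lagrange multiplier rule supplies some $\omega_c\in\mathbb R$ solving \eqref{elliptic partial}.

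\emph{Step 3: Orbital stability.} The key point—and the main technical hurdle—is that $\bar m_c^r$ is only a \emph{local} minimum, so we must prove that solutions starting close to $\mathcal M_c^r$ remain trapped in $B(r)$ for all times, so that the conservation of mass and energy force them to be minimizing sequences. Suppose for contradiction that there exist $\varepsilon_0>0$, $u^\ast\in\mathcal M_c^r$, initial data $\psi_{0,n}\to u^\ast$ in $X$, and times $t_n>0$ with $\inf_{v\in\mathcal M_c^r}\|\psi_n(t_n)-v\|_X\geq \varepsilon_0$. By Theorem \ref{theorem global existence}, $\psi_n(t)$ is global. Since $\|\psi_{0,n}\|_{\tilde X}^2\to \|u^\ast\|_{\tilde X}^2\leq rc/2<r$, and since $E_b(\psi_n(t))=E_b(\psi_{0,n})\to \bar m_c^r$ with $\|\psi_n(t)\|_{L^2}^2=\|\psi_{0,n}\|_{L^2}^2\to c$, a continuity argument shows $\|\psi_n(t)\|_{\tilde X}^2<r$ for all $t\geq 0$: otherwise, at the first crossing time, the lower bound in Step 1 forces $E_b(\psi_n(t))\geq r/2-o(1)>\bar m_c^r$, a contradiction. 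Thus $\psi_n(t_n)\in \bar S(c_n)\cap B(r)$ with $c_n\to c$; rescaling $\psi_n(t_n)$ to mass exactly $c$ (a small-correction argument) yields a minimizing sequence for $\bar m_c^r$, whose pre-compactness from Step 2 contradicts the defining property of $t_n$. The principal difficulty in this whole scheme is the trapping argument: one must ensure that the gap between $\bar m_c^r$ and the value of $E_b$ on $\partial B(r)\cap \bar S(c)$ is uniform in $c\in(0,c_r)$, which is what Step 1 has secured thanks to the strict smallness of $c_r$.
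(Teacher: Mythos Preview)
Your overall architecture matches the paper's: establish a local-minima structure (Lemma~\ref{lmwelldefine}), prove pre-compactness of minimizing sequences via strict subadditivity (Lemma~\ref{lm-limit-nonlimit2} and Proposition~\ref{prop-localmini1}), and deduce orbital stability from a trapping argument. Step~3 in particular is essentially the standard argument and is fine.

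There are, however, two genuine gaps in Step~2. First, the subadditivity inequality you write,
\[
\bar m_c^r<\bar m_{c-\mu}^r+m^\infty(\mu),
\]
is not usable as stated: if $m^\infty(\mu)$ denotes the \emph{global} Coulomb-free infimum on $\bar S(\mu)$, then $m^\infty(\mu)=-\infty$ in the supercritical range $\tfrac43<p<4$, so the inequality is vacuous. The paper instead proves the purely local inequality $\bar m_c^r<\bar m_{c-\mu}^r+\bar m_\mu^r$ (Lemma~\ref{lm-limit-nonlimit2}), obtained from the scaling monotonicity $t\,\bar m_s^r<s\,\bar m_t^r$ for $0<t<s<c_r$; this is what actually rules out dichotomy after a Brezis--Lieb splitting, since both pieces $u$ and $u_n-u$ remain in $B(r)$.

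Second, your non-vanishing argument relies on the ``attractive Coulomb tail in $x_3$'', i.e.\ on $\lambda_1<0$ strictly; it says nothing when $\lambda_1=0$ (and the Hartree term, being translation-invariant, does not help). The paper handles the two cases separately: for $\lambda_1=0$ one uses $\liminf_n\|u_n\|_{L^{p+2}}>0$ together with Lemma~\ref{lemma compactness lemma I} to recover a nonzero weak limit after translating in $x_3$; for $\lambda_1<0$ one shows $(\bar m_c^r)^\infty$ is attained (by the $\lambda_1=0$ result) and then $\bar m_c^r<(\bar m_c^r)^\infty$, which forces the weak limit (without translation) to be nonzero. You should split the argument accordingly.

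A minor point in Step~1: your lower bound drops the Coulomb and Hartree terms, but since $\lambda_1,\lambda_2\le0$ these contributions are \emph{negative} and must be controlled (via Hardy and Hardy--Littlewood--Sobolev, giving terms of order $c^{1/2}\|u\|_{\tilde X}$ and $c^{3/2}\|u\|_{\tilde X}$), exactly as in the function $i(t)$ of Lemma~\ref{lmwelldefine}.
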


\textbf{Remark 1.8.} We note that the stable result obtained here and also the one in \cite{jeanCMP2017,jean16siam} are surprise to us at the first view, since normally without the partial harmonic potential, ground states in the mass-supercritical case are strongly unstable by blow-up, see Theorem \ref{th instability no partial}. In such a sense, it seems that it is the partial harmonic potential that causes the ground states to be stable, which is also observed in \cite[Remark 1.6]{jean16siam}. By the global existence result established in Theorem \ref{theorem global existence} (4), we see that it is reasonable to study the existence of stable standing waves at least in $  \bar{S}(c)\cap B(r)$ with $c$ small. To prove this theorem, our proof is mainly inspirited by \cite{jeanCMP2017}. However, due to the appearance of the Coulomb potential and Hartree-type nonlinearity, our analyses are more subtle. Precisely, we first show a local minima structure and then show that for $c>0$ basically small, any minimizing sequence of $\bar{m}_c^r$ is pre-compact. By this essential preliminary we then finally prove both the existence and stability. We remark that to prove that the pre-compactness of a minimizing sequence of $\bar{m}_c^r$, we use the same process as that in the proof of Theorem \ref{Theorem globalmini}, and correspondingly we need to consider and do some compare with the following limit problem:
\begin{eqnarray}\label{limitlocmini}
(\bar{m}_{c}^r)^{\infty}:=\inf_{u\in \bar{S}(c)\cap B(r)}E^{\infty}_{b}(u),
\end{eqnarray}
see Proposition \ref{prop-localmini1}.

In addition, we have proved in Lemma \ref{lm-limit-nonlimit02} that if $\lambda_1> 0, \lambda_2\leq 0, \lambda_3>0$ and $\frac{4}{3}\leq p<4$, then $\mathcal{M}_c^r=\emptyset$. We also remark that for the case $\lambda_2>0$, we could prove a local minima structure, see Lemma \ref{lmwelldefine1}. But to prove the compactness of an arbitrary minimizing sequence of $\bar{m}_c^r$ is complicated. The reason is the same as we clarify in Remark 1.6.

\textbf{Remark 1.9.}
Compared with Theorem \ref{Theorem globalmini} and Theorem \ref{th-localmini} in the common case: $\lambda_1\leq 0, \lambda_2 \leq 0, \lambda_3>0$ and $p=\frac{4}{3}$, one will find that for $c>0$ small enough, there exist a global minimizer of $E_b$ on $S_r(c)$ by Theorem \ref{Theorem globalmini}, and also a local minimizer by Theorem \ref{th-localmini}, then an interesting question arises whether this local minimizer is a global one or not. We believe that the answer is positive, but for the moment, we do not have a convinced proof for that.\\

Finally,  we consider the existence and the strong instability of ground state standing waves for \eqref{equation partial} in the mass-critical and supercritical cases.
For this purpose, we need to establish the variational characterization of ground states related to \eqref{elliptic partial12} on a certain manifold. Due to the appearance of a partial harmonic potential, it is obvious that $f(\lambda)=S_{b,\omega}(v_\lambda)\rightarrow +\infty$, as $\lambda \rightarrow 0^+$. Thus, there is no maximum point of  $f(\lambda)$ on $(0,\infty)$. It is hard to establish the variational characterization of ground states on the Pohozaev manifold $\{v\in X,~~Q_b(v)=0\}$,
where $Q_b(v)$ is the Pohozaev functional of \eqref{elliptic partial} defined by
\begin{align}\label{qu partial}
Q_b(v):=&\partial_\lambda S_{b,\omega}(v_{\lambda})|_{\lambda=1}=\|\nabla v\|_{L^2}^2-b^2\int_{\mathbb{R}^3} (x_1^2+x_2^2)|v|^{2}dx\nonumber\\&+\frac{\lambda_1}{2}\int_{\mathbb{R}^3} \frac{|v|^{2}}{|x|}dx+\frac{\lambda_2}{4}\int_{\mathbb{R}^3} (|x|^{-1}\ast|v|^{2})|v|^{2}dx-\frac{3\lambda_3p}{2(p+2)}\|v\|_{L^{p+2}}^{p+2}.
\end{align}
On the other hand, due to the appearance of Hartree nonlinearity, the function $h(\lambda)=S_{b,\omega}(\lambda v)$ maybe have two critical points. Therefore, the usual Nehari manifold does not happen to be a good choice, see also \cite{ruiz}.

To overcome this difficulty, we must find a new manifold, on which we manage to establish the variational characterization of ground states. Now let us show the choice of the new manifold. Suppose that $u$ is a critical point of $S_{b,\omega}$, setting $u^\lambda(x):=\lambda^{3}u(\lambda x), \lambda>0$, we have
\begin{align}\label{suscalings}
f_1(\lambda):=S_{b,\omega}(u^\lambda)=& \frac{\lambda^5}{2}\|\nabla u\|_{L^2}^2+\frac{\omega\lambda^3}{2}\int_{\mathbb{R}^3} |u|^{2}dx+\frac{\lambda b^2}{2}\int_{\mathbb{R}^3} (x_1^2+x_2^2)|u|^{2}dx+\frac{\lambda_1\lambda^4}{2}\int_{\mathbb{R}^3} \frac{|u|^{2}}{|x|}dx
\nonumber\\&+\frac{\lambda_2\lambda^7}{4}\int_{\mathbb{R}^3} (|x|^{-1}\ast|u|^{2})|u|^{2}dx -\frac{\lambda_3\lambda^{3p+3}}{p+2}\|u\|_{L^{p+2}}^{p+2}.
\end{align}
When $\lambda_1,\lambda_2\geq0$ and $\lambda_3>0$, one
 can checked easily that this new scaling function $f_1(\lambda)$ has an unique critical point $\lambda=1$ of maximum type on $(0,\infty)$. In particular, $f_1'(1)=0$ can be written as
\begin{align}\label{ku}
K_{b,\omega}(u):&=\partial_\lambda S_\omega(u^\lambda)|_{\lambda=1}= \frac{5}{2}\|\nabla u\|_{L^2}^2+\frac{3\omega}{2}\int_{\mathbb{R}^3} |u|^{2}dx+\frac{b^2}{2}\int_{\mathbb{R}^3} (x_1^2+x_2^2)|u|^{2}dx
\nonumber\\&+2\lambda_1\int_{\mathbb{R}^3} \frac{|u|^{2}}{|x|}dx+\frac{7\lambda_2}{4}\int_{\mathbb{R}^3} (|x|^{-1}\ast|u|^{2})|u|^{2}dx -\frac{\lambda_3(3p+3)}{p+2}\|u\|_{L^{p+2}}^{p+2}.
\end{align}
In fact, $K_{b,\omega}(u)=3I_{b,\omega}(u)-J_{b,\omega}(u)$, where $I_{b,\omega}(u)$ and $J_{b,\omega}(u)$ are defined by \eqref{pohozaev identity0} and \eqref{pohozaev identity1}, respectively.
Hence, any solution $u$ of \eqref{elliptic partial} satisfies $K_{b,\omega}(u)=0$.
 We can obtain the following variational characterization of ground states related to
 \eqref{elliptic partial}.
\begin{proposition}\label{proposition ground state partial}
Let $\lambda_1=0$, $\lambda_2\geq 0$, $\lambda_3>0$, $\omega>0$ and $\frac{4}{3}\leq p<4$. Then, $u$ is a ground
state related to \eqref{elliptic partial} if and only if $u$ solves the following minimization problem:
\begin{equation}\label{minimization ground state partial}
d_K(\omega):=\inf\{S_{b,\omega}(v):~v\in X\backslash \{0\},~~K_{b,\omega}(v)= 0\}.
\end{equation}
\end{proposition}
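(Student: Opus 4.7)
The plan is to reduce the ``if and only if'' to two technical pillars: (A) $d_K(\omega)$ is attained by some $v_0\in X\setminus\{0\}$, and (B) any such minimizer is a critical point of $S_{b,\omega}$. With (A) and (B) in hand the equivalence is quick: if $u\in\mathcal{G}_\omega$ then $u$ is critical, and differentiating $f_1(\lambda)=S_{b,\omega}(u^\lambda)$ at $\lambda=1$ gives the Pohozaev-type identity $K_{b,\omega}(u)=0$, so $d_K(\omega)\le S_{b,\omega}(u)$; comparing with a minimizer $v_0$ (which by (B) is critical) we have $S_{b,\omega}(u)\le S_{b,\omega}(v_0)=d_K(\omega)$, forcing equality. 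Conversely, if $u$ attains $d_K(\omega)$ then $u\in\mathcal{F}_\omega$ by (B), and for every $w\in\mathcal{F}_\omega$ one has $K_{b,\omega}(w)=0$, so $S_{b,\omega}(w)\ge d_K(\omega)=S_{b,\omega}(u)$, proving $u\in\mathcal{G}_\omega$.

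For the attainment step (A), the scaling $v\mapsto v^\lambda=\lambda^3 v(\lambda\cdot)$ and the explicit formula \eqref{suscalings} for $f_1(\lambda)=S_{b,\omega}(v^\lambda)$ imply, under $\lambda_1=0,\lambda_2\ge 0,\lambda_3>0$ and $\frac{4}{3}\le p<4$, that $f_1$ admits a unique critical point $\lambda(v)\in(0,\infty)$, which is a strict global maximum (the analog of Lemma \ref{lemma jihexingzhi} in this regime). In particular $d_K(\omega)=\inf_{v\ne 0}\max_{\lambda>0}S_{b,\omega}(v^\lambda)>0$, and any nonzero $v$ can be rescaled into the constraint $\{K_{b,\omega}=0\}$. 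I then take a minimizing sequence $(v_n)$ on the constraint and use the identity $K_{b,\omega}(v_n)=0$ to eliminate the term $\|v_n\|_{L^{p+2}}^{p+2}$ from $S_{b,\omega}(v_n)$, which yields boundedness of $(v_n)$ in $X$. Because $\lambda_1=0$ the functional is invariant under $x_3$-translations, so one first translates $v_n\mapsto v_n(\,\cdot-(0,0,z_n))$ and then combines the compact embedding in the $(x_1,x_2)$-directions provided by the partial harmonic potential with a Brezis--Lieb splitting of the Hartree and $L^{p+2}$ terms, in the spirit of Proposition \ref{prop-limit1}, to produce a nontrivial weak limit $v_0\ne 0$. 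A strict subadditivity of $d_K(\omega)$ with respect to the Brezis--Lieb decomposition then forces the remainder to vanish in $X$, giving $K_{b,\omega}(v_0)=0$ and $S_{b,\omega}(v_0)=d_K(\omega)$.

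For step (B), the Lagrange multiplier rule furnishes $\mu\in\mathbb{R}$ with $S'_{b,\omega}(v_0)=\mu K'_{b,\omega}(v_0)$. Testing against the scaling direction $\partial_\lambda v_0^\lambda|_{\lambda=1}$ and invoking the chain-rule identities $\langle S'_{b,\omega}(v_0),\partial_\lambda v_0^\lambda|_{\lambda=1}\rangle=f_1'(1)=K_{b,\omega}(v_0)=0$ together with $\langle K'_{b,\omega}(v_0),\partial_\lambda v_0^\lambda|_{\lambda=1}\rangle=f_1''(1)<0$ (because $\lambda=1$ is the strict maximum of $f_1$, and a direct computation of the second derivative under $\lambda_1=0,\lambda_2\ge 0,\lambda_3>0$ gives strict negativity), one obtains $0=\mu f_1''(1)$, hence $\mu=0$ and $v_0\in\mathcal{F}_\omega$. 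The main obstacle of the whole argument is step (A): recovering compactness of the minimizing sequence in spite of the translation invariance in $x_3$, and in particular establishing the strict subadditivity needed to rule out both vanishing and dichotomy along the $x_3$-axis. This mirrors the technology developed in the excerpt for Proposition \ref{prop-limit1} and Lemma \ref{lm-limit-nonlimit}, but here the Nehari--Pohozaev-type constraint $K_{b,\omega}=0$ replaces the $L^2$-constraint, so the scaling structure of $f_1$ has to be combined carefully with the Brezis--Lieb decompositions of the nonlocal and pure-power nonlinearities.
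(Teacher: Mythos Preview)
Your overall skeleton --- (A) existence of a minimizer, (B) the minimizer is critical, then the two-line equivalence --- matches the paper exactly. Your argument for (B), testing the Euler--Lagrange relation $S'_{b,\omega}(v_0)=\mu K'_{b,\omega}(v_0)$ against the scaling direction $\partial_\lambda v_0^\lambda|_{\lambda=1}$ and using $f_1''(1)<0$, is a legitimate alternative to the paper's route; the paper instead mimics Lemma~\ref{lemma daoshuweiling}, multiplying the Euler--Lagrange equation by $\bar u$ and by $x\cdot\nabla\bar u$ and combining the resulting identities to force the multiplier to vanish. Both work, and your version is arguably more conceptual.

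The real divergence from the paper, and the place where your sketch is not yet a proof, is step (A). You invoke a ``strict subadditivity of $d_K(\omega)$ with respect to the Brezis--Lieb decomposition'' to force the remainder to vanish, but $d_K(\omega)$ carries no mass-type parameter, so there is no natural subadditivity statement here, and the analogy with Proposition~\ref{prop-limit1}/Lemma~\ref{lm-limit-nonlimit} does not transfer. Concretely, if the weak limit $u$ satisfies $K_{b,\omega}(u)>0$, then rescaling $u$ onto the constraint via $u\mapsto u^{\lambda_u}$ forces $\lambda_u>1$, and since $f_1$ is increasing on $(0,\lambda_u)$ this \emph{raises} the action; a direct comparison argument therefore does not close.

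The paper avoids this entirely. It relaxes the constraint to $\{K_{b,\omega}\le 0\}$ and works with the coercive, weakly lower semicontinuous functional
\[
\widetilde S_{b,\omega}=S_{b,\omega}-\tfrac{1}{3p+3}K_{b,\omega},
\]
whose every term is nonnegative. The key step is a sign trick: if $K_{b,\omega}(u)>0$ then, by the Brezis--Lieb splitting of $K_{b,\omega}$, one has $K_{b,\omega}(v_n-u)\le 0$ for large $n$, so $v_n-u$ is admissible for the relaxed problem and $\widetilde S_{b,\omega}(v_n-u)\ge \widetilde d_K(\omega)$; combined with $\widetilde S_{b,\omega}(v_n)\to\widetilde d_K(\omega)$ and the Brezis--Lieb splitting of $\widetilde S_{b,\omega}$ this forces $\widetilde S_{b,\omega}(u)\le 0$, contradicting $u\neq 0$. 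Hence $K_{b,\omega}(u)\le 0$, and weak lower semicontinuity of $\widetilde S_{b,\omega}$ immediately gives that $u$ is a minimizer with $K_{b,\omega}(u)=0$. No strong convergence of the full sequence and no subadditivity are needed. You should replace your subadditivity step by this argument. Note also that the paper passes between $\{K_{b,\omega}=0\}$ and $\{K_{b,\omega}\le 0\}$ via the multiplicative scaling $v\mapsto \lambda v$ rather than $v\mapsto v^\lambda$; this sidesteps the issue that for $p=\tfrac{4}{3}$ with $\lambda_2>0$ not every $v\neq 0$ can be driven onto $\{K_{b,\omega}=0\}$ by the $v^\lambda$-scaling.
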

\textbf{Remark 1.10.} When $\lambda_1>0$,  by a similar argument as Theorem \ref{th-nonexistence1} or Lemma \ref{lm-limit-nonlimit02}, we can prove that $d_K(\omega)$ has no any minimizers. When $\lambda_1<0$, by making use of the equivalence of norms
\[
\|\nabla u\|_{L^2}^2+b^2\int_{\mathbb{R}^3}(x_1^2+x_2^2)|u|^2dx+\omega\int_{\mathbb{R}^3} |u|^{2}dx-|\lambda_1|\int_{\mathbb{R}^3} \frac{|u|^{2}}{|x|}dx\cong \|u\|_{X}^2,
\]
where $\omega>\omega_0$ and $\omega_0$ is the smallest eigenvalue of the Schr\"{o}dinger operator $-\Delta-|\lambda_1||x|^{-1}$, we can establish the variational characterization of ground states related to \eqref{elliptic partial}, see also \cite[Proposition 1.1]{ohta18inversepotential}. However, in this case, we can not establish the estimate \eqref{keyestimate partial} in Lemma \ref{lemma keyestimate partial}, which is the key to derive the strong instability. Therefore, we assume $\lambda_1=0$ in this section.

Moreover, we remark that since the embedding $X_r:=\{v\in X\ :\ v(x_1,x_2,x_3)=v(x_1,x_2,|x_3|)\}\hookrightarrow L^q$ for some $q\in (2,6)$ is not compact, a bounded sequence $\{u_n\}$ in $X_r$ satisfying
$\liminf_{n\rightarrow \infty}\|u_n\|_{L^{p+2}}\geq \delta$
for some $\delta>0$, could possibly converge weakly to zero. Then it is hard to establish the existence of a ground state by solving the following minimization problem
\[
d_{r,K}(\omega):=\inf\{S_{b,\omega}(v):~v\in X_r\backslash \{0\},~~K_{b,\omega}(v)= 0\}.
\]
Also due to the Hartree nonlinearity, the technique of symmetric arrangement could not applied. To solve the minimization problem \eqref{minimization ground state partial}, noticing that when $\lambda_1=0$, the action functional $S_{b,\omega}$ is invariant by translation at the $x_3$ direction, then we could make use of the idea of proof in Proposition \ref{proposition ground state no} to complete the proof.

\textbf{Remark 1.11.} When we restrict on $2\leq p<4$, the function $g(\lambda)=S_{b,\omega}(\lambda v)$ indeed has an unique critical point on $(0,\infty)$, by which one could establish a variational characterization of ground states on Nehari manifold by using our method.

Next, we consider the strong instability of standing waves for \eqref{equation partial} with $b\neq 0$ and $\lambda_1=0$. In this case, the function $f(\lambda)=S_{b,\omega}(v_\lambda)$ with $v_\lambda(x):=\lambda^{\frac{3}{2}}v(\lambda x)$ does not satisfy the properties in Lemma \ref{lemma jihexingzhi}. Recently, the strong instability for other kinds of NLS has been investigated in similar case, see \cite{f20na,gancmp,ohta03die,ohta18doublepower,ohta18inversepotential,
ohta18cpaa,ohtaharmonic,
ohta95die,zhangcpde}. Moreover, the action functionals in these papers only include four terms. Therefore, the Pohozaev functionals and functionals $\partial_\lambda^2S_{\omega}(v_\lambda)|_{\lambda=1}$ in these papers only include three terms. This implies that any two of three terms can be calculated by the remaining term. Based on this fact and the assumption $\partial_\lambda^2S_{\omega}(v_\lambda)|_{\lambda=1}\leq 1$, Fukuizumi and Ohta in \cite{ohta18doublepower,ohta18inversepotential,ohtaharmonic} studied the strong instability of standing waves. Since the action functional  of \eqref{equation partial} with $\lambda_1=0$ includes five terms, the methods of Fukuizumi and Ohta  cannot work for \eqref{equation partial}.

 Under some assumptions, Zhang in \cite{zhangcpde} studied the strong instability for NLS with a harmonic potential by establishing so-called cross-invariant manifolds, see also \cite{gancmp}. The arguments in \cite{gancmp,zhangcpde} rely heavily on the fact  that the function  $g(\lambda):=S_{\omega}(\lambda v)$ has an unique critical point on $(0,\infty)$.
When $2\leq p<4$, equation \eqref{equation partial} also enjoys this property.
Therefore, we will mainly apply the ideas in \cite{gancmp,zhangcpde} to study the strong instability. However, due to the complexity
of our functional, more difficulties have to be overcame here.
The key to the idea is to construct a type of cross-constrained minimization problem, and establish so-called cross-invariant manifolds, which require us to search for proper functionals. In \cite{gancmp,zhangcpde}, they used the action, Pohozaev and Nehari functionals, and every functional only includes four terms. But in the present paper, we use the action, Pohozaev and Pohozaev-Nehari functionals, and
every functional includes five terms with different scaling rates. In \cite{gancmp}, two nonlinearities are focusing and mass-supercritical. But for \eqref{equation partial}, one nonlinearity is defocusing and mass-subcritical, the other is focusing and mass-supercritical. For these reasons, compared to
the argument in \cite{gancmp,zhangcpde}, our discussion will be more complicated and skillful.

When $\frac{4}{3}\leq p<2$,
$g(\lambda):=S_{b,\omega}(\lambda v)$ maybe have two critical points on $(0,\infty)$.
Therefore, the above argument cannot work. Let $u$ be the ground state of \eqref{elliptic partial}, in order to study the strong instability of the standing wave $e^{i\omega t}u(x)$, in view of definition, we need to construct a sequence of initial data $\{\psi_{0,n}\}\subset X$ such that $\psi_{0,n}\rightarrow u$ in $X$ as $n\rightarrow \infty$, and such that the solution $\psi_n(t)$ with initial data $\{\psi_{0,n}\}$ blows up in finite time. Generally, a sequence of initial data $\{\psi_{0,n}\}\subset X$ can be chosen by $\psi_{0,n}=\lambda_n^cu(\lambda_n^dx)$ for some $\lambda_n>1$ and $\lambda_n\rightarrow 1$ as $n\rightarrow \infty$. In order to proving blow-up, by using Lemma \ref{lemma blowup}, we need $Q_b(\psi_{0,n})=Q_b(\lambda_n^cu(\lambda_n^dx))<0$.  But, for equation \eqref{equation partial} with $b\neq0$, it is easy to check that $Q_b(\lambda_n^cu(\lambda_n^dx))$ with $c\neq0$ and $d\in \mathbb{R}$ may be larger than zero. Therefore, motivated by the idea in \cite{ohta03die,ohta18doublepower,ohta18inversepotential,ohta18cpaa,ohtaharmonic}, we
overcome this difficulty by assuming that $\lambda=1$ is the local maximum point of $f(\lambda)$, i.e., $\partial_\lambda^2S_{b,\omega}(u_\lambda)|_{\lambda=1}\leq 0$, which implies that $Q_b(u_{\lambda})<0$ and $S_{b,\omega}(u_\lambda)<S_{b,\omega}(u)$ for all $\lambda>1$. Under this assumption, we can prove the strong instability of standing waves for \eqref{equation partial}.

In order to study the strong instability of standing waves for \eqref{equation partial}, we introduce a cross-manifold in $X$ as follows:
\begin{equation}\label{cross-manifold}
\mathcal{N}:=\{v\in X:~~K_{b,\omega}(v)<0~~\mbox{and}~~Q_b(v)=0\}.
\end{equation}
When $2\leq p<4$, or when $\frac{4}{3}\leq p<2$ additionally assume that $\partial_\lambda^2S_{b,\omega}(u_\lambda)|_{\lambda=1}\leq 0$ with $u_\lambda=\lambda^{\frac{3}{2}}u(\lambda x)$ and $u$ being a ground state of \eqref{elliptic partial}, we can prove that the cross-manifold $\mathcal{N}$ is not empty, see Lemma \ref{cross-manifold feikong}. Furthermore, we introduce the following cross-constrained minimization problem
\begin{equation}\label{minimization cross-manifold}
\alpha(\omega):=\inf\{S_{b,\omega}(v):~v\in \mathcal{N}\},
\end{equation}
and define the set:
\begin{equation}\label{manifold K}
\mathcal{K}_\omega:=\inf\{v\in X:~~S_{b,\omega}(v)<S_{b,\omega}(u),~~K_{b,\omega}(v)<0~~\mbox{and}~~Q_b(v)<0\}.
\end{equation}
%where $u$ is a ground state related to \eqref{elliptic partial}.
We can prove that
\begin{lemma}\label{lemma keyestimate partial}
Let $\lambda_1=0$, $\lambda_2\geq 0$, $\lambda_3>0$, $\frac{4}{3}\leq p<4$. Assume that there exists $\omega>0$ such that $\alpha(\omega)\geq S_{b,\omega}(u)$, with $u$ being a ground state related to \eqref{elliptic partial}. Then when $2\leq p<4$, or when $\frac{4}{3}\leq p<2$ additionally assume that $\partial_\lambda^2S_{b,\omega}(u_\lambda)|_{\lambda=1}\leq 0$ with $u_\lambda=\lambda^{\frac{3}{2}}u(\lambda x)$, the set $\mathcal{K}_\omega$ is invariant under the flow of \eqref{equation partial}, and
\begin{equation}\label{keyestimate partial}
Q_b(\psi(t))\leq 2(S_{b,\omega}(\psi_0)-S_{b,\omega}(u)),
\end{equation}
for any $t\in [0,T^*)$.
\end{lemma}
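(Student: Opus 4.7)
The plan is to handle the two assertions in sequence. Both rest on conservation of $S_{b,\omega}$ (via mass and energy conservation) together with the variational characterization of the ground state $u$ from Proposition \ref{proposition ground state partial} and the standing hypothesis $\alpha(\omega) \geq S_{b,\omega}(u)$.

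For the invariance of $\mathcal{K}_\omega$, I fix $\psi_0 \in \mathcal{K}_\omega$. The identity $S_{b,\omega}(\psi(t)) \equiv S_{b,\omega}(\psi_0) < S_{b,\omega}(u)$ is preserved on $[0,T^*)$, and since $\psi \in C([0,T^*); X)$, the maps $t\mapsto K_{b,\omega}(\psi(t))$ and $t\mapsto Q_b(\psi(t))$ are continuous. Were the orbit to leave $\mathcal{K}_\omega$, there would exist a first time $t_0$ at which at least one of these two functionals vanishes. In the sub-case $K_{b,\omega}(\psi(t_0)) = 0$, the state $\psi(t_0)$ is nonzero by mass conservation and Proposition \ref{proposition ground state partial} forces $S_{b,\omega}(\psi(t_0)) \geq d_K(\omega) = S_{b,\omega}(u)$, contradicting the preserved strict inequality. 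In the sub-case $Q_b(\psi(t_0)) = 0$ with $K_{b,\omega}(\psi(t_0)) < 0$, one has $\psi(t_0) \in \mathcal{N}$, so $S_{b,\omega}(\psi(t_0)) \geq \alpha(\omega) \geq S_{b,\omega}(u)$, again a contradiction. The remaining sub-case, in which $K_{b,\omega}$ and $Q_b$ reach zero simultaneously, is absorbed into the first one.

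For the estimate, equivalently written as $2S_{b,\omega}(\psi(t)) - Q_b(\psi(t)) \geq 2S_{b,\omega}(u)$, I fix $v = \psi(t) \in \mathcal{K}_\omega$ and construct an auxiliary scaled state $w$ on (the closure of) $\mathcal{N}$ or of $\{K_{b,\omega}=0\}$ for which $2S_{b,\omega}(v) - Q_b(v) \geq 2S_{b,\omega}(w)$. The natural candidate is $w = v^{\lambda_\ast}$ with $v^\lambda(x) := \lambda^3 v(\lambda x)$: thanks to $\lambda_1 = 0$ and $b \neq 0$, the partial-confinement term $\tfrac{b^2 \lambda}{2}\int(x_1^2+x_2^2)|v|^2\,dx$ forces $K_{b,\omega}(v^\lambda) > 0$ for $\lambda$ small, whereas $K_{b,\omega}(v^1) = K_{b,\omega}(v) < 0$, so the intermediate value theorem produces $\lambda_\ast \in (0,1)$ with $K_{b,\omega}(v^{\lambda_\ast}) = 0$. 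Proposition \ref{proposition ground state partial} then yields $S_{b,\omega}(v^{\lambda_\ast}) \geq d_K(\omega) = S_{b,\omega}(u)$, and the estimate reduces to the algebraic inequality $2S_{b,\omega}(v) - Q_b(v) \geq 2 S_{b,\omega}(v^{\lambda_\ast})$, which I plan to derive by expanding both sides in the five natural densities attached to $v$ and eliminating the kinetic contribution through the relation $K_{b,\omega}(v^{\lambda_\ast}) = 0$.

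The main obstacle is precisely this last algebraic step. Because $S_{b,\omega}$ carries five independently scaling pieces, the one-variable map $\lambda \mapsto S_{b,\omega}(v^\lambda)$ is not automatically monotone, and the comparison between $2S_{b,\omega}(v)-Q_b(v)$ and $2S_{b,\omega}(v^{\lambda_\ast})$ has to be extracted from the precise one-dimensional geometry of that curve. This is where the case dichotomy enters: when $2 \leq p < 4$, the Nehari-type curve $\mu \mapsto S_{b,\omega}(\mu v)$ admits a unique critical point on $(0,\infty)$, supplying the monotonicity needed to close the inequality along the cross-manifold strategy of Zhang \cite{zhangcpde} and Gan--Zhang \cite{gancmp}; when $\tfrac{4}{3} \leq p < 2$ this uniqueness can fail and the supplementary hypothesis $\partial_\lambda^2 S_{b,\omega}(u_\lambda)|_{\lambda = 1} \leq 0$ is invoked to make $\lambda = 1$ a local maximum of $\lambda \mapsto S_{b,\omega}(u_\lambda)$, guaranteeing $Q_b(u_\lambda) < 0$ and $S_{b,\omega}(u_\lambda) < S_{b,\omega}(u)$ for $\lambda > 1$ close to $1$ and restoring the missing concavity at the ground state level. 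Verifying throughout that $v^{\lambda_\ast}$ remains on the portion of $\{K_{b,\omega}=0\}$ where Proposition \ref{proposition ground state partial} gives the sharp bound, rather than on a degenerate portion of its boundary, is the accompanying technical subtlety.
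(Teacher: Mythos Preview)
Your invariance argument is fine and matches the paper's. The gap is in the estimate.

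The paper does \emph{not} use the scaling $v^\lambda(x)=\lambda^{3}v(\lambda x)$ to push $v=\psi(t)$ onto $\{K_{b,\omega}=0\}$. It uses the $L^{p+2}$--preserving scaling
\[
\psi_\lambda(x)=\lambda^{\frac{3}{p+2}}\psi(\lambda x),
\]
and pushes $\psi$ \emph{outward} ($\lambda>1$) until $Q_b$ vanishes. Since $Q_b(\psi)<0$ and the gradient term in $Q_b(\psi_\lambda)$ scales like $\lambda^{(4-p)/(p+2)}$ while all other terms decay or are constant, some $\lambda_\ast>1$ gives $Q_b(\psi_{\lambda_\ast})=0$. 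Along this path either $K_{b,\omega}$ stays negative, so $\psi_{\lambda_\ast}\in\mathcal N$ and $S_{b,\omega}(\psi_{\lambda_\ast})\geq \alpha(\omega)\geq S_{b,\omega}(u)$; or $K_{b,\omega}$ hits zero at some $\mu\in(1,\lambda_\ast]$, so $S_{b,\omega}(\psi_\mu)\geq d_K(\omega)=S_{b,\omega}(u)$. The choice of scaling is the whole point: because $\|\psi_\lambda\|_{L^{p+2}}^{p+2}$ is independent of $\lambda$, this term cancels in both $S_{b,\omega}(\psi)-S_{b,\omega}(\psi_\lambda)$ and $Q_b(\psi)-Q_b(\psi_\lambda)$, and the inequality
\[
S_{b,\omega}(\psi)-S_{b,\omega}(\psi_\lambda)\ \geq\ \tfrac12\bigl(Q_b(\psi)-Q_b(\psi_\lambda)\bigr),\qquad \lambda>1,
\]
reduces to checking that $(1-\lambda^{-3p/(p+2)})$, $(1-\lambda^{-(5p+4)/(p+2)})$, $(1-\lambda^{(2-5p)/(p+2)})$ are nonnegative, which is immediate. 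No case dichotomy is used here.

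With your scaling $\lambda^{3}v(\lambda x)$ the $L^{p+2}$ term does \emph{not} cancel, and the inequality $2S_{b,\omega}(v)-Q_b(v)\geq 2S_{b,\omega}(v^{\lambda_\ast})$ is not automatic. If you eliminate $\|\nabla v\|^2$ via $K_{b,\omega}(v^{\lambda_\ast})=0$, the coefficient of $\|v\|_{L^{p+2}}^{p+2}$ becomes $\tfrac{3p-4}{2}-\tfrac{6p-4}{5}\lambda_\ast^{3p+3}$, which is negative for $\lambda_\ast$ near $1$ whenever $p<4$; so the argument as you outline it breaks. You also misplace the role of the hypothesis $\partial_\lambda^2 S_{b,\omega}(u_\lambda)|_{\lambda=1}\leq 0$ and of the split $p\gtrless 2$: in the paper these are used only to guarantee $\mathcal N\neq\emptyset$ (so that $\alpha(\omega)$ is defined) and later to manufacture initial data in $\mathcal K_\omega$ near $u$; they play no part in the algebraic comparison that yields \eqref{keyestimate partial}.
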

Based on the key estimate \eqref{keyestimate partial}, we obtain finally the strong instability.
\begin{theorem}\label{Theorem instability partial}
Let $\lambda_1=0$, $\lambda_2\geq 0$, $\lambda_3>0$, $\frac{4}{3}\leq p<4$. Assume that there exists $\omega>0$ such that $\alpha(\omega)\geq S_{b,\omega}(u)$ with $u$ being a ground state related to \eqref{elliptic partial}. Then when $2\leq p<4$, or when $\frac{4}{3}\leq p<2$ additionally assume that $\partial_\lambda^2S_{b,\omega}(u_\lambda)|_{\lambda=1}\leq 0$, we have that the standing wave $\psi(t,x)=e^{i\omega t}u(x)$ is
strongly unstable in the following sense: there exists $\{\psi_{0,n}\}\subset X$ such that
$\psi_{0,n}\rightarrow u$ in $X$ as $n \rightarrow \infty$ and the corresponding solution $\psi_n$ of \eqref{equation partial} with initial data $\psi_{0,n}$
blows up in finite time for any $n\geq 1$.
\end{theorem}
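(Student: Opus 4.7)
The plan is to reduce everything to the a priori information supplied by Lemma \ref{lemma keyestimate partial}. If we can exhibit a sequence $\{\psi_{0,n}\}\subset \mathcal{K}_\omega$ with $\psi_{0,n}\to u$ in $X$, then Lemma \ref{lemma keyestimate partial} immediately yields $\psi_n(t)\in\mathcal{K}_\omega$ for all $t\in[0,T^*)$, and the uniform bound
\[
Q_b(\psi_n(t))\leq 2\bigl(S_{b,\omega}(\psi_{0,n})-S_{b,\omega}(u)\bigr)=:-2\delta_n<0.
\]
Coupled with a virial-type identity adapted to \eqref{equation partial} (the analogue of Lemma \ref{lemma blowup}, which converts a uniformly negative $Q_b$ along the flow into blow-up of a nonnegative weighted second moment with a weight compatible with the partial harmonic trap in the $(x_1,x_2)$-plane), this forces $T_n^*<+\infty$ for every $n\geq 1$, proving strong instability. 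Thus the whole matter reduces to constructing the approximating sequence inside $\mathcal{K}_\omega$.

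I would build this sequence via the mass-preserving dilation $u_\lambda(x):=\lambda^{3/2}u(\lambda x)$ for $\lambda>1$ close to $1$, which converges to $u$ in $X$ as $\lambda\to 1^+$. Since $u$ is a ground state of \eqref{elliptic partial} we have $Q_b(u)=K_{b,\omega}(u)=0$, and $\lambda=1$ is a critical point of $f(\lambda):=S_{b,\omega}(u_\lambda)$. I need to verify three things for $\lambda$ slightly larger than $1$: (i) $S_{b,\omega}(u_\lambda)<S_{b,\omega}(u)$, (ii) $Q_b(u_\lambda)=f'(\lambda)<0$, and (iii) $K_{b,\omega}(u_\lambda)<0$. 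Items (i) and (ii) follow from showing $f''(1)<0$: substituting the Pohozaev identity $Q_b(u)=0$ to eliminate $\|u\|_{L^{p+2}}^{p+2}$ from the explicit expression
\[
f''(1)=\|\nabla u\|_{L^2}^2+3b^2\!\int_{\mathbb{R}^3}(x_1^2+x_2^2)|u|^2\,dx-\frac{3p(3p-2)\lambda_3}{4(p+2)}\|u\|_{L^{p+2}}^{p+2},
\]
one finds $f''(1)<0$ automatically when $2\leq p<4$, and the same conclusion follows in the range $\frac{4}{3}\leq p<2$ directly from the standing hypothesis $\partial_\lambda^2S_{b,\omega}(u_\lambda)|_{\lambda=1}\leq 0$ (upgraded to a strict inequality by a small perturbation). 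For (iii) one computes $\partial_\lambda K_{b,\omega}(u_\lambda)|_{\lambda=1}$ and then eliminates $\|u\|_{L^{p+2}}^{p+2}$ using $Q_b(u)=K_{b,\omega}(u)=0$, obtaining a linear combination of $\|\nabla u\|_{L^2}^2$, $b^2\!\int(x_1^2+x_2^2)|u|^2dx$ and $\lambda_2\!\int(|x|^{-1}\ast|u|^2)|u|^2dx$ whose coefficients, under $\lambda_1=0,\ \lambda_2\geq 0,\ p\geq\frac{4}{3}$, force the derivative to be strictly negative. Should the borderline case leave the sign of (iii) ambiguous, one can replace the one-parameter family by the two-parameter family $v_{\lambda,\sigma}(x):=\sigma\lambda^{3/2}u(\lambda x)$ and pick $(\lambda,\sigma)$ in a small neighborhood of $(1,1)$ landing in the intersection $\{S_{b,\omega}<S_{b,\omega}(u)\}\cap\{Q_b<0\}\cap\{K_{b,\omega}<0\}$, which is non-empty by the cross-constrained construction and the hypothesis $\alpha(\omega)\geq S_{b,\omega}(u)$.

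The main obstacle is the simultaneous fulfillment of the three defining inequalities of $\mathcal{K}_\omega$ along a single path issuing from $u$. Because $Q_b$ and $K_{b,\omega}$ arise from \emph{different} scalings of $S_{b,\omega}$ (the mass-preserving $u_\lambda$ for $Q_b$ and the non-mass-preserving $u^\lambda=\lambda^3u(\lambda x)$ for $K_{b,\omega}$), there is no tautological reason that the one-parameter curve $\{u_\lambda\}$ should enter both half-spaces $\{Q_b<0\}$ and $\{K_{b,\omega}<0\}$ at once. The Hartree term ($\lambda_2\geq 0$) and the partial harmonic trap ($b\neq 0$) further interfere with the local geometry near $u$. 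The extra assumption $\partial_\lambda^2 S_{b,\omega}(u_\lambda)|_{\lambda=1}\leq 0$ in the range $\frac{4}{3}\leq p<2$ is exactly what is needed to open the $Q_b<0$ region along the mass-preserving scaling, while the assumption $\alpha(\omega)\geq S_{b,\omega}(u)$ is precisely what, through Lemma \ref{lemma keyestimate partial}, prevents the flow from crossing from $\mathcal{K}_\omega$ onto the boundary manifold $\mathcal{N}$, keeping $Q_b(\psi_n(t))$ uniformly negative in $t$ and thereby powering the virial blow-up.
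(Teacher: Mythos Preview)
Your overall architecture---produce $\psi_{0,n}\in\mathcal{K}_\omega$ close to $u$, invoke Lemma~\ref{lemma keyestimate partial}, then finish with the virial identity---is exactly the paper's. For $\tfrac{4}{3}\le p<2$ your use of the mass-preserving scaling $u_\lambda$ also coincides with the paper's proof. There are, however, two genuine gaps.

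\medskip
\textbf{The claim $f''(1)<0$ for $2\le p<4$ is not justified by your argument.} You write that substituting $Q_b(u)=0$ into
\[
f''(1)=\|\nabla u\|_{L^2}^2+3b^2\!\int(x_1^2+x_2^2)|u|^2\,dx-\tfrac{3p(3p-2)\lambda_3}{4(p+2)}\|u\|_{L^{p+2}}^{p+2}
\]
already gives $f''(1)<0$. It does not: eliminating $\|u\|_{L^{p+2}}^{p+2}$ via $Q_b(u)=0$ yields
\[
f''(1)=\tfrac{4-3p}{2}\|\nabla u\|_{L^2}^2+\tfrac{3p+4}{2}\,b^2\!\int(x_1^2+x_2^2)|u|^2\,dx-\tfrac{(3p-2)\lambda_2}{8}\!\int(|x|^{-1}\!\ast|u|^2)|u|^2\,dx,
\]
and the coefficient of the partial-confinement term is \emph{positive}, so no sign conclusion follows. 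The conclusion $f''(1)<0$ is in fact true for $2\le p<4$, but one must use \emph{both} ground-state identities $I_{b,\omega}(u)=0$ and $J_{b,\omega}(u)=0$ (equivalently $Q_b(u)=0$ together with $K_{b,\omega}(u)=0$) to eliminate two of the five quantities. The paper sidesteps this entirely: for $2\le p<4$ it uses the amplitude scaling $\lambda u$ rather than $u_\lambda$, and shows directly (Lemma~\ref{cross-manifold feikong}) that $S_{b,\omega}(\lambda u)<S_{b,\omega}(u)$, $K_{b,\omega}(\lambda u)<0$, $Q_b(\lambda u)<0$ for $\lambda>1$. That route is shorter and avoids the second-derivative computation altogether. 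Your two-parameter fallback $v_{\lambda,\sigma}=\sigma\lambda^{3/2}u(\lambda x)$ does contain the paper's path ($\lambda=1$, $\sigma>1$), but as written it is an existence assertion, not a proof.

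\medskip
\textbf{Missing cutoff for the virial argument.} Lemma~\ref{lemma blowup} requires initial data in $\Sigma=\{v\in H^1:\,|x|v\in L^2\}$, and a ground state $u\in X$ is not known to lie there (only $(x_1^2+x_2^2)^{1/2}u\in L^2$). The paper inserts a smooth radial cutoff $\chi_M$ so that $\chi_{M_0}u_{\lambda_0}\in\Sigma\cap\mathcal{K}_\omega$ and still approximates $u$ in $X$; your proposal alludes to a ``weight compatible with the partial harmonic trap'' but does not supply this step.
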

\textbf{Remark 1.12.} Note that the strong instability of ground state standing waves of \eqref{equation partial} is established under the assumption that for some $\omega>0$, $\alpha(\omega)\geq S_{b,\omega}(u)$. However, the condition $\alpha(\omega)\geq S_{b,\omega}(u)$ is still vague, even in the simple case $\lambda_1=\lambda_2=0$, see \cite[Remark 5.1]{zhangcpde}. We need further to determine for which $\omega$,  $\alpha(\omega)\geq S_{b,\omega}(u)$ holds.
Moreover, it is also open that standing waves are orbital stable or not if $\alpha(\omega)< S_{b,\omega}(u)$.

\textbf{Remark 1.13.} When $b\neq 0$, $\lambda_1<0$ or $\lambda_2<0$, or even if more weakly that $b=0$, $\lambda_1<0$ or $\lambda_2<0$, some of our arguments in the proof of Theorem \ref{Theorem instability partial} do not work any more, see the proof of Lemma \ref{lemma keyestimate partial}. To the best of our knowledge, the strong instability of standing waves under these situations is still an open question. Anyway the physically interesting coefficients for \eqref{equation partial}, that is $\lambda_1,\lambda_2,\lambda_3\geq0$, are included in Theorems \ref{th instability no partial} and \ref{Theorem instability partial}. For the mathematical interest, the case of $\lambda_1<0$ or $\lambda_2<0$ will be the object of a future investigation.\\

This paper is organized as follows: in Section 2, we will collect
some lemmas such as the local well-posedness theory of \eqref{equation partial}, the Brezis-Lieb lemma, a compactness lemma. In section 3, we will prove Proposition \ref{proposition ground state no}, Theorems \ref{theorem sharp}, \ref{theorem sharp criteria critical} and \ref{th instability no partial}. In section 4, we will prove Theorem \ref{theorem existence normalized}.
In section 5, we will prove Theorems \ref{theorem global existence}, \ref{Theorem globalmini} and \ref{th-localmini}. In section 6, we will prove Proposition \ref{proposition ground state partial}, Lemma \ref{lemma keyestimate partial}, Theorem \ref{Theorem instability partial}.

\section{Preliminaries}
In this section, we recall some preliminary results that
will be used later. Firstly, let us recall the local theory for the Cauchy problem \eqref{equation partial}, see Theorem 9.1.5 in \cite{ca2003} or Theorem 3.1 in \cite{f18dcds}.
\begin{proposition}\cite[Theorem 9.1.5]{ca2003}\label{pro2.1}
Let $\psi_0 \in X$, $\lambda_1,\lambda_2,\lambda_3\in \mathbb{R}$, $0<p<4$. Then, there exists $T = T(\|\psi_0
\|_{X})$ such that \eqref{elliptic partial} admits a unique solution $\psi\in C([0,T],X)$. Let $[0,T^{\ast })$ be the maximal time interval on which the
solution $\psi$ is well-defined, if $T^{\ast }< \infty $, then $\|
 \psi(t)\| _{X}\rightarrow \infty $ as $t\uparrow T^{\ast } $. Moreover, for all $0\leq t<T^*$, the solution
$\psi(t)$ satisfies the following conservation of mass and energy
\[
\|\psi(t)\|_{L^2}=\|\psi_0\|_{L^2},
\]
\[
E_b(\psi(t))=E_b(\psi_0 ),
\]
 where $E_b(\psi(t))$ is defined by \eqref{energyE}.
\end{proposition}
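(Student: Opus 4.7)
The plan is to follow the classical Kato--Cazenave fixed-point scheme adapted to the magnetic Schrödinger setting with partial harmonic confinement; since the proposition is quoted from \cite[Theorem 9.1.5]{ca2003} and \cite[Theorem 3.1]{f18dcds}, the argument is essentially standard, and I sketch how it would run in the present setting.

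First I would establish Strichartz estimates for the linear propagator $U(t):=e^{-itH_0}$ with $H_0:=-\Delta+b^2(x_1^2+x_2^2)$. Because the harmonic potential acts only in the $(x_1,x_2)$ variables while the $x_3$ direction is free, the kernel of $U(t)$ factorizes as a product of the two-dimensional Mehler kernel and the one-dimensional free Schrödinger kernel; this yields the usual short-time dispersive estimate $\|U(t)f\|_{L^\infty}\lesssim |t|^{-3/2}\|f\|_{L^1}$ on intervals of length less than the trapping period, and the Keel--Tao machinery then produces Strichartz estimates on every bounded time interval for all admissible pairs $(q,r)$.

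Next I would set up the Duhamel formulation
\[
\psi(t)=U(t)\psi_0-i\int_0^t U(t-s)\!\left(\frac{\lambda_1}{|x|}\psi+\lambda_2(|\cdot|^{-1}\ast|\psi|^2)\psi-\lambda_3|\psi|^p\psi\right)\!(s)\,ds,
\]
and run a contraction argument in $Y_T:=C([0,T];X)\cap L^q([0,T];W^{1,r}(\mathbb{R}^3))$ for a suitable admissible pair. The three nonlinearities are treated separately: the Coulomb term is absorbed by Hardy's inequality $\|\,|x|^{-1}u\|_{L^2}\lesssim\|\nabla u\|_{L^2}$, which also yields the Lipschitz bound; the Hartree term is handled by Hardy--Littlewood--Sobolev, giving $\|\,|\cdot|^{-1}\ast|u|^2\|_{L^\infty}\lesssim\|u\|_{H^1}^2$; and the power nonlinearity, being $H^1$-subcritical in $\mathbb{R}^3$ for $0<p<4$, is controlled by the standard Strichartz--Hölder bookkeeping from \cite[Ch.~4]{ca2003}. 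To propagate the weighted component of the $X$-norm one commutes $(x_1^2+x_2^2)^{1/2}$ through the equation: the commutator $[H_0,x_j]$ for $j=1,2$ produces only first-order operators, so the weight and $\nabla$ can be estimated on equal footing, and the fixed point closes for $T=T(\|\psi_0\|_X)$ small.

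The blow-up alternative follows by iterating the local existence result from successive times; since the existence time depends only on the $X$-norm of the data at the restart, a solution that cannot be continued past a finite $T^\ast$ must satisfy $\|\psi(t)\|_X\to\infty$ as $t\uparrow T^\ast$. For the conservation laws I would first regularize simultaneously the initial data (approximating $\psi_0\in X$ by Schwartz data) and the Coulomb singularity (e.g.\ replacing $|x|^{-1}$ by $(|x|+\varepsilon)^{-1}$); for the regularized problem the solutions are smooth in time, so $\frac{d}{dt}\|\psi\|_{L^2}^2=0$ and $\frac{d}{dt}E_b(\psi)=0$ follow by direct differentiation using the self-adjointness of $H_0$ and the real-valuedness of $\mathrm{Im}\,\overline{\psi}\,\partial_t\psi$; passing to the limit with the continuous dependence on data obtained in the fixed-point step transfers the identities to the original solution. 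The main obstacle in this programme is the Coulomb potential $\lambda_1/|x|$: it is not globally in any $L^p$, so it cannot be folded into the Strichartz bookkeeping by a simple Hölder argument but must be coupled to the kinetic energy through Hardy's inequality, which forces one to work in $H^1$-based (rather than $L^2$-based) Strichartz spaces and complicates the simultaneous control of $\nabla$ and the confinement weight.
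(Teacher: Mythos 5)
The paper offers no proof of this proposition: it is quoted verbatim from \cite[Theorem 9.1.5]{ca2003} and \cite[Theorem 3.1]{f18dcds}, so there is no in-paper argument to compare against. Your sketch is the standard Kato--Cazenave scheme that those references carry out (Mehler-kernel dispersive estimate for the partial oscillator, Strichartz on bounded intervals, Duhamel fixed point in weighted $H^1$-based spaces with Hardy for the Coulomb term and Hardy--Littlewood--Sobolev for the Hartree term, regularization for the conservation laws), and it is consistent with the cited proofs.
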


In order to study the strong instability of standing waves,
 we need to prove the existence of blow-up solutions. Following the classical convexity method of Glassey,
 by some formal computations (which are made rigorous in \cite{ca2003}), we can obtain the following lemma.
\begin{lemma}\cite[Remark 9.2.10]{ca2003}\label{lemma blowup}
Let $0< p<4$ and
 $\psi_0 \in \Sigma:=  \{v\in H^1~and~|x|v \in L^2\}$. Then, the corresponding solution $\psi(t)\in \Sigma$ for all $t\in [0,T^*)$ and the function $J(t)$ belongs to $C^2[0,T^*)$, where
 $J(t)=\int_{\mathbb{R}^3} |x|^2 |\psi(t,x)|^2dx$. Furthermore, we have
\[
J^{\prime\prime}(t)=8Q_b(\psi(t)),
\]
for all $t\in [0,T^*)$,
where $Q_b(u)$ is defined by \eqref{qu partial}.
\end{lemma}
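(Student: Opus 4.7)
The plan is to carry out the classical Glassey-type virial calculation adapted to the Hamiltonian of \eqref{equation partial}, following the scheme in Cazenave's book. The identity $J''(t)=8Q_b(\psi(t))$ should emerge as the sum of the scaling derivatives of the various pieces of the energy when tested against the dilation generator $x\cdot\nabla$. My proof would proceed in three stages: persistence of the $\Sigma$-regularity, formal computation of $J'$ and $J''$, and a density argument to justify the computation rigorously.

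For the persistence of the $|x|$-moment, I would approximate $\psi_0\in\Sigma$ by a sequence $\psi_{0,n}\in \Sigma\cap H^2$ of smooth data, solve \eqref{equation partial} to obtain classical solutions $\psi_n(t)$, and use $|x|^2$ as a multiplier. Differentiating $\int |x|^2|\psi_n|^2\,dx$ gives a term controlled by $\|\nabla\psi_n\|_{L^2}\| |x|\psi_n\|_{L^2}$; since $\|\nabla\psi_n\|_{L^2}$ is controlled by the energy and mass conservations on $[0,T^{\ast})$, a Gr\"onwall argument yields a uniform bound for $\| |x|\psi_n(t)\|_{L^2}$ on compact subintervals. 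Passing $n\to\infty$ then shows $\psi(t)\in\Sigma$ and $J\in C^2[0,T^{\ast})$.

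For the first derivative, using $\partial_t\psi=-iH\psi$ where $H$ is the Hamiltonian on the right-hand side of \eqref{equation partial}, I observe that $V|\psi|^2$ is real and each nonlinear term $N(\psi)\bar\psi$ is also real-valued, so only the kinetic piece contributes to $\mathrm{Im}(\bar\psi H\psi)$. This yields the standard continuity equation $\partial_t|\psi|^2=-2\nabla\cdot\mathrm{Im}(\bar\psi\nabla\psi)$, and integration by parts against $|x|^2$ gives $J'(t)=4\,\mathrm{Im}\int (x\cdot\nabla\psi)\,\bar\psi\,dx$. Differentiating once more and substituting $\partial_t\psi=-iH\psi$ reduces $J''(t)$ to $4\,\mathrm{Re}\langle (2x\cdot\nabla+3)\psi, H\psi\rangle$ minus the corresponding conjugate expression; each piece of $H$ then contributes separately. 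The Laplacian produces $8\|\nabla\psi\|_{L^2}^2$; the quadratic potential yields $-8b^2\int(x_1^2+x_2^2)|\psi|^2\,dx$ via $x\cdot\nabla(b^2(x_1^2+x_2^2))=2b^2(x_1^2+x_2^2)$; the Coulomb term contributes $4\lambda_1\int |\psi|^2/|x|\,dx$ from $x\cdot\nabla(1/|x|)=-1/|x|$; the power nonlinearity contributes $-\tfrac{6\lambda_3 p}{p+2}\|\psi\|_{L^{p+2}}^{p+2}$; and the Hartree term contributes $2\lambda_2\int(|x|^{-1}\ast|\psi|^2)|\psi|^2\,dx$, which is obtained after symmetrisation $x\leftrightarrow y$ in the double integral, using $(x-y)\cdot\nabla_x|x-y|^{-1}=-|x-y|^{-1}$. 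Adding the contributions yields exactly $8Q_b(\psi(t))$.

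The main obstacle is to justify these manipulations when $\psi$ lies only in $X$, not in $H^2$, and when singular and nonlocal terms are present. I would regularise by replacing $1/|x|$ with $1/\sqrt{|x|^2+\varepsilon}$ and smoothing both the initial data and the Hartree kernel, compute the virial identity for the regularised problem (where everything is classical), and pass to the limits $\varepsilon\to 0$ and $n\to\infty$. Hardy's inequality controls $\int|\psi|^2/|x|\,dx$, and the Hardy--Littlewood--Sobolev inequality together with Sobolev embedding controls the Hartree energy; these bounds depend only on $\|\psi\|_X$ and $\||x|\psi\|_{L^2}$, which have already been shown to be locally bounded, so the limiting identity holds on $[0,T^{\ast})$.
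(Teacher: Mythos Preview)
The paper does not supply its own proof of this lemma; it merely quotes the result from \cite[Remark 9.2.10]{ca2003} and remarks that the formal Glassey computation is made rigorous there. Your three-stage programme (persistence of the $\Sigma$-moment via Gr\"onwall on approximating data, formal derivation of $J'$ and $J''$ through the commutator with the dilation generator, and a density/regularisation argument to justify the singular potentials and the Hartree convolution) is precisely the standard scheme in Cazenave's book, so there is nothing to compare: your route and the one the paper invokes coincide.

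One arithmetic slip: the power nonlinearity contributes $-\dfrac{12\lambda_3 p}{p+2}\|\psi\|_{L^{p+2}}^{p+2}$ to $J''$, not $-\dfrac{6\lambda_3 p}{p+2}\|\psi\|_{L^{p+2}}^{p+2}$. Indeed, for $g(u)=\lambda_3|u|^{p}u$ in dimension $N=3$ the virial contribution is $4N\big(2G(\psi)-\mathrm{Re}(\bar\psi g(\psi))\big)=-\tfrac{12\lambda_3 p}{p+2}|\psi|^{p+2}$, and this is exactly $8\times\big(-\tfrac{3\lambda_3 p}{2(p+2)}\big)$, matching the corresponding term in $8Q_b$. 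With this correction your listed contributions sum to $8Q_b(\psi)$ as claimed.
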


In this paper,
we need the so-called the Brezis-Lieb lemma, see \cite{blieb}.
\begin{lemma}\cite{blieb}\label{lemma Brezis-Lieb}
 Let $0<p<\infty$. Suppose that $f_n\rightarrow f$ almost everywhere
and $\{f_n\}$ is a bounded sequence in $L^p$, then
\[
\lim_{n\rightarrow \infty}(\|f_n\|_{L^{p}}^{p}-\|f_n-f\|_{L^{p}}^{p}-\|f\|_{L^{p}}^{p})=0.
\]
\end{lemma}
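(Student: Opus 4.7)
The plan is to follow the classical Brezis--Lieb strategy based on a pointwise interpolation inequality together with dominated convergence. The starting point is the elementary fact that for every $\varepsilon>0$ there is a constant $C_\varepsilon>0$ such that for all $a,b\in\mathbb{C}$,
\[
\bigl|\,|a+b|^{p}-|a|^{p}\,\bigr|\leq \varepsilon\,|a|^{p}+C_\varepsilon\,|b|^{p}.
\]
I would first verify this by a homogeneity argument: dividing by $|b|^p$ (the case $b=0$ being trivial), continuity of $t\mapsto |t+1|^p-|t|^p$ together with the asymptotics $|t+1|^p-|t|^p=o(|t|^p)$ as $|t|\to\infty$ forces the inequality with an appropriate $C_\varepsilon$.

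Next, I would apply this with $a=f_n-f$ and $b=f$, so that $a+b=f_n$, obtaining
\[
\bigl|\,|f_{n}|^{p}-|f_{n}-f|^{p}-|f|^{p}\,\bigr|
\leq \bigl|\,|f_{n}|^{p}-|f_{n}-f|^{p}\,\bigr|+|f|^{p}
\leq \varepsilon\,|f_{n}-f|^{p}+(C_\varepsilon+1)\,|f|^{p}.
\]
Define the nonnegative measurable functions
\[
W_{n}^{\varepsilon}:=\Bigl(\bigl|\,|f_{n}|^{p}-|f_{n}-f|^{p}-|f|^{p}\,\bigr|-\varepsilon\,|f_{n}-f|^{p}\Bigr)^{+}.
\]
Then $W_n^{\varepsilon}\leq (C_\varepsilon+1)|f|^{p}$, and the right-hand side is integrable because, by Fatou's lemma applied to $|f_n|^p\to |f|^p$ a.e.\ together with the $L^p$-boundedness of $\{f_n\}$, one has $f\in L^p$. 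Moreover $W_{n}^{\varepsilon}\to 0$ almost everywhere as $n\to\infty$, since $f_n\to f$ a.e.\ forces $|f_n|^p-|f_n-f|^p-|f|^p\to 0$ a.e. The dominated convergence theorem then yields $\int W_{n}^{\varepsilon}\,dx\to 0$.

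Finally, writing $M:=\sup_n\|f_n-f\|_{L^p}^{p}<\infty$ (which follows from $\sup_n\|f_n\|_{L^p}<\infty$ and $f\in L^p$ via the triangle inequality), the definition of $W_n^\varepsilon$ gives
\[
\int_{\mathbb{R}^{3}}\bigl|\,|f_{n}|^{p}-|f_{n}-f|^{p}-|f|^{p}\,\bigr|\,dx
\leq \int_{\mathbb{R}^{3}}W_{n}^{\varepsilon}\,dx+\varepsilon\int_{\mathbb{R}^{3}}|f_{n}-f|^{p}\,dx
\leq \int_{\mathbb{R}^{3}}W_{n}^{\varepsilon}\,dx+\varepsilon M.
\]
Passing to the limsup in $n$ and then sending $\varepsilon\to 0^{+}$ yields the desired conclusion. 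The main obstacle, and the one place where care is genuinely needed, is the dominated convergence step: I must be sure that the dominating function $(C_\varepsilon+1)|f|^{p}$ is truly in $L^{1}$, which is exactly where the $L^p$-boundedness of $\{f_n\}$ enters (through Fatou's lemma applied to $|f_n|^p$). Everything else is bookkeeping.
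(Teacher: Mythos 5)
The paper gives no proof of this lemma; it is quoted directly from Brezis--Lieb \cite{blieb}. Your argument is exactly the original Brezis--Lieb proof (the pointwise inequality $\bigl||a+b|^p-|a|^p\bigr|\le\varepsilon|a|^p+C_\varepsilon|b|^p$, the truncation $W_n^\varepsilon$, dominated convergence against $(C_\varepsilon+1)|f|^p$) and is correct; the only detail worth tightening is that for $0<p<1$ the bound $\sup_n\|f_n-f\|_{L^p}^p<\infty$ should be obtained from the elementary inequality $|a-b|^p\le|a|^p+|b|^p$ rather than the triangle inequality, which fails in that range.
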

\begin{lemma}\label{lemma Brezis-Lieb lemma Hartree}\cite[Lemma 7.2]{Kikuchi2007}
 Let $f\in H^1$. Suppose that $f_n\rightharpoonup f$ in $H^1$, then
\[
\lim_{n\rightarrow \infty}(\int_{\mathbb{R}^3} (|x|^{-1}\ast|f_n|^{2})|f_n|^{2}dx-\int_{\mathbb{R}^3} (|x|^{-1}\ast|f_n-f|^{2})|f_n-f|^{2}dx-\int_{\mathbb{R}^3} (|x|^{-1}\ast|f|^{2})|f|^{2}dx)=0.
\]
\end{lemma}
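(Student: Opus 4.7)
The plan is to reduce the statement to a standard expansion for the symmetric bilinear Hartree form, combined with the Hardy--Littlewood--Sobolev (HLS) inequality and the behaviour of a weakly-null sequence in $H^1$. Set $g_n := f_n - f$, so that $g_n \rightharpoonup 0$ in $H^1(\mathbb{R}^3)$. By Sobolev embedding $\{g_n\}$ is bounded in $L^q(\mathbb{R}^3)$ for $q \in [2,6]$, hence $\{|g_n|^2\}$ is bounded in $L^{q/2}(\mathbb{R}^3)$ for $q \in [2,6]$, in particular in $L^{6/5}$; by Rellich--Kondrachov (passing to a subsequence) $g_n \to 0$ strongly in $L^q_{\mathrm{loc}}$ for all $q < 6$ and a.e.

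Introduce the symmetric bilinear form
\[
D(u,v) := \int_{\mathbb{R}^3} \bigl(|x|^{-1} \ast u\bigr)\, v \, dx,
\]
which, by the HLS inequality in $\mathbb{R}^3$, satisfies $|D(u,v)| \leq C \|u\|_{L^{6/5}} \|v\|_{L^{6/5}}$. From the pointwise identity $|f_n|^2 = |f|^2 + |g_n|^2 + \phi_n$, with $\phi_n := 2\operatorname{Re}(f\bar g_n)$, expand the quartic form
\[
D(|f_n|^2,|f_n|^2) = D(|f|^2,|f|^2) + D(|g_n|^2,|g_n|^2) + 2 D(|f|^2,|g_n|^2) + 2 D(|f|^2,\phi_n) + 2 D(|g_n|^2,\phi_n) + D(\phi_n,\phi_n).
\]
It therefore suffices to show that the last four terms are $o(1)$ as $n \to \infty$.

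For the cross term $D(|f|^2,|g_n|^2)$: I would first observe that HLS gives $|x|^{-1}\ast|f|^2 \in L^6(\mathbb{R}^3)$, since $|f|^2 \in L^{6/5}$. Then I would check that $|g_n|^2 \rightharpoonup 0$ in $L^{6/5}$. This follows from (i) the uniform $L^{6/5}$ bound, and (ii) the Rellich convergence $|g_n|^2 \to 0$ in $L^1_{\mathrm{loc}}$, which handles compactly supported test functions; density in $L^6$ then finishes the duality argument. Pairing the two facts yields $D(|f|^2,|g_n|^2) \to 0$. For the $\phi_n$ terms, the key step is to prove $\phi_n \to 0$ strongly in $L^{6/5}(\mathbb{R}^3)$. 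I would split space into a ball $B_R$ and its complement: on $B_R$, Hölder with $f \in L^3$ and the strong local convergence $g_n \to 0$ in $L^2_{\mathrm{loc}}$ gives $\|\phi_n\|_{L^{6/5}(B_R)} \to 0$; on $\mathbb{R}^3 \setminus B_R$, Hölder gives $\|\phi_n\|_{L^{6/5}(B_R^c)} \leq 2 \|f\|_{L^3(B_R^c)} \sup_n \|g_n\|_{L^2}$, which can be made arbitrarily small by choosing $R$ large, since $f \in L^3$. Once $\|\phi_n\|_{L^{6/5}} \to 0$ is established, HLS immediately yields $|D(\phi_n, h_n)| \leq C \|\phi_n\|_{L^{6/5}} \|h_n\|_{L^{6/5}} \to 0$ for any sequence $h_n$ uniformly bounded in $L^{6/5}$, which applies to $h_n \in \{|f|^2, |g_n|^2, \phi_n\}$.

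The main technical obstacle is the cross term $D(|f|^2,|g_n|^2)$, because unlike the $\phi_n$-terms no factor goes to $0$ strongly in $L^{6/5}$. The argument hinges on establishing the weak convergence $|g_n|^2 \rightharpoonup 0$ in $L^{6/5}$ from the $H^1$-weak convergence of $g_n$ --- this is where the interplay between Sobolev boundedness, Rellich local compactness, and a density argument becomes essential. Everything else is a bookkeeping exercise using HLS and the triangle inequality.
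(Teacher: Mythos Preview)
The paper does not give its own proof of this lemma; it simply cites \cite[Lemma 7.2]{Kikuchi2007}. Your argument is correct and is essentially the standard proof: expand the bilinear form $D(\cdot,\cdot)$ using $|f_n|^2 = |f|^2 + |g_n|^2 + 2\operatorname{Re}(f\bar g_n)$, then kill the cross terms via HLS together with either strong $L^{6/5}$ convergence of $\phi_n$ or weak $L^{6/5}$ convergence of $|g_n|^2$. One small remark: you write ``passing to a subsequence'' when invoking Rellich--Kondrachov, but in fact the full sequence $g_n \to 0$ in $L^q_{\mathrm{loc}}$ for $q<6$, since every subsequence has a further subsequence converging to the same limit $0$; this matters because you later use the conclusion for the original sequence without further comment. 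Otherwise the proof is complete.
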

The following compactness lemma is vital in our discussion,
\begin{lemma}\cite[Lemma 3.4]{jeanCMP2017}\label{lemma compactness lemma I}
%	 Let $0<p<4$ and $\{u_n\}$ be a bounded sequence in $H^1$ such that
%		\[
%		\limsup_{n\rightarrow \infty} \|\nabla u_n\|_{L^2} \leq M, \quad \liminf_{n\rightarrow \infty} \|u_n\|_{L^{p+2}} \geq m.
%		\]
%		Then there exist a sequence $(x_n)_{n\geq 1}$ in $\mathbb{R}^3$ and $u \in H^1\setminus \{0\}$ such that up to a subsequence,
%		\[
%		u_n(\cdot+x_n) \rightharpoonup u \text{ weakly in } H^1.
%		\]
Assume that a sequence $\{u_n\}$ satisfies that $\sup\limits_{n}\|u_n\|_{X}<\infty$, and
$$\|u_n\|_{L^{p+2}}\geq \delta,\  \forall n\in \mathbb{N},$$
for some constant $\delta>0$,  then there exist a sequence $\{z_n\}\subset \mathbb{R}$ and $\bar{u}\in X\backslash \{0\}$, such that
$$u_n(x_1,x_2, x_3-z_n) \rightharpoonup \bar{u}, \ \mbox{in } X. $$
\end{lemma}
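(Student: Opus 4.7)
The plan is to combine a standard Lions non-vanishing lemma with the partial confinement encoded in the $X$-norm. Since $\{u_n\}$ is bounded in $X$, it is in particular bounded in $H^1(\mathbb{R}^3)$. Because $p+2\in(2,6)$ and $\|u_n\|_{L^{p+2}}\ge\delta$, the contrapositive of Lions' concentration-compactness lemma (in the form that vanishing forces $L^q\to 0$ for $q\in(2,6)$) yields constants $R>0$, $\eta>0$ and a sequence $y_n=(y_n^1,y_n^2,y_n^3)\in\mathbb{R}^3$ such that, up to a subsequence,
\begin{equation*}
\int_{B(y_n,R)} |u_n|^2\,dx \ \ge\ \eta,\qquad \forall n\in\mathbb{N}.
\end{equation*}

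Next I would show that the first two components $(y_n^1,y_n^2)$ must stay bounded. Indeed, if $|(y_n^1,y_n^2)|\to\infty$ along a subsequence, then for $x\in B(y_n,R)$ one has $x_1^2+x_2^2\ge(|(y_n^1,y_n^2)|-R)^2$, which gives
\begin{equation*}
\int_{\mathbb{R}^3}(x_1^2+x_2^2)|u_n|^2\,dx \ \ge\ (|(y_n^1,y_n^2)|-R)^2\,\eta\ \longrightarrow\ \infty,
\end{equation*}
contradicting $\sup_n\|u_n\|_X<\infty$. Hence $(y_n^1,y_n^2)$ is bounded, and along a subsequence it converges to some $(a,b)\in\mathbb{R}^2$. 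Set $z_n:=y_n^3$ and define $\tilde u_n(x):=u_n(x_1,x_2,x_3-z_n)$. Since the $X$-norm depends on $(x_1,x_2)$ only through the rotation-invariant weight $x_1^2+x_2^2$, it is invariant under translations in the $x_3$ direction, so $\{\tilde u_n\}$ is bounded in $X$; passing to a further subsequence, $\tilde u_n\rightharpoonup \bar u$ weakly in $X$.

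It remains to prove $\bar u\not\equiv 0$. The translated mass concentration balls $B((y_n^1,y_n^2,0),R)$ eventually lie inside the fixed ball $K:=\overline{B((a,b,0),R+1)}$, so
\begin{equation*}
\int_{K}|\tilde u_n|^2\,dx\ \ge\ \int_{B((y_n^1,y_n^2,0),R)}|\tilde u_n|^2\,dx\ =\ \int_{B(y_n,R)}|u_n|^2\,dx\ \ge\ \eta.
\end{equation*}
Because $X\hookrightarrow H^1(\mathbb{R}^3)$, the embedding $X\hookrightarrow L^2(K)$ is compact, hence $\tilde u_n\to\bar u$ strongly in $L^2(K)$, giving $\int_K|\bar u|^2\,dx\ge\eta>0$ and therefore $\bar u\in X\setminus\{0\}$. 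The main obstacle, and the point that distinguishes this from the usual Lions argument, is precisely the step of ruling out escape in the confined directions $(x_1,x_2)$: once the weighted $L^2$ bound pins the transverse translations in a bounded set, one may translate only in $x_3$ and invoke the local compactness from the $H^1$ part of the $X$-norm to recover a nonzero weak limit.
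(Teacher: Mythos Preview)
Your argument is correct and is exactly the standard proof of this result. Note, however, that the paper does not supply its own proof of this lemma: it is quoted directly from \cite[Lemma~3.4]{jeanCMP2017}, so there is nothing in the present paper to compare against. The proof you give---Lions' non-vanishing to locate mass in balls $B(y_n,R)$, the weighted $L^2$ bound to trap $(y_n^1,y_n^2)$ in a compact set, translation in $x_3$ only, and local $H^1\hookrightarrow L^2$ compactness to secure a nonzero weak limit---is precisely the argument in the cited reference.
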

Finally, we recall the following Pohozaev identity related to \eqref{elliptic partial}.
\begin{lemma}\label{lemma pohozaev identity}
If $u\in X$ is a solution of \eqref{elliptic partial}, then the following identities hold:
\begin{align}\label{pohozaev identity0}
I_{b,\omega}(u):=&\|\nabla u\|_{L^2}^2+\omega\|u\|_{L^2}^2+b^2\int_{\mathbb{R}^3}(x^2_1+x^2_2)|u|^2dx+ \lambda_1\int_{\mathbb{R}^3}\frac{|u|^2}{|x|}dx\nonumber\\& +\lambda_2\int_{\mathbb{R}^3} (|x|^{-1}\ast|u|^{2})|u|^{2}dx -\lambda_3\|u\|_{L^{p+2}}^{p+2}=0,
\end{align}
and
\begin{align}\label{pohozaev identity1}
 J_{b,\omega}(u):=&\frac{1}{2}\|\nabla u\|_{L^2}^2+\frac{3\omega}{2}\|u\|_{L^2}^2+ \frac{5b^2}{2}\int_{\mathbb{R}^3}(x^2_1+x^2_2)|u|^2dx+\lambda_1\int_{\mathbb{R}^3}\frac{|u|^2}{|x|}dx \nonumber\\&  +\frac{5\lambda_2}{4}\int_{\mathbb{R}^3} (|x|^{-1}\ast|u|^{2})|u|^{2}dx -\frac{3\lambda_3}{p+2}\|u\|_{L^{p+2}}^{p+2}=0.
\end{align}
Moreover, $Q_b(u)=\frac{3}{2}I_{b,\omega}(u)-J_{b,\omega}(u)=0$.
\end{lemma}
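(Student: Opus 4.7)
The plan is to establish the two identities $I_{b,\omega}(u)=0$ and $J_{b,\omega}(u)=0$ separately, and then verify the algebraic identity $Q_b(u)=\tfrac{3}{2}I_{b,\omega}(u)-J_{b,\omega}(u)$ by direct coefficient matching.

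For the first identity $I_{b,\omega}(u)=0$, I would multiply the stationary equation \eqref{elliptic partial} by $\bar{u}$, integrate over $\mathbb{R}^3$, and take real parts. Every resulting term is absolutely integrable because $u\in X$: the kinetic term follows from the definition of $X$, the harmonic potential term is finite since $(x_1^2+x_2^2)^{1/2}|u|\in L^2$, the Coulomb term is controlled via Hardy's inequality $\int |u|^2/|x|\,dx\lesssim \|u\|_{H^1}^2$, the Hartree term by the Hardy--Littlewood--Sobolev inequality, and the power nonlinearity by Gagliardo--Nirenberg since $p+2<6$. This directly yields $I_{b,\omega}(u)=0$.

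For the second identity $J_{b,\omega}(u)=0$, I would use the classical Pohozaev approach: test the equation against $x\cdot\nabla\bar{u}$ (or, equivalently, compute $\frac{d}{d\lambda}S_{b,\omega}(u_\lambda)|_{\lambda=1}$ with $u_\lambda(x):=u(\lambda x)$, noting that this scaling rescales each term of $S_{b,\omega}$ by explicit powers of $\lambda$: $\lambda^{-1}$ for the kinetic part, $\lambda^{-3}$ for the $L^2$ mass, $\lambda^{-5}$ for the partial harmonic and Hartree terms, $\lambda^{-2}$ for the Coulomb term, and $\lambda^{-3}$ for the $L^{p+2}$ term). Differentiating and setting $\lambda=1$ gives the claimed $J_{b,\omega}$. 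The main technical obstacle is rigorously justifying the integration by parts (or equivalently, that the derivative $\frac{d}{d\lambda}S_{b,\omega}(u_\lambda)|_{\lambda=1}$ really vanishes at a critical point) given the singular potential $\lambda_1/|x|$ and the partial harmonic weight. The standard route is to first upgrade the regularity of $u$: a bootstrap using elliptic estimates for $-\Delta u = f$ with $f\in L^2_{\mathrm{loc}}$ (checking that the right-hand side, including the convolution $(|x|^{-1}*|u|^2)u$ and the Coulomb term $u/|x|$, lies in $L^2_{\mathrm{loc}}$) places $u$ in $H^2_{\mathrm{loc}}$, which together with the decay forced by the confining direction suffices to carry out the multiplication and integration by parts. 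A cut-off argument $\chi_R(x)=\chi(x/R)$ handles any boundary issue at infinity: write the identity with $\chi_R$ inserted, use dominated convergence on each good term, and show the commutator $[x\cdot\nabla,\chi_R]$ contributes $o(1)$ as $R\to\infty$ using $u\in X$.

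Finally, with both identities in hand, $Q_b(u)=0$ is purely algebraic: expanding $\tfrac{3}{2}I_{b,\omega}(u)-J_{b,\omega}(u)$ term by term, the $\omega\|u\|_{L^2}^2$ coefficients cancel ($\tfrac{3}{2}-\tfrac{3}{2}=0$), the kinetic contributes $\tfrac{3}{2}-\tfrac{1}{2}=1$, the partial harmonic contributes $\tfrac{3}{2}-\tfrac{5}{2}=-1$, the Coulomb contributes $\tfrac{3}{2}-1=\tfrac{1}{2}$, the Hartree contributes $\tfrac{3}{2}-\tfrac{5}{4}=\tfrac{1}{4}$, and the nonlinear term gives $-\tfrac{3}{2}+\tfrac{3}{p+2}=-\tfrac{3p}{2(p+2)}$, matching \eqref{qu partial} exactly. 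I expect the real work to be in the regularity/cut-off step for the Pohozaev identity; the other steps are bookkeeping.
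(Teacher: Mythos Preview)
Your approach is correct and is exactly the standard derivation. The paper itself states this lemma in the preliminaries section without proof, treating it as a known fact; however, the paper does use precisely the two multipliers you describe (testing against $\bar u$ and against $x\cdot\nabla\bar u$) later in the proof of Lemma~\ref{lemma daoshuweiling}, so your method is fully consistent with the paper's toolkit. Your term-by-term verification that $Q_b=\tfrac{3}{2}I_{b,\omega}-J_{b,\omega}$ is accurate, and your remarks on the regularity/cut-off justification are appropriate (and indeed constitute the only nontrivial content, since the paper takes the identities for granted).
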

\begin{lemma}\cite[Lemma 2.1]{jeanCMP2017}\label{lemma Jeanjean}
Define
\begin{equation}\label{Lambda_0}
\Lambda_0:=\inf_{\int_{\mathbb{R}^3}|u|^2dx=1}
\int_{\mathbb{R}^3}\Big( |\nabla u|^2+b^2(x_1^2+x_2^2)|u|^{2}\Big)dx
\end{equation}
and
\begin{equation}\label{lambda_0}
\lambda_0:=\inf_{\int_{\mathbb{R}^2}|v|^2dx'=1}
\int_{\mathbb{R}^2}\Big(|\partial_{x_1}v|^2+|\partial_{x_2}v|^2+b^2(x_1^2+x_2^2)|v|^{2}\Big)dx'.
\end{equation}
Then $\Lambda_0=\lambda_0$.
\end{lemma}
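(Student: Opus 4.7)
The plan is to prove the two inequalities $\Lambda_0 \leq \lambda_0$ and $\Lambda_0 \geq \lambda_0$ separately. The guiding intuition is that in $\Lambda_0$ the $x_3$-direction is free (no harmonic confinement, only the kinetic term), so the optimal strategy for a minimizer is to spread out arbitrarily in $x_3$ while capturing the two-dimensional ground-state energy in the $(x_1,x_2)$ directions.

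For the upper bound $\Lambda_0 \leq \lambda_0$, I would use a tensor-product test function. Fix any admissible $v(x_1,x_2)$ with $\|v\|_{L^2(\mathbb{R}^2)}=1$, and fix $\phi \in C_c^\infty(\mathbb{R})$ with $\|\phi\|_{L^2(\mathbb{R})}=1$. Define the spreading family $\phi_n(x_3) := n^{-1/2}\phi(x_3/n)$, which satisfies $\|\phi_n\|_{L^2}=1$ and $\|\phi_n'\|_{L^2}^2 = n^{-2}\|\phi'\|_{L^2}^2 \to 0$. Setting $u_n(x) := v(x_1,x_2)\phi_n(x_3)$, a direct computation gives $\|u_n\|_{L^2(\mathbb{R}^3)}=1$ and
\begin{equation*}
\int_{\mathbb{R}^3}\bigl(|\nabla u_n|^2 + b^2(x_1^2+x_2^2)|u_n|^2\bigr)\,dx = \int_{\mathbb{R}^2}\bigl(|\partial_{x_1}v|^2+|\partial_{x_2}v|^2+b^2(x_1^2+x_2^2)|v|^2\bigr)\,dx' + \|\phi_n'\|_{L^2}^2.
\end{equation*}
Taking $n\to\infty$ gives $\Lambda_0 \leq \int_{\mathbb{R}^2}(|\partial_{x_1}v|^2+|\partial_{x_2}v|^2+b^2(x_1^2+x_2^2)|v|^2)\,dx'$, and then infimizing over $v$ yields $\Lambda_0 \leq \lambda_0$.

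For the lower bound $\Lambda_0 \geq \lambda_0$, I would slice in the $x_3$-variable. Given any $u\in H^1(\mathbb{R}^3)$ with $\|u\|_{L^2}=1$ and the partial harmonic integral finite, set $\alpha(x_3) := \int_{\mathbb{R}^2}|u(x_1,x_2,x_3)|^2\,dx'$; then $\int_{\mathbb{R}}\alpha(x_3)\,dx_3 = 1$. For a.e.\ $x_3$ with $\alpha(x_3)>0$, the normalized slice $v_{x_3} := u(\cdot,\cdot,x_3)/\sqrt{\alpha(x_3)}$ has unit $L^2(\mathbb{R}^2)$-norm, so by definition of $\lambda_0$,
\begin{equation*}
\int_{\mathbb{R}^2}\bigl(|\partial_{x_1}u|^2+|\partial_{x_2}u|^2 + b^2(x_1^2+x_2^2)|u|^2\bigr)(x_1,x_2,x_3)\,dx' \geq \lambda_0\,\alpha(x_3).
\end{equation*}
Integrating in $x_3$ and discarding the nonnegative term $\int|\partial_{x_3}u|^2\,dx$ gives $\int_{\mathbb{R}^3}(|\nabla u|^2 + b^2(x_1^2+x_2^2)|u|^2)\,dx \geq \lambda_0$, so $\Lambda_0 \geq \lambda_0$.

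The only mildly delicate point is the measurability/integrability of $\alpha(x_3)$ and of the slice energies, together with the handling of the null set where $\alpha(x_3)=0$; these are routine via Fubini and the fact that the integrands are nonnegative, so the two inequalities can be combined without further technicality to conclude $\Lambda_0=\lambda_0$. No deep obstacle is expected.
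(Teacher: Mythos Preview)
Your proof is correct. Note that the paper does not actually prove this lemma itself but simply cites \cite[Lemma 2.1]{jeanCMP2017}; your argument is precisely the standard one used there (tensor-product test functions for the upper bound, Fubini slicing for the lower bound), so there is nothing to compare.
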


\begin{lemma}\cite[Lemma 2.7]{RC}\label{lemma Bootstrap}
(Bootstrap argument). Let M(t) be a nonnegative
continuous function on [0,T] such that, for every $t\in [0,T]$,
\[
M(t)\leq \alpha+\beta M(t)^\theta ,
\]
where $\alpha, \beta>0$ and $\theta >1$ are constants such that
\[
\alpha<\left(1-\frac{1}{\theta }\right)\frac{1}{(\theta \beta)^{1/(\theta
-1)}},~~~M(0)\leq \frac{1}{(\theta \beta)^{1/(\theta -1)}}.
\]
Then, for every $t\in [0,T]$, we have
\[
M(t)\leq \frac{\theta }{\theta -1}\alpha.
\]
\end{lemma}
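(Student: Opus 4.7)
The plan is to recast the hypothesis as a one-variable inequality and then exploit continuity of $M$ together with connectedness of $[0,T]$. Introduce
\[
\phi(x):=x-\beta x^{\theta},\qquad x\ge 0,
\]
so the bound $M(t)\le \alpha+\beta M(t)^{\theta}$ becomes $\phi(M(t))\le \alpha$ for all $t\in[0,T]$.

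First I would analyze the geometry of $\phi$. Since $\phi'(x)=1-\theta\beta x^{\theta-1}$, the function $\phi$ increases strictly on $[0,x_\ast]$ and decreases strictly on $[x_\ast,\infty)$, where $x_\ast:=(\theta\beta)^{-1/(\theta-1)}$, with maximum value $\phi(x_\ast)=(1-1/\theta)x_\ast$. The smallness assumption on $\alpha$ is exactly $\alpha<\phi(x_\ast)$, so the sub-level set $\{x\ge 0:\phi(x)\le\alpha\}$ splits as a disjoint union of two closed intervals
\[
\{x\ge 0:\phi(x)\le\alpha\}=[0,a]\cup[b,\infty),\qquad 0\le a<x_\ast<b,
\]
where $a,b$ are the two positive roots of $\phi(x)=\alpha$.

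Next I would locate $M(t)$ in the left component. By hypothesis $M([0,T])\subset[0,a]\cup[b,\infty)$, and by continuity the preimages $F_1:=\{t:M(t)\le a\}$ and $F_2:=\{t:M(t)\ge b\}$ are closed, disjoint, and cover $[0,T]$. Connectedness of $[0,T]$ forces one of them to be empty. The assumption $M(0)\le x_\ast$ together with $\phi(M(0))\le\alpha$ puts $M(0)$ in the increasing branch $[0,x_\ast]$ and in the sub-level set, so $M(0)\in[0,a]$; thus $F_1\neq\emptyset$, and hence $F_2=\emptyset$. Therefore $M(t)\le a$ for every $t\in[0,T]$.

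Finally I would estimate $a$ itself. Since $a\le x_\ast=(\theta\beta)^{-1/(\theta-1)}$, one has $\beta a^{\theta-1}\le 1/\theta$, so $\beta a^{\theta}\le a/\theta$. Using $a=\alpha+\beta a^{\theta}$ (because $\phi(a)=\alpha$) gives $a\le\alpha+a/\theta$, hence $a\le\frac{\theta}{\theta-1}\alpha$, and the desired conclusion follows. The only substantive point in this classical argument is Step 2, where the initial-data assumption $M(0)\le (\theta\beta)^{-1/(\theta-1)}$ is tuned precisely so that $M(0)$ falls into the lower component $[0,a]$ rather than the upper one $[b,\infty)$; once this is secured, connectedness and the elementary root estimate do the rest.
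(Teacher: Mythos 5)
Your proof is correct and complete: the reduction to $\phi(M(t))\le\alpha$ with $\phi(x)=x-\beta x^{\theta}$, the identification of the sub-level set as two disjoint closed intervals separated by $x_\ast=(\theta\beta)^{-1/(\theta-1)}$, the connectedness argument placing $M([0,T])$ in the lower component, and the final root estimate $a\le\frac{\theta}{\theta-1}\alpha$ are all carried out accurately. The paper does not prove this lemma but simply cites \cite[Lemma 2.7]{RC}; your argument is the standard one underlying that reference, so there is no substantive divergence to report.
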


\section{Strong instability of standing waves without partial confine}

In this section, we shall prove the strong instability of standing waves for \eqref{e0} without magnetic potential (i.e. $b=0$), in the mass-critical and supercritical cases. To begin with, we prove a fundamental lemma:
\begin{lemma}\label{lemma jihexingzhi}
Let $\lambda_1\geq 0$, $\lambda_2> 0$, $\lambda_3>0$, $\omega>0$, $\frac{4}{3}\leq p<4$. Then for any given $u\in H^1\backslash \{0\}$ if $\frac{4}{3}<p<4$ or $u \in H^1\backslash \{0\}$ satisfying that $\sup_{\lambda>0}S_\omega(u_\lambda)<\infty$ if $p=\frac{4}{3}$, there exists a unique $\lambda_u>0$ such that $Q(u_{\lambda_u})=0$. Moreover, we have $S_\omega(u_{\lambda_u})=\max_{\lambda>0}S_\omega(u_{\lambda})$ and the function $\lambda\mapsto S_\omega(u_{\lambda})$ is
concave on $[\lambda_u,\infty)$. Moreover, if $Q(u)\leq 0$, then $\lambda_u\in(0,1]$. Here $u_{\lambda}(x):=\lambda^{\frac{3}{2}}u(\lambda x), \lambda>0$.
\end{lemma}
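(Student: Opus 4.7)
The plan is to reduce everything to an analysis of the one-variable function $f(\lambda) := S_\omega(u_\lambda)$. Using the $L^2$-invariance of the scaling $u_\lambda(x) = \lambda^{3/2} u(\lambda x)$, a direct calculation gives
$$f(\lambda) = \tfrac{\lambda^2}{2} A + \tfrac{1}{2} B + \tfrac{\lambda}{2} C - \lambda^{3p/2} D,$$
where $A := \|\nabla u\|_{L^2}^2$, $B := \omega \|u\|_{L^2}^2$, $C := \lambda_1 \int_{\mathbb{R}^3} |u|^2/|x|\, dx + \tfrac{\lambda_2}{2} \int_{\mathbb{R}^3} (|x|^{-1} \ast |u|^2) |u|^2\, dx$, and $D := \tfrac{\lambda_3}{p+2} \|u\|_{L^{p+2}}^{p+2}$. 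Under the hypotheses $A,B,D>0$, and $C>0$ since $\lambda_2>0$ forces the Hartree term to be strictly positive for $u \neq 0$. Differentiating the definition of $Q$ under the scaling yields the identity $Q(u_\lambda) = \lambda f'(\lambda)$, so locating $\lambda_u > 0$ with $Q(u_{\lambda_u}) = 0$ is the same as finding a positive zero of $f'$.

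Existence and uniqueness of this zero split naturally according to whether $p > 4/3$ or $p = 4/3$. In the mass-supercritical case $p > 4/3$, one has $f'(\lambda) \to C/2 > 0$ as $\lambda \to 0^+$ and $f'(\lambda) \to -\infty$ as $\lambda \to \infty$, so a zero exists; rewriting $f'(\lambda) = 0$ as $A + C/(2\lambda) = \tfrac{3p}{2} D \lambda^{3p/2 - 2}$, the left side is strictly decreasing and the right side strictly increasing in $\lambda$ (as $3p/2-2>0$), giving uniqueness. In the mass-critical case $p = 4/3$, $f'(\lambda) = (A - 2D)\lambda + C/2$ is linear, and the extra hypothesis $\sup_{\lambda>0} S_\omega(u_\lambda) < \infty$ forces $A - 2D < 0$ (otherwise the dominant $\lambda^2$ or $\lambda$ term sends $f$ to $+\infty$), so $f'$ is strictly decreasing with positive intercept and again has a unique positive zero $\lambda_u$.

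To identify $\lambda_u$ as the global maximum of $f$, I would eliminate $D$ from
$$f''(\lambda) = A - \tfrac{3p}{2}\bigl(\tfrac{3p}{2} - 1\bigr) \lambda^{3p/2 - 2} D$$
by using $\tfrac{3p}{2} D \lambda_u^{3p/2 - 2} = A + C/(2\lambda_u)$ (the equation $f'(\lambda_u) = 0$ divided by $\lambda_u$), obtaining
$$f''(\lambda_u) = \tfrac{4 - 3p}{2}\, A - \tfrac{3p - 2}{2} \cdot \tfrac{C}{2\lambda_u} < 0,$$
for every $p \in [4/3, 4)$. Combined with uniqueness of the critical point, this makes $\lambda_u$ the global maximum. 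For concavity on $[\lambda_u, \infty)$: when $p > 4/3$, $f''$ is strictly decreasing, so $f''(\lambda) \leq f''(\lambda_u) < 0$ for $\lambda \geq \lambda_u$; when $p = 4/3$, $f''(\lambda) \equiv A - 2D < 0$ is already a negative constant. Finally, if $Q(u) \leq 0$ then $f'(1) = Q(u) \leq 0$, and since $f' > 0$ on $(0, \lambda_u)$ and $f' \leq 0$ on $[\lambda_u, \infty)$, this forces $\lambda_u \leq 1$.

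The main technical subtlety is the case split at $p = 4/3$: without the assumption $\sup_\lambda S_\omega(u_\lambda) < \infty$ one cannot rule out $A - 2D \geq 0$, in which case $Q(u_\lambda) > 0$ for all $\lambda > 0$ and no $\lambda_u$ exists. Beyond this, the argument is an elementary calculus exercise on the explicit function $f(\lambda)$.
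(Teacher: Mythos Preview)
Your proof is correct and follows essentially the same approach as the paper: both reduce the problem to analyzing the one-variable function $f(\lambda)=S_\omega(u_\lambda)$, use the identity $Q(u_\lambda)=\lambda f'(\lambda)$, split into the cases $p>4/3$ and $p=4/3$, and establish concavity on $[\lambda_u,\infty)$ by combining the expression for $f''$ with the relation $f'(\lambda_u)=0$. Your write-up is somewhat more explicit about the uniqueness argument (comparing a decreasing side against an increasing side) and the sign analysis of $f''(\lambda_u)$, but the underlying ideas coincide.
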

\textbf{Remark 3.1.} Note that when $p=\frac{4}{3}$, the additional
 condition $\sup_{\lambda>0}S_\omega(u_\lambda)<\infty$ is satisfied if
  and only if
   $\|\nabla u\|_{L^2}^2 < \frac{3\lambda_3}{5}\int_{\mathbb{R}^3} |u|^{\frac{10}{3}}dx$,
    which implies that $\|u\|_{L^2}^2>\lambda_3^{-\frac{3}{2}}\|R\|_{L^2}^2$, where $R$ is the unique ground state of \eqref{elliptic classical}.
%\textbf{Remark 3.1.} When $p=\frac{4}{3}$, it follows from \eqref{sharp gn} that the assumption  $\sup_{\lambda>0}S_\omega(u_\lambda)<\infty$ if and only if $\|\nabla u\|_{L^2}^2<\frac{3\lambda_3p}{2(p+2)}\|u\|_{L^{p+2}}^{p+2}\leq \lambda_3\frac{\|u\|_{L^2}^p}{\|R\|_{L^2}^p}\|\nabla u\|_{L^{2}}^{2}$. This implies that the assumption  $\sup_{\lambda>0}S_\omega(u_\lambda)<\infty$ is equivalent to $\|u\|_{L^2}>\lambda_3^{-1/p}\|R\|_{L^2}$.
\begin{proof}
For any $u\in H^1\setminus \{0\}$, we have
\begin{align*}\label{su}
\frac{d}{d\lambda}S_\omega(u_\lambda)= \lambda\|\nabla u\|_{L^2}^2+\frac{\lambda_1}{2}\int_{\mathbb{R}^3} \frac{|u|^{2}}{|x|}dx
+\frac{\lambda_2}{4}\int_{\mathbb{R}^3} (|x|^{-1}\ast|u|^{2})|u|^{2}dx -\frac{\lambda_3p^*\lambda^{p^*-1}}{p+2}\|u\|_{L^{p+2}}^{p+2}
=\frac{1}{\lambda}Q(u_\lambda),
\end{align*}
where $p^*=\frac{3p}{2}$.
When $p>\frac{4}{3}$, it easily follows that there exists an unique $\lambda_u$ such that
$Q(u_{\lambda_u})=0$ and also that
\begin{equation}\label{action daoshu}
\frac{d}{d\lambda}S_\omega(u_\lambda)>0~~\mbox{if}~\lambda\in(0,\lambda_u),~~\mbox{and}~~ \frac{d}{d\lambda}S_\omega(u_\lambda)<0~~\mbox{if}~\lambda\in(\lambda_u,\infty).
\end{equation}
from which we deduce that $S_\omega(u_\lambda)<S_\omega(u_{\lambda_u})$ for any $\lambda>0$ and $\lambda\neq \lambda_u$.

When $p=\frac{4}{3}$, note that the condition $\sup_{\lambda>0}S_\omega(u_\lambda)<\infty$ holds if and only if
\begin{eqnarray}\label{3.2}
\|\nabla u\|_{L^2}^2 < \frac{p^*\lambda_3}{p+2}\|u\|_{L^{p+2}}^{p+2}=\frac{3\lambda_3}{5}\|u\|_{L^{\frac{10}{3}}}^{\frac{10}{3}},
\end{eqnarray}
and thus there also exists an unique $\lambda_u$ such that
$Q(u_{\lambda_u})=0$ and \eqref{action daoshu} holds.
Now writing $\lambda=s\lambda_u$, we have
\begin{align*}\label{su}
\frac{d^2}{d\lambda^2}S_\omega(u_\lambda)=& \|\nabla u\|_{L^2}^2 -\lambda_3p^*(p^*-1)
\frac{s^{p^*-2}\lambda_u^{p^*-2}}{p+2}\|u\|_{L^{p+2}}^{p+2}\nonumber\\
=&\frac{1}{\lambda_u^2}[\lambda_u^2\|\nabla u\|_{L^2}^2 -\lambda_3p^*(p^*-1)
\frac{s^{p^*-2}\lambda_u^{p^*}}{p+2}\|u\|_{L^{p+2}}^{p+2}].
\end{align*}
Since
\[
Q(u_{\lambda_u})= \lambda_u^2\|\nabla u\|_{L^2}^2+\frac{\lambda_1\lambda_u}{2}\int_{\mathbb{R}^3} \frac{|u|^{2}}{|x|}dx
+\frac{\lambda_2\lambda_u}{4}\int_{\mathbb{R}^3} (|x|^{-1}\ast|u|^{2})|u|^{2}dx -\lambda_3\frac{p^*\lambda_u^{p^*}}{p+2}\|u\|_{L^{p+2}}^{p+2}=0,
\]
we infer that
$\frac{d^2}{d^2\lambda}S_\omega(u_\lambda)<0$ for any $\lambda\geq \lambda_u$. This proves the lemma.
\end{proof}

Next, we prove Proposition \ref{proposition ground state no}.
 We first solve the minimization problem \eqref{minimization ground no}. To do this, we consider the following minimization problem:
\begin{equation}\label{minimization dengjia no}
\widetilde{d}(\omega)=\inf\{\widetilde{S}_\omega(v):~v\in H_r^1\backslash \{0\},~~Q(v)\leq 0\},
\end{equation}
where
\begin{align}\label{action dengjia no}
&\widetilde{S}_\omega(v)=S_\omega(v)-\frac{2}{3p}Q(v) \nonumber\\&=\frac{3p-4}{6p}\|\nabla v\|^2_{L^2}+\frac{\omega}{2}\|v\|^2_{L^2}+\lambda_1\frac{3p-2}{6p}\int_{\mathbb{R}^3} \frac{|v|^{2}}{|x|}dx+\lambda_2\frac{3p-2}{12p}\int_{\mathbb{R}^3} (|x|^{-1}\ast|v|^{2})|v|^{2}dx.
\end{align}

Then we have
\begin{lemma}\label{lemma dengjiaxing}
Let $\lambda_1\geq 0$, $\lambda_2,\lambda_3>0$, $\omega>0$, and $\frac{4}{3}\leq p<4$. Then there holds that
\begin{equation}\label{minimization problemx no}
\widetilde{d}(\omega)=\inf \{\widetilde{S}_\omega(v):~v\in H_r^1\backslash \{0\},~Q(v)=0\}=d(\omega).
\end{equation}
\end{lemma}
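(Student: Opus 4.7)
The proof has two inequalities to establish, of which one is essentially free. On the set $\{v\in H^1_r\setminus\{0\}:~Q(v)=0\}$ the two functionals coincide, since $\widetilde{S}_\omega(v)=S_\omega(v)-\frac{2}{3p}Q(v)=S_\omega(v)$. This immediately yields the second equality in \eqref{minimization problemx no}, and, because $\{Q(v)=0\}\subset\{Q(v)\leq 0\}$, it also gives $\widetilde{d}(\omega)\leq d(\omega)$ at once.

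For the nontrivial direction $\widetilde{d}(\omega)\geq d(\omega)$, I would pick an arbitrary $v\in H^1_r\setminus\{0\}$ with $Q(v)\leq 0$ and scale it by $v_\lambda(x)=\lambda^{3/2}v(\lambda x)$. The plan is to invoke Lemma \ref{lemma jihexingzhi} to produce some $\lambda_v\in(0,1]$ with $Q(v_{\lambda_v})=0$, so that $v_{\lambda_v}$ is admissible for $d(\omega)$, and then to compare $\widetilde{S}_\omega(v)$ with $\widetilde{S}_\omega(v_{\lambda_v})$ via the monotonicity of $\lambda\mapsto\widetilde{S}_\omega(v_\lambda)$. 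The key observation, and the whole reason for introducing $\widetilde{S}_\omega$, is that the subtraction $-\frac{2}{3p}Q$ \emph{kills} the nonlinear $L^{p+2}$-term, so from the explicit expression \eqref{action dengjia no} one has
\begin{align*}
\widetilde{S}_\omega(v_\lambda)=\frac{3p-4}{6p}\lambda^{2}\|\nabla v\|^{2}_{L^{2}}
&+\frac{\lambda_1(3p-2)}{6p}\lambda\int_{\mathbb{R}^3}\frac{|v|^2}{|x|}dx \\
&+\frac{\lambda_2(3p-2)}{12p}\lambda\int_{\mathbb{R}^3}(|x|^{-1}\ast|v|^2)|v|^2\,dx+\frac{\omega}{2}\|v\|^{2}_{L^{2}},
\end{align*}
and under $\lambda_1\geq 0$, $\lambda_2,\lambda_3>0$, $\omega>0$, $p\geq\frac{4}{3}$ every coefficient is nonnegative, so $\lambda\mapsto\widetilde{S}_\omega(v_\lambda)$ is nondecreasing on $(0,\infty)$. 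With $\lambda_v\in(0,1]$ one therefore gets $\widetilde{S}_\omega(v_{\lambda_v})\leq\widetilde{S}_\omega(v)$, and since $Q(v_{\lambda_v})=0$ one concludes $\widetilde{S}_\omega(v_{\lambda_v})=S_\omega(v_{\lambda_v})\geq d(\omega)$. Taking the infimum over admissible $v$ yields $\widetilde{d}(\omega)\geq d(\omega)$.

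The one delicate point, and the step I expect to require the most care, is verifying the hypothesis of Lemma \ref{lemma jihexingzhi} in the mass-critical case $p=\frac{4}{3}$: there Lemma \ref{lemma jihexingzhi} demands additionally that $\sup_{\lambda>0}S_\omega(v_\lambda)<\infty$, equivalently (see Remark 3.1) that $\|\nabla v\|^{2}_{L^{2}}<\frac{3\lambda_3}{5}\|v\|^{10/3}_{L^{10/3}}$. But in this case $Q(v)\leq 0$ reads
\[
\|\nabla v\|^{2}_{L^{2}}+\frac{\lambda_1}{2}\int_{\mathbb{R}^3}\frac{|v|^2}{|x|}dx+\frac{\lambda_2}{4}\int_{\mathbb{R}^3}(|x|^{-1}\ast|v|^2)|v|^2dx\leq\frac{3\lambda_3}{5}\|v\|^{10/3}_{L^{10/3}},
\]
and since $\lambda_1\geq 0$ and $\lambda_2>0$, the required strict inequality $\|\nabla v\|^{2}_{L^{2}}<\frac{3\lambda_3}{5}\|v\|^{10/3}_{L^{10/3}}$ is automatic (one has to rule out the degenerate equality case, which forces $v\equiv 0$, contradicting $v\in H^1_r\setminus\{0\}$). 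Once this check is made, the scaling argument applies uniformly in $p\in[\frac{4}{3},4)$ and the two inequalities combine to yield $\widetilde{d}(\omega)=d(\omega)$.
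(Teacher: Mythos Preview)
Your proof is correct and follows essentially the same approach as the paper: scale $v$ to land on $\{Q=0\}$ and use the monotonicity of $\lambda\mapsto\widetilde{S}_\omega(v_\lambda)$ (which is immediate from \eqref{action dengjia no}). The only minor difference is that the paper obtains the scaling factor directly via the intermediate value theorem (noting $Q(v_\lambda)>0$ for small $\lambda$, since $\lambda_2>0$ makes the order-$\lambda$ Hartree term dominate), rather than through Lemma \ref{lemma jihexingzhi}, thereby bypassing the $p=\tfrac{4}{3}$ verification that you (correctly) supply.
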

\begin{proof}
Indeed, if $Q(v)<0$, since $Q(v_\lambda)>0$
for sufficiently small $\lambda>0$, then by continuity, there exists $\lambda_0\in (0,1)$ such that
$Q(v_{\lambda_0})=0$. Moreover, it follows that $\widetilde{S}_\omega(v_{\lambda_0})<\widetilde{S}_\omega(v)$.
%\begin{align*}
%\widetilde{S}_\omega(v_{\lambda_0})&=\frac{3p-4}{6p}\|\nabla v\|^2_{L^2}+\frac{\omega}{2}\|v\|^2_{L^2}+\lambda_1\frac{3p-2}{6p}\int_{\mathbb{R}^3} \frac{|v|^{2}}{|x|}dx+\lambda_2\frac{3p-2}{12p}\int_{\mathbb{R}^3} (|x|^{-1}\ast|v|^{2})|v|^{2}dx\\\leq &\frac{3p-4}{6p}\|\nabla v\|^2_{L^2}+\lambda_1\frac{3p-2}{6p}\int_{\mathbb{R}^3} \frac{|v|^{2}}{|x|}dx+\lambda_2\frac{3p-2}{12p}\int_{\mathbb{R}^3} (|x|^{-1}\ast|v|^{2})|v|^{2}dx=\widetilde{E}(v).
%\end{align*}
This implies that \eqref{minimization problemx no} holds.
\end{proof}

\begin{lemma}\label{lemma minimization problem}
 Let $\lambda_1\geq 0$, $\lambda_2,\lambda_3>0$, $\omega>0$, and $\frac{4}{3}\leq p<4$. Then there exists $u\in H_r^1\backslash \{0\}$, $Q(u)=0$ and  $\widetilde{S}_\omega(u)=\widetilde{d}(\omega)$.
\end{lemma}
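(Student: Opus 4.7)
The plan is to apply the direct method of the calculus of variations to \eqref{minimization dengjia no}, exploiting the radial symmetry to recover compactness for the subcritical terms. Let $\{u_n\} \subset H_r^1 \setminus \{0\}$ be a minimizing sequence with $Q(u_n) \leq 0$ and $\widetilde{S}_\omega(u_n) \to \widetilde{d}(\omega)$.

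First I would show that $\widetilde{d}(\omega) > 0$ and $\liminf_n \|u_n\|_{L^{p+2}} > 0$. Dropping the nonnegative Coulomb and Hartree terms from $Q$, the constraint gives $\|\nabla u_n\|_{L^2}^2 \leq \tfrac{3\lambda_3 p}{2(p+2)}\|u_n\|_{L^{p+2}}^{p+2}$, and combined with the Gagliardo--Nirenberg inequality $\|u_n\|_{L^{p+2}}^{p+2} \leq C\|\nabla u_n\|_{L^2}^{3p/2}\|u_n\|_{L^2}^{2-p/2}$, this yields a strictly positive lower bound on $\|u_n\|_{H^1}$ (on $\|\nabla u_n\|_{L^2}$ in the supercritical case $p > 4/3$, on $\|u_n\|_{L^2}$ in the critical case $p = 4/3$), and consequently on $\widetilde{S}_\omega(u_n)$ and on $\|u_n\|_{L^{p+2}}$. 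Next I would establish boundedness of $\{u_n\}$ in $H^1$. When $p > 4/3$ the coefficient $\tfrac{3p-4}{6p}$ in $\widetilde{S}_\omega$ is strictly positive, so together with $\omega > 0$ one has the direct coerciveness $\widetilde{S}_\omega(u_n) \gtrsim \|u_n\|_{H^1}^2$. When $p = 4/3$ this coefficient vanishes and the gradient is not directly controlled, so I would apply a rescaling $u_n \mapsto (u_n)_{\lambda_n}$ with a suitable $\lambda_n \in (0,1]$: since $\widetilde{S}_\omega((u_n)_\lambda)$ is nondecreasing in $\lambda$ (its mass term is $\lambda$-independent while the Coulomb and Hartree contributions carry nonnegative powers of $\lambda$) and $Q((u_n)_\lambda) = \lambda^2 A_n + \lambda B_n \leq 0$ is preserved as long as $\lambda \in [-B_n/A_n,1]$, where $A_n := \|\nabla u_n\|_{L^2}^2 - \tfrac{3\lambda_3}{5}\|u_n\|_{L^{10/3}}^{10/3} \leq 0$ and $B_n \geq 0$ collects the Coulomb and Hartree integrals, a judicious choice of $\lambda_n$ produces a still-minimizing sequence bounded in $H^1$.

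With boundedness in hand, extract a subsequence with $u_n \rightharpoonup u$ weakly in $H_r^1$. The compact embedding $H_r^1 \hookrightarrow L^q(\mathbb{R}^3)$ for $2 < q < 6$ gives strong $L^{p+2}$-convergence, which together with the lower bound $\|u_n\|_{L^{p+2}} \geq \delta > 0$ forces $u \neq 0$. The same radial compactness plus H\"older (using $|x|^{-1} \in L^{12/7}_{\mathrm{loc}} + L^\infty(\{|x|>1\})$) handles the Coulomb integral $\int |u_n|^2/|x|\,dx$, and the Hartree term converges via Lemma \ref{lemma Brezis-Lieb lemma Hartree} combined with the Hardy--Littlewood--Sobolev inequality. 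Weak lower semicontinuity of the gradient and mass norms then gives $\widetilde{S}_\omega(u) \leq \liminf_n \widetilde{S}_\omega(u_n) = \widetilde{d}(\omega)$ and $Q(u) \leq \liminf_n Q(u_n) \leq 0$, so $u$ is admissible and attains the infimum. To upgrade to $Q(u) = 0$, I would re-run the argument of Lemma \ref{lemma dengjiaxing}: if $Q(u) < 0$ there is $\lambda_0 \in (0,1)$ with $Q(u_{\lambda_0}) = 0$, and the explicit $\lambda$-monotonicity of $\widetilde{S}_\omega(u_\lambda)$ (strict from the gradient term when $p > 4/3$, and from the Hartree term with $\lambda_2 > 0$ when $p = 4/3$) yields $\widetilde{S}_\omega(u_{\lambda_0}) < \widetilde{S}_\omega(u) = \widetilde{d}(\omega)$, contradicting minimality.

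The main obstacle I anticipate is the mass-critical endpoint $p = 4/3$, where $\widetilde{S}_\omega$ loses coercive control over the gradient and the boundedness of the minimizing sequence must be forced via the scaling reduction above rather than read off the energy directly; this is also where the strict monotonicity needed in the final step relies essentially on $\lambda_2 > 0$. A secondary delicate point is the weak continuity of the Coulomb integral, which is not covered by the standard Hartree-type Brezis--Lieb lemma and requires separate justification using radial compactness together with the specific local integrability of $|x|^{-1}$ at the origin.
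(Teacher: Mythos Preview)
Your argument is sound for $p>\tfrac{4}{3}$ and, modulo the stylistic choice of using radial compactness where the paper prefers a Brezis--Lieb splitting (the latter being transportable to non-radial translation-invariant settings), it follows the paper's line. The genuine gap is at the endpoint $p=\tfrac{4}{3}$, and it shows up in two places.

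First, the rescaling manoeuvre you propose for boundedness does not close. With $A_n=\|\nabla u_n\|_{L^2}^2-\tfrac{3\lambda_3}{5}\|u_n\|_{L^{10/3}}^{10/3}$ and $B_n\ge 0$ the Coulomb-plus-Hartree part, admissibility forces $\lambda_n\in[B_n/|A_n|,1]$, so bounding $\lambda_n\|\nabla u_n\|_{L^2}$ requires $B_n\|\nabla u_n\|_{L^2}\lesssim|A_n|$. From $Q(u_n)\le0$ you only know $|A_n|\ge B_n$, and nothing prevents $|A_n|$ from staying bounded (near $B_n$) while $\|\nabla u_n\|_{L^2}\to\infty$: the sharp Gagliardo--Nirenberg inequality allows $\tfrac{3\lambda_3}{5}\|u_n\|_{L^{10/3}}^{10/3}$ to track $\|\nabla u_n\|_{L^2}^2$ arbitrarily closely once the mass exceeds $\lambda_3^{-3/2}\|R\|_{L^2}^2$. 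Second, your non-vanishing claim for $p=\tfrac{4}{3}$ is incomplete: a lower bound on the mass together with $Q(u_n)\le0$ does not rule out $\|u_n\|_{L^{10/3}}\to0$ (and hence $\|\nabla u_n\|_{L^2}\to0$), since Gagliardo--Nirenberg here only gives the self-consistent bound $\|u_n\|_{L^{10/3}}^{10/3}\lesssim\|\nabla u_n\|_{L^2}^2$ with no contradiction.

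The paper fills both gaps with one structural inequality coming from the Poisson equation $-\Delta(|x|^{-1}\ast|u_n|^2)=4\pi|u_n|^2$: a variational comparison yields
\[
\|u_n\|_{L^3}\le C\,\|\nabla u_n\|_{L^2}^{1/3}\Big(\int_{\mathbb{R}^3}(|x|^{-1}\ast|u_n|^{2})|u_n|^{2}\,dx\Big)^{1/6}.
\]
Interpolating $\|u_n\|_{L^{10/3}}^{10/3}\le C\|u_n\|_{L^3}^{8/3}\|\nabla u_n\|_{L^2}^{2/3}$ and inserting this into $Q(u_n)\le0$ gives $\|\nabla u_n\|_{L^2}^{4/9}\le C\big(\text{Hartree}\big)^{4/9}$; since the Hartree integral is bounded by $\widetilde{S}_\omega(u_n)$, the gradient is bounded outright and no rescaling is needed. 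The same inequality (combined with $\|u_n\|_{L^{10/3}}^{10/3}\le C\|u_n\|_{L^3}^3+C\|u_n\|_{L^5}^5$) is what drives the paper's non-vanishing argument in the critical case. This is the missing ingredient in your treatment of $p=\tfrac{4}{3}$; once you have it, the rest of your outline goes through.
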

\begin{proof}
Let $\{v_n\}$ be a minimizing sequence for \eqref{minimization dengjia no}, i.e.
$\{v_n\}\subset H_r^1\backslash\{0\}$, $Q(v_n)\leq0$ and $\widetilde{S}_\omega(v_n)\rightarrow \widetilde{d}(\omega)$ as $n\rightarrow \infty$.
When $\frac{4}{3}< p<4$, we deduce from \eqref{action dengjia no} that for $n$ large enough,
\[
\widetilde{d}(\omega)+1\geq \widetilde{S}_\omega(v_n)=S_\omega(v_n)-\frac{2}{3p}Q(v_n) \geq\frac{3p-4}{6p}\|\nabla v_n\|^2_{L^2}+\frac{\omega}{2}\|v_n\|^2_{L^2},
\]
which implies that $\{v_n\}$ is bounded in $H_r^1$.

When $p=\frac{4}{3}$, we see from \eqref{action dengjia no} that there exists $C_1>0$
such that
 \begin{equation}\label{bound hartree}
\| v_n\|_{L^2}^2+\frac{\lambda_1}{2}\int_{\mathbb{R}^3} \frac{|v_n|^{2}}{|x|}dx+\frac{\lambda_2}{4}\int_{\mathbb{R}^3} (|x|^{-1}\ast|v_n|^{2})|v_n|^{2}dx\leq C_1.
\end{equation}
Let $V_n=|x|^{-1}\ast|v_n|^{2}$, then
\[
-\Delta V_n=4\pi |v_n(x)|^{2}.
\]
This implies that $V_n$ is a minimizer of the following variational problem:
\[
\inf_{\varphi\in D^{1,2}}\left\{\frac{1}{2}\int_{\mathbb{R}^3}|\nabla \varphi(x)|^2dx-4\pi \int_{\mathbb{R}^3}|v_n(x)|^2\varphi (x)dx\right\}.
\]
Taking $\varphi (x)=\frac{4\pi\|v_n\|_{L^3}^3}{\|\nabla v_n\|_{L^{2}}^2} |v_n(x)|$, it follows that
\begin{align}\label{inequality}
\|v_n\|_{L^{3}}\leq C\|\nabla v_n\|_{L^{2}}^{1/3}\left(\int_{\mathbb{R}^3} (|x|^{-1}\ast|v_n|^{2})|v_n|^{2}dx\right)^{1/6}.
\end{align}
 Thus, it follows from $Q(v_n)\leq 0$ that there exists $C>0$ such that
  \begin{align}\label{bound}
&\|\nabla v_n\|_{L^2}^2+\frac{\lambda_1}{2}\int_{\mathbb{R}^3} \frac{|v_n|^{2}}{|x|}dx+\frac{\lambda_2}{4}\int_{\mathbb{R}^3} (|x|^{-1}\ast|v_n|^{2})|v_n|^{2}dx\nonumber\\\leq &\frac{3\lambda_3}{5}\int_{\mathbb{R}^3} |v_n|^{\frac{10}{3}}dx\leq C\|v_n\|_{L^{3}}^{8/3}\|\nabla v_n\|_{L^{2}}^{2/3}\nonumber\\\leq &C\|\nabla v_n\|_{L^{2}}^{14/9}\left(\int_{\mathbb{R}^3} (|x|^{-1}\ast|v_n|^{2})|v_n|^{2}dx\right)^{4/9}.
\end{align}
This, together with \eqref{bound hartree}, implies
\[
\|\nabla v_n\|_{L^2}^{4/9}\leq C\left(\int_{\mathbb{R}^3} (|x|^{-1}\ast|v_n|^{2})|v_n|^{2}dx\right)^{4/9}\leq C_2.
\]
This implies that $\{v_n\}$ is bounded in $H_r^1$ in the mass-critical case.

Next, we claim that $\liminf_{n\rightarrow \infty}\|v_n\|_{L^{p+2}}^{p+2}>0$. Indeed, if $\|v_n\|_{L^{p+2}}^{p+2}\rightarrow 0$ as $n\rightarrow \infty$, we deduce from $Q(v_n)\leq0$ that $\|\nabla v_n\|_{L^{2}}\rightarrow 0$.
When $\frac{4}{3}<p<4$, it follows from $Q(v_n)\leq 0$ and H\"{o}lder's inequality that
\[
\|\nabla v_n\|_{L^{2}}^2\leq C\|v_n\|_{L^{p+2}}^{p+2}\leq C\|\nabla v_n\|_{L^{2}}^{\frac{3p}{2}}\|v_n\|_{L^{2}}^{p+2-\frac{3p}{2}}\leq C\|\nabla v_n\|_{L^{2}}^{\frac{3p}{2}},
\]
which is a contradiction with $\|\nabla v_n\|_{L^{2}}\rightarrow 0$.

When $p=\frac{4}{3}$, we deduce form $Q(v_n)\leq0$ and the H\"{o}lder inequality that
\begin{align*}
\|\nabla v_n\|_{L^{2}}^2&\leq C\|v_n\|_{L^{\frac{10}{3}}}^{10/3}\leq C_3\|v_n\|_{L^3}^3+C_4\|v_n\|_{L^{5}}^5\\&\leq C_5\|\nabla v_n\|_{L^{2}}\left(\int_{\mathbb{R}^3} (|x|^{-1}\ast|v_n|^{2})|v_n|^{2}dx\right)^{1/2}+C_4\|v_n\|_{L^{5}}^5,
\end{align*}
which, together with $\|\nabla v_n\|_{L^{2}}\rightarrow 0$ implies that
\[
\|\nabla v_n\|_{L^{2}}^2\leq C_4\|v_n\|_{L^{5}}^5\leq C_6\|v_n\|_{L^{2}}^{1/2}\|\nabla v_n\|_{L^{2}}^{9/2}\leq C_7\|\nabla v_n\|_{L^{2}}^{9/2},
\]
for some positive constants $C_4$, $C_5$, $C_6$, $C_7$.
However, this contradicts with $\|\nabla v_n\|_{L^{2}}\rightarrow 0$.
Thus, we obtain
\[
\liminf_{n\rightarrow \infty}\|v_n\|_{L^{p+2}}^{p+2}>0,
\]
for all $\frac{4}{3}\leq p<4$.
Therefore, there exist a subsequence, still denoted by $\{v_n\}$ and $u\in H_r^1\backslash \{0\}$ such that
\[
v_n\rightharpoonup u\neq 0~~\mbox{weakly~in} ~H_r^1.
\]
Moreover, we deduce from Lemmas \ref{lemma Brezis-Lieb} and \ref{lemma Brezis-Lieb lemma Hartree} that
\begin{equation}\label{bl1no}
Q(u_n)-Q(u_n-u)-Q(u)\rightarrow 0,
\end{equation}
and
\begin{equation}\label{bl2no}
\widetilde{S}_\omega(u_n)-\widetilde{S}_\omega(u_n-u)-\widetilde{S}_\omega(u)\rightarrow 0.
\end{equation}
Now, we claim that $Q(u)\leq 0$. If not, it follows from \eqref{bl1no} and
$Q(u_n)\leq 0$ that $Q(u_n-u)\leq0$ for sufficiently large $n$. Thus, by the definition of
$\widetilde{d}(\omega)$, it follows that
\[
\widetilde{S}_\omega(u_n-u)\geq \widetilde{d}(\omega),
\]
which, together with $\widetilde{S}_\omega(u_n)\rightarrow \widetilde{d}(\omega)$, implies that $\widetilde{S}_\omega(u)\leq 0$,
which is a contradiction with $\widetilde{S}_\omega(u)>0$. We thus obtain $Q(u)\leq 0$.

Furthermore, we deduce from the definition of
$\widetilde{d}(\omega)$ and the weak lower semicontinuity of norm that
\[
\widetilde{d}(\omega)\leq \widetilde{S}_\omega(u)\leq \liminf_{n\rightarrow \infty}\widetilde{S}_\omega(u_n)=\widetilde{d}(\omega).
\]
This yields that
\[
\widetilde{S}_\omega(u)=\widetilde{d}(\omega).
\]
Finally, it follows from Lemma \ref{lemma dengjiaxing} that
 $Q(u)=0$.
This completes the proof.
\end{proof}
By the fact $d(\omega)=\widetilde{d}(\omega)$ and this lemma, we can obtain the following Corollary.
\begin{corollary}\label{corollary no}
 Let $\lambda_1\geq 0$, $\lambda_2,\lambda_3>0$, $\omega>0$, and $\frac{4}{3}\leq p<4$. Then there exists $u\in H_r^1\backslash \{0\}$, $Q(u)=0$ and $S_\omega(u)=d(\omega)$.
\end{corollary}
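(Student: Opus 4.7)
The plan is to read off the corollary as an immediate consequence of the two preceding lemmas, so the argument is essentially a bookkeeping step rather than a new piece of analysis.

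First I would invoke Lemma \ref{lemma minimization problem} to produce a function $u\in H_r^1\setminus\{0\}$ with $Q(u)=0$ and $\widetilde{S}_\omega(u)=\widetilde{d}(\omega)$. This is the only place where real work (boundedness of the minimizing sequence, the non-vanishing claim $\liminf_n\|v_n\|_{L^{p+2}}>0$, and the Brezis--Lieb passage to the limit) is needed, and it has already been done.

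Next I would observe that on the constraint $\{Q=0\}$ the two functionals agree pointwise: from the definition
\begin{equation*}
\widetilde{S}_\omega(v) = S_\omega(v) - \frac{2}{3p}Q(v),
\end{equation*}
the identity $Q(u)=0$ gives $\widetilde{S}_\omega(u) = S_\omega(u)$. Combining this with $\widetilde{S}_\omega(u)=\widetilde{d}(\omega)$ from Lemma \ref{lemma minimization problem} and with the equality $\widetilde{d}(\omega)=d(\omega)$ furnished by Lemma \ref{lemma dengjiaxing}, I would conclude
\begin{equation*}
S_\omega(u) = \widetilde{S}_\omega(u) = \widetilde{d}(\omega) = d(\omega),
\end{equation*}
which together with $Q(u)=0$ and $u\neq 0$ is exactly the statement of the corollary.

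There is no genuine obstacle here; the only subtlety worth flagging is that the coincidence $\widetilde{S}_\omega=S_\omega$ is valid only because we land precisely on $Q(u)=0$, not merely on $Q(u)\le 0$, which is why Lemma \ref{lemma dengjiaxing} (showing the infimum over $Q\le 0$ is attained on $Q=0$) is needed rather than just the existence of a minimizer of $\widetilde{d}(\omega)$.
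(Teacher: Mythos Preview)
Your proposal is correct and follows exactly the paper's own route: the corollary is stated immediately after Lemma \ref{lemma minimization problem} with the one-line justification that $d(\omega)=\widetilde{d}(\omega)$ (Lemma \ref{lemma dengjiaxing}) together with the existence of a minimizer $u$ with $Q(u)=0$ and $\widetilde{S}_\omega(u)=\widetilde{d}(\omega)$ yields the result. Your remark that the coincidence $\widetilde{S}_\omega(u)=S_\omega(u)$ hinges on landing exactly on $Q(u)=0$ is a nice clarification but not an additional step beyond what the paper does.
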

\begin{lemma}\label{lemma daoshuweiling}
 Let $\lambda_1\geq 0$, $\lambda_2,\lambda_3>0$, $\omega>0$, and $\frac{4}{3}\leq p<4$. Assume that $u\in H_r^1\backslash \{0\}$ such that $Q(u)=0$ and $S_\omega(u)=d(\omega)$. Then $S'_\omega(u)=0$.
\end{lemma}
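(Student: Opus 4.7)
The plan is to use a Lagrange multiplier argument combined with the scaling $u_\lambda(x) = \lambda^{3/2} u(\lambda x)$. Since $u$ realizes the constrained minimum in \eqref{minimization ground no}, the natural expectation is that $S'_\omega(u) = \mu\, Q'(u)$ in $(H^1_r)^*$ for some $\mu \in \mathbb{R}$, and the task reduces to proving $\mu = 0$.

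First I would verify that the Lagrange multiplier theorem applies, i.e.\ that $Q'(u) \neq 0$. This will be a byproduct of the key computation below, so I just record the general fact and postpone the verification. Next, I would test the identity $S'_\omega(u) = \mu Q'(u)$ against the scaling direction
\[
v := \frac{d}{d\lambda} u_\lambda \Big|_{\lambda = 1} = \tfrac{3}{2} u + x \cdot \nabla u,
\]
so that, by the chain rule,
\[
\langle S'_\omega(u), v \rangle = \frac{d}{d\lambda} S_\omega(u_\lambda) \Big|_{\lambda = 1}, \qquad \langle Q'(u), v \rangle = \frac{d}{d\lambda} Q(u_\lambda) \Big|_{\lambda = 1}.
\]
Using the identity $\frac{d}{d\lambda} S_\omega(u_\lambda) = \frac{1}{\lambda} Q(u_\lambda)$ recorded in the proof of Lemma 3.1, the left-hand side equals $Q(u) = 0$, hence
\[
0 = \mu \cdot \frac{d}{d\lambda} Q(u_\lambda)\Big|_{\lambda=1}.
\]

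The remaining — and main — point is to show that $\frac{d}{d\lambda} Q(u_\lambda)|_{\lambda=1} \neq 0$, which simultaneously gives $Q'(u) \neq 0$ (justifying the Lagrange multiplier step) and forces $\mu = 0$. A direct differentiation of the explicit formula \eqref{qu} under the scaling $u_\lambda$ yields
\[
\frac{d}{d\lambda} Q(u_\lambda)\Big|_{\lambda=1}
= 2\|\nabla u\|_{L^2}^2 + \frac{\lambda_1}{2}\!\int_{\mathbb{R}^3}\!\frac{|u|^2}{|x|}\,dx
+ \frac{\lambda_2}{4}\!\int_{\mathbb{R}^3}\!(|x|^{-1}\ast |u|^2)|u|^2\,dx
- \frac{9p^2 \lambda_3}{4(p+2)} \|u\|_{L^{p+2}}^{p+2}.
\]
Plugging in the constraint $Q(u) = 0$ to eliminate the $L^{p+2}$-term, this simplifies to
\[
\frac{d}{d\lambda} Q(u_\lambda)\Big|_{\lambda=1}
= \frac{4-3p}{2}\|\nabla u\|_{L^2}^2
+ \frac{2-3p}{2}\Big[\frac{\lambda_1}{2}\!\int_{\mathbb{R}^3}\!\frac{|u|^2}{|x|}\,dx
+ \frac{\lambda_2}{4}\!\int_{\mathbb{R}^3}\!(|x|^{-1}\ast |u|^2)|u|^2\,dx\Big].
\]
Since $p \ge \frac{4}{3}$, both coefficients are $\le 0$; since $\lambda_2 > 0$ and $u \neq 0$ force the bracketed term to be strictly positive (as $2 - 3p \le -2 < 0$), the whole expression is strictly negative. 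This is the crucial sign, and it is the part where the hypothesis $\lambda_2 > 0$ (rather than just $\lambda_2 \ge 0$) is essential to cover the endpoint $p = \frac{4}{3}$ where the coefficient in front of $\|\nabla u\|^2$ vanishes.

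From $\mu \cdot (\text{non-zero quantity}) = 0$ we conclude $\mu = 0$, hence $S'_\omega(u) = 0$ as an element of $(H^1_r)^*$. To upgrade to the full space $H^1$, I would invoke Palais's principle of symmetric criticality: $S_\omega$ is invariant under the action of $O(3)$ on $H^1$ whose fixed point set is $H^1_r$, so any radial critical point of $S_\omega|_{H^1_r}$ is also a critical point of $S_\omega$ on $H^1$, i.e.\ a weak solution of \eqref{elliptic partial12}. The delicate step of the argument is the explicit identification of $\frac{d}{d\lambda}Q(u_\lambda)|_{\lambda=1}$ and the careful bookkeeping that shows its strict negativity under the standing hypotheses on the coefficients and the range of $p$.
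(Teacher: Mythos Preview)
Your argument is correct and is essentially the paper's proof repackaged: the paper writes out the Euler--Lagrange equation $S'_\omega(u)+\lambda Q'(u)=0$, multiplies separately by $\bar u$ and by $x\cdot\nabla\bar u$, and takes the combination $\tfrac{3}{2}(\cdot)-(\cdot)$, which is exactly your single test against the scaling direction $v=\tfrac{3}{2}u+x\cdot\nabla u$; the resulting nonzero quantity $\tfrac{3}{2}Q_1(u)-Q_2(u)$ coincides with your $\tfrac{d}{d\lambda}Q(u_\lambda)\big|_{\lambda=1}$. Your presentation via the chain rule is a bit more conceptual, but the content is identical.
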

\begin{proof}
Applying the Lagrange multiplier rule, there exists $\lambda\in\mathbb{R}$ such that $S'_\omega(u)+\lambda Q'(u)=0$. We need only to show $\lambda=0$.
Note that the equation $S'_\omega(u)+ \lambda Q'(u)=0$ can be written as
\begin{align}\label{equation lagrane0}
&-\Delta u+\omega u+\frac{\lambda_1}{|x|}u+ \lambda_2(|\cdot|^{-1}\ast |u|^2)u-
 \lambda_3|u|^p u\nonumber\\&+\lambda[-2\Delta u+\frac{\lambda_1}{|x|}u+ \lambda_2(|\cdot|^{-1}\ast |u|^2)u-
\frac{3\lambda_3p}{2}|u|^p u]=0.
\end{align}
Multiplying \eqref{equation lagrane0} by $\bar{u}$, then integrating the resulting equations
with respect to $x$ on $\mathbb{R}^3$, we get
\begin{equation}\label{0lagrane1}
S_1(u)+\lambda Q_1(u)=0,
\end{equation}
where $S_1(u)$ and $Q_1(u)$ are defined by
\begin{equation*}\label{su1}
S_1(u)=\|\nabla u\|_{L^2}^2+\omega\| u\|_{L^2}^2+\lambda_1\int_{\mathbb{R}^3} \frac{|u|^{2}}{|x|}dx+\lambda_2\int_{\mathbb{R}^3} (|x|^{-1}\ast|u|^{2})|u|^{2}dx-\|u\|_{L^{p+2}}^{p+2},
\end{equation*}
and
\begin{equation*}\label{qu1}
Q_1(u)=2\|\nabla u\|_{L^2}^2+\lambda_1\int_{\mathbb{R}^3} \frac{|u|^{2}}{|x|}dx+\lambda_2\int_{\mathbb{R}^3} (|x|^{-1}\ast|u|^{2})|u|^{2}dx-\frac{3\lambda_3p}{2}\|u\|_{L^{p+2}}^{p+2}.
\end{equation*}
On the other hand, multiplying \eqref{equation lagrane0} by $x\cdot\nabla \bar{u}$, then integrating
the resulting terms with respect to $x$ on $\mathbb{R}^3$, one
obtains
\begin{equation}\label{lagrane20}
S_2(u)+\lambda Q_2(u)=0,
\end{equation}
where $S_2(u)$ and $Q_2(u)$ are defined by
\begin{equation*}\label{su2}
S_2(u)=\frac{1}{2}\|\nabla u\|_{L^2}^2+\frac{3\omega}{2}\| u\|_{L^2}^2+\lambda_1\int_{\mathbb{R}^3} \frac{|u|^{2}}{|x|}dx+\frac{5\lambda_2}{4}\int_{\mathbb{R}^3} (|x|^{-1}\ast|u|^{2})|u|^{2}dx-\frac{3\lambda_3}{p+2}\|u\|_{L^{p+2}}^{p+2},
\end{equation*}
and
\begin{equation*}\label{qu2}
Q_2(u)=\|\nabla u\|_{L^2}^2+\lambda_1\int_{\mathbb{R}^3} \frac{|u|^{2}}{|x|}dx+\frac{5\lambda_2}{4}\int_{\mathbb{R}^3} (|x|^{-1}\ast|u|^{2})|u|^{2}dx-\frac{9\lambda_3p}{2(p+2)}\|u\|_{L^{p+2}}^{p+2}.
\end{equation*}
Note that
\begin{equation*}\label{lagrane30}
\frac{3}{2}S_1(u)- S_2(u)=Q(u)=0,
\end{equation*}
which, together with \eqref{0lagrane1} and \eqref{lagrane20}, implies  that
\begin{equation}\label{lagrane40}
\lambda \left(\frac{3}{2}Q_1(u)-Q_2(u)\right)=0.
\end{equation}
After some simple calculations, we have
\begin{equation*}\label{lagrane50}
\frac{3}{2}Q_1(u)-Q_2(u)=-\frac{\lambda_1}{2}\int_{\mathbb{R}^3} \frac{|u|^{2}}{|x|}dx-\frac{\lambda_2}{4}\int_{\mathbb{R}^3} (|x|^{-1}\ast|u|^{2})|u|^{2}dx-\lambda_3\frac{3p(3p-4)}{4(p+2)}\|u\|_{L^{p+2}}^{p+2}<0.
\end{equation*}
Thus, we obtain from \eqref{lagrane40} that $\lambda=0$. This completes the proof.
\end{proof}

\begin{proof}[\textbf{Proof of Proposition \ref{proposition ground state no}}]
Proposition \ref{proposition ground state no} follows immediately from Lemma \ref{lemma daoshuweiling}.
\end{proof}

\begin{proof}[\textbf{Proof of Theorem \ref{theorem sharp}.}]
We first prove that $\mathcal{A}_\omega$ and $\mathcal{B}_\omega$ are two invariant manifolds of \eqref{equation partial} with $b=0$.
Let $\psi_0\in \mathcal{A}_\omega$, by Proposition 2.1, we see that there
exists a unique solution $\psi\in C([0,T^*),H_r^1)$ with initial data $\psi_0$. We deduce from the conservations of mass and energy that
\begin{equation}\label{asc no}
S_\omega(\psi(t))=S_\omega(\psi_0)<S_\omega(u),
\end{equation}
for any $t\in [0,T^*)$. In addition, by the continuity of the function $t\mapsto Q(\psi(t))$ and
Corollary \ref{corollary no}, if there exists
$t_0\in[0,T^*)$ such that  $Q(\psi(t_0))= 0$, then $S_\omega(\psi(t_0))\geq S_\omega(u)$, which contradicts with \eqref{asc no}. Therefore, we have $Q(\psi(t))>0 $ for any $t\in [0,T^*)$. Similarly, we can prove that $ \mathcal{B}_\omega$ is invariant under the flow of \eqref{equation partial} with $b=0$.

Now, we prove $(1)$.
 If $\psi_0\in \mathcal{A}_\omega$,
then $Q(\psi(t))>0 $ for any $t\in [0,T^*)$.
Thus,
we deduce from the conservation of energy and \eqref{action dengjia no} that for all $\psi_0\in \mathcal{A}_\omega$
\begin{align}\label{action111}
S_\omega(u)> S_\omega(\psi(t))>S_\omega(\psi(t))-\frac{2}{3p}Q(\psi(t)) \geq \frac{3p-4}{6p}
\|\nabla \psi(t)\|^2_{L^2}
.
\end{align}
This implies that the solution $\psi(t)$ of \eqref{equation partial} exists globally.

Next, we prove $(2)$.
We first claim that if $v\in H^1$ satisfies $Q(v)<0$, then $Q(v)\leq 2(S_\omega(v)-S_\omega(u))$.
Indeed, due to $Q(v)<0$, it follows from Lemma \ref{lemma jihexingzhi}
there exists $\lambda_v\in (0,1)$ such that $Q(v_{\lambda_v})=0$.
Moreover, when $p\geq\frac{4}{3}$, the function
\begin{align*}
f(\lambda)=&S_\omega(v_\lambda)-\frac{\lambda^{2}}{2}Q(v)\\=&
\frac{\omega}{2}\int_{\mathbb{R}^3} |v|^{2}dx+\frac{\lambda_1(2\lambda-\lambda^2)}{4}\int_{\mathbb{R}^3} \frac{|v|^{2}}{|x|}dx
+\frac{\lambda_2(2\lambda-\lambda^2)}{8}\int_{\mathbb{R}^3} (|x|^{-1}\ast|v|^{2})|v|^{2}dx \\&-\frac{\lambda_3(4\lambda^{\frac{3p}{2}}-3p\lambda^2)}{4(p+2)}\|v\|_{L^{p+2}}^{p+2},
\end{align*}
attains its maximum at $\lambda=1$. Thus, it follows from $Q(v_{\lambda_v})=0$ and Proposition \ref{proposition ground state no} that
\[
S_\omega(u)\leq S_\omega(v_{\lambda_v})=
S_\omega(v_{\lambda_v})-\frac{\lambda_v^2}{2}Q(v_{\lambda_v})
<S_\omega(v)-\frac{1}{2}Q(v).
\]
Thus, when $\psi_0\in \mathcal{B}_\omega$,
 $Q(\psi(t))<0 $ and then
 \[
 Q(\psi(t))\leq 2(S_\omega(\psi(t))-S_\omega(u))
 =2(S_\omega(\psi_0)-S_\omega(u))<0
  \]
  for any $t\in [0,T^*)$. This, together with $x\psi_0\in L^2$, implies that the corresponding solution $\psi(t)$ blows up in finite time.
\end{proof}

\begin{proof}[\textbf{Proof of Theorem \ref{theorem sharp criteria critical}.}]
When $\|\psi_0\|_{L^2}< \lambda_3^{-\frac{3}{4}}\|R\|_{L^2}$, the proof of global existence is standard, so we omit it.

When $\|\psi_0\|_{L^2}= \lambda_3^{-\frac{3}{4}}\|R\|_{L^2}$, we prove this theorem by contradiction. If the solution $\psi(t)$ of \eqref{equation partial} blows up in finite time, then there exists $T^*>0$ such that $\lim_{t\rightarrow T^*}\|\nabla \psi(t)\|_{L^2}=\infty$.
 Set
\[
\rho(t)=\|\nabla R\|_{L^2}/\|\nabla \psi(t)\|_{L^2}~~\mbox{and}~~v(t,x)=\rho^{\frac{3}{2}}(t)\psi(t,\rho(t) x).
\]
Let $\{t_n\}_{n=1}^\infty$ be an any time sequence such that $t_n\rightarrow T^*$, $\rho_n:=\rho(t_n)$ and $v_n(x):=v(t_n,x)$.
Then, the sequence $\{v_n\}$
satisfies
\begin{equation}\label{44}
\|v_n\|_{L^2}=\|\psi(t_n)\|_{L^2}=\|\psi_0\|_{L^2}=\lambda_3^{-\frac{3}{4}}\|R\|_{L^2},~~\|\nabla v_n\|_{L^2}=\rho_n\|\nabla \psi(t_n)\|_{L^2}=\|\nabla R\|_{L^2}.
\end{equation}
Next, we recall the following sharp Gagliardo-Nirenberg inequality (see \cite[Lemma 8.4.2]{ca2003})
\begin{equation}\label{sharp gn}
\frac{3}{10}\|u\|_{L^{\frac{10}{3}}}^{\frac{10}{3}}\leq \frac{\|u\|_{L^2}^\frac{4}{3}}{2\|R\|_{L^2}^\frac{4}{3}}\|\nabla u\|_{L^2}^2,\ \forall u\in H^1,
\end{equation}
where $R$ is the ground state of \eqref{elliptic classical}.
Applying this inequality and the conservation of energy, we deduce that
\begin{align*}\label{45}
0\leq\frac{1}{2}\int_{\mathbb{R}^3} |\nabla v_n(x)|^2dx&-\frac{3\lambda_3}{10}\int_{\mathbb{R}^3}
|v_n(x)|^{\frac{10}{3}}dx
  =\rho_n^2\left(\frac{1}{2}\int_{\mathbb{R}^3} |\nabla \psi(t_n,x)|^2dx-\frac{3\lambda_3}{10}\int_{\mathbb{R}^3}
|\psi(t_n,x)|^{\frac{10}{3}}dx\right)\nonumber\\
  =&\rho_n^2\left(E(\psi_0)-\lambda_1\int_{\mathbb{R}^3} \frac{|\psi(t_n)|^{2}}{|x|}dx-\lambda_2\int_{\mathbb{R}^3} (|x|^{-1}\ast|\psi(t_n)|^{2})|\psi(t_n)|^{2}dx\right)\rightarrow 0,
  \end{align*}
as $n\rightarrow\infty$.
This implies that
\[
 \lim_{n\rightarrow \infty}\int_{\mathbb{R}^3}
|v_n(x)|^{\frac{10}{3}}dx= \frac{5}{3\lambda_3}\|\nabla R\|_{L^2}^2.
 \]
 Thus, we deduce from \eqref{44} that there exist a subsequence, still denoted by $\{v_n\}$ and $u\in H^1\backslash \{0\}$ such that
\[
u_n:=\tau_{x_n}v_n\rightharpoonup u\neq 0~~\mbox{weakly in} ~H^1,
\]
for some $\{x_n\}\subset \mathbb{R}^3$. We consequently deduce from \eqref{inequality} that there exists $C_0>0$ such that
\begin{align}\label{dayuling}
\liminf_{n\rightarrow \infty}\int_{\mathbb{R}^3} (|x|^{-1}\ast|v_n|^{2})|v_n|^{2}dx=\liminf_{n\rightarrow \infty}\int_{\mathbb{R}^3} (|x|^{-1}\ast|u_n|^{2})|u_n|^{2}dx\geq C_0>0.
\end{align}
On the other hand, we deduce from \eqref{sharp gn} and $\|\psi(t)\|_{L^2}=\|\psi_0\|_{L^2}=\lambda_3^{-\frac{3}{4}}\|R\|_{L^2}$ that
\begin{align*}
\frac{1}{2}\int_{\mathbb{R}^3} |\nabla \psi(t,x)|^2 dx-\frac{3}{10}\int_{\mathbb{R}^3}
|\psi(t,x)|^{\frac{10}{3}}dx+\lambda_1\int_{\mathbb{R}^3} \frac{|\psi(t,x)|^{2}}{|x|}dx\geq 0,
\end{align*}
for all $t\in[0,T^*)$.
This implies that
\begin{align*}
\lambda_2\int_{\mathbb{R}^3} (|x|^{-1}\ast|\psi(t)|^{2})|\psi(t,x)|^{2}dx\leq E(\psi_0),
\end{align*}
for all $t\in[0,T^*)$. We consequently obtain that
\begin{align*}
\lambda_2\int_{\mathbb{R}^3} (|x|^{-1}\ast|v_n|^{2})|v_n(x)|^{2}dx=\rho_n\lambda_2\int_{\mathbb{R}^3} (|x|^{-1}\ast|\psi(t_n)|^{2})|\psi(t_n,x)|^{2}dx\leq \rho_nE(\psi_0)\rightarrow 0,
\end{align*}
as $n\rightarrow\infty$,
which is a contradiction with \eqref{dayuling}. Thus, the solution $\psi(t)$ of \eqref{equation partial} with $b=0$ exists globally.

When $\rho>\lambda_3^{-\frac{3}{4}}\|R\|_{L^2}$, we define the initial data $\psi_0(x)=\frac{\rho}{\|R\|_{L^2}}\lambda^{\frac{3}{2}} R(\lambda x)$, then
 $|x|\psi_0\in L^2$ and $\|\psi_0\|_{L^2}=\rho$,  $J(t)=\int_{\mathbb{R}^3} |xu(t,x)|^2dx$ is well-defined, and it follows from Lemma 2.2 that
\begin{equation}\label{j11}
J''(t)=16E(\psi_0)-4\lambda_1\int_{\mathbb{R}^3} \frac{|\psi(t,x)|^{2}}{|x|}dx-2\lambda_2\int_{\mathbb{R}^3} (|x|^{-1}\ast|\psi(t)|^{2})|\psi(t,x)|^{2}dx.
\end{equation}
 By the
definition of initial data $\psi_0(x)=\frac{\rho}{\|R\|_{L^2}}\lambda^{\frac{3}{2}} R(\lambda x)$ and the Pohozaev identity for equation \eqref{elliptic classical}, i.e., $\frac{1}{2}\|\nabla R\|_{L^2}^2=\frac{3}{10}\|R\|^{\frac{10}{3}}_{L^{\frac{10}{3}}}$, we deduce that
\begin{align*}
E(\psi_0)=& \frac{\rho^2\lambda^2}{2\|R\|_{L^2}^2}\|\nabla R\|_{L^2}^2+\frac{\lambda_1\rho^2\lambda}{2\|R\|_{L^2}^2}\int_{\mathbb{R}^3} \frac{|R(x)|^{2}}{|x|}dx
\\&+\frac{\lambda_2\rho^4\lambda}{4\|R\|_{L^2}^4}\int_{\mathbb{R}^3} (|x|^{-1}\ast|R|^{2})|R|^{2}dx
-\frac{3\lambda_3\rho^{\frac{10}{3}}\lambda^2}{10\|R\|_{L^2}^{\frac{10}{3}}}
\|R\|_{L^{\frac{10}{3}}}^{\frac{10}{3}}
\\=&
-\frac{\rho^2\lambda^2}{2}\frac{\lambda_3\rho^{\frac{4}{3}}-\|R\|_{L^2}^\frac{4}{3}}
{\|R\|_{L^2}^{\frac{10}{3}}}
\|\nabla R\|_{L^2}^2+\frac{\lambda_1\rho^2\lambda}{2\|R\|_{L^2}^2}\int_{\mathbb{R}^3} \frac{|R(x)|^{2}}{|x|}dx
\\&+\frac{\lambda_2\rho^4\lambda}{4\|R\|_{L^2}^4}\int_{\mathbb{R}^3} (|x|^{-1}\ast|R|^{2})|R|^{2}dx<0,
\end{align*}
for sufficiently large $\lambda>0$.
This implies $E(\psi_0) < 0$. It follows from \eqref{j11} that $J''(t)<16 E(\psi_0) < 0$.
By the standard concave argument, the solution
$\psi(t)$ of \eqref{equation partial} with $b=0$ and the initial data $\psi_0$ blows up in finite time.
\end{proof}

\begin{proof}[\textbf{Proof of Theorem \ref{th instability no partial}.}]
Let $u$ be the solution of minimization problem \eqref{minimization ground no},
and $u_\lambda(x):=\lambda^{\frac{3}{2}}u(\lambda x)$, then $u_{\lambda}\in \mathcal{B}_\omega$ for all $\lambda> 1$.
 Due to $u_{\lambda}\rightarrow u$ in $H^1$ as $\lambda\searrow 1$, for every $\varepsilon>0$, there exists $\lambda_0>1$ such that $\|u_{\lambda_0}-u\|_{H^1}<\varepsilon/2$.
 Let $\chi\in C^\infty[0,\infty)$ be a function satisfying
$0\leq \chi \leq 1$, $\chi(r)=1$ if $0\leq r \leq 1$, and $\chi(r)=0$ if $r \geq 2$. For $M>0$, we define a cutoff
function $\chi_M\in C_c^\infty(\mathbb{R}^3)$ by $\chi_M(x)=\chi(|x|/M)$. Then we see that $\chi_Mu_{\lambda_0}\rightarrow u_{\lambda_0}$ in $H^1$ as
$M\rightarrow \infty$. Moreover, we have $\chi_Mu_{\lambda_0}\in \Sigma$.
Therefore, there exists $M_0>0$ such
that
$\|\chi_{M_0}u_{\lambda_0}-u_{\lambda_0}\|_{H^1}<\varepsilon/2$. Thus, we obtain
$\|\chi_{M_0}u_{\lambda_0}-u\|_{H^1}<\varepsilon$. In addition, by the continuity of  $S_{\omega}(u)$ and $Q(u)$, we have $\chi_{M_0}u_{\lambda_0}\in \Sigma\cap \mathcal{B}_\omega$.

Let $\psi_{\lambda_0}(t)$ be the maximal solution of \eqref{equation partial} with $b=0$ and the initial data $\chi_{M_0}u_{\lambda_0}\in \Sigma\cap \mathcal{B}_\omega$. Then, applying Lemma \ref{lemma blowup} and Theorem \ref{theorem sharp}, it follows that $\psi_{\lambda_0}(t)\in \Sigma\cap \mathcal{B}_\omega$ and
 \[
\frac{d^2}{dt^2}\int_{\mathbb{R}^3}|x\psi_{\lambda_0}(t,x)|^2dx=8Q(\psi_{\lambda_0}(t))\leq 16(S_{\omega}(\chi_{M_0}u_{\lambda_0})-S_{\omega}(u))<0,
 \]
 for all $t\in[0,T^*)$.
 This implies that the solution $\psi_{\lambda_0}(t)$ of \eqref{equation partial} with the initial data $\chi_{M_0}u_{\lambda_0}$ blows up in finite time. Hence, the result follows, since for any $\varepsilon>0$, there exist $M_0>0$ and $\lambda_0$ such that $\|\chi_{M_0}u_{\lambda_0}-u\|_{H^1}<\varepsilon$.
 This completes the proof.
\end{proof}

\section{Normalized solutions without partial confine}
In this section, we study the existence and properties of normalized solutions in the mass-supercritical case without partial confine. Since the functional is unbounded from below, we consider the following minimization problem.
\begin{equation}\label{minimization problem normalzied1}
\gamma(c):=\inf_{u\in V_r(c)}E(u),
\end{equation}
where the constraint $V_r(c)$ is defined by
\begin{equation}\label{submanifold1}
V_r(c):=\{u\in S_r(c):~Q(u)=0\}.
\end{equation}
We note that the manifold of $V_r(c)$ is indeed a natural constraint for $E$ on $S_r(c)$, since we have
\begin{lemma}\label{lm-lagrange}
Let $\lambda_1\geq 0$, $\lambda_2,\lambda_3>0$, and $\frac{4}{3}\leq p<4$. Then each critical point of $E|_{V_r(c)}$ is exactly a critical point of $E|_{S_r(c)}$.
\end{lemma}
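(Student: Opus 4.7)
The plan is to apply the Lagrange multiplier rule on the doubly constrained set $V_r(c) \subset H^1_r$ and then exploit the $L^2$-preserving scaling $u_\lambda(x) := \lambda^{3/2} u(\lambda x)$ to kill the multiplier attached to the Pohozaev constraint $Q(u) = 0$. The argument is very close in spirit to that of Lemma \ref{lemma daoshuweiling}.

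Granted that $V_r(c)$ is a smooth codimension-two submanifold of $H^1_r$ near $u$ (this is checked below by showing $\partial_\lambda Q(u_\lambda)|_{\lambda=1} \neq 0$, which rules out linear dependence of the two constraint gradients $2u$ and $Q'(u)$), the Lagrange rule yields $\omega, \mu \in \mathbb{R}$ with
\[
E'(u) + \omega u + \mu\, Q'(u) = 0.
\]
The goal is to show $\mu = 0$; once that is done, $E'(u) + \omega u = 0$ is exactly the Euler--Lagrange equation of $E$ restricted to $S_r(c)$, with $\omega$ the associated Lagrange multiplier, and the conclusion follows.

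To extract $\mu = 0$, I test the Lagrange equation against $v := \partial_\lambda u_\lambda|_{\lambda=1}$. Since $\|u_\lambda\|_{L^2}$ is constant in $\lambda$, $\langle u, v\rangle_{L^2} = 0$, so the mass term disappears and we get
\[
\partial_\lambda E(u_\lambda)\big|_{\lambda=1} + \mu\, \partial_\lambda Q(u_\lambda)\big|_{\lambda=1} = 0.
\]
A direct computation using the scaling laws for each term in $E$ shows $\partial_\lambda E(u_\lambda)|_{\lambda=1} = Q(u)$, which vanishes because $u \in V_r(c)$. Hence it suffices to prove $\partial_\lambda Q(u_\lambda)|_{\lambda=1} \neq 0$.

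The main computation is to expand $Q(u_\lambda)$ using the scaling identities, differentiate at $\lambda = 1$, and substitute $Q(u) = 0$ to eliminate the term $\|\nabla u\|_{L^2}^2$. This yields
\[
\partial_\lambda Q(u_\lambda)\big|_{\lambda=1} = -\frac{\lambda_1}{2}\int_{\mathbb{R}^3}\frac{|u|^2}{|x|}\,dx - \frac{\lambda_2}{4}\int_{\mathbb{R}^3}(|x|^{-1}\ast|u|^2)|u|^2\,dx + \frac{3\lambda_3\, p\,(4-3p)}{4(p+2)}\|u\|_{L^{p+2}}^{p+2}.
\]
Under the standing assumptions $\lambda_1 \geq 0$, $\lambda_2, \lambda_3 > 0$ and $p \geq \tfrac{4}{3}$, every term on the right-hand side is nonpositive, and the Hartree term is strictly negative since $u \not\equiv 0$. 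Therefore $\partial_\lambda Q(u_\lambda)|_{\lambda=1} < 0$, which both legitimizes the Lagrange step (the two constraints are independent) and forces $\mu = 0$. The only real obstacle is this sign computation; it rests squarely on $p \geq \tfrac{4}{3}$, and the rest is a routine unfolding of the Lagrange formulation.
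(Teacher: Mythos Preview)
Your proof is correct and is essentially the paper's argument in a more streamlined packaging: where the paper multiplies the Lagrange equation separately by $\bar u$ and by $x\cdot\nabla\bar u$ and then linearly combines to eliminate the mass multiplier, you test directly against $\partial_\lambda u_\lambda|_{\lambda=1}=\tfrac{3}{2}u+x\cdot\nabla u$, which is precisely that combination. The sign computation $\partial_\lambda Q(u_\lambda)|_{\lambda=1}<0$ both kills $\mu$ and rules out linear dependence of the constraint gradients, exactly as in the paper's treatment of the degenerate alternative.
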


\begin{proof}
Let $u$ be a critical point of $E|_{V_r(c)}$, then by \cite[Corollary 4.1.2]{k-cc} we have the alternative: either $(i)$ $Q'(u)$ and $(\|u\|_{L^2}^2)' $ are linearly dependent, or $(ii)$ there exists $\omega_1, \omega_2\in \mathbb{R}$ such that
\begin{eqnarray}\label{lagrange110}
E'(u)+\omega_1Q'(u)+\omega_2u=0,\ \mbox{ in } H_r^{-1}.
\end{eqnarray}
Indeed, $(i)$ is impossible. If so, then we have for some $\omega_0\in \mathbb{R}$,
\begin{eqnarray*}
Q'(u)+\omega_0(\|u\|_{L^2}^2)'=0,\ \mbox{ in } H_r^{-1},
\end{eqnarray*}
or equivalently
\begin{eqnarray}\label{lagrange111}
-2\Delta u +\dfrac{\lambda_1}{|x|}u+\lambda_2(|x|^{-1}\ast|u^2|)u-\dfrac{3p\lambda_3}{2}|u|^pu+2\omega_0u=0,\ \mbox{ in } H_r^{-1}.
\end{eqnarray}
Multiplying \eqref{lagrange111} by $\bar{u}$ and integrating by part, we have
\begin{eqnarray}\label{lagrange112}
2\|\nabla u\|_{L^2}^2+\lambda_1\int_{\mathbb{R}^3} \frac{|u|^{2}}{|x|}dx
+\lambda_2\int_{\mathbb{R}^3} (|x|^{-1}\ast|u|^{2})|u|^{2}dx -\frac{3p\lambda_3}{2}\|u\|_{L^{p+2}}^{p+2}+2\omega_0\|u\|_{L^2}^2=0.
\end{eqnarray}
Again, multiplying \eqref{lagrange111} by $x\cdot \nabla \bar{u}$ and integrating by part, we have
\begin{eqnarray}\label{lagrange113}
\|\nabla u\|_{L^2}^2+\lambda_1\int_{\mathbb{R}^3} \frac{|u|^{2}}{|x|}dx
+\dfrac{5\lambda_2}{4}\int_{\mathbb{R}^3} (|x|^{-1}\ast|u|^{2})|u|^{2}dx -\frac{9p\lambda_3}{2(p+2)}\|u\|_{L^{p+2}}^{p+2}+3\omega_0\|u\|_{L^2}^2=0.
\end{eqnarray}
Eliminate the $\omega_0\|u\|_{L^2}^2$ term from \eqref{lagrange112} and \eqref{lagrange113}, then we get
\begin{eqnarray}\label{lagrange114}
2\|\nabla u\|_{L^2}^2+\frac{\lambda_1}{2}\int_{\mathbb{R}^3} \frac{|u|^{2}}{|x|}dx
+\dfrac{\lambda_2}{4}\int_{\mathbb{R}^3} (|x|^{-1}\ast|u|^{2})|u|^{2}dx -\frac{9p^2\lambda_3}{4(p+2)}\|u\|_{L^{p+2}}^{p+2}=0.
\end{eqnarray}
Note that $u\in V_r(c)$ and then $Q(u)=0$. By this identity and \eqref{lagrange114}, we eliminate the $\|\nabla u\|_{L^2}^2$ term, to obtain
$$\frac{\lambda_1}{2}\int_{\mathbb{R}^3} \frac{|u|^{2}}{|x|}dx
+\dfrac{\lambda_2}{4}\int_{\mathbb{R}^3} (|x|^{-1}\ast|u|^{2})|u|^{2}dx + \frac{3p(3p-4)\lambda_3}{4(p+2)}\|u\|_{L^{p+2}}^{p+2}=0,$$
which is obviously a contradiction, since $u\neq 0$.

Hence to end the proof, we only need to show that $\omega_1=0$ in \eqref{lagrange110}. Following the above idea, multiplying \eqref{lagrange110} by $\bar{u}$ and $x\cdot \nabla \bar{u}$ respectively, and then integrating by part, we have
\begin{eqnarray}\label{lagrange115}
(1+2\omega_1)\|\nabla u\|_{L^2}^2+(1+\omega_1)\lambda_1\int_{\mathbb{R}^3} \frac{|u|^{2}}{|x|}dx
+(1+\omega_1)\lambda_2\int_{\mathbb{R}^3} (|x|^{-1}\ast|u|^{2})|u|^{2}dx \nonumber\\ -(1-\frac{3p\omega_1}{2})\lambda_3\|u\|_{L^{p+2}}^{p+2}+\omega_2\|u\|_{L^2}^2=0,
\end{eqnarray}
and
\begin{eqnarray}\label{lagrange116}
(1+2\omega_1)\|\nabla u\|_{L^2}^2+2(1+\omega_1)\lambda_1\int_{\mathbb{R}^3} \frac{|u|^{2}}{|x|}dx
+(1+\omega_1)\dfrac{5\lambda_2}{2}\int_{\mathbb{R}^3} (|x|^{-1}\ast|u|^{2})|u|^{2}dx \nonumber\\ -(1-\frac{3p\omega_1}{2})\dfrac{6\lambda_3}{p+2}\|u\|_{L^{p+2}}^{p+2}+3\omega_2\|u\|_{L^2}^2=0.
\end{eqnarray}
Eliminating the $\omega_1\|u\|_{L^2}^2$ term from \eqref{lagrange115} and \eqref{lagrange116}, we have
\begin{eqnarray}\label{lagrange117}
(2+4\omega_1)\|\nabla u\|_{L^2}^2+(1+\omega_1)\lambda_1\int_{\mathbb{R}^3} \frac{|u|^{2}}{|x|}dx
+\dfrac{(1+\omega_1)\lambda_2}{2}\int_{\mathbb{R}^3} (|x|^{-1}\ast|u|^{2})|u|^{2}dx \nonumber\\ -(1-\frac{3p\omega_1}{2})\dfrac{3p\lambda_3}{p+2}\|u\|_{L^{p+2}}^{p+2}=0.
\end{eqnarray}
Taking into account of $Q(u)=0$, we then deduce from \eqref{lagrange117} that
$$\omega_1\cdot\Big(4\|\nabla u\|_{L^2}^2+\lambda_1\int_{\mathbb{R}^3} \frac{|u|^{2}}{|x|}dx
+\dfrac{\lambda_2}{2}\int_{\mathbb{R}^3} (|x|^{-1}\ast|u|^{2})|u|^{2}dx +\dfrac{9p^2\lambda_3}{p+2}\|u\|_{L^{p+2}}^{p+2}\Big)=0 ,$$
which implies that $\omega_1=0$. Then the lemma is proved.
\end{proof}
\begin{lemma}\label{lm-nonexistence11}
Let $\lambda_1\geq 0$, $\lambda_2,\lambda_3>0$ and $p=\frac{4}{3}$. Then for all $c\in (0, \lambda_3^{-\frac{3}{2}}\|R\|_{L^2}^2]$, the functional $E$ has no any critical point on $S_r(c)$, where $R$ is the unique ground state of \eqref{elliptic classical}.
\end{lemma}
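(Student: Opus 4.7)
The plan is to argue by contradiction, combining the Pohozaev identity with the sharp Gagliardo--Nirenberg inequality \eqref{sharp gn} in order to show that the mass constraint $c\leq \lambda_3^{-3/2}\|R\|_{L^2}^2$ forces the Hartree term to vanish, which is impossible because $\lambda_2>0$.

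Suppose there exists a critical point $u\in S_r(c)$ of $E|_{S_r(c)}$. By the Lagrange multiplier rule there exists $\omega\in\mathbb R$ such that $u$ solves \eqref{elliptic partial12} with $p=\tfrac{4}{3}$. By Lemma \ref{lemma pohozaev identity} (applied with $b=0$) the Pohozaev identity $Q(u)=0$ holds, i.e.
\[
\|\nabla u\|_{L^2}^2+\frac{\lambda_1}{2}\int_{\mathbb R^3}\frac{|u|^2}{|x|}\,dx+\frac{\lambda_2}{4}\int_{\mathbb R^3}(|x|^{-1}\ast|u|^2)|u|^2\,dx=\frac{3\lambda_3}{5}\|u\|_{L^{10/3}}^{10/3}.
\]

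Next I would insert the sharp Gagliardo--Nirenberg inequality \eqref{sharp gn}, which at $p=\tfrac{4}{3}$ reads
\[
\frac{3\lambda_3}{5}\|u\|_{L^{10/3}}^{10/3}\leq \frac{\lambda_3\,\|u\|_{L^2}^{4/3}}{\|R\|_{L^2}^{4/3}}\,\|\nabla u\|_{L^2}^2.
\]
Using the mass constraint $\|u\|_{L^2}^2=c\leq \lambda_3^{-3/2}\|R\|_{L^2}^2$ gives $\lambda_3 c^{2/3}\leq \|R\|_{L^2}^{4/3}$, so the right-hand side is bounded above by $\|\nabla u\|_{L^2}^2$. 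Combined with $Q(u)=0$, this yields
\[
\frac{\lambda_1}{2}\int_{\mathbb R^3}\frac{|u|^2}{|x|}\,dx+\frac{\lambda_2}{4}\int_{\mathbb R^3}(|x|^{-1}\ast|u|^2)|u|^2\,dx\leq 0.
\]
Since $\lambda_1\geq 0$ and both integrands are nonnegative, the Hartree term $\frac{\lambda_2}{4}\int(|x|^{-1}\ast|u|^2)|u|^2\,dx$ must vanish; but $\lambda_2>0$ and $u\not\equiv 0$ make this integral strictly positive, a contradiction. Hence no such $u$ exists.

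Conceptually, there is no serious obstacle here: the whole argument is a two-line chain of inequalities once the Pohozaev identity is in place. The only place where one has to be slightly careful is the boundary case $c=\lambda_3^{-3/2}\|R\|_{L^2}^2$, where the Gagliardo--Nirenberg step is saturated. In that case the inequality above becomes an equality in the $\|\nabla u\|_{L^2}^2$ comparison, but the positivity of the Hartree term (coming from $\lambda_2>0$) still rules out any nonzero $u$, so the same conclusion holds uniformly on the closed interval $(0,\lambda_3^{-3/2}\|R\|_{L^2}^2]$.
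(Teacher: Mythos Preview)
Your proof is correct and follows essentially the same approach as the paper: both combine the Pohozaev identity $Q(u)=0$ with the sharp Gagliardo--Nirenberg inequality \eqref{sharp gn} and the strict positivity of the Hartree term (coming from $\lambda_2>0$, $u\neq 0$) to derive a contradiction. The only cosmetic difference is the order of the two steps---the paper first uses positivity of the Coulomb/Hartree terms to get the strict inequality $\|\nabla u\|_{L^2}^2<\tfrac{3\lambda_3}{5}\|u\|_{L^{10/3}}^{10/3}$ and then applies \eqref{sharp gn}, whereas you apply \eqref{sharp gn} first and then invoke positivity---but the substance is identical.
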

\begin{proof}
For any $c\in (0, \lambda_3^{-\frac{3}{2}}\|R\|_{L^2}^2]$, we note that if $u_0$ is a critical point of $E$ on $S_r(c)$, then necessarily $Q(u_0)=0$ and $u_0\neq 0$. Thus by \eqref{sharp gn}, we have
\begin{eqnarray}\label{necess}
\|\nabla u_0\|_{L^2}^2<\frac{3\lambda_3}{5}\int_{\mathbb{R}^3}|u_0|^{\frac{10}{3}}dx\leq \frac{\lambda_3\|u_0\|_{L^2}^{\frac{4}{3}}}{\|R\|_{L^2}^{\frac{4}{3}}}\|\nabla u_0\|_{L^2}^2.
\end{eqnarray}
This implies
$$\Big(\lambda_3c^{\frac{2}{3}}-\|R\|_{L^2}^{\frac{4}{3}}\Big)\|\nabla u_0\|_{L^2}^2>0,$$
which is a contradiction since $c\leq \lambda_3^{-\frac{3}{2}}\|R\|_{L^2}^2$.
\end{proof}
\textbf{Remark 4.1.} Due to this lemma, in what follows, to obtain the existence, it is necessary to restrict our analyses on $c>\lambda_3^{-\frac{3}{2}}\|R\|_{L^2}^2$ when $p=\frac{4}{3}$.

To show that $\gamma(c)$ is attained for some $u\in V_r(c)$, we consider first the following equivalent minimization problem:
\begin{equation}\label{minimization dengjia normal}
\widetilde{\gamma}(c):=\inf\{\widetilde{E}(v):~v\in S_r(c),~Q(v)\leq 0\},
\end{equation}
where
\begin{align}\label{enerfy dengjia}
\widetilde{E}(v):=&
E(v)-\frac{2}{3p}Q(v) \nonumber\\=&\frac{3p-4}{6p}\|\nabla v\|^2_{L^2}+\lambda_1\frac{3p-2}{6p}\int_{\mathbb{R}^3} \frac{|v|^{2}}{|x|}dx+\lambda_2\frac{3p-2}{12p}\int_{\mathbb{R}^3} (|x|^{-1}\ast|v|^{2})|v|^{2}dx.
\end{align}
By a similar argument as Lemma \ref{lemma dengjiaxing}, we can prove that
\begin{lemma}\label{lm-gammac}
Let $\lambda_1\geq 0$, $\lambda_2,\lambda_3>0$, and $\frac{4}{3}\leq p<4$. Then for any $c>0$, there holds that
\begin{equation}\label{minimization problemx0}
\widetilde{\gamma}(c)=\inf \{\widetilde{E}(v):~v\in S_r(c),~Q(v)=0\}=\gamma(c).
\end{equation}
\end{lemma}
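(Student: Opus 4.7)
The plan is to adapt the scaling argument from the proof of Lemma \ref{lemma dengjiaxing} to the mass-constrained setting. The essential observation is that the rescaling $v_\lambda(x) := \lambda^{3/2} v(\lambda x)$ preserves both the $L^2$-norm and radial symmetry, so $v \in S_r(c)$ implies $v_\lambda \in S_r(c)$ for every $\lambda > 0$. This is what allows the same one-parameter deformation used in Lemma \ref{lemma dengjiaxing} to operate without breaking the mass constraint.

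The middle equality $\inf\{\widetilde{E}(v): v \in S_r(c),\ Q(v) = 0\} = \gamma(c)$ is immediate, since $\widetilde{E}(v) = E(v)$ whenever $Q(v) = 0$ by \eqref{enerfy dengjia}; and restricting the admissible set in $\widetilde{\gamma}(c)$ from $\{Q(v) \leq 0\}$ to $\{Q(v) = 0\}$ trivially gives $\widetilde{\gamma}(c) \leq \gamma(c)$. For the reverse inequality, take any $v \in S_r(c)$ with $Q(v) < 0$ (the case $Q(v) = 0$ being trivial). A direct scaling computation gives $Q(v_\lambda) = \lambda^2 A + \lambda B - \lambda^{3p/2} C$, where $A = \|\nabla v\|_{L^2}^2$, $B = \frac{\lambda_1}{2}\int \frac{|v|^2}{|x|}\,dx + \frac{\lambda_2}{4}\int(|x|^{-1}\ast|v|^2)|v|^2\,dx$, and $C = \frac{3\lambda_3 p}{2(p+2)}\|v\|_{L^{p+2}}^{p+2}$. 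The hypothesis $\lambda_2 > 0$ forces $B > 0$ for $v \neq 0$, so $Q(v_\lambda) > 0$ for $\lambda$ near $0^+$, while $Q(v_1) = Q(v) < 0$; the intermediate value theorem then yields a $\lambda_0 \in (0,1)$ with $Q(v_{\lambda_0}) = 0$. (Alternatively, one could invoke Lemma \ref{lemma jihexingzhi}, noting that the sign hypothesis $Q(v) \leq 0$ automatically supplies the growth condition required when $p = \frac{4}{3}$.)

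Since $v_{\lambda_0} \in V_r(c)$ by the mass-preserving property of the scaling, we have $\gamma(c) \leq E(v_{\lambda_0}) = \widetilde{E}(v_{\lambda_0})$. Reading off \eqref{enerfy dengjia}, the three terms of $\widetilde{E}(v_\lambda)$ scale as $\lambda^2$ (gradient) and $\lambda$ (Coulomb and Hartree), with nonnegative coefficients in the regime $p \geq \frac{4}{3}$, $\lambda_1 \geq 0$, $\lambda_2 > 0$; combined with $\lambda_0 \in (0,1)$, this yields $\widetilde{E}(v_{\lambda_0}) \leq \widetilde{E}(v)$. Hence $\gamma(c) \leq \widetilde{E}(v)$ for every admissible $v$ in the definition of $\widetilde{\gamma}(c)$, giving $\gamma(c) \leq \widetilde{\gamma}(c)$ and closing the argument. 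The only point requiring slight care is the existence of $\lambda_0$ in the mass-critical case $p = \frac{4}{3}$, where $Q(v_\lambda) = \lambda^2(A - C) + \lambda B$ is a quadratic whose positive root lies in $(0,1)$ precisely because $\lambda_2 > 0$ forces $B > 0$.
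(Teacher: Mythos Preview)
Your proof is correct and follows essentially the same approach as the paper, which simply says the lemma is proved ``by a similar argument as Lemma \ref{lemma dengjiaxing}.'' You have filled in the details carefully, including the mass-preservation of the scaling $v_\lambda$ and the explicit treatment of the borderline case $p=\tfrac{4}{3}$, which the paper leaves implicit.
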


%\begin{proof}
%%This lemma can be proved by a similar argument as Lemma \ref{lemma dengjiaxing}, so we omit it.
%Indeed, on one hand, by the definition, $\gamma(c)\geq \widetilde{\gamma}(c)$ is trivial. On the other hand, note that if $Q(v)<0$, we see from Lemma \ref{lemma jihexingzhi} that $Q(v_\lambda)>0$
%for sufficiently small $\lambda>0$. Thus, there exists $\lambda_0\in (0,1)$ such that
%$Q(v_{\lambda_0})=0$. Moreover, it follows that
%\begin{align*}
%\widetilde{E}(v_{\lambda_0})=&\frac{3p-4}{6p}\lambda_0^2\|\nabla v\|^2_{L^2}+\lambda_1\lambda_0\frac{3p-2}{6p}\int_{\mathbb{R}^3} \frac{|v|^{2}}{|x|}dx+\lambda_2\lambda_0\frac{3p-2}{12p}\int_{\mathbb{R}^3} (|x|^{-1}\ast|v|^{2})|v|^{2}dx\\\leq &\frac{3p-4}{6p}\|\nabla v\|^2_{L^2}+\lambda_1\frac{3p-2}{6p}\int_{\mathbb{R}^3} \frac{|v|^{2}}{|x|}dx+\lambda_2\frac{3p-2}{12p}\int_{\mathbb{R}^3} (|x|^{-1}\ast|v|^{2})|v|^{2}dx=\widetilde{E}(v).
%\end{align*}
%This implies that $\gamma(c)\leq \widetilde{\gamma}(c)$ and then \eqref{minimization problemx0} follows.
%\end{proof}

\begin{lemma}\label{lemma dandiaoxing}
Let $\lambda_1\geq 0$, $\lambda_2,\lambda_3>0$.
Then when $\frac{4}{3}< p<4$, the function $c\mapsto \gamma(c)$ is non-increasing on $(0,+\infty)$, and when $p=\frac{4}{3}$, $c\mapsto \gamma(c)$ is non-increasing on $(\lambda_3^{-\frac{3}{2}}\|R\|_{L^2}^2,+\infty)$.
\end{lemma}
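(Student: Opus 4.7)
Fix $0 < c_1 < c_2$ in the range prescribed by the lemma and let $\varepsilon > 0$. The plan is to construct a test function $w_R \in V_r(c_2)$ with $E(w_R) \le \gamma(c_1) + \varepsilon + o_R(1)$; then sending first $R \to \infty$ and then $\varepsilon \to 0^+$ gives $\gamma(c_2) \le \gamma(c_1)$, which is the claim. Choose first a near-minimizer $u \in V_r(c_1)$ with $E(u) < \gamma(c_1) + \varepsilon$; the set $V_r(c_1)$ is nonempty in the stated range since, in the critical case, $c_1 > \lambda_3^{-3/2}\|R\|_{L^2}^2$ guarantees the hypothesis of Lemma~3.1 for a suitable scaling of the Gagliardo--Nirenberg extremizer, so Pohozaev scaling produces an element of $V_r(c_1)$.

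\textbf{Adding a far-away radial bump.} Fix a radial $\phi \in C_c^\infty(\mathbb{R}^3)$ supported in $\{1 \le |x| \le 2\}$ with $\|\phi\|_{L^2}^2 = c_2 - c_1$, and set $h_R(x) := R^{-3/2}\phi(x/R)$ for $R > 0$. Then $h_R$ is radial, supported in $\{R \le |x| \le 2R\}$, has $\|h_R\|_{L^2}^2 = c_2 - c_1$, and by elementary scaling $\|\nabla h_R\|_{L^2}^2 = O(R^{-2})$, $\int|h_R|^2/|x|\,dx = O(R^{-1})$, $\int(|x|^{-1}\ast|h_R|^2)|h_R|^2\,dx = O(R^{-1})$, $\|h_R\|_{L^{p+2}}^{p+2} = O(R^{-3p/2})$, so in particular $E(h_R), Q(h_R) \to 0$. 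Since $h_R$ is bounded in $H^1$ and converges pointwise to $0$, we have $h_R \rightharpoonup 0$ in $H_r^1$. The Brezis--Lieb type results (Lemmas~2.3 and 2.4) combined with weak convergence for the gradient and $L^2$ cross-terms and Hardy's inequality for the Coulomb cross-term then yield
\begin{align*}
\|u + h_R\|_{L^2}^2 = c_2 + o_R(1), \quad E(u + h_R) = E(u) + o_R(1), \quad Q(u + h_R) = o_R(1).
\end{align*}

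\textbf{Two-step projection to $V_r(c_2)$.} Step (i): rescale amplitude by $\alpha_R := \sqrt{c_2}/\|u + h_R\|_{L^2} = 1 + o_R(1)$ and set $\tilde v_R := \alpha_R(u + h_R) \in S_r(c_2)$; the estimates above are preserved, so $E(\tilde v_R) = E(u) + o_R(1)$ and $Q(\tilde v_R) = o_R(1)$. Step (ii): apply Lemma~3.1 to $\tilde v_R$. In the supercritical case this is automatic; in the critical case $p = 4/3$ one needs $\|\nabla \tilde v_R\|_{L^2}^2 < \tfrac{3\lambda_3}{5}\|\tilde v_R\|_{L^{10/3}}^{10/3}$, which holds for $R$ large by continuity from the strict inequality for $u$, itself a consequence of $Q(u) = 0$ and the strict positivity of the Hartree term ($\lambda_2 > 0$). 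Hence there is a unique $\mu_R > 0$ with $w_R := (\tilde v_R)_{\mu_R} \in V_r(c_2)$. Using the identity $\tfrac{d}{d\lambda}E((\tilde v_R)_\lambda)|_{\lambda = 1} = Q(\tilde v_R) = o_R(1)$ together with the fact that $\tfrac{d^2}{d\lambda^2}E((\tilde v_R)_\lambda)|_{\lambda = 1}$ converges as $R \to \infty$ to a strictly negative limit (forced by the mass-(super)critical constraint $Q(u) = 0$), a Taylor expansion around $\lambda = 1$ gives $\mu_R = 1 + O(Q(\tilde v_R))$ and $E(w_R) = E(\tilde v_R) + O(Q(\tilde v_R)^2) = E(u) + o_R(1)$.

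\textbf{Conclusion and main obstacle.} Assembling, $\gamma(c_2) \le E(w_R) \le E(u) + o_R(1) < \gamma(c_1) + \varepsilon + o_R(1)$, and the announced limits yield $\gamma(c_2) \le \gamma(c_1)$. The most delicate point is the quantitative Pohozaev projection in Step~(ii): the second derivative of $\lambda \mapsto E((\tilde v_R)_\lambda)$ at $\lambda = 1$ must stay bounded away from $0$ uniformly in $R$, which rests on the convergence of each of the four nonlinear functionals of $\tilde v_R$ (gradient, Coulomb, Hartree, $L^{p+2}$) to those of $u$, and on the strict negativity of $\partial^2_\lambda E(u_\lambda)|_{\lambda = 1}$ ensured by $u$ being the maximum of $\lambda \mapsto E(u_\lambda)$ along the $L^2$-preserving scaling in the mass-(super)critical regime.
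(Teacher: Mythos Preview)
Your argument is correct. The paper, by contrast, proceeds by first recasting $\gamma(c)$ as a min--max,
\[
\gamma(c)=\inf_{u\in S_r(c)}\max_{\lambda>0}E(u_\lambda)\quad\Big(\text{resp.}\ \inf_{u\in K_r(c)}\max_{\lambda>0}E(u_\lambda)\ \text{if }p=\tfrac43\Big),
\]
which follows immediately from Lemma~3.1, and then invokes \cite[Lemma~5.3]{jean-luo} for the monotonicity. The construction hidden behind that citation is essentially your ``bump at infinity'' argument, so the core idea is the same; the difference is packaging. The min--max formulation lets one sidestep your two-step projection: once $\tilde v_R\in S_r(c_2)$ is built with all four functionals close to those of $u$, one gets directly $\gamma(c_2)\le\max_\lambda E((\tilde v_R)_\lambda)\to\max_\lambda E(u_\lambda)=E(u)$, without having to quantify $\mu_R-1$ via a second-derivative Taylor bound. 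Your route is more self-contained (no external citation) at the price of the extra Step~(ii) analysis; the paper's route is shorter but defers the work to \cite{jean-luo}.

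One remark on your write-up: the Taylor argument for $\mu_R\to1$ is slightly heavier than needed. Since each of the four coefficients in $\lambda\mapsto Q((\tilde v_R)_\lambda)$ converges to the corresponding coefficient for $u$, the whole family converges locally uniformly in $\lambda$ to $Q(u_\lambda)$, which has a simple sign change at $\lambda=1$; this already gives $\mu_R\to1$ and hence $E(w_R)\to E(u)$ without invoking the second derivative explicitly.
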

\begin{proof}
To prove the non-increasing property of $\gamma(c)$, it is essential to show that
\begin{eqnarray}\label{1111}
\gamma(c)=\inf_{u\in S_r(c)}\max_{\lambda>0}E(u_{\lambda}),\ \mbox{ if }\ \frac{4}{3}<p<4,
\end{eqnarray}
and
\begin{eqnarray}\label{1112}
\gamma(c)=\inf_{u\in K_r(c)}\max_{\lambda>0}E(u_{\lambda}),\ \mbox{ if }\ p=\frac{4}{3},
\end{eqnarray}
where $K_r(c):=\{u\in S_r(c) : \|\nabla u\|_{L^2}^2 <\frac{3\lambda_3}{5}\int_{\mathbb{R}^3} |u|^{\frac{10}{3}}dx\}$. For this aim, the key is the point that for any given $u\in S_r(c)$ if $\frac{4}{3}<p<4$ or $u\in K_r(c)$ if $p=\frac{4}{3}$, there exists a unique $\lambda_u>0$ such that $Q(u_{\lambda_u})=0$, and that $E(u_{\lambda_u})=\max_{\lambda>0}E(u_{\lambda})$, which is indeed already obtained by Lemma \ref{lemma jihexingzhi} and Remark 3.1. Since others can be argued as standard as that in \cite[Lemma 5.3]{jean-luo}, hence here we omit the details.
\end{proof}

\begin{proposition}\label{proposition existence}
Let $\lambda_1\geq 0$, $\lambda_2,\lambda_3>0$. Then for all $c>0$ if $\frac{4}{3}< p<4$, or for all $c>\lambda_3^{-\frac{3}{2}}\|R\|_{L^2}^2$ if $p=\frac{4}{3}$, we have $\gamma(c)>0$ and moreover, there exists a $u\in H_r^1\backslash \{0\}$, such that $\widetilde{E}(u)=\gamma(c)$ and $u\in V_r(\|u\|_{L^2}^2)$ with $0<\|u\|_{L^2}^2\leq c$.
\end{proposition}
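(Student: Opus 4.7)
My plan is to imitate the direct method of Lemma \ref{lemma minimization problem}, supplemented with a Brezis--Lieb mass decomposition and the monotonicity of $\gamma$ (Lemma \ref{lemma dandiaoxing}) to control the possibility that the weak limit of a minimizing sequence carries strictly less mass than $c$. By Lemma \ref{lm-gammac} I may equivalently work with the relaxed problem $\widetilde{\gamma}(c)=\inf\{\widetilde{E}(v):v\in S_r(c),\ Q(v)\leq 0\}$, and the splitting \eqref{enerfy dengjia} exhibits $\widetilde{E}$ as a sum of non-negative terms under the sign assumptions. Fix a minimizing sequence $\{v_n\}$. When $\frac{4}{3}<p<4$, the positive kinetic coefficient $\frac{3p-4}{6p}$ in $\widetilde{E}$ together with $\|v_n\|_{L^2}^2=c$ bounds $\{v_n\}$ in $H_r^1$. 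When $p=\frac{4}{3}$ this coefficient degenerates, and I would rerun the chain of estimates in Lemma \ref{lemma minimization problem}: bound the Hartree integral from $\widetilde{E}(v_n)$, invoke the $L^3$-type inequality \eqref{inequality} coming from the Poisson equation for $V_n=|x|^{-1}*|v_n|^2$, and feed it into $Q(v_n)\leq 0$ to extract the $\|\nabla v_n\|_{L^2}$ bound. These same estimates (plus Gagliardo--Nirenberg, using $3p/2>2$ for $p>\frac{4}{3}$) also force $\liminf_n\|v_n\|_{L^{p+2}}^{p+2}>0$, and radial compactness yields along a subsequence a weak limit $v_n\rightharpoonup u$ in $H_r^1$ with $u\not\equiv 0$, together with strong convergence $v_n\to u$ in $L^q$ for $2<q<6$.

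The crux is showing $Q(u)\leq 0$. Set $w_n:=v_n-u$; Lemmas \ref{lemma Brezis-Lieb} and \ref{lemma Brezis-Lieb lemma Hartree} deliver the three splittings
\[
Q(v_n)=Q(u)+Q(w_n)+o(1),\quad \widetilde{E}(v_n)=\widetilde{E}(u)+\widetilde{E}(w_n)+o(1),\quad \|v_n\|_{L^2}^2=\|u\|_{L^2}^2+\|w_n\|_{L^2}^2+o(1).
\]
Suppose for contradiction that $Q(u)>0$. Then $Q(w_n)<0$ for large $n$, so $w_n$ is admissible for the relaxed problem at mass $\|w_n\|_{L^2}^2$, giving $\widetilde{E}(w_n)\geq\gamma(\|w_n\|_{L^2}^2)$. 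Since $\|w_n\|_{L^2}^2\leq c+o(1)$, Lemma \ref{lemma dandiaoxing} yields $\gamma(\|w_n\|_{L^2}^2)\geq\gamma(c)+o(1)$. Substituting into the energy splitting gives $\gamma(c)\geq\widetilde{E}(u)+\gamma(c)$, i.e.\ $\widetilde{E}(u)\leq 0$. But $\lambda_2>0$ and $u\neq 0$ force the Hartree term in $\widetilde{E}(u)$ to be strictly positive, so $\widetilde{E}(u)>0$, a contradiction. Hence $Q(u)\leq 0$.

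Set $c':=\|u\|_{L^2}^2\in(0,c]$. Then $u$ is admissible for $\widetilde{\gamma}(c')$, so $\widetilde{E}(u)\geq\gamma(c')$; weak lower semicontinuity of each term in $\widetilde{E}$ yields $\widetilde{E}(u)\leq\gamma(c)$; monotonicity gives $\gamma(c')\geq\gamma(c)$. Chaining these forces $\widetilde{E}(u)=\gamma(c)=\gamma(c')$. If $Q(u)<0$, Lemma \ref{lemma jihexingzhi} supplies $\lambda_0\in(0,1)$ with $Q(u_{\lambda_0})=0$; the scaling preserves mass and, exactly as in the proof of Lemma \ref{lemma dengjiaxing}, strictly decreases $\widetilde{E}$, producing $\gamma(c')\leq\widetilde{E}(u_{\lambda_0})<\widetilde{E}(u)=\gamma(c')$, absurd. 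Therefore $Q(u)=0$, which means $u\in V_r(c')$. Finally $\gamma(c)=\widetilde{E}(u)>0$ because $\lambda_2>0$ makes the Hartree term strictly positive at $u\neq 0$.

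The main obstacle I anticipate is the second paragraph: ruling out $Q(u)>0$ requires correctly pairing the Brezis--Lieb mass splitting with the monotonicity of $\gamma$, and crucially uses the strict positivity of the Hartree functional guaranteed by $\lambda_2>0$ to prevent a fragment of the minimizing sequence from escaping to infinity while carrying a negative Pohozaev quantity.
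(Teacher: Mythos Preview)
Your proof is correct and follows essentially the same route as the paper: work with the relaxed problem $\widetilde{\gamma}(c)$, bound the minimizing sequence, extract a nontrivial radial weak limit via non-vanishing of $\|v_n\|_{L^{p+2}}$, use the Brezis--Lieb splittings for $Q$, $\widetilde{E}$ and the $L^2$-norm, rule out $Q(u)>0$ by monotonicity of $\gamma$ together with $\widetilde{E}(u)>0$, and finish with the scaling argument to upgrade $Q(u)\le 0$ to $Q(u)=0$. The only cosmetic differences are that the paper proves $\gamma(c)>0$ at the outset (via the same lower bounds you invoke implicitly) and, when excluding $Q(u)>0$, splits into the two sub-cases $\|u\|_{L^2}^2<c$ and $\|u\|_{L^2}^2=c$, whereas you treat them uniformly.

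One small point to make explicit in your write-up for $p=\tfrac{4}{3}$: Lemma \ref{lemma dandiaoxing} only gives monotonicity of $\gamma$ on $(\lambda_3^{-3/2}\|R\|_{L^2}^2,\infty)$, so before applying it to $\gamma(\|w_n\|_{L^2}^2)$ you should observe (as the paper does, using the sharp Gagliardo--Nirenberg inequality \eqref{sharp gn} as in \eqref{necess}) that $Q(w_n)<0$ forces $\|w_n\|_{L^2}^2>\lambda_3^{-3/2}\|R\|_{L^2}^2$. This also disposes of the edge case $\|u\|_{L^2}^2=c$ automatically, since then $\|w_n\|_{L^2}^2\to 0$ would contradict this lower bound.
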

\begin{proof}
We first show that $\gamma(c)>0$.  For any $v\in S_r(c)$ with $Q(v)\leq 0$, when $\frac{4}{3}< p<4$, we have
\[
\|\nabla v\|_{L^2}^2<\frac{3\lambda_3p}{2(p+2)}\|v\|_{L^{p+2}}^{p+2}\leq C\|\nabla v\|_{L^2}^{\frac{3p}{2}}
\| v \|^{\frac{4-p}{2}}_{L^{2}},
\]
which implies that
\begin{eqnarray}\label{4.144}
c^{\frac{p-4}{2}} \leq C\|\nabla v\|_{L^2}^{\frac{3p}{2}-2}.
\end{eqnarray}
When $p=\frac{4}{3}$, from \eqref{bound} and the Young inequality with $\varepsilon$, we have
\begin{align}\label{4.145}
\|\nabla v\|_{L^2}^2 + \frac{\lambda_2}{4}\int_{\mathbb{R}^3} (|x|^{-1}\ast|v|^{2})|v|^{2}dx\leq& C \|\nabla v\|_{L^2}^{\frac{14}{9}}\Big(\int_{\mathbb{R}^3} (|x|^{-1}\ast|v|^{2})|v|^{2}dx\Big)^{\frac{4}{9}}\nonumber\\
\leq & \varepsilon \|\nabla v\|_{L^2}^2 +C(\varepsilon)\Big(\int_{\mathbb{R}^3} (|x|^{-1}\ast|v|^{2})|v|^{2}dx\Big)^2.
\end{align}
Letting $\varepsilon=1$, then \eqref{4.145} implies that
\begin{eqnarray}\label{4.146}
\int_{\mathbb{R}^3} (|x|^{-1}\ast|v|^{2})|v|^{2}dx \geq \widetilde{C},
\end{eqnarray}
for some  $\widetilde{C}>0$ independent of $v$. Therefore, in both the cases, by \eqref{4.144}, \eqref{4.146} and the definition of $\widetilde{\gamma}(c)$, we see that $\widetilde{\gamma}(c)>0$. Thus by Lemma \ref{lm-gammac}, $\gamma(c)>0$ follows.

Now let $\{v_n\}$ be an arbitrary minimizing sequence for \eqref{minimization dengjia normal}, i.e.,
 $\{v_n\}\subset S_r(c)$, $Q(v_n)\leq 0$ and $\widetilde{E}(v_n)\rightarrow \widetilde{\gamma}(c)=\gamma(c)$ as $n\rightarrow \infty$. We claim that $\{v_n\}$ is bounded in $H_r^1$. Indeed, since $v_n\in S_r(c)$, it is enough to verify the boundedness of $\|\nabla v_n\|_{L^2}$. When $\frac{4}{3}<p<4$, by \eqref{enerfy dengjia}, $\|\nabla v_n\|_{L^2}$ is surely bounded. When $p=\frac{4}{3}$, by \eqref{enerfy dengjia}, $\int_{\mathbb{R}^3} (|x|^{-1}\ast|v_n|^{2})|v_n|^{2}dx$ is bounded, then in \eqref{4.145} by letting $\varepsilon =\frac{1}{2}$ we see that $\|\nabla v_n\|_{L^2}$ is bounded. Thus the claim is verified. In addition, by \eqref{4.144} and \eqref{4.146}, it follows from $Q(v_n)\leq 0$ that there exists $C_0>0$ such that
\[
\frac{3\lambda_3p}{2(p+2)}\liminf_{n\rightarrow \infty}\|v_n\|^{p+2}_{L^{p+2}}\geq C_0>0.
\]
Hence, there exist a subsequence, still denoted by $\{v_n\}$ and $u\in H_r^1\backslash \{0\}$ such that
\[
v_n\rightharpoonup u\neq 0~~\mbox{weakly~in} ~H_r^1.
\]

Moreover, we deduce from Lemmas \ref{lemma Brezis-Lieb} and \ref{lemma Brezis-Lieb lemma Hartree} that
\begin{equation}\label{bl10}
Q(v_n)-Q(v_n-u)-Q(u)\rightarrow 0,
\end{equation}
\begin{equation}\label{bl20}
\widetilde{E}(v_n)-\widetilde{E}(v_n-u)-\widetilde{E}(u)\rightarrow 0,
\end{equation}
\begin{equation}\label{bl30}
\|v_n\|_{L^{2}}^{2}-\|v_n-u\|_{L^{2}}^2-\|u\|_{L^{2}}^{2}\rightarrow 0.
\end{equation}
By \eqref{bl30}, $0<\|u\|_{L^{2}}^{2}\leq c$. We claim that $Q(u)\leq 0$, which indeed can be proved by excluding the other possibilities:

(1) If $Q(u)> 0$ and $\|u\|_{L^{2}}^{2}<c$, it follows from \eqref{bl10} and
$Q(v_n)\leq 0$ that $Q(v_n-u)\leq0$ for sufficiently large $n$.
Set $c_1=c-\|u\|_{L^{2}}^{2}$ and $w_n=\sqrt{c_1}\|v_n-u\|_{L^{2}}^{-1}(v_n-u)$, then we have
\[
\|v_n-u\|_{L^{2}}\rightarrow \sqrt{c_1},~~w_n\in S(c_1),~~\mbox{and}~~Q(w_n)\leq 0.
\]
Thus, by the definition of
$\widetilde{\gamma}(c_1)$, it follows that
\[
\widetilde{E}(w_n)\geq \widetilde{\gamma}(c_1)~~\mbox{and}~~\widetilde{E}(v_n-u)\geq \widetilde{\gamma}(c_1).
\]
Note from $Q(w_n)\leq 0$ and \eqref{necess}, we see in particular that $c_1>\lambda_3^{-\frac{3}{2}}\|R\|_{L^2}^2$ when $p=\frac{4}{3}$. Then applying Lemma \ref{lemma dandiaoxing}, $\widetilde{\gamma}(c_1)=\gamma(c_1)\geq \gamma(c)$, and by \eqref{bl20} we obtain
$$\widetilde{E}(u)=\frac{3p-4}{6p}\|\nabla u\|^2_{L^2}+\lambda_1\frac{3p-2}{6p}\int_{\mathbb{R}^3} \frac{|u|^{2}}{|x|}dx+\lambda_2\frac{3p-2}{12p}\int_{\mathbb{R}^3} (|x|^{-1}\ast|u|^{2})|u|^{2}dx\leq0,$$
which is a contradiction since $u\neq 0$.

(2) If $Q(u)> 0$ and $\|u\|_{L^{2}}^{2}=c$, then $v_n\rightarrow u$ in $L^2$ as $n\rightarrow \infty$. This implies that
$v_n\rightarrow u$ in $L^{q}$ as $n\rightarrow \infty$, for any $q\in [2, 6)$ . On the other hand, we deduce from $Q(u)> 0$ that $Q(v_n-u)\leq0$ for sufficiently large $n$. Thus, we can obtain $v_n\rightarrow u$ in $H_r^1$ as $n\rightarrow \infty$.
This yields $Q(v_n-u)\rightarrow 0$ as $n\rightarrow \infty$. Thus, it follows from \eqref{bl10} and $Q(u)> 0$ that $Q(v_n)> 0$ for sufficiently large $n$, which is a contradiction with $Q(v_n)\leq 0$.

Therefore, it follows that $Q(u)\leq 0$ and $0<\|u\|_{L^{2}}^{2}\leq c$, particularly, by \eqref{necess}, $\lambda_3^{-\frac{3}{2}}\|R\|_{L^2}^2< \|u\|_{L^{2}}^{2}\leq c$ when $p=\frac{4}{3}$. Thus we deduce from the definition of
$\widetilde{\gamma}(c)$, the weak lower semicontinuity of the functional $\widetilde{E}(u)$  and also Lemma \ref{lemma dandiaoxing} that
$$\widetilde{\gamma}(c)\leq \widetilde{\gamma}(\|u\|_{L^2}^2)\leq \widetilde{E}(u)\leq \lim_{n\to \infty}\widetilde{E}(v_n)=\widetilde{\gamma}(c),$$
which shows that $\widetilde{E}(u)=\widetilde{\gamma}(c)$. By this, we could prove further that $Q(u)=0$. Indeed, if $Q(u)<0$, then there exists a $\lambda_0\in (0,1)$ such that $Q(u_{\lambda_0})=0$, thus
$$\widetilde{\gamma}(c)\leq \widetilde{\gamma}(\|u\|_{L^2}^2)\leq \widetilde{E}(u_{\lambda_0})<\widetilde{E}(u)=\widetilde{\gamma}(c),$$
which is a contradiction. Therefore, by Lemma \ref{lm-gammac} we have already proved that $\widetilde{E}(u)=\gamma(c)$ and $Q(u)=0$. Then the proof is completed.
\end{proof}

\textbf{Remark 4.5.} As one may observe that since we work in the radial space $H_r^1$ where we have the advantage of compact embedding, the proof of Proposition \ref{proposition existence} can be more simpler. However, we here provide a new proof which can be applied in the space without radial property, provided that the functional keeps the translation invariant, even only in the $x_3$ direction, which is indeed the situation in Lemma \ref{lemma minimization problem}.\\

To prove finally  that Theorem \ref{theorem existence normalized}, we need the following two Lemmas.
\begin{lemma}\label{lm-lambda}
Assume that $\lambda_1\geq 0$, $\lambda_2,\lambda_3>0$. Let $u\in H^1$ be a weak solution of the equation
\begin{equation}\label{elliptic partial123}
-\Delta u+\omega u+\frac{\lambda_1}{|x|}u+ \lambda_2(|\cdot|^{-1}\ast |u|^2)u-
 \lambda_3|u|^p u=0, \ x\in \mathbb{R}^3.
\end{equation}
If $\omega \leq 0$, then the only solution of \eqref{elliptic partial123} fulfilling one of the following conditions is the null function: (i) $\frac{4}{3}<p<4$ and $\|u\|_{L^2}^2\leq c_0$ for some $c_0>0$ independent of $\omega$; (ii) $p=\frac{4}{3}$  and  $\|u\|_{L^2}^2<(\frac{9}{7})^{\frac{3}{2}}\lambda_3^{-\frac{3}{2}}\|R\|_{L^2}^2$.
\end{lemma}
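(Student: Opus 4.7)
The plan is to use the two algebraic identities every non-trivial $H^1$ solution of \eqref{elliptic partial123} automatically satisfies: testing against $\bar u$ and integrating by parts yields
\begin{equation*}
\|\nabla u\|_{L^2}^2+\omega\|u\|_{L^2}^2+\lambda_1\int_{\mathbb{R}^3}\frac{|u|^2}{|x|}dx+\lambda_2\int_{\mathbb{R}^3}(|x|^{-1}\ast|u|^2)|u|^2\,dx-\lambda_3\|u\|_{L^{p+2}}^{p+2}=0,
\end{equation*}
while Lemma \ref{lemma pohozaev identity} (with $b=0$) gives $Q(u)=0$, cf.\ \eqref{qu}. In each case I will build a linear combination of these two equalities that (a) kills one nonlinearity, (b) exposes a sign-definite relation thanks to $\omega\le 0$ and $\lambda_1\ge 0$, $\lambda_2,\lambda_3>0$, and (c) collides with a Gagliardo--Nirenberg estimate when the $L^2$-mass of $u$ is small.

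For case (i), $4/3<p<4$, I take $\alpha=2(p+2)/(3p)\in(1,2)$; a direct coefficient count shows that $\alpha Q(u)-(\text{first identity})=0$ exactly eliminates the $\|u\|_{L^{p+2}}^{p+2}$ term and reads
\begin{equation*}
\tfrac{4-p}{3p}\|\nabla u\|_{L^2}^2=\omega\|u\|_{L^2}^2+\tfrac{2(p-1)}{3p}\lambda_1\int\tfrac{|u|^2}{|x|}dx+\tfrac{5p-2}{6p}\lambda_2\int(|x|^{-1}\ast|u|^2)|u|^2\,dx.
\end{equation*}
Using $\omega\le 0$ together with Hardy's inequality $\int|u|^2/|x|\,dx\le 2\|\nabla u\|_{L^2}\|u\|_{L^2}$ and the Hardy--Littlewood--Sobolev bound $\int(|x|^{-1}\ast|u|^2)|u|^2\,dx\le C\|u\|_{L^2}^3\|\nabla u\|_{L^2}$ produces, after dividing by $\|\nabla u\|_{L^2}$, an upper estimate of the form $\|\nabla u\|_{L^2}\le C_1\|u\|_{L^2}+C_2\|u\|_{L^2}^3$ with constants depending only on $p,\lambda_1,\lambda_2$. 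In parallel, $Q(u)=0$ combined with the usual inequality $\|u\|_{L^{p+2}}^{p+2}\le C\|\nabla u\|_{L^2}^{3p/2}\|u\|_{L^2}^{(4-p)/2}$ and the mass-supercritical sign $2-3p/2<0$ yields the lower bound $\|\nabla u\|_{L^2}\ge C_3\|u\|_{L^2}^{-(4-p)/(3p-4)}$ (non-trivial $H^1$ functions have non-zero gradient). As $\|u\|_{L^2}\to 0$ the lower bound blows up while the upper bound vanishes, so a structurally determined $c_0>0$, independent of $\omega$, below which the bounds are incompatible forces $u\equiv 0$.

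For case (ii), $p=4/3$, the Pohozaev lower bound degenerates because $3p/2-2=0$ is mass-critical. Instead I use the combination (first identity)$-4\,Q(u)=0$: at $p=4/3$ the Hartree coefficient becomes $1-4\cdot(1/4)=0$ and the identity collapses to
\begin{equation*}
3\|\nabla u\|_{L^2}^2=\omega\|u\|_{L^2}^2-\lambda_1\int\tfrac{|u|^2}{|x|}dx+\tfrac{7\lambda_3}{5}\|u\|_{L^{10/3}}^{10/3}.
\end{equation*}
With $\omega\le 0$ and $\lambda_1\ge 0$ this yields $3\|\nabla u\|_{L^2}^2\le\tfrac{7\lambda_3}{5}\|u\|_{L^{10/3}}^{10/3}$. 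Inserting the sharp Gagliardo--Nirenberg inequality \eqref{sharp gn}, equivalently $\|u\|_{L^{10/3}}^{10/3}\le\tfrac{5}{3\|R\|_{L^2}^{4/3}}\|u\|_{L^2}^{4/3}\|\nabla u\|_{L^2}^2$, and dividing by $\|\nabla u\|_{L^2}^2>0$, gives $\|u\|_{L^2}^{4/3}\ge\tfrac{9}{7}\lambda_3^{-1}\|R\|_{L^2}^{4/3}$, i.e.\ $\|u\|_{L^2}^2\ge(9/7)^{3/2}\lambda_3^{-3/2}\|R\|_{L^2}^2$, contradicting the hypothesis and forcing $u\equiv 0$.

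The main obstacle is case (ii): mass-critical scaling destroys the upper-lower pincer used in (i), so it must be replaced by a single sharp inequality. The non-obvious point is that the correct combination is the one that kills the Hartree term (coefficient $4$), not the power term as in (i); only then does the resulting inequality become scale-critical, so that the sharp Gagliardo--Nirenberg constant attached to the ground state of \eqref{elliptic classical} closes the estimate with the explicit threshold $(9/7)^{3/2}$.
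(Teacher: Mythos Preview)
Your proof is correct and follows essentially the same strategy as the paper: the paper also forms the combination of the Nehari identity and the Pohozaev identity $Q(u)=0$ that eliminates the $\|u\|_{L^{p+2}}^{p+2}$ term in case (i), then pairs the resulting upper bound on $\|\nabla u\|_{L^2}$ (via Hardy and Hardy--Littlewood--Sobolev) against the lower bound coming from $Q(u)=0$ plus Gagliardo--Nirenberg; in case (ii) it likewise takes the combination killing the Hartree term and closes with the sharp Gagliardo--Nirenberg inequality \eqref{sharp gn} to obtain the identical threshold $(9/7)^{3/2}\lambda_3^{-3/2}\|R\|_{L^2}^2$. The only cosmetic difference is that the paper estimates the Coulomb integral via the split bound \eqref{Kulun estimate} rather than your direct Hardy--Cauchy--Schwarz bound, which does not affect the argument.
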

\begin{proof}
%Indeed, by Lemma \ref{lemma pohozaev identity} we know that $Q(u)=0$. Thus
%\begin{eqnarray}\label{5.44}
%\|\nabla u\|_{L^2}^2+\frac{\lambda_1}{2}\int_{\mathbb{R}^3} \frac{|u|^{2}}{|x|}dx
%+\frac{\lambda_2}{4}\int_{\mathbb{R}^3} (|x|^{-1}\ast|u|^{2})|u|^{2}dx -\frac{3p\lambda_3}{2(p+2)}\|u\|_{L^{p+2}}^{p+2}=0.
%\end{eqnarray}
%This identity implies that
%\begin{eqnarray}\label{5.45}
%\|\nabla u\|_{L^2}^2\leq \frac{3p\lambda_3}{2(p+2)}\|u\|_{L^{p+2}}^{p+2} \leq C\|u\|_{L^2}^{\frac{4-p}{2}}\|\nabla u\|_{L^2}^{\frac{3p}{2}},
%\end{eqnarray}
%where we use  the following Gagliardo-Nirenberg inequality
%\begin{equation}\label{lp estimate}
%\int_{\mathbb{R}^3} |u|^{p+2}dx\leq C\|u\|_{L^2}^{\frac{4-p}{2}}\|\nabla u\|_{L^2}^{\frac{3p}{2}},\ \forall u\in H^1.
%\end{equation}

%On the other hand, note that multiplying \eqref{elliptic partial123} by $u$ and integrating by part, we have
%\begin{eqnarray}\label{5.48}
%\|\nabla u\|_{L^2}^2+\lambda_1\int_{\mathbb{R}^3} \frac{|u|^{2}}{|x|}dx
%+\lambda_2\int_{\mathbb{R}^3} (|x|^{-1}\ast|u|^{2})|u|^{2}dx  -\lambda_3\|u\|_{L^{p+2}}^{p+2}+\omega\|u\|_{L^2}^2=0.
%\end{eqnarray}
By the Hardy-Littlewood-Sobolev and Sobolev inequalities, we have
\begin{equation}\label{Hartree estimate}
\int_{\mathbb{R}^3} (|x|^{-1}\ast|u|^{2})|u|^{2}dx\leq C\|u\|_{L^{\frac{12}{5}}}^4\leq C\|u\|_{L^2}^3\|\nabla u\|_{L^2}, \ \forall u\in H^1.
\end{equation}
Also for any $u\in H^1$, we deduce that
\begin{align}\label{Kulun estimate}
\int_{\mathbb{R}^3} \frac{|u|^2}{|x|}dx&=\int_{B_1(0)} \frac{|u|^2}{|x|}dx+\int_{B^c_1(0)} \frac{|u|^2}{|x|}dx \nonumber\\
&\leq C_1 \||x|^{-1}\chi_{B_1(0)}\|_{L^{\frac{3}{1+3\epsilon}}} \|u\|_{L^{\frac{6}{2-3\epsilon}}}^2
+C_2 \||x|^{-1}\chi_{B_1^c(0)}\|_{L^{\frac{3}{1-3\epsilon}}} \|u\|_{L^{\frac{6}{2+3\epsilon}}}^2\nonumber\\
&\leq C \Big (\|\nabla u\|_{L^2}^{1+3\epsilon}\|u\|_{L^2}^{1-3\epsilon} + \|\nabla u\|_{L^2}^{1-3\epsilon}\|u\|_{L^2}^{1+3\epsilon} \Big ),
\end{align}
with any $\epsilon\in (0,\frac{1}{3})$.
Since $u\in H^1$ be a weak solution of \eqref{elliptic partial123}, we deduce from \eqref{pohozaev identity0}, \eqref{pohozaev identity1}, \eqref{Hartree estimate} and \eqref{Kulun estimate} that
\begin{align}\label{5.49}
\omega\|u\|_{L^2}^2=&\lambda_3\|u\|_{L^{p+2}}^{p+2}-\|\nabla u\|_{L^2}^2-\lambda_1\int_{\mathbb{R}^3} \frac{|u|^{2}}{|x|}dx
-\lambda_2\int_{\mathbb{R}^3} (|x|^{-1}\ast|u|^{2})|u|^{2}dx\nonumber\\
=&\frac{4-p}{3p}\|\nabla u\|_{L^2}^2-\frac{2(p-1)\lambda_1}{3p}\int_{\mathbb{R}^3} \frac{|u|^{2}}{|x|}dx-\frac{(5p-2)\lambda_2}{6p}\int_{\mathbb{R}^3} (|x|^{-1}\ast|u|^{2})|u|^{2}dx\nonumber\\
\geq&\frac{4-p}{3p}\|\nabla u\|_{L^2}^2 - \widetilde{C}_1\Big(\|\nabla u\|_{L^2}^{\frac{3}{2}}\|u\|_{L^2}^{\frac{1}{2}} + \|\nabla u\|_{L^2}^{\frac{1}{2}}\|u\|_{L^2}^{\frac{3}{2}}\Big) - \widetilde{C}_2 \|u\|_{L^2}^3\|\nabla u\|_{L^2},
\end{align}
where $\widetilde{C}_i>0,i=1,2$, are independent of $u$. When $\frac{4}{3}< p<4$, then from \eqref{4.144} and \eqref{5.49}, we see that $\omega>0$ holds necessarily if $\|u\|_{L^2}$ is small enough.

When $p=\frac{4}{3}$, we deduce from \eqref{pohozaev identity0}, \eqref{pohozaev identity1} and \eqref{sharp gn} that
\begin{align}\label{15.49}
\omega\|u\|_{L^2}^2=3\|\nabla u\|_{L^2}^2-\frac{7\lambda_3}{5}\|u\|_{L^{p+2}}^{p+2}+\frac{\lambda_1}{4}\int_{\mathbb{R}^3} \frac{|u|^{2}}{|x|}dx
\geq\left(\frac{9\|R\|_{L^2}^\frac{4}{3}-7\lambda_3\|u\|_{L^2}^\frac{4}{3}}
{9\|R\|_{L^2}^\frac{4}{3}}\right)\|\nabla u\|_{L^2}^2,
\end{align}
which implies that $\omega>0$ if $\|u\|_{L^2}^2<(\frac{9}{7})^{\frac{3}{2}}\lambda_3^{-\frac{3}{2}}\|R\|_{L^2}^2$.
\end{proof}

\begin{lemma}\label{lemma smonotonicity}
Assume that $\lambda_1\geq 0$, $\lambda_2,\lambda_3>0$. Then when $\frac{4}{3}< p<4$, there exists a $c_1>0$  such that the mapping $c\mapsto \gamma(c)$ is strictly decreasing on the interval $(0,c_1)$, and when $p=\frac{4}{3}$, $c\mapsto \gamma(c)$ is strictly decreasing on $(\lambda_3^{-\frac{3}{2}}\|R\|_{L^2}^2,
(\frac{9}{7})^{\frac{3}{2}}\lambda_3^{-\frac{3}{2}}\|R\|_{L^2}^2)$.
\end{lemma}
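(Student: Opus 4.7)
The plan is to first establish local strict monotonicity at each point $c$ in the prescribed interval via a Lagrange-multiplier identity, and then extend it to the whole interval by combining with the non-increasing property from Lemma \ref{lemma dandiaoxing}. Fix $c$ in the interval (taking $c_1 := c_0$ from Lemma \ref{lm-lambda} in the supercritical case). Proposition \ref{proposition existence} supplies $u \in H_r^1\setminus\{0\}$ with $E(u)=\gamma(c)$ and $u \in V_r(c^*)$, where $c^* := \|u\|_{L^2}^2 \leq c$. Since $\gamma$ is non-increasing and $\gamma(c^*) \leq E(u) = \gamma(c)$, we have $\gamma(c^*)=\gamma(c)$. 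By Lemma \ref{lm-lagrange}, $u$ solves \eqref{elliptic partial123} with some Lagrange multiplier $\omega \in \mathbb{R}$. Since $c^*$ still lies in the prescribed interval (using \eqref{necess} as a lower bound when $p=\frac{4}{3}$), Lemma \ref{lm-lambda} yields $\omega > 0$.

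Next, I would construct a one-parameter family $v_\theta := \theta u \in S_r(\theta^2 c^*)$ for $\theta$ near $1$, and deform it onto $V_r(\theta^2 c^*)$ via Lemma \ref{lemma jihexingzhi}: there exists a unique $\lambda_\theta>0$ with $Q((v_\theta)_{\lambda_\theta})=0$, and $\lambda_1=1$. The hypothesis of Lemma \ref{lemma jihexingzhi} requires verification only when $p=\frac{4}{3}$; it follows from $Q(u)=0$ together with $\lambda_2>0$ and $u\neq 0$, which yield $\|\nabla u\|_{L^2}^2<\frac{3\lambda_3}{5}\|u\|_{L^{10/3}}^{10/3}$ and hence the same estimate for $v_\theta$ when $\theta\geq 1$. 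An implicit function theorem argument applied to $F(\theta,\lambda):=Q((v_\theta)_\lambda)=0$ at $(1,1)$---whose non-degeneracy follows from $\partial_\lambda^2 E((u)_\lambda)|_{\lambda=1}<0$ (for $p=\frac{4}{3}$ this reduces via $Q(u)=0$ to $-\frac{\lambda_1}{2}\int|u|^2/|x|\,dx-\frac{\lambda_2}{4}\int(|x|^{-1}\ast|u|^2)|u|^2\,dx<0$, hence the essential use of $\lambda_2>0$)---shows that $\theta\mapsto\lambda_\theta$ is $C^1$ near $\theta=1$. Setting $g(\theta):=E((v_\theta)_{\lambda_\theta})$, we have $\gamma(\theta^2 c^*)\leq g(\theta)$, $g(1)=\gamma(c)$, and an envelope-theorem computation using $\partial_\lambda E((u)_\lambda)|_{\lambda=1}=Q(u)=0$ reduces $g'(1)$ to $\frac{d}{d\theta}E(\theta u)|_{\theta=1}$. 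Testing the equation solved by $u$ against $\bar u$ and integrating, this quantity equals $-\omega c^*$, so $g'(1)=-\omega c^*<0$, giving $g(\theta)<\gamma(c)$ for $\theta>1$ sufficiently close to $1$.

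Finally, I would first deduce that $c^* = c$: if $c^*<c$, monotonicity forces $\gamma\equiv\gamma(c)$ on $[c^*,c]$, but the previous step produces $\gamma(\theta^2 c^*)<\gamma(c^*)$ with $\theta^2 c^* \in (c^*,c)$ for $\theta>1$ sufficiently close to $1$, a contradiction. Hence $c^*=c$, and so $\gamma(\theta^2 c)<\gamma(c)$ for $\theta>1$ close to $1$. Strict decrease on the full interval then follows: for any $\tilde c<\bar c$ in the interval, pick $\theta>1$ close enough to $1$ that $\theta^2 \tilde c\leq \bar c$; by the local strict decrease at $\tilde c$ and Lemma \ref{lemma dandiaoxing}, $\gamma(\bar c)\leq\gamma(\theta^2 \tilde c)<\gamma(\tilde c)$. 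The main obstacle I anticipate is verifying the implicit-function non-degeneracy condition at the mass-critical exponent $p=\frac{4}{3}$, where the leading-order contribution cancels and one must exploit the strict positivity of the Hartree term, together with the careful bookkeeping needed to rule out $c^*<c$ without invoking the conclusion we are trying to prove.
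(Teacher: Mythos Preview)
Your proof is correct, but it follows a different route from the paper's. The paper argues by contradiction: if $\gamma$ were constant on some subinterval $(c_2,c_3)$, then the minimizer $u_0$ furnished by Proposition~\ref{proposition existence} (applied at a point of the interval) would be a \emph{local minimizer of $E$ on the full Pohozaev manifold} $\{Q=0\}$, without any mass constraint. A Lagrange-multiplier computation identical to that in Lemma~\ref{lm-lagrange} then shows the multiplier in front of $Q'(u_0)$ vanishes, i.e.\ $E'(u_0)=0$. Combining this with the constrained Euler--Lagrange equation $E'(u_0)+\omega_0 u_0=0$ forces $\omega_0=0$, contradicting Lemma~\ref{lm-lambda}.

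By contrast, you work directly: you use the positivity of $\omega$ to build an explicit one-parameter competitor $(\theta u)_{\lambda_\theta}$ on $V_r(\theta^2 c^*)$ and differentiate, obtaining the quantitative slope $g'(1)=-\omega c^*<0$ via an envelope-theorem argument. The paper's approach is shorter and avoids the implicit function theorem entirely (no smoothness of $\theta\mapsto\lambda_\theta$ is needed), while your approach is constructive and extracts an explicit local rate of decrease; both hinge on exactly the same input, namely $\omega>0$ from Lemma~\ref{lm-lambda}. Your careful check that $\partial_\lambda^2 E(u_\lambda)|_{\lambda=1}<0$ at $p=\tfrac43$ relies on $\lambda_2>0$, which mirrors how the paper's computation $\tfrac32 Q_1-Q_2<0$ in Lemma~\ref{lemma daoshuweiling} uses the same sign conditions.
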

\begin{proof} Indeed, by the fact that $c\mapsto \gamma(c)$ is non-increasing obtained in Lemma \ref{lemma dandiaoxing}, if we assume by contradiction that $$\gamma(c)\equiv \gamma(c_3),~~~ \forall c\in (c_2, c_3),$$
 for some $c_2,c_3>0$. Then by Proposition \ref{proposition existence}, for $c_3>0$, there exists a $u_0\in V_r(\|u_0\|_{L^2}^2)$ such that
 $$E(u_0)=\gamma(c_3)\ \mbox{ and }\ 0<\|u_0\|_{L^2}^2<c_3.$$
Hence $u_0$ is a local minimizer of $E$ on the manifold $\mathcal{M}:=\{u\in H_r^1\backslash\{0\}:~~ Q(u)=0\}$. Thus there exists a Lagrange multiplier $\lambda_0\in \mathbb{R}$ such that
$$E'(u_0)+\lambda_0Q'(u_0)=0,\ \mbox{ in }\ H_r^{-1}.$$
By the same argument in the proof of Lemma \ref{lm-lagrange}, we deduce that $\lambda_0=0$. Then $E'(u_0)u_0=0$, namely
\begin{eqnarray}\label{5.28}
\|\nabla u_0\|_{L^2}^2+\lambda_1\int_{\mathbb{R}^3} \frac{|u_0|^{2}}{|x|}dx
+\lambda_2\int_{\mathbb{R}^3} (|x|^{-1}\ast|u_0|^{2})|u_0|^{2}dx = \lambda_3\|u_0\|_{L^{p+2}}^{p+2}.
\end{eqnarray}
On the other hand, since the fact that $u_0$ is indeed a minimizer of
$$\gamma(\|u_0\|_{L^2}^2)=\inf\{E(u):~~ u\in S_r(\|u_0\|_{L^2}^2), ~~Q(u)=0\},$$
by Lemma \ref{lm-lagrange}, we know that $u_0$ is a critical point of $E$ on $S_r(\|u_0\|_{L^2}^2)$. Then there exists a $\omega_0\in \mathbb{R}$ such that
$$E'(u_0)+\omega_0u_0=0,\ \mbox{ in }\ H_r^{-1},$$
which implies
\begin{eqnarray}\label{5.29}
\|\nabla u_0\|_{L^2}^2 + \omega_0\|u_0\|_{L^2}^2 +\lambda_1\int_{\mathbb{R}^3} \frac{|u_0|^{2}}{|x|}dx
+\lambda_2\int_{\mathbb{R}^3} (|x|^{-1}\ast|u_0|^{2})|u_0|^{2}dx = \lambda_3\|u_0\|_{L^{p+2}}^{p+2}.
\end{eqnarray}
Hence by \eqref{5.28}-\eqref{5.29}, we conclude that $\omega_0\|u_0\|_{L^2}^2=0$. Then $\omega_0=0$ since $u_0\neq0$. However, by Lemma \ref{lm-lambda}, when $\frac{4}{3}<p<4$ and $c_3>0$ is small enough, or when $p=\frac{4}{3}$ and $(c_2,c_3)\subset (\lambda_3^{-\frac{3}{2}}\|R\|_{L^2}^2,
(\frac{9}{7})^{\frac{3}{2}}\lambda_3^{-\frac{3}{2}}\|R\|_{L^2}^2)$, there holds necessarily $\omega_0>0$, which is a contradiction. At this point, we have proved the strict decreasing property of $\gamma(c)$.
\end{proof}

Now we are ready to prove Theorem  \ref{theorem existence normalized}.

\begin{proof}[\textbf{Proof of Theorem \ref{theorem existence normalized}}]
Firstly we prove $(1)$. Indeed, by Proposition \ref{proposition existence} and Lemma \ref{lemma smonotonicity}, we conclude immediately that when $\frac{4}{3}<p<4$ and $c>0$ is small enough, $\gamma(c)>0$ and $\gamma(c)$ admits at least one minimizer $u_c\in V_r(c)$. In particular, by Lemma \ref{lm-lagrange},  $u_c$ is a critical point of $E$ on $S_r(c)$. Thus standardly, there exists a $\omega_c\in \mathbb{R}$ such that $(u_c,\omega_c)\in S_r(c)\times \mathbb{R}$ solves weakly the equation \eqref{elliptic partial12}. To show the behavior of $u_c$ as $c\to 0^+$. We note that by $Q(u_c)=0$, we have
\begin{eqnarray}\label{5.455}
\|\nabla u_c\|_{L^2}^2\leq \frac{3p\lambda_3}{2(p+2)}\|u_c\|_{L^{p+2}}^{p+2} \leq C\|u_c\|_{L^2}^{\frac{4-p}{2}}\|\nabla u_c\|_{L^2}^{\frac{3p}{2}}.
\end{eqnarray}
Since $\frac{4}{3}<p<4$, then by \eqref{5.455},
\begin{eqnarray}\label{5.466}
\|\nabla u_c\|_{L^2}\to +\infty,\ \mbox{ as }\ c\to 0^+.
\end{eqnarray}
and thus $E(u_c)\to +\infty$ as $c\to 0^+$, by the following calculation
\begin{align}\label{5.477}
E(u_c)=\frac{3p-4}{6p}\|\nabla u_c\|^2_{L^2}+\lambda_1\frac{3p-2}{6p}\int_{\mathbb{R}^3} \frac{|u_c|^{2}}{|x|}dx+\lambda_2\frac{3p-2}{12p}\int_{\mathbb{R}^3} (|x|^{-1}\ast|u_c|^{2})|u_c|^{2}dx.
\end{align}

Finally, we prove that $\omega_c\to +\infty$ as $c\to 0^+$.
%Note that multiplying \eqref{elliptic partial12} by $u_c$ and integrating by part, we have
%\begin{eqnarray}\label{5.48}
%\|\nabla u_c\|_{L^2}^2+\lambda_1\int_{\mathbb{R}^3} \frac{|u_c|^{2}}{|x|}dx
%+\lambda_2\int_{\mathbb{R}^3} (|x|^{-1}\ast|u_c|^{2})|u_c|^{2}dx  -\lambda_3\|u_c\|_{L^{p+2}}^{p+2}+\omega_c\|u_c\|_{L^2}^2=0.
%\end{eqnarray}
%By the Hardy-Littlewood-Sobolev and Sobolev inequalities, we have
%\begin{equation}\label{Hartree estimate}
%\int_{\mathbb{R}^3} (|x|^{-1}\ast|u|^{2})|u|^{2}dx\leq C\|u\|_{L^{\frac{12}{5}}}^4\leq C\|u\|_{L^2}^3\|\nabla u\|_{L^2}, \ \forall u\in H^1.
%\end{equation}
%Also for any $u\in H^1$, we deduce that
%\begin{align}\label{Kulun estimate}
%\int_{\mathbb{R}^3} \frac{|u|^2}{|x|}dx&=\int_{B_1(0)} \frac{|u|^2}{|x|}dx+\int_{B^c_1(0)} \frac{|u|^2}{|x|}dx \nonumber\\
%&\leq C_1 \||x|^{-1}\chi_{B_1(0)}\|_{L^{\frac{3}{1+3\epsilon}}} \|u\|_{L^{\frac{6}{2-3\epsilon}}}^2
%+C_2 \||x|^{-1}\chi_{B_1^c(0)}\|_{L^{\frac{3}{1-3\epsilon}}} \|u\|_{L^{\frac{6}{2+3\epsilon}}}^2\nonumber\\
%&\leq \overline{C}_1 \|u\|_{L^{\frac{6}{2-3\epsilon}}}^2 +\overline{C}_2 \|u\|_{L^{\frac{6}{2+3\epsilon}}}^2 \nonumber\\
%&\leq C \Big (\|\nabla u\|_{L^2}^{1+3\epsilon}\|u\|_{L^2}^{1-3\epsilon} + \|\nabla u\|_{L^2}^{1-3\epsilon}\|u\|_{L^2}^{1+3\epsilon} \Big ),
%\end{align}
%with any $\epsilon\in (0,\frac{1}{3})$.
%Then by  \eqref{5.44}, \eqref{5.48}, \eqref{Hartree estimate} and \eqref{Kulun estimate}, we have
Note that by \eqref{5.49}, we have
\begin{align}\label{5.499}
\omega_c\|u_c\|_{L^2}^2\geq\frac{4-p}{3p}\|\nabla u_c\|_{L^2}^2 - \widetilde{C}_1\Big(\|\nabla u_c\|_{L^2}^{\frac{3}{2}}\|u_c\|_{L^2}^{\frac{1}{2}} + \|\nabla u_c\|_{L^2}^{\frac{1}{2}}\|u_c\|_{L^2}^{\frac{3}{2}}\Big) - \widetilde{C}_2 \|u_c\|_{L^2}^3\|\nabla u_c\|_{L^2},
\end{align}
where $\widetilde{C}_i>0,i=1,2$, are independent of $u_c$. From \eqref{5.466} and \eqref{5.499}, we see that $\omega_c\to +\infty$ as $c\to 0^+$. Then Point $(1)$ is proved.

Point $(2)$, follows directly from Lemma \ref{lm-nonexistence11}, Proposition \ref{proposition existence} and Lemma \ref{lemma smonotonicity}.

Finally, we note that for any function $u_c\in V_r(c)$ with $E(u_c)=\gamma(c)$, by Lemma \ref{lm-lagrange},  it is a critical point of $E$ on $S_r(c)$ and then a radial solution of \eqref{elliptic partial12} with $\omega=\omega_c\in \mathbb{R}$. Then by following the proof of Theorem \ref{th instability no partial}, we conclude that the standing wave $\psi(t,x)=e^{i\omega_c t}u_c(x)$ is strongly unstable. See also similar proofs in \cite[Theorem 1.5]{jean16siam} or \cite[Theorem 1.6]{jean-luo}. Particularly, we note that if $\omega_c>0$ (which is the case as $c>0$ small), then Point $(3)$ is indeed a consequence of Theorem \ref{th instability no partial}. At this point, we complete the proof.
\end{proof}

\section{Normalized solutions with partial confine}

In this section we prove the existence of stable ground state by considering two types of minimization problems. To obtain the stability, we need first to show the global existence result of Theorem \ref{theorem global existence}.

\begin{proof}[\textbf{Proof of Theorem \ref{theorem global existence}}.]
%In order to show this theorem, it suffices to show
%\begin{equation}\label{1001}
%\|\psi(t)\|_{\tilde{X}} \leq C(T,\|\psi_0\|_{\tilde{X}})~~~\mbox{for~every}~t\in [0,T],
%\end{equation}
%for every $T>0$.
(1) and (2) have been proved in \cite{f18dcds}. Here we only prove (3) and (4), in both cases, we always assume that $\lambda_1,\lambda_2\in \mathbb{R}$ and $\lambda_3>0$.

Recall that, by the Hardy inequality and the Young inequality with $\varepsilon$, we have
\begin{align}\label{i1}
\frac{|\lambda_1|}{2}\int_{\mathbb{R}^3}\frac{|\psi(t,x)|^2}{|x|}dx\leq C\|\psi(t)\|_{L^2}\|\psi(t)\|_{\tilde{X}}\leq \varepsilon_1\|\psi(t)\|_{\tilde{X}}^2+C(\varepsilon_1, \lambda_1)\|\psi(t)\|_{L^2}^2,
\end{align}
and
\begin{align}\label{i2}
\frac{|\lambda_2|}{4}\int_{\mathbb{R}^3}(|x|^{-1}*|\psi(t)|^{2})|\psi(t,x)|^{2}dx\leq C\|\psi(t)\|_{L^2}^3\|\psi(t)\|_{\tilde{X}}\leq\varepsilon_2\|\psi(t)\|_{\tilde{X}}^2+C(\varepsilon_2, \lambda_2)\|\psi(t)\|_{L^2}^6,
\end{align}
where $\varepsilon_i>0$ is arbitrary and $C(\varepsilon_i, \lambda_i)>0, i=1,2$.

When $p=\frac{4}{3}$, using the conservation of mass and energy, \eqref{i1}, \eqref{i2} and \eqref{sharp gn},
we have
\begin{align*}
\frac{1}{2}\|\psi(t)\|_{\tilde{X}}^2\leq& E_b(\psi_0)+C(\varepsilon_1, \varepsilon_2, \lambda_1, \lambda_2)(\|\psi_0\|_{L^2}^2+\|\psi_0\|_{L^2}^6)+(\varepsilon_1+\varepsilon_2 +\frac{\lambda_3\|\psi_0\|_{L^2}^{\frac{4}{3}}}{2\|R\|_{L^2}^{\frac{4}{3}}})\|\psi(t)\|_{\tilde{X}}^2.
\end{align*}
When $\|\psi_0\|_{L^2}<\lambda_3^{-\frac{3}{4}}\|R\|_{L^2}$, by
taking $\varepsilon_1$ and $\varepsilon_2$ small enough such that
\[
\frac{1}{2}
-\lambda_3\frac{\|\psi_0\|_{L^2}^{\frac{4}{3}}}{2\|R\|_{L^2}^{\frac{4}{3}}}
-\varepsilon_1-\varepsilon_2>0,
\]
we then obtain the global existence.
%\begin{align}
%\left(\frac{1}{2}-\varepsilon_1-\varepsilon_2
%-\lambda_3\frac{\|\psi_0\|_{L^2}^p}{2\|R\|_{L^2}^p}\right)\|\psi(t)\|_{\tilde{X}}^2\leq& E(\psi_0)+C(\|\psi_0\|_{L^2}).
%\end{align}

When $\frac{4}{3}<p<4$. Also by the conservation of mass and energy, \eqref{i1}, \eqref{i2} and the Gagiliardo-Nirenberg inequality, we have
\begin{align*}
\frac{1}{2}\|\psi(t)\|_{\tilde{X}}^2\leq& E_b(\psi_0)+C(\varepsilon_1, \varepsilon_2, \lambda_1, \lambda_2)(\|\psi_0\|_{L^2}^2+\|\psi_0\|_{L^2}^6)+(\varepsilon_1+\varepsilon_2)\|\psi(t)\|_{\tilde{X}}^2
+C\|\psi_0\|_{L^2}^{\frac{4-p}{2}
}\|\psi(t)\|_{\tilde{X}}^{\frac{3p}{2}}.
\end{align*}
Taking $\varepsilon_1=\varepsilon_2=\frac{1}{8}$, this implies that
\begin{align*}
\frac{1}{4}\|\psi(t)\|_{\tilde{X}}^2\leq C(E_b(\psi_0),\|\psi_0\|_{L^2})
+C\|\psi_0\|_{L^2}^{\frac{4-p}{2}
}\|\psi(t)\|_{\tilde{X}}^{\frac{3p}{2}}.
\end{align*}
Therefore, for every given $\kappa>0$, with $\|\psi_0\|_{\tilde{X}}<\kappa$ and $\|\psi_0\|_{L^2}<1$, denoting $C(E_b(\psi_0),\|\psi_0\|_{L^2})=C(\kappa)$, then applying Lemma \ref{lemma Bootstrap}, there exists $0<c_{\kappa}<1$ such that $\|\psi_0\|_{L^2}< c_{\kappa}$ and $\|\psi(t)\|_{\tilde{X}}\leq C({\kappa})$ for all $t\in [0,T^*)$. Thus the proof ends.
\end{proof}

\subsection{Solutions as global minimizers}

In this subsection, we prove Theorem \ref{Theorem globalmini}.
To begin with, we first show that
\begin{lemma}\label{lm3.1}
Assume that $\lambda_i\in \mathbb{R}, i=1,2,3$. Let $R$ be the unique ground state of \eqref{elliptic classical}. Then for any $c>0$ if $0<p<\frac{4}{3}$ or $0<c<|\lambda_3|^{-\frac{3}{2}}\|R\|_{L^2}^2$ if $p=\frac{4}{3}$, the functional $E_b(u)$ is bounded from below on $\bar{S}(c)$, and $m(c)>-\infty$.
\end{lemma}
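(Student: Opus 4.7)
The plan is to estimate each potentially negative contribution to $E_b$ in terms of the kinetic energy $\tfrac{1}{2}\|\nabla u\|_{L^2}^2$ and the fixed mass $\|u\|_{L^2}^2=c$, using Hardy's inequality, Hardy--Littlewood--Sobolev (as in \eqref{Hartree estimate}), and Gagliardo--Nirenberg (in its sharp form \eqref{sharp gn} in the mass-critical case). Only the terms with $\lambda_1<0$, $\lambda_2<0$, or $\lambda_3>0$ contribute negatively to $E_b$, so it suffices to bound those three in absolute value; all other contributions to $E_b$ are manifestly non-negative on $\bar S(c)$.

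For the Coulomb and Hartree contributions I would argue exactly as in \eqref{i1}--\eqref{i2}: for any $\varepsilon_1,\varepsilon_2>0$,
\[
\tfrac{|\lambda_1|}{2}\int_{\mathbb{R}^3}\tfrac{|u|^2}{|x|}\,dx\le \varepsilon_1\|\nabla u\|_{L^2}^2+C(\varepsilon_1,\lambda_1)\,c,
\]
\[
\tfrac{|\lambda_2|}{4}\int_{\mathbb{R}^3}(|x|^{-1}\ast|u|^2)|u|^2\,dx\le \varepsilon_2\|\nabla u\|_{L^2}^2+C(\varepsilon_2,\lambda_2)\,c^3,
\]
where the mass constraint has been used to convert powers of $\|u\|_{L^2}$ into constants. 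For the power nonlinearity, when $\lambda_3>0$ I would split according to the value of $p$. In the subcritical case $0<p<\tfrac{4}{3}$, Gagliardo--Nirenberg gives $\|u\|_{L^{p+2}}^{p+2}\le C\|\nabla u\|_{L^2}^{3p/2}\|u\|_{L^2}^{(4-p)/2}$ with exponent $3p/2<2$; Young's inequality then yields, for any $\varepsilon_3>0$,
\[
\tfrac{\lambda_3}{p+2}\|u\|_{L^{p+2}}^{p+2}\le \varepsilon_3\|\nabla u\|_{L^2}^2+C(\varepsilon_3,\lambda_3,c).
\]
In the mass-critical case $p=\tfrac{4}{3}$, the sharp inequality \eqref{sharp gn} gives
\[
\tfrac{3\lambda_3}{10}\|u\|_{L^{10/3}}^{10/3}\le \tfrac{\lambda_3\,c^{2/3}}{2\|R\|_{L^2}^{4/3}}\,\|\nabla u\|_{L^2}^2,
\]
and the hypothesis $c<\lambda_3^{-3/2}\|R\|_{L^2}^2$ forces $\lambda_3\,c^{2/3}/\|R\|_{L^2}^{4/3}<1$, so the prefactor is strictly less than $\tfrac{1}{2}$ by a definite positive margin. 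The cases $\lambda_3\le 0$ need no treatment, since the power term then has a favourable sign.

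Combining all of the above and choosing $\varepsilon_1,\varepsilon_2$ (and $\varepsilon_3$ in the subcritical case) small enough that the residual coefficient of $\|\nabla u\|_{L^2}^2$ stays strictly positive, I would arrive at
\[
E_b(u)\ge \delta\|\nabla u\|_{L^2}^2+\tfrac{b^2}{2}\int_{\mathbb{R}^3}(x_1^2+x_2^2)|u|^2\,dx-K(c)
\]
for every $u\in\bar{S}(c)$, with some $\delta>0$ and a constant $K(c)\ge 0$ depending only on $c$ and on the fixed parameters. This gives both statements at once: $E_b$ is bounded from below on $\bar S(c)$, and $m(c)\ge -K(c)>-\infty$. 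The only genuinely delicate point is the mass-critical case, in which the threshold on $c$ is sharp and the conclusion fails without it: only the sharp Gagliardo--Nirenberg constant in \eqref{sharp gn} is small enough to permit absorption, and the assumption $c<|\lambda_3|^{-3/2}\|R\|_{L^2}^2$ is precisely what keeps the corresponding prefactor strictly below $\tfrac{1}{2}$.
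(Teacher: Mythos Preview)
Your proof is correct and follows essentially the same strategy as the paper: bound the Coulomb, Hartree, and power terms via Hardy, Hardy--Littlewood--Sobolev, and Gagliardo--Nirenberg, then absorb into the kinetic energy. The only cosmetic difference is that the paper records the outcome as a single lower bound $E_b(u)\ge \tfrac{\min\{1,b^2\}}{2}(\|u\|_X^2-c)-C_1\|u\|_X^{3/2}-C_2\|u\|_X^{1/2}-C_3\|u\|_X-C_4\|u\|_X^{3p/2}$ (a polynomial in $\|u\|_X$ with dominant quadratic term), whereas you apply Young with $\varepsilon$ term by term to absorb directly into $\|\nabla u\|_{L^2}^2$; the mass-critical case is handled identically via the sharp constant in \eqref{sharp gn}.
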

\begin{proof}
%Recall that, by the Hardy-Littlewood-Sobolev and Sobolev inequalities, we have
%\begin{equation}\label{Hartree estimate}
%\int_{\mathbb{R}^3} (|x|^{-1}\ast|u|^{2})|u|^{2}dx\leq C\|u\|_{L^{\frac{12}{5}}}^4\leq C\|u\|_{L^2}^3\|\nabla u\|_{L^2}, \ \forall u\in X,
%\end{equation}
%and
%\begin{equation}\label{lp estimate}
%\int_{\mathbb{R}^3} |u|^{p+2}dx\leq C\|u\|_{L^2}^{\frac{4-p}{2}}\|\nabla u\|_{L^2}^{\frac{3p}{2}},\ \forall u\in X,
%\end{equation}
%for some constant $C>0$ independent of $u$.
%Also for any $u\in X$, we deduce that
%\begin{align}\label{Kulun estimate}
%\int_{\mathbb{R}^3} \frac{|u|^2}{|x|}dx&=\int_{B_1(0)} \frac{|u|^2}{|x|}dx+\int_{B^c_1(0)} \frac{|u|^2}{|x|}dx \nonumber\\
%&\leq C_1 \||x|^{-1}\chi_{B_1(0)}\|_{L^{\frac{3}{1+3\epsilon}}} \|u\|_{L^{\frac{6}{2-3\epsilon}}}^2
%+C_2 \||x|^{-1}\chi_{B_1^c(0)}\|_{L^{\frac{3}{1-3\epsilon}}} \|u\|_{L^{\frac{6}{2+3\epsilon}}}^2\nonumber\\
%&\leq \overline{C}_1 \|u\|_{L^{\frac{6}{2-3\epsilon}}}^2 +\overline{C}_2 \|u\|_{L^{\frac{6}{2+3\epsilon}}}^2 \nonumber\\
%&\leq C \Big (\|\nabla u\|_{L^2}^{1+3\epsilon}\|u\|_{L^2}^{1-3\epsilon} + \|\nabla u\|_{L^2}^{1-3\epsilon}\|u\|_{L^2}^{1+3\epsilon} \Big ),
%\end{align}
%with any $\epsilon\in (0,\frac{1}{3})$.
Applying the Gagliardo-Nirenberg inequality,  \eqref{Hartree estimate}, \eqref{Kulun estimate},
we can conclude that, there exist constants $\widetilde{C}_i>0, i=1,2,3$, depending on only $c,p$, such that
%Therefore, when $\lambda_1,\lambda_2\in \mathbb{R}$, $\lambda_3>0$ and $0<p<\frac{4}{3}$, it follows from the Young inequality with $\varepsilon$ that there exist $C_1(\epsilon,\varepsilon_1,c)$, $C_2(\varepsilon_2,c)$ and $C_3(\varepsilon_3,c)$ such that
\begin{eqnarray}\label{energy estimate}
E_b(u)\geq \dfrac{\min\{1,b^2\}}{2}(\|u\|_X^2-c)-\dfrac{|\lambda_1|\widetilde{C}_1}{2}\Big (\|u\|_{X}^{\frac{3}{2}} + \|u\|_{X}^{\frac{1}{2}} \Big ) - \dfrac{|\lambda_2|\widetilde{C}_2}{4}\|u\|_X -  \dfrac{|\lambda_3|\widetilde{C}_3}{p+2}\|u\|_X^{\frac{3p}{2}},
\end{eqnarray}
for all $u\in \Bar{S}(c)$.
If we assume that $\lambda_i \in \mathbb{R}, i=1,2,3,$ and $0<p<\frac{4}{3}$, then from \eqref{energy estimate} we see immediately  that for any $c>0$, $E_b(u)$ is bounded from below on $\Bar{S}(c)$, and then that $m(c)>-\infty$.

When $p=\frac{4}{3}$ and $0<c<|\lambda_3|^{-\frac{3}{2}}\|R\|_{L^2}^2$, this result can be easily obtained by using the sharp inequality \eqref{sharp gn} and \eqref{energy estimate}.
\end{proof}

As we declared in Remark 1.6, due to the appearance of the Coulomb potential, the functional does not keep invariant by translation, which brings new difficulties on the compactness of a minimizing sequence for $m(c)$. To overcome this obstacle, we consider the following limit minimization problem:
\begin{eqnarray}\label{limitmini1}
m^{\infty}(c):=\inf_{u\in \Bar{S}(c)}E^{\infty}_{b}(u),
\end{eqnarray}
where the limit energy $E^{\infty}_{b}$ is given by \eqref{limitenergy}. Denote by $\mathcal{M}_c^{\infty}$ the set of all minimizers of $m^{\infty}(c)$. We first establish an existence result for $m^{\infty}(c)$.
\begin{proposition}\label{prop-limit1}
Assume that $\lambda_2\leq 0, \lambda_3>0$. Then for any $c>0$ if $0<p<\frac{4}{3}$ or $0<c<\lambda_3^{-\frac{3}{2}}\|R\|_{L^2}^2$ if $p=\frac{4}{3}$, any minimizing sequence of $m^{\infty}(c)$ is pre-contract, up to a translation, in $X$. In particular, $\mathcal{M}_c^{\infty}\neq \emptyset$.
\end{proposition}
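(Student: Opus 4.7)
The plan is to follow the concentration–compactness scheme adapted to the partial translation invariance (in the $x_3$ direction) of $E_b^{\infty}$. Given a minimizing sequence $\{u_n\}\subset \bar{S}(c)$, I would first obtain boundedness in $X$: using the Hardy–Littlewood–Sobolev inequality \eqref{Hartree estimate}, the Gagliardo–Nirenberg inequality (or the sharp version \eqref{sharp gn} when $p=\frac{4}{3}$), and the hypotheses on $p,c$, one controls $\|u_n\|_{\tilde{X}}$ through $E_b^{\infty}(u_n)$ and $\|u_n\|_{L^2}^2=c$, exactly as in Lemma \ref{lm3.1}, absorbing the (non-positive) Hartree term trivially thanks to $\lambda_2\le 0$.

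Next I would rule out vanishing. Using Lemma \ref{lemma Jeanjean} one has the Poincaré-type bound $\|\nabla u\|_{L^2}^2+b^2\int(x_1^2+x_2^2)|u|^2\ge \Lambda_0\|u\|_{L^2}^2$, so
\[
E_b^{\infty}(u_n)\ge \tfrac{\Lambda_0 c}{2}-\tfrac{\lambda_3}{p+2}\|u_n\|_{L^{p+2}}^{p+2}.
\]
On the other hand, the test function $v(x)=\phi(x_1,x_2)\psi_t(x_3)$ with $\phi$ the normalized first eigenfunction of $-\Delta_{x_1,x_2}+b^2(x_1^2+x_2^2)$ and $\psi_t(x_3)=t^{1/2}\psi(tx_3)$ yields, as $t\to 0^+$, $E_b^{\infty}(v)<\tfrac{\Lambda_0 c}{2}$, since $t^{p/2}$ dominates $t^2$ and the Hartree contribution is non-positive. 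Hence $m^{\infty}(c)<\tfrac{\Lambda_0 c}{2}$ and necessarily $\liminf_n\|u_n\|_{L^{p+2}}\ge \delta>0$. Applying Lemma \ref{lemma compactness lemma I}, I extract $\{z_n\}\subset\mathbb{R}$ and $\bar{u}\in X\setminus\{0\}$ such that $w_n(x):=u_n(x_1,x_2,x_3-z_n)\rightharpoonup \bar{u}$ in $X$; by the $x_3$-translation invariance of $E_b^{\infty}$ (recall $\lambda_1=0$ in this limit problem), $\{w_n\}$ is still a minimizing sequence.

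The key remaining step is to exclude dichotomy. Using the Brezis–Lieb lemmas \ref{lemma Brezis-Lieb}--\ref{lemma Brezis-Lieb lemma Hartree} together with weak convergence of the kinetic and partial-harmonic quadratic forms, I would split
\[
c=\|\bar{u}\|_{L^2}^2+\|w_n-\bar{u}\|_{L^2}^2+o(1),\qquad m^{\infty}(c)=E_b^{\infty}(\bar{u})+E_b^{\infty}(w_n-\bar{u})+o(1).
\]
To rule out the case $\bar{c}:=\|\bar{u}\|_{L^2}^2\in (0,c)$, I would prove the strict subadditivity $m^{\infty}(c)<m^{\infty}(\mu)+m^{\infty}(c-\mu)$ for every $\mu\in(0,c)$ (and, when $p=\frac{4}{3}$, for $c$ in the admissible range so that each of $m^{\infty}(\mu), m^{\infty}(c-\mu)$ is well-defined and strictly negative). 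The scaling $u\mapsto \sqrt{t}\,u$, with $t>1$, preserves the $x$-variables and gives
\[
E_b^{\infty}(\sqrt{t}\,u)=t E_b^{\infty}(u)+t(t-1)\tfrac{\lambda_2}{4}\!\int(|x|^{-1}\!\ast|u|^2)|u|^2dx-(t^{(p+2)/2}-t)\tfrac{\lambda_3}{p+2}\|u\|_{L^{p+2}}^{p+2},
\]
where both correction terms are $\le 0$ (strictly negative when $\|u\|_{L^{p+2}}>0$) since $\lambda_2\le 0$, $\lambda_3>0$, and $(p+2)/2>1$. Choosing $u$ a near-minimizer of $m^{\infty}(c)$ with the uniform lower bound on $\|u\|_{L^{p+2}}$ from the non-vanishing step, one deduces $m^{\infty}(tc)<t\,m^{\infty}(c)$ for all $t>1$, which (coupled with $m^{\infty}(c)<0$) yields strict subadditivity by a standard computation.

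Finally, the strict subadditivity contradicts $m^{\infty}(c)\ge m^{\infty}(\bar c)+m^{\infty}(c-\bar c)$ unless $\bar{c}=c$. Thus $\|w_n\|_{L^2}\to\|\bar{u}\|_{L^2}$, which upgrades the weak $L^2$ convergence to strong $L^2$ convergence; interpolating with the $X$-boundedness, $w_n\to\bar{u}$ in $L^{p+2}$ and, by Lemma \ref{lemma Brezis-Lieb lemma Hartree}, also the Hartree energy converges. Lower semicontinuity of the kinetic and partial-harmonic quadratic forms, combined with $E_b^{\infty}(w_n)\to m^{\infty}(c)\le E_b^{\infty}(\bar{u})$, then forces $\|w_n\|_{\tilde{X}}\to \|\bar u\|_{\tilde{X}}$, giving strong convergence in $X$. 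Thus $\bar{u}\in\mathcal{M}_c^{\infty}\ne\emptyset$. I expect the main obstacle to be the strict subadditivity: the partial harmonic potential breaks full spatial scaling, and one must rely precisely on the multiplicative scaling $u\mapsto\sqrt{t}u$ and on the uniform non-vanishing of $\|u_n\|_{L^{p+2}}$ to close the argument.
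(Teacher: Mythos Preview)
Your proposal is correct and follows essentially the same concentration--compactness scheme as the paper: boundedness from the energy estimate, non-vanishing via the same test function $w(x_1,x_2)\varphi^\lambda(x_3)$ showing $m^{\infty}(c)<\frac{\Lambda_0 c}{2}$, extraction of a nonzero weak limit after $x_3$-translation via Lemma~\ref{lemma compactness lemma I}, and then strong convergence in $X$ from the $L^2$ convergence.

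The only organizational difference is in the dichotomy step. You first prove strict subadditivity $m^{\infty}(c)<m^{\infty}(\mu)+m^{\infty}(c-\mu)$ via the scaling inequality $m^{\infty}(tc)<t\,m^{\infty}(c)$, and then contradict the Brezis--Lieb splitting. The paper instead scales both pieces $u$ and $v_n-u$ directly back onto $\bar{S}(c)$ by factors $a=\sqrt{c}/\|u\|_{L^2}$ and $a_n=\sqrt{c}/\|v_n-u\|_{L^2}$, uses $\tfrac{1}{a^2}+\tfrac{1}{a_n^2}=1+o_n(1)$, and reads off $a=1$ from the resulting inequality; this is the same multiplicative scaling you invoke, just packaged without an intermediate subadditivity lemma. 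One small correction: your parenthetical ``coupled with $m^{\infty}(c)<0$'' is unnecessary (and has not been proved); the inequality $m^{\infty}(tc)<t\,m^{\infty}(c)$ for $t>1$ alone yields strict subadditivity by the standard splitting $m^{\infty}(c)=\tfrac{c-\mu}{c}m^{\infty}(c)+\tfrac{\mu}{c}m^{\infty}(c)$.
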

\begin{proof}
Let $\{u_n\}$ be an arbitrary minimizing sequence of $m^{\infty}(c)$, namely
\begin{equation}\label{o3}
E^{\infty}_b(u_n)\rightarrow m^{\infty}(c)~~\mbox{ and }~~\|u_n\|_{L^2}^2=c.
\end{equation}
Then by \eqref{energy estimate}, clearly $\{u_n\}$ is bounded in $X$. Further, we claim that
$$\liminf_{n\to \infty}\|u_n\|_{L^{p+2}}\geq \delta, $$ for some constant $\delta>0$.

Indeed, if there exists a subsequence, still denoted by $\{u_n\}$, such that $\|u_n\|_{L^{p+2}}\rightarrow 0$ as $n\rightarrow \infty$, then by the interpolation, $\|u_n\|_{L^q}\rightarrow 0$ as $n\rightarrow \infty$ for all $q\in (2,6)$. Thus by  \eqref{Hartree estimate},
\[
\int_{\mathbb{R}^3} (|x|^{-1}\ast|u_n|^{2})|u_n|^{2}dx\rightarrow 0  ~~\mbox{as}~~n\rightarrow \infty.
\]
We consequently obtain that
\begin{align}\label{xiajie estimate}
m^{\infty}(c)= E^{\infty}_b(u_n) + o_n(1) =
\frac{1}{2}\int_{\mathbb{R}^3}(|\nabla u_n|^2+b^2(x_1^2+x_2^2)|u_n|^2)dx
+o_n(1)\geq \frac{c\Lambda_0}{2}+o_n(1),
\end{align}
where $\Lambda_0$ is given in \eqref{Lambda_0}. Noting the definition of $\lambda_0$ in \eqref{lambda_0}, since the Sobolev space
$H:=\{v\in H^1(\mathbb{R}^2):~\int_{\mathbb{R}^2}(x_1^2+x_2^2)|v|^2dx<\infty\}$ is compactly embedded in $L^2(\mathbb{R}^2)$, it is standard to show
that $\lambda_0$ is achieved by some $w\in H^1(\mathbb{R}^2)$ with $\int_{\mathbb{R}^2}|w|^2dx=1$. Let $\varphi\in H^1(\mathbb{R})$ satisfy $\int_{\mathbb{R}}|\varphi|^2dx=c$
and for any $\lambda>0$, set
\[
u^\lambda(x)=w(x')\varphi^\lambda(x_3),~~ \mbox{where} ~~ \varphi^\lambda(x_3)=\lambda^{\frac{1}{2}}\varphi(\lambda x_3).
\]
Then $u^\lambda \in \Bar{S}(c), \forall \lambda>0$, and by Lemma \ref{lemma Jeanjean} we have
\begin{align}\label{shangjie estimate}
E_b(u^\lambda)=& \frac{c\Lambda^0}{2}+\frac{\lambda^2}{2}\int_{\mathbb{R}} |\partial_{x_3}\varphi|^{2}dx_3+\frac{\lambda_1\lambda}{2}\int_{\mathbb{R}^3} \frac{|w(x')|^{2}|\varphi(x_3)|^{2}}{\sqrt{\lambda^2x_1^2+\lambda^2x_2^2+x_3^2}}dx
\nonumber\\&+\frac{\lambda_2\lambda}{4}\int_{\mathbb{R}^3}\int_{\mathbb{R}^3} \frac{|w(x')\varphi(x_3)|^{2}|w(y')\varphi(y_3)|^{2}}{\sqrt{\lambda^2|x_1-y_1|^2
+\lambda^2|x_2-y_2|^2+|x_3-y_3|^2}}dxdy \nonumber\\&-\frac{\lambda_3\lambda^{\frac{p}{2}}}{p+2}\int_{\mathbb{R}^2} |w(x')|^{p+2}dx'\int_{\mathbb{R}} |\varphi(x_3)|^{p+2}dx_3.
%\leq &\frac{c\Lambda^0}{2}+\frac{\lambda^2}{2}\int_{\mathbb{R}} |\partial_{x_3}\varphi|^{2}dx_3-\frac{\lambda_3\lambda^{\frac{p}{2}}}{p+2}\int_{\mathbb{R}^2} |w(x')|^{p+2}dx'\int_{\mathbb{R}} |\varphi(x_3)|^{p+2}dx_3,
\end{align}
Note that $\lambda_3>0$ and $0<\frac{p}{2}<1$ if $0<p\leq\frac{4}{3}$, then we observe from \eqref{shangjie estimate} that
$E^{\infty}_b(u^{\lambda})<\frac{c\Lambda^0}{2}$, for $\lambda>0$ small enough. This implies that
$$m^{\infty}(c)<\frac{c\Lambda^0}{2},$$
which contradicts with \eqref{xiajie estimate}. Hence by Lemma \ref{lemma compactness lemma I}, there exist a sequence $\{z_n\}\subset \mathbb{R}$ and $u\in X\backslash \{0\}$, such that
$$u_n(x_1,x_2, x_3-z_n) \rightharpoonup u, \ \mbox{in } X. $$

Denote $v_n:=u_n(x_1,x_2, x_3-z_n)$, we now prove the pre-compactness of $\{v_n\}_{n\geq 1}$. Indeed, clearly $0<\|u\|_{L^2}^2\leq \lim\limits_{n\to \infty}\|v_n\|_{L^2}^2=c$. If $\|u\|_{L^2}^2<c$, then we denote  $a:=\frac{\sqrt{c}}{\|u\|_{L^2}}$, thus $a\geq 1, au\in \Bar{S}(c)$ and
\begin{align}\label{energy scaling}
E^{\infty}_b(au)=& \frac{a^2}{2}\|\nabla u\|_{L^2}^2+\frac{a^2b^2}{2}\int_{\mathbb{R}^3} (x_1^2+x_2^2)|u|^{2}dx
\nonumber\\&+\frac{\lambda_2a^4}{4}\int_{\mathbb{R}^3} (|x|^{-1}\ast|u|^{2})|u|^{2}dx -\frac{\lambda_3a^{p+2}}{p+2}\|u\|_{L^{p+2}}^{p+2},
\end{align}
which implies that
\begin{align}\label{energy scaling1}
E^{\infty}_b(u)= \frac{E^{\infty}_b(au)}{a^2}-\frac{\lambda_2(a^2-1)}{4}\int_{\mathbb{R}^3} (|x|^{-1}\ast|u|^{2})|u|^{2}dx +\frac{\lambda_3(a^p-1)}{p+2}\|u\|_{L^{p+2}}^{p+2}.
\end{align}
Similarly, let $a_n:=\frac{\sqrt{c}}{\|v_n-u\|_{L^2}}$, then $a_n\geq 1, a_n(v_n-u)\in \Bar{S}(c)$ and
\begin{align}\label{energy scaling2}
E^{\infty}_b(v_n-u)=& \frac{E^{\infty}_b(a_n(v_n-u))}{a_n^2}-\frac{\lambda_2(a_n^2-1)}{4}\int_{\mathbb{R}^3} (|x|^{-1}\ast|v_n-u|^{2})|v_n-u|^{2}dx \nonumber\\&+\frac{\lambda_3(a_n^p-1)}{p+2}\|v_n-u\|_{L^{p+2}}^{p+2}.
\end{align}
Therefore, we have
\begin{align}\label{energy scaling3}
&E^{\infty}_b(v_n)=E^{\infty}_b(v_n-u)+E^{\infty}_b(u)+o_n(1)\nonumber\\\geq & m^{\infty}(c)\left(\frac{1}{a^2}+\frac{1}{a_n^2}\right)+\frac{\lambda_3(a^p-1)}{p+2}\|u\|_{L^{p+2}}^{p+2}
+\frac{\lambda_3(a_n^p-1)}{p+2}\|v_n-u\|_{L^{p+2}}^{p+2}
\nonumber\\&-\frac{\lambda_2(a^2-1)}{4}\int_{\mathbb{R}^3} (|x|^{-1}\ast|u|^{2})|u|^{2}dx-\frac{\lambda_2(a_n^2-1)}{4}\int_{\mathbb{R}^3} (|x|^{-1}\ast|v_n-u|^{2})|v_n-u|^{2}dx.
\end{align}
Note that since $E^{\infty}_b$ keeps invariant by translation at the $x_3$ direction, then $E^{\infty}_b(v_n)=m^\infty(c)+o_n(1), \frac{1}{a^2}+\frac{1}{a_n^2}=1+o_n(1)$, and  $a\geq 1, a_n\geq 1$. Since $\lambda_2\leq0$, $\lambda_3>0$, we deduce from \eqref{energy scaling3} that $a=1$. Namely $v_n\rightarrow u$ in $L^2$. In particular, by the interpolation, $v_n\rightarrow u$ in $L^q$, $q\in [2,6)$. Therefore, by
\[
\int_{\mathbb{R}^3} (|x|^{-1}\ast|v_n|^{2})|v_n|^{2}dx\rightarrow
\int_{\mathbb{R}^3} (|x|^{-1}\ast|u|^{2})|u|^{2}dx,
\]
and further by the weak lower semi-continuity of the $X$ norm, we see that
\[
E^{\infty}_b(u)\leq \lim_{k\rightarrow \infty}E^{\infty}_b(v_{n})=m^{\infty}(c).
\]
Thus by the definition of $m^{\infty}(c)$, we have $E^{\infty}_b(u)=m^{\infty}(c)$. Hence, $E^{\infty}_b(v_n)\rightarrow E^{\infty}_b(u)$, and $\|v_{n}\|_{X}\rightarrow \|u\|_{X}$, which implies that $u_n(x_1,x_2, x_3-z_n)\rightarrow u$ strongly in $X$. In particular, $u\in \bar{S}(c)$ is a minimizer of $m^{\infty}(c)$. Thus $\mathcal{M}^{\infty}_c\neq \emptyset$.
\end{proof}

Note that $m(c)=m^{\infty}(c)$ if $\lambda_1=0$. When $\lambda_1\neq 0$, we first prove the following non-existence result.
\begin{theorem}\label{th-nonexistence1}
Assume that $\lambda_1>0, \lambda_2\leq 0, \lambda_3>0$. Then for any $c>0$ if $0<p<\frac{4}{3}$ or $0<c<\lambda_3^{-\frac{3}{2}}\|R\|_{L^2}^2$ if $p=\frac{4}{3}$, we have $\mathcal{M}_c=\emptyset$, namely $m(c)$ has no any minimizers.
\end{theorem}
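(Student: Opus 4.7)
The plan is to show that, under the hypothesis $\lambda_1>0$, one has $m(c)=m^{\infty}(c)$ but $E_b$ never attains this common value. Since every $u\in\bar{S}(c)$ is nonzero and $\lambda_1>0$, the Coulomb term is strictly positive, giving
$$E_b(u) = E^{\infty}_b(u) + \frac{\lambda_1}{2}\int_{\mathbb{R}^3}\frac{|u|^2}{|x|}\,dx > E^{\infty}_b(u) \geq m^{\infty}(c).$$
Thus $m(c)\geq m^{\infty}(c)$, and any hypothetical minimizer $u\in\mathcal{M}_c$ would satisfy $E_b(u)>m^{\infty}(c)$. It therefore suffices to establish the reverse inequality $m(c)\leq m^{\infty}(c)$, which together with the above will rule out the existence of a minimizer.

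To achieve this, I would exploit the $x_3$-translation invariance of every term of $E^{\infty}_b$: the kinetic part, the partial harmonic term $b^2(x_1^2+x_2^2)$, the Hartree nonlinearity, and the $L^{p+2}$ norm are all invariant under translations in the $x_3$-direction, whereas the Coulomb term $\int |u|^2/|x|\,dx$ is not and in fact tends to zero when the mass is pushed to infinity along $x_3$. By Proposition \ref{prop-limit1}, pick $u^{\infty}\in\mathcal{M}^{\infty}_c$, set $e_3:=(0,0,1)$, and define $u_n(x):=u^{\infty}(x-ne_3)$. Then $u_n\in\bar{S}(c)$ (the $X$-membership being preserved by the invariance of the partial harmonic term under $x_3$-translations), and a direct change of variables yields
$$E_b(u_n) = m^{\infty}(c) + \frac{\lambda_1}{2}\,I_n, \qquad I_n := \int_{\mathbb{R}^3}\frac{|u^{\infty}(y)|^2}{|y+ne_3|}\,dy.$$

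The main technical step is to show that $I_n\to 0$ as $n\to\infty$. Since $u^{\infty}\in X\subset H^1(\mathbb{R}^3)$, I would argue by density: for $\varepsilon>0$, select $g_{\varepsilon}\in C_c^{\infty}(\mathbb{R}^3)$ with support in some ball $B(0,R_{\varepsilon})$ and $\|u^{\infty}-g_{\varepsilon}\|_{H^1}<\varepsilon$. For $n>2R_{\varepsilon}$ one has $|y+ne_3|\geq n/2$ on the support of $g_{\varepsilon}$, so $\int |g_{\varepsilon}|^2/|y+ne_3|\,dy\leq \frac{2}{n}\|g_{\varepsilon}\|_{L^2}^2\to 0$. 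The remainder is estimated via $||u^{\infty}|^2-|g_{\varepsilon}|^2|\leq (|u^{\infty}|+|g_{\varepsilon}|)|u^{\infty}-g_{\varepsilon}|$, Cauchy-Schwarz, and the $n$-independent bound $\int |f|^2/|y+ne_3|\,dy\leq 2\|f\|_{L^2}\|\nabla f\|_{L^2}$ (itself a consequence of the shifted Hardy inequality $\int |f|^2/|y+ne_3|^2\,dy\leq 4\|\nabla f\|_{L^2}^2$). This yields a remainder of order $C(\|u^{\infty}\|_{H^1})\varepsilon$, so letting $\varepsilon\to 0$ gives $I_n\to 0$.

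Combining these pieces, $m(c)\leq \lim_{n\to\infty}E_b(u_n)=m^{\infty}(c)$, and hence $m(c)=m^{\infty}(c)$. If some $u\in\mathcal{M}_c$ existed, it would satisfy $E_b(u)=m(c)=m^{\infty}(c)$, contradicting the strict inequality of the first paragraph. Hence $\mathcal{M}_c=\emptyset$.
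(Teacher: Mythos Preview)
Your proof is correct and follows essentially the same approach as the paper: both establish $m(c)=m^{\infty}(c)$ by using the positivity of the Coulomb term for one inequality and $x_3$-translation to kill the Coulomb term for the other, then derive a contradiction from the strict inequality $E_b(u)>m^{\infty}(c)$ for any $u\in\bar S(c)$. The only minor variation is that the paper translates an arbitrary $u\in\bar S(c)$ (avoiding reliance on Proposition~\ref{prop-limit1}), whereas you translate a specific minimizer of $m^{\infty}(c)$; both work equally well under the stated hypotheses.
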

\begin{proof}
If $\lambda_1>0$, we claim that
$$m^{\infty}(c)=m(c).$$
Indeed, since $\lambda_1>0$, for all $u\in \bar{S}(c)$, we have $E_b(u)\geq E^{\infty}_b(u)$, which implies that $m(c)\geq m^{\infty}(c)$. On the other hand, for all $u\in \bar{S}(c)$,
$$m(c)\leq E_b(u(\cdot -n\vec{e}_3))=E^{\infty}_b(u)+\frac{\lambda_1}{2}\int_{\mathbb{R}^3}\frac{|u|^2}{\sqrt{x_1^2+x_2^2+(x_3+n)^2}}dx,\ \vec{e}_3:=(0,0,1),$$
letting $n\to \infty$, we then have
$$m(c)\leq E^{\infty}_b(u)\Rightarrow m(c)\leq m^{\infty}(c).$$
Then $m(c)= m^{\infty}(c)$ follows.

Now we argue by contradiction to assume that $\mathcal{M}_c\neq \emptyset$. Namely there exists a $u_0\in \bar{S}(c)$, such that $E_b(u_0)=m(c)$. Then
\begin{align}\label{5.13}
m(c)=E_b(u_0)=& E^{\infty}_b(u_0)+ \frac{\lambda_1}{2}\int_{\mathbb{R}^3}\frac{|u_0|^2}{|x|}dx \nonumber\\
\geq & m^{\infty}(c) + \frac{\lambda_1}{2}\int_{\mathbb{R}^3}\frac{|u_0|^2}{|x|}dx = m(c) + \frac{\lambda_1}{2}\int_{\mathbb{R}^3}\frac{|u_0|^2}{|x|}dx,
\end{align}
since $\lambda_1>0$, thus $\int_{\mathbb{R}^3}\frac{|u_0|^2}{|x|}dx=0$, i.e. $u_0=0$ a.e. in $\mathbb{R}^3$. This contradicts with the fact that $u_0\in \bar{S}(c)$. At this point, the proof ends.
\end{proof}

As for $\lambda_1<0$, we have
\begin{lemma}\label{lm-limit-nonlimit}
Assume that $\lambda_1<0$, $\lambda_2\leq 0$, $\lambda_3>0$. Then for any $c>0$ if $0<p<\frac{4}{3}$ or $0<c<\lambda_3^{-\frac{3}{2}}\|R\|_{L^2}^2$ if $p=\frac{4}{3}$, we have
\begin{eqnarray}\label{3.133}
m(c)<m^{\infty}(c),
\end{eqnarray}
and
\begin{eqnarray}\label{3.134}
m(c)<m(c-\mu)+m(\mu),\quad \forall \mu\in (0,c).
\end{eqnarray}
\end{lemma}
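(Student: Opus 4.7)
The plan is to treat the two strict inequalities separately, exploiting the existence of a minimizer for $m^\infty$ supplied by Proposition \ref{prop-limit1}. For the first inequality $m(c)<m^\infty(c)$, I would pick a minimizer $u_\infty\in \bar{S}(c)$ of $m^\infty(c)$. Since $u_\infty\not\equiv 0$ and \eqref{Kulun estimate} ensures $\int_{\mathbb{R}^3}|u_\infty|^2/|x|\,dx\in(0,\infty)$, the strict sign $\lambda_1<0$ gives at once
\begin{equation*}
m(c)\le E_b(u_\infty)=E^\infty_b(u_\infty)+\tfrac{\lambda_1}{2}\int_{\mathbb{R}^3}\tfrac{|u_\infty|^2}{|x|}\,dx < E^\infty_b(u_\infty)=m^\infty(c).
\end{equation*}

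For the strict subadditivity, the core step is to establish the scaling inequality
\begin{equation*}
m(\theta\mu)<\theta\, m(\mu),\qquad \forall\,\theta>1,
\end{equation*}
for every admissible $\mu$, i.e.\ the strict monotonicity of $\mu\mapsto m(\mu)/\mu$. Granting it, applying the monotonicity with $\theta=c/\mu$ and $\theta=c/(c-\mu)$ yields the strict inequalities $(\mu/c)\,m(c)<m(\mu)$ and $((c-\mu)/c)\,m(c)<m(c-\mu)$, which upon summing give the desired $m(c)<m(\mu)+m(c-\mu)$. To prove the scaling inequality, fix a minimizing sequence $\{u_n\}\subset\bar{S}(\mu)$ of $m(\mu)$, which is bounded in $X$ by the coercivity in Lemma \ref{lm3.1}, and test $m(\theta\mu)$ with $w_n:=\sqrt{\theta}\,u_n\in\bar{S}(\theta\mu)$. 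A direct calculation yields
\begin{equation*}
E_b(w_n)=\theta\,E_b(u_n)+(\theta^2-\theta)\tfrac{\lambda_2}{4}H(u_n)-(\theta^{(p+2)/2}-\theta)\tfrac{\lambda_3}{p+2}\|u_n\|_{L^{p+2}}^{p+2},
\end{equation*}
where $H(u):=\int_{\mathbb{R}^3}(|x|^{-1}\ast|u|^2)|u|^2\,dx$. For $\theta>1$ and $p>0$ both parenthetical factors are positive, so the Hartree correction is non-positive (as $\lambda_2\le 0$), and the strict conclusion after $n\to\infty$ hinges on the \emph{quantitative} lower bound $\liminf_n\|u_n\|_{L^{p+2}}^{p+2}>0$.

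This non-vanishing will be the main technical obstacle, because without it the scaling only yields a non-strict $m(\theta\mu)\le\theta m(\mu)$. To secure it I would first record the strict estimate $m(\mu)<\mu\Lambda_0/2$: the test function $u^\lambda$ built (with mass $\mu$) in the proof of Proposition \ref{prop-limit1} satisfies $E^\infty_b(u^\lambda)<\mu\Lambda_0/2$, and since $\lambda_1<0$, $E_b(u^\lambda)=E^\infty_b(u^\lambda)+\tfrac{\lambda_1}{2}\int|u^\lambda|^2/|x|<E^\infty_b(u^\lambda)$. Next, assume for contradiction that $\|u_n\|_{L^{p+2}}\to 0$; interpolation between $L^2$ (fixed mass $\mu$) and $L^6$ (bounded via Sobolev embedding and the boundedness in $X$) forces $\|u_n\|_{L^q}\to 0$ for every $q\in(2,6)$. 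The Hartree term then vanishes by \eqref{Hartree estimate}, while a dyadic split
\begin{equation*}
\int_{\mathbb{R}^3}\tfrac{|u_n|^2}{|x|}\,dx \le \|u_n\|_{L^4}^2\,\||x|^{-1}\|_{L^2(B_R)}+\tfrac{1}{R}\|u_n\|_{L^2}^2
\end{equation*}
together with $\|u_n\|_{L^4}\to 0$ (for fixed $R$) and then $R\to\infty$ makes the Coulomb term vanish as well. Consequently $E_b(u_n)=\tfrac12\|u_n\|_{\tilde{X}}^2+o(1)\ge \mu\Lambda_0/2+o(1)$, contradicting $m(\mu)<\mu\Lambda_0/2$. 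With the uniform lower bound $\|u_n\|_{L^{p+2}}^{p+2}\ge\delta>0$ in hand, passing $n\to\infty$ in the scaling identity gives $m(\theta\mu)\le \theta m(\mu)-(\theta^{(p+2)/2}-\theta)\tfrac{\lambda_3\delta}{p+2}<\theta m(\mu)$, which closes the argument.
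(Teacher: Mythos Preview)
Your proof is correct and follows essentially the same strategy as the paper: for \eqref{3.133} you test $m(c)$ with a minimizer of $m^\infty(c)$, and for \eqref{3.134} you first establish the scaling inequality $m(\theta\mu)<\theta\,m(\mu)$ via a minimizing sequence and then deduce strict subadditivity. The paper appeals to the estimates \eqref{xiajie estimate}--\eqref{shangjie estimate} to obtain non-vanishing of $\|u_n\|_{L^{p+2}}$, whereas you spell out the additional step needed when $\lambda_1<0$ (showing that the Coulomb term also vanishes along a putative vanishing sequence via a near/far split), which is a detail the paper leaves implicit; otherwise the arguments coincide.
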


\begin{proof}
By Proposition \ref{prop-limit1}, $m^{\infty}(c)$ is attained at some $u_0\in \bar{S}(c)$. Since $\lambda_1<0$ and $u_0\neq 0$, then
\begin{align*}
m(c)\leq E_b(u_0)=E^{\infty}_b(u_0)+ \frac{\lambda_1}{2}\int_{\mathbb{R}^3}\frac{|u_0|^2}{|x|}dx= m^{\infty}(c) + \frac{\lambda_1}{2}\int_{\mathbb{R}^3}\frac{|u_0|^2}{|x|}dx < m^{\infty}(c).
\end{align*}
Thus $\eqref{3.133}$ is verified.

To show \eqref{3.134}, it is enough to prove
\begin{eqnarray}\label{3.135}
m(tc)<tm(c),\quad \forall t>1.
\end{eqnarray}
Indeed, by \eqref{3.135}, we have for all $\mu\in (0,c)$ that
\begin{align*}
m(c)=&\frac{c-\mu}{c}m(c)+\frac{\mu}{c}m(c)<\frac{c-\mu}{c}\cdot \frac{c}{c-\mu}m( c-\mu)+\frac{\mu}{c}\cdot \frac{c}{\mu}m(\mu)=m(c-\mu)+m(\mu).
\end{align*}

Now to verify \eqref{3.135}, we let $\{u_n\}\subset \bar{S}(c)$ be such that $m(c)=E_b(u_n)+o_n(1)$. Then by \eqref{xiajie estimate} and \eqref{shangjie estimate} we could prove that
$$\liminf_{n\to \infty}\|u_n\|_{L^{p+2}}\geq \delta, $$ for some constant $\delta>0$. Thus for all $t>1$, $\sqrt{t}u_n\in \bar{S}(tc)$, and
\begin{align*}
m(tc)\leq  \liminf_{n\to \infty} E_b(\sqrt{t}u_n)
\leq & t \liminf_{n\to \infty} E_b(u_n) -\liminf_{n\to \infty}\frac{\lambda_3}{p+2}(t^{(p+2)/2}-t)\|u_n\|_{L^{p+2}}^{p+2}\\
\leq & t m(c) - \frac{\lambda_3}{p+2}(t^{(p+2)/2}-t)\delta^{p+2}<tm(c).
\end{align*}
\end{proof}

\begin{proposition}\label{prop-globalmini}
Assume that $\lambda_1<0, \lambda_2\leq 0, \lambda_3>0$. Then for any $c>0$ if $0<p<\frac{4}{3}$ or $0<c<\lambda_3^{-\frac{3}{2}}\|R\|_{L^2}^2$ if $p=\frac{4}{3}$, any minimizing sequence of $m(c)$ is pre-compact in $X$. In particular, $\mathcal{M}_c\neq \emptyset$.
\end{proposition}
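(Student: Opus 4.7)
The plan is a concentration--compactness argument following the standard scheme but driven by the two strict inequalities of Lemma \ref{lm-limit-nonlimit}: the gap $m(c)<m^\infty(c)$ will prevent mass escape to infinity along $x_3$, and the strict subadditivity $m(c)<m(\mu)+m(c-\mu)$ will prevent mass splitting. Let $\{u_n\}\subset\bar S(c)$ be an arbitrary minimizing sequence; by \eqref{energy estimate} it is bounded in $X$, so after extracting a subsequence $u_n\rightharpoonup u$ weakly in $X$, and by the Rellich property $u_n\to u$ in $L^q_{\rm loc}(\mathbb{R}^3)$ for every $q\in[2,6)$. My goal is to establish, in turn, that $u\neq 0$, that $\|u\|_{L^2}^2=c$, and finally that the convergence is strong in $X$.

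I would first show $u\neq 0$ by a direct argument that bypasses Lemma \ref{lemma compactness lemma I}. Suppose $u=0$; splitting $\int|u_n|^2/|x|\,dx=\int_{B_R}+\int_{B_R^c}$, combining H\"older and \eqref{Kulun estimate} on the inside (where the Rellich convergence $u_n\to 0$ in $L^{6/(2-3\epsilon)}(B_R)$ kicks in) and using the elementary bound $\int_{B_R^c}|u_n|^2/|x|\,dx\le c/R$ on the outside, one obtains $\int|u_n|^2/|x|\,dx\to 0$ after letting $R\to\infty$. Hence $E_b(u_n)=E^\infty_b(u_n)+o(1)\ge m^\infty(c)+o(1)$ since $u_n\in\bar S(c)$, and the limit contradicts \eqref{3.133}. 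This is the \emph{main obstacle} of the whole argument: because $E_b$ loses translation invariance due to the Coulomb potential, Lemma \ref{lemma compactness lemma I} cannot be used in the usual way (any translation would mix $E_b$ with $E^\infty_b$), so the weak-null alternative has to be ruled out directly, which is exactly where the Coulomb-vanishing mechanism and the strict gap $m(c)<m^\infty(c)$ intervene.

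Next I would prove $\mu:=\|u\|_{L^2}^2=c$ by strict subadditivity. Setting $v_n:=u_n-u$, so that $v_n\rightharpoonup 0$ in $X$ and $\|v_n\|_{L^2}^2\to c-\mu$, I would combine Lemma \ref{lemma Brezis-Lieb}, Lemma \ref{lemma Brezis-Lieb lemma Hartree}, the Hilbert-space parallelogram identity for the quadratic terms in $\|\cdot\|_{\tilde X}^2$, and the vanishing of the Coulomb cross-term $\int(u/|x|)\bar v_n\,dx\to 0$ (valid since $u/|x|\in L^2$ by Hardy and $v_n\rightharpoonup 0$ in $L^2$) to obtain the clean splitting $E_b(u_n)=E_b(u)+E_b(v_n)+o(1)$. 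Re-applying the previous vanishing argument to the weakly-null $v_n$ gives $\int|v_n|^2/|x|\,dx\to 0$, whence $E_b(v_n)=E^\infty_b(v_n)+o(1)$; renormalising $v_n$ so that $a_n v_n\in\bar S(c-\mu)$ (with $a_n\to 1$) yields $\liminf E^\infty_b(v_n)\ge m^\infty(c-\mu)\ge m(c-\mu)$. Combined with $E_b(u)\ge m(\mu)$, the assumption $\mu<c$ would give $m(c)\ge m(\mu)+m(c-\mu)$, contradicting \eqref{3.134}; hence $\mu=c$.

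Once $\|u_n\|_{L^2}\to\|u\|_{L^2}$ is known, strong $L^2$ convergence follows and by interpolation with the uniform $L^6$ bound, $u_n\to u$ in $L^q$ for every $q\in[2,6)$; the Coulomb, Hartree, and $L^{p+2}$ contributions in $E_b$ therefore all converge. The identity $m(c)=\lim E_b(u_n)=E_b(u)$, together with the weak lower semi-continuity of $\|\cdot\|_{\tilde X}^2$, then forces $\|u_n\|_{\tilde X}\to\|u\|_{\tilde X}$; since $X$ is Hilbert and $\|u_n\|_{L^2}^2\equiv c$, the weak convergence upgrades to strong convergence $u_n\to u$ in $X$. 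This gives $u\in\mathcal{M}_c$ and shows every minimizing sequence of $m(c)$ is pre-compact in $X$.
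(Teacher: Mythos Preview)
Your proposal is correct and follows essentially the same concentration--compactness scheme as the paper's proof: boundedness from \eqref{energy estimate}, exclusion of vanishing via the Coulomb-term decay and the strict gap $m(c)<m^\infty(c)$ of \eqref{3.133}, exclusion of dichotomy via the strict subadditivity \eqref{3.134}, and the final upgrade to strong $X$-convergence. The only minor technical difference is in the dichotomy step: for the remainder $v_n=u_n-u$ you first observe that its Coulomb term vanishes and pass through $E^\infty_b(v_n)\ge m^\infty(c-\mu)\ge m(c-\mu)$, whereas the paper bounds $E_b(v_n)\ge m(\|v_n\|_{L^2}^2)$ directly and then uses the scaling inequality \eqref{3.135} to pass from $m(c_n)$ to $m(c-c_0)$; both variants are correct and of comparable length.
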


\begin{proof} Let $\{u_n\}$ be an arbitrary minimizing sequence of $m(c)$, namely
\begin{equation}\label{o31}
E_b(u_n)\rightarrow m(c)~~\mbox{ and }~~\|u_n\|_{L^2}^2=c.
\end{equation}
Then by \eqref{energy estimate}, $\{u_n\}$ is bounded in $X$, and up to a subsequence, there exists a $u\in X$, such that $u_n \rightharpoonup u $ in $X$. In particular, since $X\hookrightarrow H^1(\mathbb{R}^3)$ continuously, then $u_n \rightharpoonup u $ in $H^1$ and $u_n \to u$ in $L^q_{loc}(\mathbb{R}^3), \forall q\in [2,6)$. Now we claim that $u\neq 0$. Indeed, if so, then by the fact that $u_n \to 0$ in $L^q_{loc}(\mathbb{R}^3)$ one could easily prove that
\begin{eqnarray}\label{3.136}
\int_{\mathbb{R}^3}\frac{|u_n|^2}{|x|}dx=o_n(1),\ \mbox{ as }\ n\to \infty.
\end{eqnarray}
Thus as $n \to \infty$, we have
$$m(c)=E_b(u_n)+o_n(1)=E^{\infty}_b(u_n)+o_n(1).$$
Since $u_n\in \bar{S}(c)$, we then deduce that
$$m^{\infty}(c)\leq E^{\infty}_b(u_n)=m(c)+o_n(1),$$
which implies $m^{\infty}(c)\leq m(c)$. This contradicts with \eqref{3.133} in Lemma \ref{lm-limit-nonlimit}. Hence $u\neq 0$.

Now we claim that $u$ is a minimizer of $m(c)$. Indeed, since $u\neq 0$, by the Brezis-Lieb Lemma, we have
\begin{align}\label{3.137}
E_b(u_n)=E_b(u_n-u)+E_b(u)+o_n(1)\geq m(\|u_n-u\|_{L^2}^2)+m(\|u\|_{L^2}^2)+o_n(1).
\end{align}
Denote $c_0:=\|u\|_{L^2}^2, c_n:=\|u_n-u\|_{L^2}^2$, then $c_n=c-c_0+o_n(1)$. Choosing a subsequence of $\{u_n\}$ (still denote by $\{u_n\}$) being such that $c_n\to (c-c_0)^{-}$, then by \eqref{3.135} and \eqref{3.137},
\begin{align}\label{3.1371}
m(c)\geq m(c_n)+m(c_0)+o_n(1)\geq& \frac{c_n}{c-c_0} m(c-c_0)+ m(c_0)+o_n(1)\nonumber\\
\Longrightarrow \qquad  m(c)\geq & m(c-c_0)+ m(c_0).
\end{align}
If $\|u\|_{L^2}^2<c$, then \eqref{3.1371} contradicts with \eqref{3.134} in Lemma \ref{lm-limit-nonlimit}.  Hence $u_n\to u$ in $L^2$, and by the interpolation, $u_n\to u$ in $L^{q}(\mathbb{R}^3)$, for all $q\in [2,6)$. Then by \eqref{Kulun estimate} and \eqref{Hartree estimate}, we have
\[
\int_{\mathbb{R}^3}\frac{|u_n|^{2}}{|x|}dx\rightarrow\int_{\mathbb{R}^3} \frac{|u|^{2}}{|x|}dx
~~\mbox{and}~~\int_{\mathbb{R}^3} (|x|^{-1}\ast|u_n|^{2})|u_n|^{2}dx\rightarrow
\int_{\mathbb{R}^3} (|x|^{-1}\ast|u|^{2})|u|^{2}dx,
\]
and further,
\[
E_b(u)\leq \lim_{n\rightarrow \infty}E_b(u_{n})=m(c).
\]
Then $E_b(u)=m(c)$ and  $\|u_{n}\|_{X}\rightarrow \|u\|_{X}$, which implies that $u_n\rightarrow u$ strongly in $X$. In particular, $u\in \bar{S}(c)$ is a minimizer of $m(c)$. Thus $\mathcal{M}_c\neq \emptyset$.
\end{proof}

\begin{proof}[\textbf{Proof of Theorem \ref{Theorem globalmini}}.]
Having Proposition \ref{prop-globalmini} at hand, the rest of Theorem \ref{Theorem globalmini} can be proved standardly.
\end{proof}

\subsection{Solutions as local minimizers}

%We now turn to the case $\frac{4}{3}\leq p<4$, and consider the following localized minimization problem.
%For any given $r>0$, we define
%\begin{eqnarray}\label{localmini1}
%\bar{m}_c^r:=\inf_{u\in \Bar{S}(c)\cap B(r)}E_b(u),
%\end{eqnarray}
%where $B(r):=\{u\in X \ | \ \|u\|_{X_0}^2\leq r \}$, and for $\lambda_1\geq 0, \lambda_2\geq 2$,
%\begin{eqnarray}\label{Xnorm1}
%\|u\|_{X_0}^2:=\|\nabla u\|_{L^2}^2+b^2\int_{\mathbb{R}^3} (x_1^2+x_2^2)|u|^{2}dx+\lambda_1\int_{\mathbb{R}^3} \frac{|u|^{2}}{|x|}dx+ \frac{\lambda_2}{2}\int_{\mathbb{R}^3} (|x|^{-1}\ast|u|^{2})|u|^{2}dx.
%\end{eqnarray}
%Denote the set of all minimizers of $\bar{m}_c^r$ by
%$$\mathcal{M}_c^r:=\{u\in \Bar{S}(c)\cap B(r)\ | \ E_b(u)=\bar{m}_c^r \}.$$
%We first prove that $\bar{m}_c^r$ is well-defined under some assumptions on $\lambda_i, i=1,2,3$ and $c, r, p$.
In this subsection, we prove Theorem \ref{th-localmini}. We first establish a local minima structure for $E_b(u)$ on $\Bar{S}(c)$.
\begin{lemma}\label{lmwelldefine}
Assume that $\lambda_1\in \mathbb{R}, \lambda_2\leq 0, \lambda_3>0$ and $\frac{4}{3}\leq p<4$, all being fixed, then there exists a $r_0>0$, such that for every given $r>r_0$, there exists a $c_r$ with $0<c_r<1$, we have
\begin{eqnarray}\label{3.15}
\Bar{S}(c)\cap B(\frac{rc}{2})\neq \emptyset, \ \forall c>0,
\end{eqnarray}
\begin{eqnarray}\label{3.16}
\inf_{u\in \Bar{S}(c)\cap B(\frac{rc}{2})}E_b(u) < \inf_{u\in \Bar{S}(c)\cap (B(r)\setminus B(rc^{\frac{1}{4}}) ) }E_b(u), \ \forall c<c_r.
\end{eqnarray}
\end{lemma}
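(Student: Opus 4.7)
The plan is to establish both assertions by a straightforward test-function construction for the upper bound, paired with a concave-convex analysis of the energy for the lower bound.

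For the non-emptyness \eqref{3.15}, I would fix once and for all a function $\phi_0 \in X$ with $\|\phi_0\|_{L^2}=1$ and $\|\phi_0\|_{\tilde{X}}^2 \leq \Lambda_0 + 1$, where $\Lambda_0$ is given in \eqref{Lambda_0}, and then set $r_0 := 2(\Lambda_0+1)$. For any $r > r_0$ and any $c>0$, the rescaled function $\sqrt{c}\,\phi_0$ belongs to $\bar{S}(c)\cap B(rc/2)$, since
$$\|\sqrt{c}\,\phi_0\|_{\tilde{X}}^2 = c\|\phi_0\|_{\tilde{X}}^2 \leq c(\Lambda_0+1) < rc/2.$$

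For the strict inequality \eqref{3.16}, plugging $\sqrt{c}\,\phi_0$ into $E_b$ and using $\lambda_2\leq 0$, $\lambda_3>0$, I obtain an $r$-independent upper bound
$$\inf_{u\in \bar{S}(c)\cap B(rc/2)} E_b(u) \;\leq\; E_b(\sqrt{c}\,\phi_0) \;\leq\; K\,c,$$
with $K = K(\phi_0,\lambda_1) > 0$. For the lower bound, given $u\in \bar{S}(c)\cap\bigl(B(r)\setminus B(rc^{1/4})\bigr)$, combining \eqref{i1} and \eqref{i2} (with $\varepsilon_1=\varepsilon_2=1/8$) with the Gagliardo-Nirenberg inequality $\|u\|_{L^{p+2}}^{p+2}\leq C\|u\|_{\tilde{X}}^{3p/2}c^{(4-p)/4}$ yields
$$E_b(u) \;\geq\; g\bigl(\|u\|_{\tilde{X}}^2\bigr) - C_1 c - C_2 c^3, \qquad g(s):=\frac{s}{4} - C_3\, s^{3p/4}\,c^{(4-p)/4}.$$

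The main technical step is to exploit the shape of $g$ on $[0,r]$. A direct computation of $g'$ shows that $g$ is monotonically increasing on $[0,r]$ provided $c$ is below a first threshold $c_r^{(1)}$ depending on $r$; the minimum of $g$ over the annulus $[rc^{1/4},r]$ is then attained at the left endpoint, where
$$g(rc^{1/4}) = \frac{rc^{1/4}}{4} - C_3\, r^{3p/4}\, c^{(16-p)/16}.$$
Since $p\in[4/3,4)$ guarantees $(16-p)/16 > 1/4$, the negative term is strictly of lower order in $c$, and for $c$ smaller than a second threshold $c_r^{(2)}$ one has $g(\|u\|_{\tilde{X}}^2) \geq rc^{1/4}/8$. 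A final smallness $c_r^{(3)}$, coming from $c^{1/4} \gg c$ as $c\to 0^+$, ensures $rc^{1/4}/8 - C_1 c - C_2 c^3 > Kc$; setting $c_r := \min\{c_r^{(1)}, c_r^{(2)}, c_r^{(3)}, 1\}$ will close the argument.

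I expect the main obstacle to be the scaling bookkeeping: three competing powers of $c$ (the $c^{1/4}$ coming from the lower bound $\|u\|_{\tilde{X}}^2 \geq rc^{1/4}$, the $c^{(4-p)/4}$ from the nonlinearity, and the linear $c$ from the Coulomb and mass contributions) must be reconciled simultaneously with three competing powers of $r$, and one has to verify that the gap $(16-p)/16 - 1/4 = (12-p)/16 > 0$ holds uniformly in $p\in[4/3,4)$. The signs of $\lambda_1$ and $\lambda_2$ do not create extra trouble, since \eqref{i1} and \eqref{i2} control the sign-indefinite terms using only $|\lambda_i|$, and the assumption $\lambda_2\leq 0$ is used only to drop the Hartree contribution in the upper-bound step.
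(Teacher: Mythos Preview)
Your proposal is correct and follows essentially the same strategy as the paper: establish an upper bound on the inner ball $B(rc/2)$ of order $O(c)$ and a lower bound on the annulus $B(r)\setminus B(rc^{1/4})$ of order $O(c^{1/4})$, then exploit $c^{1/4}\gg c$ as $c\to 0^+$.

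The only noteworthy difference is in the upper-bound step. You use a single test function $\sqrt{c}\,\phi_0$ to get $\inf_{B(rc/2)} E_b \le Kc$. The paper instead proves a \emph{uniform} bound $E_b(u)\le K_0\|u\|_{\tilde X}^2$ on all of $\bar S(c)\cap B(r)$, by first deriving the inequality
\[
\int_{\mathbb{R}^3}\frac{|u|^2}{|x|}\,dx \le \|u\|_{L^2}\|\nabla u\|_{L^2} \le C_b\,\|u\|_{\tilde X}^2
\]
from the partial-confinement identity $\|u\|_{L^2}^2 \le C_b\|u\|_{\tilde X}^2$ (obtained via integration by parts against $\mathrm{div}(x_1,x_2)$). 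This buys a slightly cleaner comparison of the two auxiliary functions $i(t)$ and $j(t)$ directly at the level of $\|u\|_{\tilde X}$, but your test-function route is equally valid and arguably more transparent. The lower-bound treatments differ only cosmetically (you invoke \eqref{i1}--\eqref{i2} with Young's $\varepsilon$; the paper keeps the product form $c^{1/2}\|u\|_{\tilde X}$), and the scaling bookkeeping you outline, in particular the exponent gap $(12-p)/16>0$, is correct.
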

\begin{proof}
Let $u_0\in X$ be such that $\|u_0\|_{L^2}^2=2, \|u_0\|_{\tilde{X}}^2=r_0$. Then for all $c>0$, letting $u_c:=\sqrt{\frac{c}{2}}u_0$, we have
$$\|u_c\|_{L^2}^2=c\quad \mbox{and}\quad \|u_c\|_{\tilde{X}}^2=\frac{r_0c}{2}< \frac{rc}{2},\ \forall r>r_0,$$
namely $u_c\in \Bar{S}(c)\cap B(\frac{rc}{2})$. Thus \eqref{3.15} is verified.

%for every $r>r_0$,
%For any given $r>0$, we let $u_r\in X$ be such that $\|u_r\|_{X_0}^2=r$. Denote $c_r:=\|u_r\|_{L^2}^2$, then for any $c<c_r$, letting $u_c:=\frac{\sqrt{c}}{\sqrt{c_r}}u_r $, one can easily check that $u_c\in \Bar{S}(c)\cap B(r)$. Thus \eqref{3.15} is verified.

To verify \eqref{3.16}, we note that $\Bar{S}(c)\cap B(\frac{rc}{2})\subset \Bar{S}(c)\cap B(r)$ if $c<1$. Thus for any $u\in \Bar{S}(c)\cap B(r)$,
we have
\begin{align}\label{5.14}
\int_{\mathbb{R}^3} |u|^2dx =& \frac{1}{2}\int_{\mathbb{R}^3} |u|^2 div (x_1,x_2)dx =-\frac{1}{2} \int_{\mathbb{R}^3}\Big(\partial_{x_1}(|u|^2)x_1 +\partial_{x_2}(|u|^2)x_2\Big)dx,\nonumber\\
\leq& \frac{1}{2}\Big(\|\nabla u\|_{L^2}^2+\int_{\mathbb{R}^3}(x_1^2+x_2^2)|u|^2dx\Big )\leq \max\{\frac{1}{2b^2},\frac{1}{2}\}\|u\|_{\tilde{X}}^2.
\end{align}
This, together with the Hardy inequality (see e.g. \cite[Lemma 7.6.1]{ca2003}), implies that
%\begin{eqnarray}\label{5.13}
%\int_{\mathbb{R}^3} \frac{|u|^{2}}{|x|}dx ,
%\end{eqnarray}
 \begin{eqnarray}\label{5.143}
\int_{\mathbb{R}^3} \frac{|u|^{2}}{|x|}dx\leq \|u\|_{L^2}\|\nabla u\|_{L^2} \leq \sqrt{\max\{\frac{1}{2b^2},\frac{1}{2}\}}\|u\|_{\tilde{X}}^2.
\end{eqnarray}
Thus, by the Gagliardo-Nirenberg inequality, \eqref{Hartree estimate} and \eqref{5.143}, we have
$$\left\{
  \begin{array}{ll}
    E_b(u)\geq \frac{1}{2}\|u\|_{\tilde{X}}^2-|\lambda_1|c^{\frac{1}{2}}\|u\|_{\tilde{X}} -\frac{C_1|\lambda_2|}{4}c^{\frac{3}{2}}\|u\|_{\tilde{X}}- \frac{\lambda_3C_2}{p+2} c^{\frac{4-p}{4}}\|u\|_{\tilde{X}}^{\frac{3p}{2}}, \\
    E_b(u) \leq \frac{1}{2}\Big (1+|\lambda_1|\cdot\sqrt{\max\{\frac{1}{2b^2},\frac{1}{2}\}}\Big)\|u\|_{\tilde{X}}^2,
  \end{array}
\right.$$
where $C_i>0,i=1,2$ depending only on $p$. Denote $K_0:=\frac{1}{2}\Big (1+|\lambda_1|\cdot\sqrt{\max\{\frac{1}{2b^2},\frac{1}{2}\}}\Big), K_1:=\frac{C_1|\lambda_2|}{4}$ and $K_2:=\frac{\lambda_3C_2}{p+2},$ then we define the following functions:
$$
\left\{
  \begin{array}{ll}
    i(t):=\frac{1}{2}t^2-|\lambda_1|\cdot c^{\frac{1}{2}}t-K_1\cdot c^{\frac{3}{2}}t-K_2 \cdot c^{\frac{4-p}{4}}t^{\frac{3p}{2}}, \ t>0, \\
    j(t):=K_0t^2, \ t>0.
  \end{array}
\right.
$$

Notice that for any $r>r_0$, there exists a $c_r\ll 1$, such that we have not only $B(\frac{rc}{2})\subset B(rc^{\frac{1}{4}}), \forall c<c_r$, but also that
for all $t\in (rc^{\frac{1}{4}},r)$ with $c<c_r$,
\begin{align*}
i(t)=&t^2\Big(\frac{1}{2}-|\lambda_1|\cdot c^{\frac{1}{2}}t^{-1}-K_1 \cdot c^{\frac{3}{2}}t^{-1}-K_2 \cdot c^{\frac{4-p}{4}}t^{\frac{3p}{2}-2} \Big)
    \\\geq& r^2c^{\frac{1}{2}}\Big(\frac{1}{2}-|\lambda_1|\cdot c^{\frac{1}{4}}r^{-1}- K_1 \cdot c^{\frac{5}{4}}r^{-1}-K_2 \cdot c^{\frac{4-p}{4}}r^{\frac{3p}{2}-2}\Big)\\
    >&\frac{3r^2c^{\frac{1}{2}}}{8}>j(\frac{rc}{2}).
\end{align*}
This implies that
$$\inf_{t\in (rc^{\frac{1}{4}},r)}i(t)>j(\frac{rc}{2}),\quad \forall c<c_r,$$
which yields to \eqref{3.16}.
\end{proof}

Indeed, using the same idea in proving Lemma \ref{lmwelldefine}, we could prove a similar local minima structure for the case $\lambda_2>0$. Denote
$$B_1(r):=\{u\in X:~ \|u\|_{X_1}^2\leq r \},$$
and for $\lambda_1\in \mathbb{R}, \lambda_2> 0$,
\begin{eqnarray}\label{Xnorm1}
\|u\|_{X_1}^2:=\|\nabla u\|_{L^2}^2+b^2\int_{\mathbb{R}^3} (x_1^2+x_2^2)|u|^{2}dx+|\lambda_1|\int_{\mathbb{R}^3} \frac{|u|^{2}}{|x|}dx+\frac{\lambda_2}{2}\int_{\mathbb{R}^3} (|x|^{-1}\ast|u|^{2})|u|^{2}dx.
\end{eqnarray}
Then
\begin{lemma}\label{lmwelldefine1}
Assume that $\lambda_1\in \mathbb{R}, \lambda_2> 0, \lambda_3>0$ and $\frac{4}{3}\leq p<4$, all being fixed, then there exists a $r_0>0$, such that for every given $r>r_0$, there exists a $c_r$ with $0<c_r<1$, we have
\begin{eqnarray}\label{3.150}
\Bar{S}(c)\cap B_1(\frac{rc}{2})\neq \emptyset, \ \forall c>0,
\end{eqnarray}
\begin{eqnarray}\label{3.160}
\inf_{u\in \Bar{S}(c)\cap B_1(\frac{rc}{2})}E_b(u) < \inf_{u\in \Bar{S}(c)\cap (B_1(r)\setminus B_1(rc^{\frac{1}{4}}) ) }E_b(u), \ \forall c<c_r.
\end{eqnarray}
\end{lemma}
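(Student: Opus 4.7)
The plan is to adapt the proof of Lemma \ref{lmwelldefine} \emph{mutatis mutandis}; the only structural novelty is that the Coulomb and Hartree contributions are now built directly into the norm $\|\cdot\|_{X_1}$, with the Hartree coefficient chosen precisely so that its share of $\tfrac12\|u\|_{X_1}^2$ matches the one in $E_b(u)$. Consequently, the ``upper envelope'' $j(t)$ becomes a clean multiple of $t^2$, and only the bad contribution of a possibly negative $\lambda_1$ need be estimated in the lower bound.

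\emph{Step 1 (Non-emptiness \eqref{3.150}).} Fix any nontrivial $u_0\in X$ with $\|u_0\|_{L^2}^2=2$, and set
\[
A:=\|\nabla u_0\|_{L^2}^2+b^2\!\int_{\mathbb{R}^3}(x_1^2+x_2^2)|u_0|^2\,dx+|\lambda_1|\!\int_{\mathbb{R}^3}\frac{|u_0|^2}{|x|}\,dx,\qquad B:=\frac{\lambda_2}{2}\!\int_{\mathbb{R}^3}(|x|^{-1}\ast|u_0|^2)|u_0|^2\,dx,
\]
and $r_0:=A+B$. Letting $u_c:=\sqrt{c/2}\,u_0$, a direct scaling gives $\|u_c\|_{L^2}^2=c$ and
\[
\|u_c\|_{X_1}^2=\tfrac{c}{2}A+\tfrac{c^2}{4}B\leq \tfrac{c}{2}(A+B)=\tfrac{cr_0}{2}<\tfrac{rc}{2}\quad\text{for every } c\in(0,1],\ r>r_0,
\]
so that $u_c\in\bar{S}(c)\cap B_1(rc/2)$. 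Since Step~3 will require $c_r<1$, this is the only range of $c$ that matters for the sequel.

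\emph{Step 2 (Two-sided bounds on $E_b$ in terms of $\|\cdot\|_{X_1}$).} The identity
\[
E_b(u)=\tfrac{1}{2}\|u\|_{X_1}^2+\tfrac{\lambda_1-|\lambda_1|}{2}\!\int_{\mathbb{R}^3}\frac{|u|^2}{|x|}\,dx-\tfrac{\lambda_3}{p+2}\|u\|_{L^{p+2}}^{p+2},
\]
together with $\lambda_1-|\lambda_1|\leq 0$ and $\lambda_3>0$, immediately yields the upper bound $E_b(u)\leq\tfrac12\|u\|_{X_1}^2$. For the lower direction, the Hardy inequality gives $\int|u|^2/|x|\,dx\leq\|u\|_{L^2}\|\nabla u\|_{L^2}\leq\sqrt{c}\,\|u\|_{X_1}$, while the Gagliardo--Nirenberg inequality, combined with $u\in\bar{S}(c)$, furnishes $\|u\|_{L^{p+2}}^{p+2}\leq C\,c^{(4-p)/4}\|u\|_{X_1}^{3p/2}$. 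Hence
\[
E_b(u)\geq\tfrac{1}{2}\|u\|_{X_1}^2-|\lambda_1|\sqrt{c}\,\|u\|_{X_1}-\tfrac{\lambda_3 C}{p+2}\,c^{(4-p)/4}\|u\|_{X_1}^{3p/2}.
\]

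\emph{Step 3 (Comparing the two infima).} Define $i(t):=\tfrac{t^2}{2}-|\lambda_1|\sqrt{c}\,t-\tfrac{\lambda_3 C}{p+2}c^{(4-p)/4}t^{3p/2}$ and $j(t):=\tfrac{t^2}{2}$. From Step~1 and the upper bound of Step~2,
\[
\inf_{u\in\bar{S}(c)\cap B_1(rc/2)}E_b(u)\leq j\bigl(\sqrt{rc/2}\bigr)=\tfrac{rc}{4}.
\]
For $u\in\bar{S}(c)\cap(B_1(r)\setminus B_1(rc^{1/4}))$, writing $t:=\|u\|_{X_1}\in(\sqrt{r}\,c^{1/8},\sqrt{r}]$ and using $p\geq 4/3$ to bound $t^{3p/2-2}\leq r^{(3p-4)/4}$, one obtains
\[
i(t)\geq r c^{1/4}\!\left(\tfrac{1}{2}-\tfrac{|\lambda_1|c^{3/8}}{\sqrt{r}}-\tfrac{\lambda_3 C}{p+2}\,c^{(4-p)/4}r^{(3p-4)/4}\right).
\]
Since $3/8>0$ and $(4-p)/4>0$, for all sufficiently small $c_r\in(0,1)$ the parenthesis exceeds $3/8$ uniformly in $c<c_r$, so $i(t)>\tfrac{3rc^{1/4}}{8}>\tfrac{rc}{4}$, the last inequality holding because $c<1$ forces $c^{3/4}<3/2$. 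This delivers \eqref{3.160}.

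\emph{Main obstacle.} The single genuine novelty occurs in Step~1: because the Hartree term in $\|\cdot\|_{X_1}^2$ scales as $c^2$ (rather than linearly in $c$ as in the $\tilde X$-norm), the natural test function $u_c=\sqrt{c/2}\,u_0$ only sits inside $B_1(rc/2)$ as long as $c$ stays bounded (say $c\leq 1$). As the comparison in Step~3 already forces $c_r<1$, this restriction is harmless, and the remaining analysis proceeds exactly as in Lemma \ref{lmwelldefine}.
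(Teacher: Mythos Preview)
Your proof is correct and follows exactly the approach the paper indicates (it gives no detailed proof, only the sentence ``using the same idea in proving Lemma~\ref{lmwelldefine}''). Your identification of the one new wrinkle---that the Hartree contribution in $\|\cdot\|_{X_1}^2$ scales like $c^2$ under $u\mapsto\sqrt{c/2}\,u_0$, so the test function lands in $B_1(rc/2)$ only for $c\le 1$---is accurate and, as you note, harmless since \eqref{3.160} already forces $c<c_r<1$; the literal ``$\forall c>0$'' in \eqref{3.150} appears to be carried over from \eqref{3.15} without adjustment.
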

%\begin{proof}
%The proof of \eqref{3.151} is the same as \eqref{3.15}. We only prove \eqref{3.161}.
%We note that for any $u\in \Bar{S}(c)\cap B_1(r)$, by \eqref{lp estimate}, we have
%$$\left\{
%  \begin{array}{ll}
%    E_b(u)\geq \frac{1}{2}\|u\|_{X_1}^2 - \frac{\lambda_3C}{p+2} c^{\frac{4-p}{4}}\|u\|_{X_1}^{\frac{3p}{2}}, \\
%    E_b(u) \leq \frac{1}{2}\|u\|_{X_1}^2,
%  \end{array}
%\right.$$
%where $C>0$ depending only on $p$. We then define the following functions:
%$$
%\left\{
%  \begin{array}{ll}
%    i_1(t):=\frac{1}{2}t^2-K_2c^{\frac{4-p}{4}}t^{\frac{3p}{2}}, \ t>0, \\
%    j_1(t):=\frac{1}{2}t^2, \ t>0.
%  \end{array}
%\right.
%$$
%Notice that
%\begin{align*}
%i_1(t)=t^2\Big(\frac{1}{2}-K_2 c^{\frac{4-p}{4}}t^{\frac{3p}{2}-2}\Big)\geq \frac{3r^2c^2}{8},
%\end{align*}
%for any $ t\in (rc,r)$ and $ c<c_r\ll 1$.
%This implies that
%$$\inf_{t\in (rc,r)}i(t)\geq \frac{3r^2c^2}{8}>j(\frac{rc}{2}).$$
%Hence, when $\lambda_2\geq0$, there exists a $c_r$ with $0<c_r\ll 1$, such that for any $c\in (0,c_r)$, we have \eqref{3.161}.
%\end{proof}

%\begin{theorem}\label{th-localmini1}
%Assume that $\lambda_1\geq0, \lambda_2 = 0, \lambda_3>0$ and $\frac{4}{3}\leq p<4$, all being fixed, then every given $r>0$, there exists a $c_r$ with $0<c_r<1$, such that for any $c\in (0,c_r)$, $\mathcal{M}_c^r\neq \emptyset$, and is orbitally stable under the flow associated with \eqref{equation partial}.
%\end{theorem}
To show the local minima is attained, we need to consider the following limit problem:
\begin{eqnarray}\label{limitlocmini1}
(\bar{m}_{c}^r)^{\infty}:=\inf_{u\in \bar{S}(c)\cap B(r)}E^{\infty}_{b}(u).
\end{eqnarray}

Then following the argument in the proof of Theorem \ref{th-nonexistence1}, we could prove that
\begin{lemma}\label{lm-limit-nonlimit02}
Assume that $\lambda_1>0, \lambda_2\leq 0, \lambda_3>0$ and $\frac{4}{3}\leq p<4$, all being fixed, then for all $c<c_r$ (where $r$ and $c_r$ are given in Lemma \ref{lmwelldefine}, and letting $r$ be larger, $c_r$ smaller if necessary), we have
\begin{eqnarray}\label{5.2812}
\bar{m}_{c}^r = (\bar{m}_{c}^r)^{\infty},
\end{eqnarray}
and $\bar{m}_{c}^r$ has no any minimizers.
\end{lemma}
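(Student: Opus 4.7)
The plan is to adapt the proof of Theorem \ref{th-nonexistence1} to the constrained problem on $\bar{S}(c)\cap B(r)$. The crucial observation enabling the adaptation is that the norm $\|\cdot\|_{\tilde{X}}$ defined in \eqref{Xnorm00} involves only $\|\nabla u\|_{L^2}^2$ and $\int_{\mathbb{R}^3}(x_1^2+x_2^2)|u|^2\,dx$, both of which are invariant under translations of $u$ along the $x_3$-axis (since the partial confine $b^2(x_1^2+x_2^2)$ does not depend on $x_3$). Consequently, the constraint set $\bar{S}(c)\cap B(r)$ is itself invariant under such translations, so the translation trick from Theorem \ref{th-nonexistence1} can be performed without leaving the local region.

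First I will establish $\bar{m}_c^r=(\bar{m}_c^r)^\infty$. Since $\lambda_1>0$, one trivially has $E_b(u)\geq E_b^\infty(u)$ for every $u\in \bar{S}(c)\cap B(r)$, giving $\bar{m}_c^r\geq(\bar{m}_c^r)^\infty$. For the reverse direction, fix any $u\in\bar{S}(c)\cap B(r)$ and set $u_n(x):=u(x-n\vec{e}_3)$ with $\vec{e}_3=(0,0,1)$. By the invariance noted above, $u_n\in\bar{S}(c)\cap B(r)$, and thus
\[
\bar{m}_c^r \leq E_b(u_n)=E_b^\infty(u)+\frac{\lambda_1}{2}\int_{\mathbb{R}^3}\frac{|u(y)|^2}{\sqrt{y_1^2+y_2^2+(y_3+n)^2}}\,dy.
\]
To drive the Coulomb term to $0$ as $n\to\infty$, I split the integral into $\{|y|\leq n/2\}$, on which the denominator is at least $n/2$ and the contribution is bounded by $2c/n$, and $\{|y|> n/2\}$, on which I apply the Hardy inequality together with $\|u\chi_{\{|y|>n/2\}}\|_{L^2}\to 0$ (using that $u\in B(r)\subset H^1$ by \eqref{5.14}). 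Passing to the limit $n\to\infty$ yields $\bar{m}_c^r\leq E_b^\infty(u)$ for every admissible $u$, hence $\bar{m}_c^r\leq(\bar{m}_c^r)^\infty$.

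Given the equality $\bar{m}_c^r=(\bar{m}_c^r)^\infty$, non-existence follows by a rigidity argument: if $u_0\in\bar{S}(c)\cap B(r)$ were a minimizer, then
\[
\bar{m}_c^r=E_b(u_0)=E_b^\infty(u_0)+\frac{\lambda_1}{2}\int_{\mathbb{R}^3}\frac{|u_0|^2}{|x|}\,dx\geq(\bar{m}_c^r)^\infty+\frac{\lambda_1}{2}\int_{\mathbb{R}^3}\frac{|u_0|^2}{|x|}\,dx=\bar{m}_c^r+\frac{\lambda_1}{2}\int_{\mathbb{R}^3}\frac{|u_0|^2}{|x|}\,dx,
\]
which forces $\int_{\mathbb{R}^3}|u_0|^2/|x|\,dx=0$ and therefore $u_0\equiv 0$, contradicting $\|u_0\|_{L^2}^2=c>0$.

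The only point requiring care is the decay as $n\to\infty$ of the shifted Coulomb integral, but the splitting argument above handles this uniformly for $u\in B(r)$. I also remark that the hypothesis $c<c_r$ and the size condition on $r$ do not enter in any substantive way in the steps above: they serve only (via Lemma \ref{lmwelldefine}, adapted to the case $\lambda_1>0$ or the reasoning in the referenced Proposition \ref{prop-localmini1}) to ensure that the constraint set is non-empty and that the local minima structure is meaningful, so that the statement $\bar{m}_c^r=(\bar{m}_c^r)^\infty$ is not vacuous.
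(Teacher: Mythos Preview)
Your proposal is correct and follows essentially the same route as the paper, which simply says to adapt the proof of Theorem \ref{th-nonexistence1}; the key new ingredient you identify---that the $\tilde{X}$-norm (and hence the constraint $\bar{S}(c)\cap B(r)$) is invariant under translations in the $x_3$-direction---is exactly what makes the adaptation go through. Your justification of the vanishing of the shifted Coulomb term is slightly more detailed than what the paper writes, though the phrase ``apply the Hardy inequality together with $\|u\chi_{\{|y|>n/2\}}\|_{L^2}\to 0$'' is a bit loose (near the singularity at $-n\vec{e}_3$ one really uses an $L^6$-tail estimate rather than just the $L^2$-tail); this is a minor point and easily fixed.
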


Hence in what follows we consider the case $\lambda_1\leq 0$.
\begin{lemma}\label{lm-limit-nonlimit2}
Assume that $\lambda_1\leq0, \lambda_2\leq 0, \lambda_3>0$ and $\frac{4}{3}\leq p<4$, all being fixed, then for all $c<c_r$ (where $r$ and $c_r$ are given in Lemma \ref{lmwelldefine}, and letting $r$ be larger, $c_r$ smaller if necessary), we have
\begin{eqnarray}\label{5.2811}
\bar{m}_{c}^r < \bar{m}_{c-\mu}^r + \bar{m}_{\mu}^r,\quad \forall \mu\in (0,c),
\end{eqnarray}
\end{lemma}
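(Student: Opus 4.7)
The plan is to mirror the proof of \eqref{3.134} in Lemma \ref{lm-limit-nonlimit}: reduce the strict subadditivity \eqref{5.2811} to a scaling-type inequality
\[
\bar m_c^r<\tfrac{c}{c_0}\,\bar m_{c_0}^r,\qquad\forall\,c_0\in(0,c),
\]
and then use the convex identity $\bar m_c^r=\tfrac{c-\mu}{c}\bar m_c^r+\tfrac{\mu}{c}\bar m_c^r$ together with the scaling applied at $c_0=c-\mu$ and $c_0=\mu$ to obtain $\tfrac{c-\mu}{c}\bar m_c^r<\bar m_{c-\mu}^r$ and $\tfrac{\mu}{c}\bar m_c^r<\bar m_{\mu}^r$, whose sum is precisely \eqref{5.2811}.

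For the scaling inequality, I would pick a minimizing sequence $\{u_n\}\subset \bar S(c_0)\cap B(r)$ for $\bar m_{c_0}^r$ and, using the local minima structure \eqref{3.16} of Lemma \ref{lmwelldefine} (shrinking $c_r$ further if necessary), arrange that eventually $\|u_n\|_{\tilde X}^2\le rc_0/2$. Setting $t:=c/c_0>1$ and $v_n:=\sqrt{t}\,u_n$ produces $v_n\in\bar S(c)$ with $\|v_n\|_{\tilde X}^2=t\|u_n\|_{\tilde X}^2\le rc/2<r$ (since $c<c_r<1$), so $v_n$ is admissible for $\bar m_c^r$. A direct scaling computation gives
\begin{equation*}
E_b(v_n)-tE_b(u_n)=\frac{\lambda_2(t^2-t)}{4}\!\int_{\mathbb{R}^3}(|x|^{-1}*|u_n|^2)|u_n|^2\,dx-\frac{\lambda_3(t^{(p+2)/2}-t)}{p+2}\|u_n\|_{L^{p+2}}^{p+2},
\end{equation*}
where both right-hand terms are non-positive since $t>1$, $\lambda_2\le 0$, $\lambda_3>0$, and $(p+2)/2>1$.

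To upgrade the inequality to a strict one, I would verify $\liminf_n\|u_n\|_{L^{p+2}}>0$. If this failed, the nonlinear term would be negligible and $E_b(u_n)$ would be bounded below by its quadratic part alone; comparing with the explicit trial function $u_{c_0}=\sqrt{c_0/2}\,u_0$ from Lemma \ref{lmwelldefine} would then contradict $E_b(u_n)\to \bar m_{c_0}^r$. Hence the second correction is strictly negative in the limit, and passing to the limit yields $\bar m_c^r\le\liminf_n E_b(v_n)<t\,\bar m_{c_0}^r$, which is the scaling inequality.

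The main obstacle is the first step: securing that the minimizing sequence $\{u_n\}$ can be taken in the tight inner ball $B(rc_0/2)$ where the scaling preserves the $B(r)$ constraint, rather than merely in the larger region $B(rc_0^{1/4})$ directly afforded by \eqref{3.16}. This requires a quantitative sharpening of Lemma \ref{lmwelldefine}, achievable by further shrinking $c_r$ so that the strict separation in \eqref{3.16} extends to the whole annulus $B(r)\setminus B(rc_0/2)$, which is the delicate point when transferring the argument of Lemma \ref{lm-limit-nonlimit} from the global to the local-minimization setting.
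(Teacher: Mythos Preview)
Your approach is exactly the paper's: prove the scaling inequality $t\,\bar m_s^r < s\,\bar m_t^r$ for $0<t<s<c_r$ by taking a minimizing sequence $\{v_n\}$ for $\bar m_t^r$, scaling to $w_n=\sqrt{s/t}\,v_n\in\bar S(s)$, verifying $w_n\in B(r)$, and using $\lambda_2\le 0$, $\lambda_3>0$ together with $\liminf_n\|v_n\|_{L^{p+2}}>0$ (which the paper obtains from \eqref{xiajie estimate}--\eqref{shangjie estimate}) for strictness; then deduce \eqref{5.2811} by the convex splitting exactly as you describe.

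Regarding the obstacle you flag: the paper disposes of it in one line, asserting ``by Lemma~\ref{lmwelldefine}, for $n$ large enough, $v_n\in\bar S(t)\cap B(\tfrac{rt}{2})$,'' and then $w_n\in B(\tfrac{rs}{2})\subset B(r)$ since $s<1$. You are right that \eqref{3.16} as stated only forces the sequence into $B(rt^{1/4})$, so the paper's claim needs the caveat in the lemma's hypothesis (``letting $r$ be larger, $c_r$ smaller if necessary''). Your suggestion of merely shrinking $c_r$ is not quite enough: the point is rather to enlarge $r$. From $E_b(v_n)\le K_0(\tfrac{rt}{2})^2+o(1)$ and the lower bound $E_b(v_n)\ge \tfrac12\sigma^2-|\lambda_1|t^{1/2}\sigma-K_1t^{3/2}\sigma-K_2t^{(4-p)/4}\sigma^{3p/2}$ with $\sigma=\|v_n\|_{\tilde X}$, one solves the resulting quadratic to get $\sigma^2\le C(|\lambda_1|^2+o(1))\,t$ for small $t$; taking $r$ large relative to $|\lambda_1|^2$ then yields $\sigma^2\le \tfrac{rt}{2}$. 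So the gap closes with the right adjustment, and otherwise your write-up matches the paper's proof.
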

\begin{proof}
We first prove the following strict monotonicity:
\begin{eqnarray}\label{smonotonicity}
t\bar{m}_{s}^r <  s\bar{m}_{t}^r,\quad \forall~~~ 0<t<s<\min\{1,c_r\}.
\end{eqnarray}
Indeed, let $\{v_n\}\subset \Bar{S}(t)\cap B(r)$ with $E_{b}(v_n)\to \bar{m}_{t}^r$, then by Lemma \ref{lmwelldefine}, for $n$ large enough, we have
$v_n\in \Bar{S}(t)\cap B(\frac{rt}{2})$, and by \eqref{xiajie estimate} and \eqref{shangjie estimate}, there exists a $\delta_1>0$ such that $$\liminf_{n\to \infty}\|v_n\|_{L^{p+2}}\geq \delta_1. $$
Thus letting $w_n:=\sqrt{\frac{s}{t}}v_n$, we have $w_n\in \Bar{S}(s)\cap B(\frac{rs}{2})\subset \Bar{S}(s)\cap B(r)$, and
\begin{align}\label{smonotonicity1}
\bar{m}_{s}^r \leq  \lim_{{n\rightarrow \infty}}E_b(w_n)
            \leq &\frac{s}{t} \lim_{{n\rightarrow \infty}}E_b(v_n)+\liminf_{{n\rightarrow \infty}}\frac{\lambda_2(\frac{s}{t})[(\frac{s}{t})^{p/2}-1]}{4}\int_{\mathbb{R}^3} (|x|^{-1}\ast|v_n|^{2})|v_n|^{2}dx\nonumber\\
            & - \liminf_{{n\rightarrow \infty}}\frac{\lambda_3(\frac{s}{t})[(\frac{s}{t})^{p/2}-1]}{p+2}\|v_n\|_{L^{p+2}}^{p+2}\nonumber\\
            \leq &\frac{s}{t} \bar{m}_t^r - \frac{\lambda_3(\frac{s}{t})[(\frac{s}{t})^{p/2}-1]}{p+2}\delta_1^{p+2}<\frac{s}{t}\bar{m}_t^r, \quad \mbox{since }\ \lambda_2\leq 0, \lambda_3>0,
\end{align}
which implies \eqref{smonotonicity}. Then for all $\mu\in (0,c)$,
$$\bar{m}_{c}^r =\frac{c-\mu}{c}\bar{m}_{c}^r +\frac{\mu}{c}\bar{m}_{c}^r < \frac{c-\mu}{c}\cdot \frac{c}{c-\mu}\bar{m}_{c-\mu}^r  + \frac{c}{c}\cdot \frac{c}{\mu}\bar{m}_{\mu}^r= \bar{m}_{c-\mu}^r + \bar{m}_{\mu}^r.$$
\end{proof}

\begin{proposition}\label{prop-localmini1}
Assume that $\lambda_1\leq 0, \lambda_2\leq 0, \lambda_3>0$ and $\frac{4}{3}\leq p<4$, all being fixed, then there exists a $r_0>0$, such that for every given $r>r_0$, there exists a $c_r$ with $0<c_r<1$, we have for any $c<c_r$ that any minimizing sequence of $\bar{m}_{c}^r$ is pre-compact in $X$. In particular, $\mathcal{M}_c^r\neq \emptyset$.
\end{proposition}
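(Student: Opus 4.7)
The plan is to adapt the proof of Proposition \ref{prop-globalmini} to the local constraint $\bar S(c)\cap B(r)$. Start with an arbitrary minimizing sequence $\{u_n\}\subset \bar S(c)\cap B(r)$ of $\bar m_c^r$. The first observation is furnished by Lemma \ref{lmwelldefine}: since $E_b$ takes strictly smaller values on $B(\tfrac{rc}{2})$ than on the annulus $B(r)\setminus B(rc^{1/4})$, the sequence is eventually confined to $B(\tfrac{rc}{2})$, producing the uniform bound $\|u_n\|_{\tilde X}^2\leq \tfrac{rc}{2}$. Together with $\|u_n\|_{L^2}^2=c$ this bounds $\{u_n\}$ in $X$, and up to a subsequence we extract $u_n\rightharpoonup u$ weakly in $X$ and strongly in $L^q_{loc}(\mathbb R^3)$ for $q\in[2,6)$.

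The decisive step is to exclude $u=0$, which we split into two subcases mirroring the global argument. When $\lambda_1=0$, the functional is invariant under $x_3$-translations (which preserve the $\tilde X$-norm, hence the ball $B(\tfrac{rc}{2})$). Replaying the test-function construction underlying \eqref{shangjie estimate} (after possibly enlarging $r_0$ so the configuration $w(x')\varphi^\lambda(x_3)$ still fits in $B(r)$ for small $\lambda$), we show $\liminf_{n\to\infty}\|u_n\|_{L^{p+2}}\geq \delta>0$; Lemma \ref{lemma compactness lemma I} then yields $\{z_n\}\subset\mathbb R$ such that $u_n(\cdot,\cdot,\cdot-z_n)\rightharpoonup \bar u\neq 0$. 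When $\lambda_1<0$, we first use the $\lambda_1=0$ case just established to obtain a minimizer of $(\bar m_c^r)^\infty$, and test $E_b$ against it to deduce the strict inequality $\bar m_c^r<(\bar m_c^r)^\infty$. If $u=0$, the local compactness $u_n\to 0$ in $L^q_{loc}$ combined with a cut-off argument yields $\int |u_n|^2/|x|\,dx\to 0$, whence $E_b(u_n)=E_b^\infty(u_n)+o(1)\geq (\bar m_c^r)^\infty+o(1)$, contradicting $E_b(u_n)\to \bar m_c^r<(\bar m_c^r)^\infty$.

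With $u\neq 0$ secured, apply the Brezis-Lieb lemmas (\ref{lemma Brezis-Lieb} and \ref{lemma Brezis-Lieb lemma Hartree}) together with the orthogonal splitting provided by weak convergence in the Hilbert space $X$ to obtain
\begin{align*}
E_b(u_n)&=E_b(u_n-u)+E_b(u)+o(1),\\
\|u_n\|_{L^2}^2&=\|u_n-u\|_{L^2}^2+\|u\|_{L^2}^2+o(1),\\
\|u_n\|_{\tilde X}^2&=\|u_n-u\|_{\tilde X}^2+\|u\|_{\tilde X}^2+o(1).
\end{align*}
In particular both $u$ and $u_n-u$ lie in $B(\tfrac{rc}{2})\subset B(r)$ for large $n$. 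Set $c_0:=\|u\|_{L^2}^2\in (0,c]$ and $c_n:=\|u_n-u\|_{L^2}^2\to c-c_0$. If $c_0<c$, by passing to a subsequence with $c_n\to (c-c_0)^-$ the splitting of $E_b$ gives $\bar m_c^r\geq \bar m_{c-c_0}^r+\bar m_{c_0}^r$, contradicting the strict subadditivity of Lemma \ref{lm-limit-nonlimit2}. Hence $c_0=c$, so $u_n\to u$ in $L^2$ and, by interpolation, in $L^q$ for every $q\in [2,6)$. The Coulomb and Hartree terms then converge (via \eqref{Kulun estimate} and \eqref{Hartree estimate}), and lower semicontinuity of $\|\cdot\|_{\tilde X}$ forces $E_b(u)=\bar m_c^r$ and $\|u_n\|_{\tilde X}\to \|u\|_{\tilde X}$, yielding strong convergence $u_n\to u$ in $X$ and $u\in \mathcal M_c^r$.

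The main obstacle is the non-vanishing step in the case $\lambda_1<0$: it forces one to first run the full argument for the limit problem when $\lambda_1=0$ (the $x_3$-translation trick plus the delicate verification that the test function used in \eqref{shangjie estimate} remains inside $B(r)$ after enlarging $r_0$), then to transfer the resulting minimizer back via the strict inequality $\bar m_c^r<(\bar m_c^r)^\infty$. Once vanishing is ruled out, dichotomy is excluded cleanly by Lemma \ref{lm-limit-nonlimit2} and the local constraint adds no new difficulty compared with Proposition \ref{prop-globalmini}.
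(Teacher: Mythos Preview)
Your proposal is correct and follows essentially the same approach as the paper's own proof: both treat $\lambda_1=0$ first via $x_3$-translation invariance and Lemma~\ref{lemma compactness lemma I} (using the test-function construction behind \eqref{xiajie estimate}--\eqref{shangjie estimate} to rule out vanishing), then handle $\lambda_1<0$ by producing a minimizer of the limit problem $(\bar m_c^r)^\infty$ and deriving the strict inequality $\bar m_c^r<(\bar m_c^r)^\infty$ to exclude $u=0$, and finally eliminate dichotomy via the strict subadditivity of Lemma~\ref{lm-limit-nonlimit2}. Your explicit remark that $r_0$ may need to be enlarged so that the test configuration $w(x')\varphi^\lambda(x_3)$ lies in $B(r)$ is a useful clarification that the paper leaves implicit.
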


\begin{proof} We use the idea in the proofs of Proposition \ref{prop-limit1} and Proposition \ref{prop-globalmini} to prove this proposition. Let $\{u_n\}$ be an arbitrary minimizing sequence of $\bar{m}_c^r$. Namely,
\begin{equation}\label{0311}
E_b(u_n)\rightarrow \bar{m}_c^r,\ \|u_n\|_{L^2}^2=c~~\mbox{and}~~\|u_n\|_{\tilde{X}}^2\leq r.
\end{equation}
Then clearly $\{u_n\}$ is bounded in $X$. We first claim that if $\lambda_1=0$, then up to a subsequence, there exist a sequence $\{z_n\}\subset \mathbb{R}$ and $u\in X\backslash \{0\}$, such that
$$u_n(x_1,x_2, x_3-z_n)\to u\ \mbox{in}\  X.$$
Note that if $\lambda_1=0$, then $E_b(u_n)=E^{\infty}_b(u_n)$ and $\bar{m}_{c}^r=(\bar{m}_{c}^r)^{\infty}$. From \eqref{xiajie estimate} and \eqref{shangjie estimate}, we deduce that there exists a $\delta_2>0$ such that $$\liminf_{n\to \infty}\|u_n\|_{L^{p+2}}\geq \delta_2. $$
Thus by Lemma \ref{lemma compactness lemma I}, there exist a sequence $\{z_n\}\subset \mathbb{R}$ and $u\in X\backslash \{0\}$, such that
$$v_n \rightharpoonup u \quad  \mbox{in}~ X,$$
where $v_n:=u_n(x_1,x_2, x_3-z_n)$. Denote $c_1:=\|u\|_{L^2}^2$ and $c_n:=\|v_n-u\|_{L^2}^2$, thus by
$$\|v_n\|_{L^2}^2=\|v_n-u\|_{L^2}^2+\|u\|_{L^2}^2+o_n(1),\quad \|v_n\|_{\tilde{X}}^2=\|v_n-u\|_{\tilde{X}}^2+\|u\|_{\tilde{X}}^2+o_n(1),$$
we see that $u\in \bar{S}(c_1)\cap B(r)$ and $v_n-u\in \bar{S}(c_n)\cap B(r)$. Choosing a subsequence of $\{v_n\}$ (still denote by $\{v_n\}$) being such that $c_n\to (c-c_1)^{-}$, then by \eqref{smonotonicity},
\begin{align}\label{5.33}
(\bar{m}_{c}^r)^{\infty} = E^{\infty}_b(v_n)+o_n(1)= E^{\infty}_b(v_n-u)+ E^{\infty}_b(u)+o_n(1)&\geq (\bar{m}_{c_n}^r)^{\infty} + (\bar{m}_{c_1}^r)^{\infty} +o_n(1)\nonumber\\
              &\geq \frac{c_n}{c-c_1}(\bar{m}_{c-c_1}^r)^{\infty} + (\bar{m}_{c_1}^r)^{\infty} +o_n(1)\nonumber\\
              &=(\bar{m}_{c-c_1}^r)^{\infty} + (\bar{m}_{c_1}^r)^{\infty},
\end{align}
which is a contradiction with \eqref{5.2811}, if $\|u\|_{L^2}^2<c$. Therefore $\|u\|_{L^2}^2=c$, and $v_n\to u$ in $L^2$. Thus following the arguments in the proof of Proposition \ref{prop-globalmini}, we prove further that $v_n\to u$ in $X$, namely $u_n(x_1,x_2, x_3-z_n)\to u$ in $X$.

Now when $\lambda_1<0$, since $\{u_n\}$ is bounded in $X$, then up to a subsequence, there exists a $u\in X$, such that $u_n \rightharpoonup u $ in $X$. In particular, since $X\hookrightarrow H^1(\mathbb{R}^3)$ continuously, then $u_n \rightharpoonup u $ in $H^1$ and $u_n \to u$ in $L^q_{loc}(\mathbb{R}^3), \forall q\in [2,6)$. We claim that $u\neq 0$. Indeed, if so, then by the fact that $u_n \to 0$ in $L^q_{loc}(\mathbb{R}^3)$,  one could easily prove that
\begin{eqnarray}\label{0312}
\int_{\mathbb{R}^3}\frac{|u_n|^2}{|x|}dx=o_n(1),\ \mbox{ as }\ n\to \infty.
\end{eqnarray}
Thus as $n \to \infty$, we have
$$\bar{m}_c^r=E_b(u_n)+o_n(1)=E^{\infty}_b(u_n)+o_n(1).$$
Since $u_n\in \bar{S}(c)\cap B(r)$, we then deduce that
$$(\bar{m}_c^r)^{\infty}\leq E^{\infty}_b(u_n)=\bar{m}_c^r+o_n(1),$$
which implies $(\bar{m}_c^r)^{\infty}\leq \bar{m}_c^r$. However, since $(\bar{m}_c^r)^{\infty}$ is attained at some $u_0\in \bar{S}(c)\cap B(r)$ and $\lambda_1<0$, then
\begin{align*}
\bar{m}_c^r\leq E_b(u_0)= E^{\infty}_b(u_0)+ \frac{\lambda_1}{2}\int_{\mathbb{R}^3}\frac{|u_0|^2}{|x|}dx
=(\bar{m}_c^r)^{\infty} + \frac{\lambda_1}{2}\int_{\mathbb{R}^3}\frac{|u_0|^2}{|x|}dx < (\bar{m}_c^r)^{\infty},
\end{align*}
and a contradiction occurs. Hence $u_n \rightharpoonup u\neq 0 $ in $X$. Then since \eqref{5.2811} in Lemma \ref{lm-limit-nonlimit2}, we could argue as analogue as the case $\lambda_1=0$, to deduce that
%Denote $c_2:=\|u\|_{L^2}^2$ and $c_n:=\|v_n-u\|_{L^2}^2$, thus by the Brezis-Lieb Lemma, we have
%\begin{align}\label{0313}
%\bar{m}_c^r=E_b(u_n)+o_n(1)=E_b(u_n-u)+E_b(u)+o_n(1)
%\geq \bar{m}_{c_n}^r+\bar{m}_{c_2}^r+o_n(1)
%= \bar{m}_{c-c_2}^r+\bar{m}_{c_2}^r.
%\end{align}
%If $\|u\|_{L^2}^2<c$, then \eqref{0313} contradicts with \eqref{5.2811} in Lemma \ref{lm-limit-nonlimit2}.  Hence
$\|u\|_{L^2}^2=c$ and $u_n\to u$ in $L^2$, and further that $u_n\to u $ in $X$. At this point, we have already proved that any minimizing sequence of $\bar{m}_{c}^r$ is pre-compact in $X$. In particular, $u$ is a minimizer of $\bar{m}_{c}^r$.
\end{proof}

\begin{proof}[\textbf{Proof of Theorem \ref{th-localmini}}.] By Proposition \ref{prop-localmini1}, this theorem follows immediately.
\end{proof}

\section{Strong instability of standing waves with partial confine}
In this section, we study the existence and strong instability of standing waves for \eqref{equation partial} with $\lambda_1=0$.
We first prove Proposition \ref{proposition ground state partial}.
In order to solve the minimization problem \eqref{minimization ground state partial}, we consider the following minimization problem:
\begin{equation}\label{minimization1}
\widetilde{d}_K(\omega):=\inf\{\widetilde{S}_{b,\omega}(v):~v\in X\backslash \{0\},~~K_{b,\omega}(v)\leq 0\},
\end{equation}
where
\begin{align}\label{s1u}
\widetilde{S}_{b,\omega}(v):=&S_{b,\omega}(v)-\frac{K_{b,\omega}(v)}{3p+3}= \frac{3p-2}{6p+6}\|\nabla u\|_{L^2}^2+\frac{3p\omega}{6p+6}\int_{\mathbb{R}^3} |u|^{2}dx\nonumber\\&+b^2\frac{3p+2}{6p+6}\int_{\mathbb{R}^3} (x_1^2+x_2^2)|u|^{2}dx
+\lambda_2 \frac{3p-4}{12p+12}\int_{\mathbb{R}^3} (|x|^{-1}\ast|u|^{2})|u|^{2}dx.
\end{align}
If $K_{b,\omega}(v)<0$, then we have $K_\omega(\lambda v)>0$
for sufficiently small $\lambda>0$. Thus, there exists $\lambda_0\in (0,1)$ such that
$K_{b,\omega}(\lambda_0v)=0$. Moreover, it follows that
\[
\widetilde{S}_{b,\omega}(\lambda_0v)<\widetilde{S}_{b,\omega}(v).
\]
This implies that
\begin{equation}\label{minimization problemx partial}
\widetilde{d}_K(\omega)=\inf \{\widetilde{S}_{b,\omega}(v):~~v\in X\backslash \{0\},~~ K_{b,\omega}(v)= 0\}=d_K(\omega).
\end{equation}

\begin{lemma}\label{lemma minimization problem}
Let $\lambda_1=0$, $\lambda_2\geq 0$, $\lambda_3>0$, $\omega>0$ and $\frac{4}{3}\leq p<4$. Then there exists $u\in X\backslash \{0\}$, $K_{b,\omega}(u)=0$ and $\widetilde{S}_{b,\omega}(u)=\widetilde{d}_K(\omega)$.
\end{lemma}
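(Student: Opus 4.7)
The plan is to follow the strategy developed for the radial minimization problem in Section 3, adapted to the partial confine setting by exploiting translation invariance in the $x_3$ direction rather than radial symmetry. Let $\{v_n\} \subset X \setminus \{0\}$ be a minimizing sequence for \eqref{minimization1}, so $K_{b,\omega}(v_n) \leq 0$ and $\widetilde{S}_{b,\omega}(v_n) \to \widetilde{d}_K(\omega)$. Since $p \geq 4/3$, $\omega > 0$ and $\lambda_2 \geq 0$, every coefficient in \eqref{s1u} is non-negative and those in front of $\|\nabla v\|_{L^2}^2$, $\|v\|_{L^2}^2$ and $\int(x_1^2+x_2^2)|v|^{2}dx$ are strictly positive, so $\{v_n\}$ is bounded in $X$.

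Next I would show $\liminf_n \|v_n\|_{L^{p+2}} > 0$. First I would establish $\widetilde{d}_K(\omega) > 0$: from $K_{b,\omega}(v) \leq 0$ and the Gagliardo--Nirenberg inequality (using \eqref{sharp gn} in the critical case $p = 4/3$) one obtains a uniform lower bound on $\|\nabla v\|_{L^2}^2 + \|v\|_{L^2}^2$ over the admissible set, which bounds $\widetilde{S}_{b,\omega}(v)$ away from $0$. Once $\widetilde{d}_K(\omega) > 0$ is known, $\|v_n\|_{L^{p+2}} \to 0$ combined with $K_{b,\omega}(v_n) \leq 0$ would force $\|v_n\|_X \to 0$, hence $\widetilde{S}_{b,\omega}(v_n) \to 0$, a contradiction. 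Because $\lambda_1 = 0$, both $\widetilde{S}_{b,\omega}$ and $K_{b,\omega}$ are invariant under translation in $x_3$, so Lemma \ref{lemma compactness lemma I} produces a sequence $\{z_n\} \subset \mathbb{R}$ and $u \in X \setminus \{0\}$ with
\[
w_n(x) := v_n(x_1, x_2, x_3 - z_n) \rightharpoonup u \quad \text{in } X,
\]
and by translation invariance $\widetilde{S}_{b,\omega}(w_n) = \widetilde{S}_{b,\omega}(v_n)$, $K_{b,\omega}(w_n) = K_{b,\omega}(v_n)$.

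Then I would combine the Brezis--Lieb type splittings: the parallelogram identity for the Hilbertian quadratic pieces (the gradient, $L^2$ and weighted $L^2$ norms), Lemma \ref{lemma Brezis-Lieb} for the $L^{p+2}$ term, and Lemma \ref{lemma Brezis-Lieb lemma Hartree} for the Hartree term. These yield
\[
K_{b,\omega}(w_n) - K_{b,\omega}(w_n - u) - K_{b,\omega}(u) \to 0, \qquad \widetilde{S}_{b,\omega}(w_n) - \widetilde{S}_{b,\omega}(w_n - u) - \widetilde{S}_{b,\omega}(u) \to 0.
\]
If $K_{b,\omega}(u) > 0$, then $K_{b,\omega}(w_n - u) < 0$ for $n$ large, making $w_n - u$ admissible and giving $\widetilde{S}_{b,\omega}(w_n - u) \geq \widetilde{d}_K(\omega)$; passing to the limit would force $\widetilde{S}_{b,\omega}(u) \leq 0$, contradicting the strict positivity of $\widetilde{S}_{b,\omega}$ on $X \setminus \{0\}$. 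Hence $K_{b,\omega}(u) \leq 0$; weak lower semicontinuity then gives $\widetilde{S}_{b,\omega}(u) \leq \liminf_n \widetilde{S}_{b,\omega}(w_n) = \widetilde{d}_K(\omega)$, while admissibility forces $\widetilde{S}_{b,\omega}(u) \geq \widetilde{d}_K(\omega)$; so $\widetilde{S}_{b,\omega}(u) = \widetilde{d}_K(\omega)$, and \eqref{minimization problemx partial} promotes $K_{b,\omega}(u) \leq 0$ to $K_{b,\omega}(u) = 0$.

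The principal obstacle is the failure of compactness of $X \hookrightarrow L^{p+2}$: the partial harmonic potential confines only the $(x_1,x_2)$--variables, so a weakly convergent minimizing sequence may escape along the $x_3$--axis and lose its mass in the limit. With the Hartree nonlinearity blocking symmetric-rearrangement techniques and no radial symmetry available as in Section 3, the substitute is the translation-compactness result Lemma \ref{lemma compactness lemma I}; this is precisely where the hypothesis $\lambda_1 = 0$ is crucial, since a Coulomb term $\lambda_1 |x|^{-1}$ would break the $x_3$--translation invariance on which the whole extraction of the non-trivial weak limit depends.
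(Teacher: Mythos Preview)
Your proposal is correct and follows essentially the same route as the paper's proof: bounding the minimizing sequence via the coercivity of $\widetilde{S}_{b,\omega}$, establishing $\widetilde{d}_K(\omega)>0$ and the non-vanishing of $\|v_n\|_{L^{p+2}}$, applying Lemma~\ref{lemma compactness lemma I} together with the $x_3$--translation invariance (which is exactly where $\lambda_1=0$ is used), and then running the Brezis--Lieb splitting / weak lower semicontinuity argument to conclude. The only cosmetic difference is that the paper obtains $\widetilde{d}_K(\omega)>0$ by a direct Sobolev bound $\frac{1}{2}H_{b,\omega}(v)\leq C\,H_{b,\omega}(v)^{(p+2)/2}$ rather than invoking Gagliardo--Nirenberg separately in the critical and supercritical cases, but this changes nothing substantive.
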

\begin{proof}
We first show that $\widetilde{d}_K(\omega)>0$. For any $v\in X\setminus\{0\}$ satisfying $K_{b,\omega}(v)\leq 0$, we have
\begin{align}\label{kufl}
& \frac{5}{2}\|\nabla v\|_{L^2}^2+\frac{3\omega}{2}\int_{\mathbb{R}^3} |v|^{2}dx+\frac{b^2}{2}\int_{\mathbb{R}^3} (x_1^2+x_2^2)|v|^{2}dx
\nonumber\\&+\frac{7\lambda_2}{4}\int_{\mathbb{R}^3} (|x|^{-1}\ast|v|^{2})|v|^{2}dx \leq\frac{\lambda_3(3p+3)}{p+2}\|v\|_{L^{p+2}}^{p+2},
\end{align}
which implies that
\[
\frac{1}{2}H_{b,\omega}(v)\leq \frac{\lambda_3(3p+3)}{p+2}H_{b,\omega}(v)^{\frac{p}{2}+1},
\]
where $H_{b,\omega}(v)=\|\nabla v\|_{L^2}^2+\omega\| v\|_{L^2}^2+b^2\int_{\mathbb{R}^3} (x_1^2+x_2^2)|v|^{2}dx$. Thus, there exists $C_0>0$ such that $H_{b,\omega}(v)>C_0$ for all $K_{b,\omega}(v)\leq 0$.
This implies that there exists $C_1>0$ such that
\begin{align*}
\widetilde{S}_{b,\omega}(v)\geq& \frac{3p-2}{6p+6}\|\nabla v\|_{L^2}^2+\frac{3p\omega}{6p+6}\int_{\mathbb{R}^3} |v|^{2}dx+\frac{3p+2}{6p+6}\int_{\mathbb{R}^3} (x_1^2+x_2^2)|v|^{2}dx\geq C_1.
\end{align*}
Taking the infimum over $v$, we get $\widetilde{d}_K(\omega)>0$.

We now show the minimizing problem \eqref{minimization1} is attained.
Let $\{v_n\}$ be a minimizing sequence for \eqref{minimization1}, i.e.,
$\{v_n\}\subset X\backslash\{0\}$,  $K_{b,\omega}(v_n)\leq 0$ and $\widetilde{S}_{b,\omega}(v_n)\rightarrow \widetilde{d}_K(\omega)$ as $n\rightarrow \infty$.
  Thus, there exists $N$ such that $\widetilde{S}_{b,\omega}(v_n)\leq \widetilde{d}_K(\omega)+1$ for all  $n>N$.
%  \begin{align}\label{bound}
%&\frac{3p-2}{6p+6}\|\nabla v_n\|_{L^2}^2+\frac{3p\omega}{6p+6}\int_{\mathbb{R}^3} |v_n|^{2}dx+b^2\frac{3p+2}{6p+6}\int_{\mathbb{R}^3} (x_1^2+x_2^2)|v_n|^{2}dx\nonumber\\&+\lambda_1\frac{3p-1}{6p+6}\int_{\mathbb{R}^3} \frac{|v_n|^{2}}{|x|}dx
%+\lambda_2 \frac{3p-4}{12p+12}\int_{\mathbb{R}^3} (|x|^{-1}\ast|v_n|^{2})|v_n|^{2}dx\leq \widetilde{d_K}(\omega)+1.
%\end{align}
This implies that $\{v_n\}$ is bounded in $X$.

Next, we claim that $\liminf_{n\rightarrow \infty}\|v_n\|_{L^{p+2}}^{p+2}>0$.
Indeed, if there exists a subsequence, still denoted by $\{v_n\}$, such that $\|v_n\|_{L^{p+2}}^{p+2}\rightarrow 0$ as $n\rightarrow \infty$, then by $K_{b,\omega}(v_n)\leq 0$, we deduce from \eqref{kufl} that $\widetilde{S}_{b,\omega}(v_n) \rightarrow 0$. This contradicts with the fact that $\widetilde{S}_{b,\omega}(v_n)\to \widetilde{d}_K(\omega)>0$.

Thus, we obtain
\[
\liminf_{n\rightarrow \infty}\|v_n\|_{L^{p+2}}^{p+2}>0,
\]
for all $\frac{4}{3}\leq p<4$.
Therefore, applying Lemma \ref{lemma compactness lemma I}, there exist a sequence $\{z_n\}\subset \mathbb{R}$ and $u\in X\backslash \{0\}$, such that, up to a subsequence,
$$v_n(x_1,x_2, x_3-z_n) \rightharpoonup u, \ \mbox{in } X. $$

Moreover, we deduce from the Brezis-Lieb lemma(Lemma 2.2) and Lemma 2.3 that
\begin{equation}\label{bl1}
K_{b,\omega}(v_n)-K_{b,\omega}(v_n-u)-K_{b,\omega}(u)\rightarrow 0,
\end{equation}
and
\begin{equation}\label{bl2}
\widetilde{S}_{b,\omega}(v_n)-\widetilde{S}_{b,\omega}(v_n-u)
-\widetilde{S}_{b,\omega}(u)\rightarrow 0.
\end{equation}
Now, we claim that $K_{b,\omega}(u)\leq 0$. If not, it follows from \eqref{bl1} and
$K_\omega(v_n)\leq 0$ that $K_{b,\omega}(v_n-u)\leq0$ for sufficiently large $n$. Thus, by the definition of
$\widetilde{d}_K(\omega)$, it follows that
\[
\widetilde{S}_{b,\omega}(v_n-u)\geq \widetilde{d}_K(\omega),
\]
which, together with $\widetilde{S}_{b,\omega}(v_n)\rightarrow \widetilde{d}_K(\omega)$, implies that
\[
\widetilde{S}_{b,\omega}(u)\leq 0,
\]
which is a contradiction with $\widetilde{S}_{b,\omega}(u)>0$. We thus obtain $K_{b,\omega}(u)\leq 0$.

Furthermore, we deduce from the definition of
$\widetilde{d}_K(\omega)$ and the weak lower semicontinuity of norm that
\[
\widetilde{d}_K(\omega)\leq \widetilde{S}_{b,\omega}(u)\leq \liminf_{n\rightarrow \infty}\widetilde{S}_{b,\omega}(v_n)=\widetilde{d}_K(\omega).
\]
This yields that
\[
\widetilde{S}_{b,\omega}(u)=\widetilde{d}_K(\omega).
\]

Finally, by a similar argument as \eqref{minimization problemx partial}, we can show that $K_{b,\omega}(u)=0$. This completes the proof.
\end{proof}
By the fact $d_K(\omega)=\widetilde{d}_K(\omega)$ and this lemma, we can obtain the existence of the minimizing problem \eqref{minimization ground state partial}.
\begin{corollary}\label{corollary}
Let $\lambda_1=0$, $\lambda_2\geq 0$, $\lambda_3>0$, $\omega>0$ and $\frac{4}{3}\leq p<4$. Then there exists $u\in X\backslash \{0\}$, $K_{b,\omega}(u)=0$ and $S_{b,\omega}(u)=d_K(\omega)$.
\end{corollary}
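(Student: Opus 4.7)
The plan is to deduce this corollary directly from Lemma \ref{lemma minimization problem} together with the identification $d_K(\omega)=\widetilde{d}_K(\omega)$ already recorded in \eqref{minimization problemx partial}; no fresh compactness or variational input is needed.

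First, I would invoke Lemma \ref{lemma minimization problem}, which under the very same hypotheses ($\lambda_1=0$, $\lambda_2\geq 0$, $\lambda_3>0$, $\omega>0$, $\frac{4}{3}\leq p<4$) produces a candidate $u\in X\setminus\{0\}$ satisfying $K_{b,\omega}(u)=0$ and $\widetilde{S}_{b,\omega}(u)=\widetilde{d}_K(\omega)$. The point of the vanishing of $K_{b,\omega}(u)$ is precisely that it lets me convert the reduced functional $\widetilde{S}_{b,\omega}$ back into the action $S_{b,\omega}$: from the definition \eqref{s1u},
\[
\widetilde{S}_{b,\omega}(u)=S_{b,\omega}(u)-\frac{K_{b,\omega}(u)}{3p+3}=S_{b,\omega}(u).
\]

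Second, I would recall the identification $\widetilde{d}_K(\omega)=d_K(\omega)$ displayed in \eqref{minimization problemx partial}. Its justification is exactly the short scaling argument given just before \eqref{minimization problemx partial}: for any $v\in X\setminus\{0\}$ with $K_{b,\omega}(v)<0$, one has $K_{b,\omega}(\lambda v)>0$ for small $\lambda>0$ (the quadratic-in-$\lambda$ terms dominate the higher power nonlinearity), so by continuity there is $\lambda_0\in(0,1)$ with $K_{b,\omega}(\lambda_0 v)=0$, and a direct computation shows $\widetilde{S}_{b,\omega}(\lambda_0 v)<\widetilde{S}_{b,\omega}(v)$. Hence relaxing the constraint from $\{K_{b,\omega}=0\}$ to $\{K_{b,\omega}\leq 0\}$ does not lower the infimum.

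Chaining the two observations gives
\[
S_{b,\omega}(u)=\widetilde{S}_{b,\omega}(u)=\widetilde{d}_K(\omega)=d_K(\omega),
\]
and $u\in X\setminus\{0\}$ with $K_{b,\omega}(u)=0$, which is exactly the claim. No serious obstacle is expected, since all the analytic work (boundedness of the minimizing sequence, the lower bound $\widetilde{d}_K(\omega)>0$, the nonvanishing of $\|v_n\|_{L^{p+2}}$, the concentration compactness via Lemma \ref{lemma compactness lemma I}, and the Brezis--Lieb type splitting in \eqref{bl1}--\eqref{bl2}) has already been carried out inside Lemma \ref{lemma minimization problem}; the corollary is essentially a bookkeeping restatement in terms of $S_{b,\omega}$ and $d_K$.
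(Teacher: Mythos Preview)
Your proposal is correct and follows essentially the same approach as the paper: the paper's proof of this corollary is a one-line remark that it follows from ``the fact $d_K(\omega)=\widetilde{d}_K(\omega)$ and this lemma'' (i.e., Lemma~\ref{lemma minimization problem}), which is precisely what you do.
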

By a similar argument as Lemma \ref{lemma daoshuweiling}, we can obtain that the minimizer of \eqref{minimization ground state partial} is indeed the solution of \eqref{elliptic partial}.
\begin{lemma}\label{lemma daoshuweiling partial}
Let $\lambda_1=0$, $\lambda_2\geq 0$, $\lambda_3>0$, $\omega>0$ and $\frac{4}{3}\leq p<4$. Assume that $u\in X\backslash \{0\}$ such that $K_{b,\omega}(u)=0$ and $S_{b,\omega}(u)=d_K(\omega)$. Then $S'_{b,\omega}(u)=0$.
\end{lemma}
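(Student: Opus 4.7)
My plan is to follow the scheme of Lemma \ref{lemma daoshuweiling}, adapted to the new manifold $\{v : K_{b,\omega}(v) = 0\}$ attached to the scaling $u^\lambda(x) = \lambda^3 u(\lambda x)$ introduced in \eqref{suscalings}. Since $u$ is a minimizer of $S_{b,\omega}$ on this manifold, the Lagrange multiplier rule supplies $\Lambda \in \mathbb{R}$ with
\begin{equation*}
S'_{b,\omega}(u) + \Lambda K'_{b,\omega}(u) = 0 \quad \text{in } X^{\ast},
\end{equation*}
and the whole task reduces to proving $\Lambda = 0$.

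In analogy with the proof of Lemma \ref{lemma daoshuweiling}, I would test the Lagrange equation against the two natural functions $\bar u$ and $x\cdot\nabla \bar u$. The $\bar u$-pairing produces the $I_{b,\omega}$-type identity for $S_{b,\omega}$ together with a Nehari-type functional of $K_{b,\omega}$, while the $x\cdot\nabla\bar u$-pairing produces the $J_{b,\omega}$-type identity and the corresponding Pohozaev-type functional of $K_{b,\omega}$. Forming the combination $3\times(\text{first}) - (\text{second})$ reproduces, on the $S$-side, precisely $K_{b,\omega}(u) = 3 I_{b,\omega}(u) - J_{b,\omega}(u) = 0$ (the identity recalled just after \eqref{ku}); on the $K$-side, the same combination equals $\langle K'_{b,\omega}(u),\, 3u + x\cdot\nabla u\rangle = \partial_\lambda K_{b,\omega}(u^\lambda)\big|_{\lambda=1}$, which in turn coincides with the second derivative $L_{b,\omega}(u) := \partial_\lambda^2 S_{b,\omega}(u^\lambda)\big|_{\lambda=1}$. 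Hence one obtains $\Lambda\, L_{b,\omega}(u) = 0$, and the argument hinges on showing $L_{b,\omega}(u) \neq 0$.

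To compute $L_{b,\omega}(u)$ I would differentiate \eqref{suscalings} twice in $\lambda$, evaluate at $\lambda = 1$, and use the hypothesis $K_{b,\omega}(u) = 0$ to eliminate the $\|u\|_{L^{p+2}}^{p+2}$ term. A direct rearrangement yields
\begin{align*}
L_{b,\omega}(u) = &-\frac{5(3p-2)}{2}\|\nabla u\|_{L^2}^2 - \frac{9p\,\omega}{2}\|u\|_{L^2}^2 - \frac{(3p+2)b^2}{2}\int_{\mathbb{R}^3}(x_1^2 + x_2^2)|u|^2 \, dx \\
&+ \frac{7(4-3p)\lambda_2}{4}\int_{\mathbb{R}^3}(|x|^{-1} \ast |u|^2)|u|^2 \, dx.
\end{align*}
Under the standing assumptions $\frac{4}{3} \leq p < 4$, $\omega > 0$, $\lambda_2 \geq 0$ and $u \not\equiv 0$, every term on the right is non-positive and the first two are strictly negative, so $L_{b,\omega}(u) < 0$. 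This forces $\Lambda = 0$ and therefore $S'_{b,\omega}(u) = 0$.

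The one genuine subtlety, compared with Lemma \ref{lemma daoshuweiling}, is that $S_{b,\omega}$ now carries five terms with four distinct scaling exponents under $u^\lambda$, so after combining the two Lagrange-type identities one has to track all contributions and rule out any sign cancellation. The mass-critical threshold $p \geq \frac{4}{3}$ is exactly what flips the Hartree coefficient $\frac{7(4-3p)}{4}$ into the non-positive range, while the condition $\omega > 0$ independently guarantees strict negativity through the $\|u\|_{L^2}^2$ term, independent of the value of $b$. I expect this sign check to be the only delicate point of the argument; the rest is a direct transcription of the Nehari--Pohozaev combination already used in the proof of Lemma \ref{lemma daoshuweiling}.
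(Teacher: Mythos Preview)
Your proposal is correct and follows exactly the approach the paper intends: the paper gives no independent proof of this lemma and simply writes ``By a similar argument as Lemma \ref{lemma daoshuweiling}''. You have carried out those details, pairing the Lagrange equation with $3u + x\cdot\nabla u$ (the generator of the scaling $u^\lambda=\lambda^3 u(\lambda x)$) so that the $S$-part collapses to $K_{b,\omega}(u)=0$ and the $K$-part reduces to $\partial_\lambda K_{b,\omega}(u^\lambda)\big|_{\lambda=1}=f_1''(1)$, and your sign computation of $L_{b,\omega}(u)<0$ under $\lambda_1=0$, $\lambda_2\ge 0$, $\omega>0$, $p\ge\tfrac{4}{3}$ is accurate.
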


\begin{proof}[\textbf{Proof of Proposition \ref{proposition ground state partial}.}]
We now denote the set of all minimizers of \eqref{minimization ground state partial} by
\[
\mathcal{M}_{b,\omega}:=\{u\in X\backslash \{0\},~~S_{b,\omega}(u)=d_K(\omega),~K_\omega(u)=0\}.
\]
Then, by a standard argument, it easily follows that $\mathcal{M}_{b,\omega}=\mathcal{G}_{b,\omega}$, where $\mathcal{G}_{b,\omega}$ is the set of ground states of \eqref{elliptic partial}. We consequently obtain Proposition \ref{proposition ground state partial}.
\end{proof}
%\[
%\mathcal{G}_{b,\omega}:=\{u\in H^1\backslash \{0\}\ |\ ~~(S'_\omega(u),u)=0\}.
%\]
%\begin{lemma}\label{lemma baohan}
%$\mathcal{M}_{\omega}\subseteq \mathcal{G}_{\omega}$.
%\end{lemma}
%\begin{proof}
%Let $u\in \mathcal{M}_\omega$. It follows from Lemma \ref{lemma daoshuweiling} that $S'_\omega(u)=0$. In particular, we have $u\in \mathcal{A}_\omega$. To prove $u\in \mathcal{G}_\omega$, it
%remains to show that $S_\omega(u)\leq S_\omega(v)$ for all $v\in \mathcal{A}_\omega$. To see this, notice that $K_\omega(v)=0$
%for all $v\in \mathcal{A}_\omega$.
%By definition of  $d_K(\omega)$, we have $S_\omega(u)\leq S_\omega(v)$.
% Thus, $u\in \mathcal{G}_\omega$.
%\end{proof}
%\begin{lemma}\label{lemma baohan1}
%$\mathcal{G}_\omega \subset \mathcal{M}_\omega$.
%\end{lemma}
%\begin{proof}
%Let $u\in \mathcal{G}_\omega$. Since $\mathcal{M}_\omega$ is not empty, we take $v\in \mathcal{M}_\omega$. By Lemma \ref{lemma baohan}, $v\in \mathcal{G}_\omega$. In
%particular, $S_\omega(u)=S_\omega(v)$. Since $v\in \mathcal{M}_\omega$, we get
%\[
%S_\omega(u)=S_\omega(v)=d_K(\omega).
%\]
%It remains to show that $K_\omega(u)=0$. Since  $u\in \mathcal{A}_\omega$, we have $S'_\omega(u)=0$ and $K_\omega(u)=0$.
%Therefore, $u\in \mathcal{M}_\omega$.
%\end{proof}

Next, we prove that the cross-manifold $\mathcal{N}$ defined by \eqref{cross-manifold} is not empty.
\begin{lemma}\label{cross-manifold feikong}
Let $\lambda_1=0$, $\lambda_2\geq 0$, $\lambda_3>0$, $\omega>0$, $\frac{4}{3}\leq p<4$ and $u$ be the ground
state related to \eqref{elliptic partial}. When $\frac{4}{3}\leq p<2$, assume further that $\partial_\lambda^2S_{b,\omega}(u_\lambda)|_{\lambda=1}\leq 0$ with $u_\lambda=\lambda^{\frac{3}{2}}u(\lambda x)$. Then $\mathcal{N}$ is not empty.
\end{lemma}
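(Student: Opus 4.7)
My plan is to exhibit a single element of $\mathcal{N}$ built by scaling the ground state $u$ from \eqref{elliptic partial}. The key algebraic observation is the identity $K_{b,\omega}(v)-Q_b(v)=\tfrac{3}{2}I_{b,\omega}(v)$, so that $v\in\mathcal{N}$ amounts to $Q_b(v)=0$ together with $I_{b,\omega}(v)<0$. At the ground state all three functionals vanish ($Q_b(u)=K_{b,\omega}(u)=I_{b,\omega}(u)=0$), so I have to perturb $u$ in a direction that keeps $Q_b=0$ (or restores it after a rescaling) while pushing $K_{b,\omega}$ strictly below $0$.

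My first step is a sign computation along the $L^2$-preserving scaling $u_\lambda=\lambda^{3/2}u(\lambda\cdot)$. Substituting $K_{b,\omega}(u)=0$ into the $\lambda$-derivative at $\lambda=1$ I expect to obtain
\[
\frac{d}{d\lambda}K_{b,\omega}(u_\lambda)\Big|_{\lambda=1}
=\frac{20-15p}{4}\|\nabla u\|_{L^2}^2
-\frac{4+3p}{4}b^{2}\!\!\int_{\mathbb{R}^3}(x_1^2+x_2^2)|u|^2\,dx
+\frac{14-21p}{8}\lambda_2\!\!\int_{\mathbb{R}^3}(|x|^{-1}\!\ast|u|^2)|u|^2\,dx
-\frac{9p\omega}{4}\|u\|_{L^2}^2,
\]
each summand of which is nonpositive when $p\ge\tfrac{4}{3}$, $\lambda_2\ge 0$, $\omega>0$, while the last term is strictly negative. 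Consequently $K_{b,\omega}(u_\lambda)<0$ for all $\lambda>1$ in a one-sided neighborhood of $1$. In parallel, $Q_b(u_\lambda)=\lambda f'(\lambda)$ with $f(\lambda)=S_{b,\omega}(u_\lambda)$ and $f'(1)=Q_b(u)=0$. For $\tfrac{4}{3}\le p<2$ the hypothesis $f''(1)\le 0$ forces $Q_b(u_\lambda)\le 0$ for $\lambda>1$ nearby, while for $2\le p<4$ I will instead exploit that the amplitude-scaled function $g(\mu)=S_{b,\omega}(\mu u)$ has a unique critical point at $\mu=1$ of strict maximum type (since $I_{b,\omega}(\mu u)/\mu=g'(\mu)$ changes sign only once), which yields $I_{b,\omega}(\mu u)<0$ for $\mu>1$ and provides the extra first-order flexibility at $u$.

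To enforce $Q_b(v)=0$ exactly while retaining $K_{b,\omega}(v)<0$, I pass to the two-parameter family $v_{\mu,\lambda}(x):=\mu\,u_\lambda(x)$ and apply the implicit function theorem to the map $\Phi(\mu,\lambda):=\bigl(Q_b(v_{\mu,\lambda}),K_{b,\omega}(v_{\mu,\lambda})\bigr)$ at the base point $(1,1)$, where $\Phi=(0,0)$. Using the Pohozaev identities that $u$ satisfies, the four entries of $D\Phi(1,1)$ reduce to explicit polynomial combinations of the invariants $\|\nabla u\|_{L^2}^2$, $b^2\!\int(x_1^2+x_2^2)|u|^2$, $\lambda_2\!\int(|x|^{-1}\!*|u|^2)|u|^2$, $\lambda_3\|u\|_{L^{p+2}}^{p+2}$, and $\omega\|u\|_{L^2}^2$. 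Step 1 already provides one entry with a definite sign, while the case-specific inputs from Step 2 (the hypothesis for $p<2$, and $g''(1)<0$ for $p\ge 2$) deliver the remaining linear independence. Once $D\Phi(1,1)$ is non-singular, $\Phi$ is a local diffeomorphism near $(1,1)$, and I can solve $\Phi(\mu_\varepsilon,\lambda_\varepsilon)=(0,-\varepsilon)$ for arbitrarily small $\varepsilon>0$; the function $v:=v_{\mu_\varepsilon,\lambda_\varepsilon}$ then belongs to $\mathcal{N}$.

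The main obstacle will be the non-degeneracy of $D\Phi(1,1)$, whose determinant is a fairly involved expression in the five invariants listed above. In the regime $\tfrac{4}{3}\le p<2$ this verification rests essentially on the hypothesis $f''(1)\le 0$, and the boundary case $f''(1)=0$ will require either higher-order information along the scaling path or a slight modification of the family (for instance, replacing $\mu\,u_\lambda$ by a perturbation $u+tw$ with $w$ tangent to $\{Q_b=0\}$ chosen so that $\langle K'_{b,\omega}(u),w\rangle<0$, which is possible provided $K'_{b,\omega}(u)$ is not proportional to $Q'_b(u)$ on the span of $u$ and $x\cdot\nabla u$). This linear-independence check is the most delicate part of the argument.
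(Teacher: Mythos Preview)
Your derivative computation for $\partial_\lambda K_{b,\omega}(u_\lambda)\big|_{\lambda=1}$ is correct, and the identity $K_{b,\omega}-Q_b=\tfrac{3}{2}I_{b,\omega}$ is a nice observation.  However, the proposal stops short of a proof precisely at the step you yourself flag as the main obstacle: the non-degeneracy of $D\Phi(1,1)$ is asserted, not verified.  The determinant is a genuine mixture of the five invariants, and neither the hypothesis $f''(1)\le 0$ (for $p<2$) nor $g''(1)<0$ (for $p\ge 2$) by itself forces it to be nonzero.  In the boundary case $f''(1)=0$ the situation is worse: then $\partial_\lambda Q_b\big|_{(1,1)}=f''(1)=0$, so non-degeneracy reduces to $\partial_\mu Q_b\big|_{(1,1)}\ne 0$, and a short computation gives
\[
\partial_\mu Q_b(\mu u)\big|_{\mu=1}=-p\bigl(\|\nabla u\|_{L^2}^2-b^2\!\!\int(x_1^2+x_2^2)|u|^2\bigr)+(2-p)\,\tfrac{\lambda_2}{4}\!\!\int(|x|^{-1}\!\ast|u|^2)|u|^2,
\]
which for $\tfrac{4}{3}\le p<2$ has no controlled sign (the quantity $\|\nabla u\|_{L^2}^2-b^2\!\int(x_1^2+x_2^2)|u|^2$ is only shown to be positive in the paper for $p\ge 2$).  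Your fallback ideas (higher-order information, or replacing $\mu u_\lambda$ by $u+tw$) are not carried out, and the linear-independence check you describe would still have to be done explicitly.

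The paper sidesteps the implicit function theorem entirely with a two-step explicit scaling.  First, one pushes \emph{both} $K_{b,\omega}$ and $Q_b$ strictly negative: for $2\le p<4$ via amplitude scaling $\lambda u$ ($\lambda>1$), using the Pohozaev identity $J_{b,\omega}(u)=0$ to get $\|\nabla u\|_{L^2}^2>b^2\!\int(x_1^2+x_2^2)|u|^2$; for $\tfrac{4}{3}\le p<2$ via $u_\lambda=\lambda^{3/2}u(\lambda\cdot)$, using the hypothesis $f''(1)\le 0$ together with $I_{b,\omega}(u)=0$ to conclude $g'(1)<0$ and $g''(\lambda)<0$ for $\lambda\ge 1$, hence $K_{b,\omega}(u_\lambda)<0$ and $Q_b(u_\lambda)<0$ for all $\lambda>1$.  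The second step applies the scaling $v_\mu(x)=\mu^{2/p}v(\mu x)$ to $v=\lambda u$ (or $v=u_\lambda$): since all positive terms in $K_{b,\omega}(v_\mu)$ carry powers of $\mu$ strictly smaller than the combined leading exponent $4/p-1$, one gets $K_{b,\omega}(v_\mu)\le \mu^{4/p-1}K_{b,\omega}(v)<0$ for every $\mu>1$; meanwhile an elementary continuity argument shows $Q_b(v_\mu)$ changes sign on $(1,\infty)$, producing $\mu_*>1$ with $Q_b(v_{\mu_*})=0$.  This avoids any Jacobian computation and handles the boundary case $f''(1)=0$ without extra work.
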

\begin{proof}
We firstly consider the case $2\leq p<4$.
Let $u\in X$ be the ground
state related to \eqref{elliptic partial}, it follows from Lemma
\ref{lemma pohozaev identity} that $K_{b,\omega}(u)=0$ and
$Q_b(u)=0$.
We see from \eqref{pohozaev identity1} that
\[
\frac{5\lambda_2}{4}\int_{\mathbb{R}^3} (|x|^{-1}\ast|u|^{2})|u|^{2}dx<\frac{3\lambda_3}{p+2}\|u\|_{L^{p+2}}^{p+2}
<\frac{15p\lambda_3}{2(p+2)}\|u\|_{L^{p+2}}^{p+2}.
\]
This, together with $Q_b(u)=0$, implies that
\[
\|\nabla u\|_{L^2}^2-b^2\int_{\mathbb{R}^3} (x_1^2+x_2^2)|v|^{2}dx>0.
\]
We consequently obtain that
\[
K_{b,\omega}(\lambda u)<0~~ \mbox{ and }~~
Q_b(\lambda u)<0,
\]
for all $\lambda>1$.
%From $2\leq p<4$, it follows that $\frac{2}{5}\frac{3p+3}{p+2}>\frac{p^*}{p+2}$. Thus  we can choose $\lambda>1$ such that
%  \begin{equation}\label{scaling18}
%\|\nabla v\|_{L^2}^2-\frac{\lambda_3p^*}{p+2}\|v\|_{L^{p+2}}^{p+2}>0,
%\end{equation}
%  \begin{equation}\label{scaling19}
%\frac{5}{2}\|\nabla v\|_{L^2}^2-\frac{\lambda_3(3p+3)}{p+2}\|v\|_{L^{p+2}}^{p+2}<0.
%\end{equation}
%\[
%\|\nabla v\|_{L^2}^2+\frac{\lambda_1}{2}\int_{\mathbb{R}^3} \frac{|v|^{2}}{|x|}dx+\frac{\lambda_2}{4}\int_{\mathbb{R}^3} (|x|^{-1}\ast|v|^{2})|v|^{2}dx-\frac{\lambda_3p^*}{p+2}\|v\|_{L^{p+2}}^{p+2}>0.
%\]
% \begin{equation}\label{scaling19}
%\frac{5}{2}\|\nabla v\|_{L^2}^2+\frac{3\omega}{2}\int_{\mathbb{R}^3} |v|^{2}dx+2\lambda_1\int_{\mathbb{R}^3} \frac{|v|^{2}}{|x|}dx+\frac{7\lambda_2}{4}\int_{\mathbb{R}^3} (|x|^{-1}\ast|v|^{2})|v|^{2}dx-\frac{\lambda_3(3p+3)}{p+2}\|v\|_{L^{p+2}}^{p+2}<0.
%\end{equation}
Now, we let $v=\lambda u$ and $v_\mu(x)=\mu^{\frac{2}{p}}v(\mu x)$. It easily follows that
\begin{align}\label{iuscaling}
K_{b,\omega}&(v_\mu)=\mu^{\frac{4}{p}-1}\left(\frac{5}{2}\|\nabla v\|_{L^2}^2-\frac{\lambda_3(3p+3)}{p+2}\|v\|_{L^{p+2}}^{p+2}\right)
+\frac{3\omega}{2}\mu^{\frac{4}{p}-3}\int_{\mathbb{R}^3} |v|^{2}dx\nonumber\\&+\frac{b^2}{2}\mu^{\frac{4}{p}-5}\int_{\mathbb{R}^3} (x_1^2+x_2^2)|v|^{2}dx+\frac{7\lambda_2}{4}\mu^{\frac{8}{p}-5}\int_{\mathbb{R}^3} (|x|^{-1}\ast|v|^{2})|v|^{2}dx\leq \mu^{\frac{4}{p}-1}K_{b,\omega}(v)<0,
\end{align}
for all $\mu>1$.
Next, we define
%\begin{align}\label{quscaling}
%f(\lambda,\mu):=Q_b(v_\mu)=\mu^{\frac{4}{p}-1}g(\lambda,\mu)
%-\mu^{\frac{4}{p}-5}b^2\lambda^2\int_{\mathbb{R}^3} (x_1^2+x_2^2)|u|^{2}dx,
%\end{align}
\begin{align}\label{quscaling}
f(\lambda,\mu):=Q_b(v_\mu)=\mu^{\frac{4}{p}-1}(g(\lambda,\mu)
-\mu^{-4}\lambda^2g(1,1)),
\end{align}
where
\[
g(\lambda,\mu)=\lambda^2\|\nabla u\|_{L^2}^2+\frac{\lambda_2\mu^{\frac{4}{p}-4}\lambda^4}{4}\int_{\mathbb{R}^3} (|x|^{-1}\ast|u|^{2})|u|^{2}dx
-\frac{\lambda_3p^*\lambda^{p+2}}{p+2}\|u\|_{L^{p+2}}^{p+2},
\]
with $p^*=\frac{3p}{2}$.
Due to $g(1,1)>0$, there exists $\delta>0$ such that $g(\lambda,\mu)>0$ for all $1\leq \lambda <1+\delta$ and $1\leq \mu <1+\delta$. This, together with $f(1,1)=0$ implies that there exists $(\lambda_0,\mu_0)$ with $\lambda_0 > 1,\mu_0>1$, such that $f(\lambda_0,\mu_0)>0$. On the other hand, we have $f(\lambda_0,1)<0$.
Therefore, there exists $\mu_*>1$ such that $Q_b(v_{\mu_*})=0$.
Thus, $v_{\mu_*}\in \mathcal{N}$.

We firstly consider the case $\frac{4}{3}\leq p<2$.
Let $u\in X$ be the ground
state related to \eqref{elliptic partial}, it follows from Lemma
\ref{lemma pohozaev identity} that $K_{b,\omega}(u)=0$ and
$Q_b(u)=0$. Set $u_\lambda=\lambda^{\frac{3}{2}}u(\lambda x)$, we define
% \begin{align}\label{scaling12}
%g(\lambda):=K_\omega(u_\lambda)=& \frac{5\lambda^5}{2}\|\nabla u\|_{L^2}^2+\frac{3\omega\lambda^3}{2}\int_{\mathbb{R}^3} |u|^{2}dx+\frac{\lambda}{2}\int_{\mathbb{R}^3} (x_1^2+x_2^2)|u|^{2}dx+2\lambda_1\lambda^4\int_{\mathbb{R}^3} \frac{|u|^{2}}{|x|}dx
%\nonumber\\&+\frac{7\lambda_2\lambda^7}{4}\int_{\mathbb{R}^3} (|x|^{-1}\ast|u|^{2})|u|^{2}dx -\frac{\lambda_3(3p+3)\lambda^{3p+3}}{p+2}\|u\|_{L^{p+2}}^{p+2},
%\end{align}
%and
%\begin{align}\label{scaling13}
%h(\lambda):=Q(u_\lambda)=&\lambda^5\|\nabla u\|_{L^2}^2-\lambda\int_{\mathbb{R}^3} (x_1^2+x_2^2)|u|^{2}dx+\frac{\lambda_1\lambda^4}{2}\int_{\mathbb{R}^3} \frac{|u|^{2}}{|x|}dx\nonumber\\&+\frac{\lambda_2\lambda^7}{4}\int_{\mathbb{R}^3} (|x|^{-1}\ast|u|^{2})|u|^{2}dx-\frac{3\lambda_3p\lambda^{3p+3}}{2(p+2)}
%\|u\|_{L^{p+2}}^{p+2},
%\end{align}
%where $p^*=\frac{3p}{2}$.
%$u_\lambda=\lambda^au(\lambda^b x)$
%\begin{align}\label{scaling13}
%h(\lambda):=Q(u_\lambda)=&\lambda^{2a-b}\|\nabla u\|_{L^2}^2-\lambda^{2a-5b}\int_{\mathbb{R}^3} (x_1^2+x_2^2)|u|^{2}dx+\frac{\lambda_1\lambda^{2a-2b}}{2}\int_{\mathbb{R}^3} \frac{|u|^{2}}{|x|}dx\nonumber\\&+\frac{\lambda_2\lambda^{4a-5b}}{4}\int_{\mathbb{R}^3} (|x|^{-1}\ast|u|^{2})|u|^{2}dx-\frac{3\lambda_3p\lambda^{a(p+2)-3b}}{2(p+2)}
%\|u\|_{L^{p+2}}^{p+2},
%\end{align}
\begin{align*}
f(\lambda):=S_{b,\omega}(u_\lambda)=& \frac{\lambda^2}{2}\|\nabla u\|_{L^2}^2+\frac{\omega}{2}\int_{\mathbb{R}^3} |u|^{2}dx+\frac{\lambda^{-2}b^2}{2}\int_{\mathbb{R}^3} (x_1^2+x_2^2)|u|^{2}dx
\nonumber\\&+\frac{\lambda_2\lambda}{4}\int_{\mathbb{R}^3} (|x|^{-1}\ast|u|^{2})|u|^{2}dx -\frac{\lambda_3\lambda^{p^*}}{p+2}\|u\|_{L^{p+2}}^{p+2},
\end{align*}
%\begin{align}\label{scaling13}
%h(\lambda):=Q_b(u_\lambda)=\lambda f'(\lambda),
%\end{align}
%\begin{align}\label{scaling13}
%h(\lambda):=Q_b(u_\lambda)=&\lambda^2\|\nabla u\|_{L^2}^2-\lambda^{-2}b^2\int_{\mathbb{R}^3} (x_1^2+x_2^2)|u|^{2}dx+\lambda\frac{\lambda_1}{2}\int_{\mathbb{R}^3} \frac{|u|^{2}}{|x|}dx\nonumber\\&+\frac{\lambda_2\lambda}{4}\int_{\mathbb{R}^3} (|x|^{-1}\ast|u|^{2})|u|^{2}dx-\frac{\lambda_3p^*\lambda^{p^*}}{p+2}
%\|u\|_{L^{p+2}}^{p+2},
%\end{align}
 \begin{align}\label{scaling12}
g(\lambda):=K_{b,\omega}(u_\lambda)=& \frac{5\lambda^2}{2}\|\nabla u\|_{L^2}^2+\frac{3\omega}{2}\int_{\mathbb{R}^3} |u|^{2}dx+\frac{\lambda^{-2}b^2}{2}\int_{\mathbb{R}^3} (x_1^2+x_2^2)|u|^{2}dx
\nonumber\\&+\frac{7\lambda_2\lambda}{4}\int_{\mathbb{R}^3} (|x|^{-1}\ast|u|^{2})|u|^{2}dx -\frac{\lambda_3(3p+3)\lambda^{p^*}}{p+2}\|u\|_{L^{p+2}}^{p+2}.
\end{align}
 Note that
 \begin{equation}\label{scaling14}
f''(\lambda)= \|\nabla u\|_{L^2}^2+3\lambda^{-4}b^2\int_{\mathbb{R}^3} (x_1^2+x_2^2)|u|^{2}dx -\frac{\lambda_3p^*(p^*-1)\lambda^{p^*-2}}{p+2}\|u\|_{L^{p+2}}^{p+2},
\end{equation}
which, together with $f''(1)\leq 0$, implies that $f''(\lambda)<0$ for all $\lambda>1$. We consequently obtain that
\[
\frac{1}{\lambda}h(\lambda)= f'(\lambda)< f'(1)=0,
\]
 for all $\lambda>1$, where $h(\lambda):=Q_b(u_\lambda)$.
 On the other hand, from
 \begin{equation}\label{scaling15}
g''(\lambda)=5\|\nabla u\|_{L^2}^2+3\lambda^{-4}b^2\int_{\mathbb{R}^3} (x_1^2+x_2^2)|u|^{2}dx -\frac{\lambda_3(3p+3)p^*(p^*-1)\lambda^{p^*-2}}{p+2} \|u\|_{L^{p+2}}^{p+2},
\end{equation}
$K_{b,\omega}(u)=0$ and $f''(1)\leq 0$, we can obtain that $g''(1)<0$. It consequently follows that $g''(\lambda)<0$ for all $\lambda\geq1$.
 Moreover, we deduce from \eqref{pohozaev identity0} and $Q_b(u)=0$ that
 \begin{align*}
g'(1)=&5\|\nabla u\|_{L^2}^2-b^2\int_{\mathbb{R}^3} (x_1^2+x_2^2)|u|^{2}dx+\frac{7\lambda_2}{4}\int_{\mathbb{R}^3} (|x|^{-1}\ast|u|^{2})|u|^{2}dx-\frac{\lambda_3(3p+3)p^*}{p+2}\|u\|_{L^{p+2}}^{p+2}
\nonumber\\\leq &4\|\nabla u\|_{L^2}^2+4\lambda_2\int_{\mathbb{R}^3} (|x|^{-1}\ast|u|^{2})|u|^{2}dx-\frac{\lambda_3(3p+2)p^*}{p+2}\|u\|_{L^{p+2}}^{p+2}<0.
\end{align*}
Combining the above estimates, we obtain
 \begin{equation}\label{scaling17}
K_{b,\omega}(u_\lambda)<0~~\mbox{and}~~Q_b(u_\lambda)<0,
\end{equation}
for all $\lambda>1$.

Now, we let $v=u_\lambda$
 and $v_\mu(x)=\mu^{\frac{2}{p}}v(\mu x)$. By a similar argument as the case $2\leq p<4$, there exists $\mu_*>1$, $Q_b(v_{\mu_*})=0$ and $K_{b,\omega}(v_{\mu_*})<0$.
Thus, $v_{\mu_*}\in \mathcal{N}$.
\end{proof}

\begin{lemma}\label{lemma dayuling}
Let $\lambda_1=0$, $\lambda_2\geq 0$, $\lambda_3>0$, $\omega>0$, $\frac{4}{3}\leq p<4$. Then, $\alpha(\omega)>0$, where $\alpha(\omega)$ is defined by \eqref{minimization cross-manifold}.
\end{lemma}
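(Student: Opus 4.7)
The plan is to derive a uniform positive lower bound for $S_{b,\omega}(v)$ over $v\in\mathcal{N}$, from which $\alpha(\omega)>0$ follows upon taking the infimum. The argument will combine the coercivity coming from $K_{b,\omega}(v)<0$ with the exact identity provided by $Q_b(v)=0$.

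First, following the same chain of estimates used in the proof of Lemma \ref{lemma minimization problem}, I will exploit $K_{b,\omega}(v)<0$ to produce a uniform lower bound on
$$H_{b,\omega}(v):=\|\nabla v\|_{L^2}^2+\omega\|v\|_{L^2}^2+b^2\int_{\mathbb{R}^3}(x_1^2+x_2^2)|v|^2\,dx.$$
Dropping the nonnegative Hartree term in $K_{b,\omega}(v)$ yields $\frac{1}{2}H_{b,\omega}(v)\leq \frac{\lambda_3(3p+3)}{p+2}\|v\|_{L^{p+2}}^{p+2}$, and the Sobolev embedding $X\hookrightarrow H^1\hookrightarrow L^{p+2}$ (valid since $p+2\leq 6$) then gives $\|v\|_{L^{p+2}}^{p+2}\leq CH_{b,\omega}(v)^{(p+2)/2}$. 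Because $(p+2)/2>1$, this forces $H_{b,\omega}(v)\geq C_1>0$ uniformly on $\mathcal{N}$.

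Next, I will use $Q_b(v)=0$ to eliminate the nonlinear term from $S_{b,\omega}$ by writing $S_{b,\omega}(v)=S_{b,\omega}(v)-\frac{2}{3p}Q_b(v)$. A direct computation gives the identity
\begin{equation*}
S_{b,\omega}(v)=\tfrac{3p-4}{6p}\|\nabla v\|_{L^2}^2+\tfrac{\omega}{2}\|v\|_{L^2}^2+\tfrac{b^2(3p+4)}{6p}\int_{\mathbb{R}^3}(x_1^2+x_2^2)|v|^2\,dx+\tfrac{\lambda_2(3p-2)}{12p}\int_{\mathbb{R}^3}(|x|^{-1}\ast|v|^2)|v|^2\,dx.
\end{equation*}
When $p>4/3$, every coefficient above is strictly positive (using $\omega>0$, $\lambda_2\geq 0$), and a coefficient-by-coefficient comparison with $H_{b,\omega}$ immediately yields $S_{b,\omega}(v)\geq\frac{3p-4}{6p}H_{b,\omega}(v)\geq\frac{3p-4}{6p}C_1>0$, so $\alpha(\omega)>0$ in this range.

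The main obstacle will be the borderline mass-critical case $p=4/3$, where the $\|\nabla v\|_{L^2}^2$ coefficient in the identity vanishes and only the mass, partial-harmonic and Hartree terms remain. To handle it, I will instead extract a uniform $L^2$ lower bound directly from $K_{b,\omega}(v)<0$: retaining just the gradient term on the left gives $\frac{5}{2}\|\nabla v\|_{L^2}^2<\frac{21\lambda_3}{10}\|v\|_{L^{10/3}}^{10/3}$, and the sharp Gagliardo--Nirenberg inequality $\|v\|_{L^{10/3}}^{10/3}\leq C\|v\|_{L^2}^{4/3}\|\nabla v\|_{L^2}^2$, together with the fact that $\|\nabla v\|_{L^2}>0$ (otherwise $K_{b,\omega}(v)\geq 0$, contradicting $v\in\mathcal{N}$), allows cancelling $\|\nabla v\|_{L^2}^2$ and produces $\|v\|_{L^2}\geq C_2>0$ uniformly. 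Hence $S_{b,\omega}(v)\geq\frac{\omega}{2}\|v\|_{L^2}^2\geq\frac{\omega C_2^2}{2}>0$, completing the proof. In summary, the mass-supercritical regime is essentially automatic after the Pohozaev substitution, while the critical threshold $p=4/3$ requires an extra direct Gagliardo--Nirenberg squeeze to recover the missing coercivity in $\|v\|_{L^2}$.
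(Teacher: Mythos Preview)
Your proof is correct and uses the same two ingredients as the paper: the identity $S_{b,\omega}(v)=S_{b,\omega}(v)-\tfrac{2}{3p}Q_b(v)$ coming from $Q_b(v)=0$, and the Gagliardo--Nirenberg coercivity coming from $K_{b,\omega}(v)<0$. The organization differs slightly: the paper first observes $\alpha(\omega)\geq 0$ from the identity and then argues by contradiction, taking a sequence $\{v_n\}\subset\mathcal{N}$ with $S_{b,\omega}(v_n)\to 0$, reading off $\|v_n\|_{L^2}\to 0$ (and, for $p>4/3$, also $\|\nabla v_n\|_{L^2}\to 0$) from the identity, and then contradicting the inequality $\tfrac{1}{2}\leq C\|\nabla v_n\|_{L^2}^{(3p-4)/2}\|v_n\|_{L^2}^{(4-p)/2}$. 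You instead give a direct uniform lower bound, splitting off $p=4/3$ explicitly and extracting an $L^2$ lower bound there; this is cleaner and more quantitative, but mathematically equivalent.
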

\begin{proof}
Let $v\in \mathcal{N}$. From $K_{b,\omega}(v)<0$, we have $v\neq 0$. From $Q_b(v)=0$, we have
\begin{align}\label{scaling20}
S_{b,\omega}(v)=S_{b,\omega}(v)-\frac{2}{3p}Q_b(v)=&\frac{3p-4}{6p}
\|\nabla v\|^2_{L^2}+\frac{\omega}{2} \|v\|^2_{L^2}+b^2\frac{3p+4}{6p}\int_{\mathbb{R}^3} (x_1^2+x_2^2)|v|^{2}dx \nonumber\\
&+ \frac{\lambda_2(3p-2)}{12p} \int_{\mathbb{R}^3} (|x|^{-1}\ast|v|^{2})|v|^{2}dx.
\end{align}
Since $\frac{4}{3}\leq p<4$, it follows from \eqref{scaling20} and $v\neq0$ that $S_\omega(v)>0$ for all $v\in \mathcal{N}$.
Thus $\alpha(\omega)\geq 0$. In the following, we shall show by contradiction that $\alpha(\omega)\neq 0$, which then completes the proof.
%We first treat the super-critical case $\frac{4}{3}< p<4$. In this case, we deduce from $K_\omega(v)<0$ that
%\[
%\frac{1}{2}H_\omega(v)\leq \frac{\lambda_3(3p+3)}{p+2}H_\omega(v)^{\frac{p}{2}+1},
%\]
%which implies that there exists $C_0>0$ such that
%\[
%H_\omega(v)\geq C_0.
%\]
%This, together with \eqref{scaling20} and $\frac{4}{3}< p<4$, implies that there exists $C_1>0$ such that
%\[
%S_\omega(v)\geq C_1,
%\]
%for all $v\in \mathcal{N}$. Thus, we have $m_1>0$.
%
%Next we treat the critical case $p=\frac{4}{3}$. In this case we argue by contradiction.

Indeed, if $\alpha(\omega)=0$ happened, then from \eqref{scaling20} there were a sequence $\{v_n\}\subset \mathcal{N}$ such that $Q_b(v_n)=0$,
$K_{b,\omega}(v_n)<0$, and $S_{b,\omega}(v_n)\rightarrow 0$ as $n\rightarrow \infty$. Since $p\geq \frac{4}{3}$, \eqref{scaling20} implies that
\begin{align}\label{coscaling21}
 \|v_n\|^2_{L^2}\rightarrow 0,~~~~~~~~~~\int_{\mathbb{R}^3} (x_1^2+x_2^2)|v_n|^{2}dx\rightarrow 0,
~~\mbox{and}~~ \int_{\mathbb{R}^3} (|x|^{-1}\ast|v_n|^{2})|v_n|^{2}dx\rightarrow 0,
\end{align}
as $n\rightarrow \infty$.
On the other hand, it follows from $K_\omega(v_n)<0$ that
\[
\frac{1}{2}\left(\|\nabla v_n\|_{L^2}^2+\omega\| v_n\|_{L^2}^2+b^2\int_{\mathbb{R}^3} (x_1^2+x_2^2)|v_n|^{2}dx\right)< \frac{\lambda_3(3p+3)}{p+2}\|v_n\|_{L^{p+2}}^{p+2}
\leq C_3\|\nabla v_n\|_{L^2}^{\frac{3p}{2}}\|v_n\|_{L^2}^{\frac{4-p}{2}}.
\]
%For this $C_3$, we infer from \eqref{coscaling21} that when $n$ sufficiently large,
%\[
%\frac{1}{2}\left(\|\nabla v\|_{L^2}^2+\omega\| v\|_{L^2}^2+\int_{\mathbb{R}^3} (x_1^2+x_2^2)|v|^{2}dx\right)\geq C_3\|\nabla v_n\|_{L^2}^2\|v_n\|_{L^2}^{4/3},
%\]
This implies that
$$\frac{1}{2}\|\nabla v_n\|_{L^2}^2\leq C_3\|\nabla v_n\|_{L^2}^{\frac{3p}{2}}\|v_n\|_{L^2}^{\frac{4-p}{2}},$$
or equivalently
$$\frac{1}{2} \leq C_3\|\nabla v_n\|_{L^2}^{\frac{3p-4}{2}}\|v_n\|_{L^2}^{\frac{4-p}{2}},$$
which is an contradiction with \eqref{coscaling21}, since $\frac{4}{3}\leq p<4$.
\end{proof}

\begin{proof}[\textbf{Proof of Lemma \ref{lemma keyestimate partial}.}]
Firstly, by a similar argument as Theorem \ref{theorem sharp}, we can prove that the set $\mathcal{K}_\omega$ is invariant under the flow of \eqref{equation partial}, so we omit it.
%Let $\psi_0\in \mathcal{K}_\omega$, by Proposition 2.1, we see that there
%exists a unique solution $\psi\in C([0,T^*),X)$ with initial data $\psi_0$. We deduce from the conservations of mass and energy that
%\begin{equation}\label{asc}
%S_{b,\omega}(\psi(t))=S_{b,\omega}(\psi_0)<S_{b,\omega}(u),
%\end{equation}
%for any $t\in [0,T^*)$. In addition, by the continuity of the function $t\mapsto K_{b,\omega}(\psi(t))$ and Proposition \ref{proposition ground state partial}, if there exists
%$t_0\in[0,T^*)$ such that  $K_{b,\omega}(\psi(t_0))= 0$, then $S_{b,\omega}(\psi(t_0))\geq S_{b,\omega}(u)$, which contradicts with \eqref{asc}. Therefore, we have $K_{b,\omega}(\psi(t))<0 $ for any $t\in [0,T^*)$.

%In addition, by the continuity of the function $t\mapsto Q_b(\psi(t))$, if there exists
%$t_1\in[0,T^*)$ such that  $Q_b(\psi(t_1))= 0$, we infer from $K_{b,\omega}(\psi(t_1))<0$ that $S_{b,\omega}(u)\leq \alpha(\omega)\leq S_{b,\omega}(\psi(t_1))$, which contradicts with \eqref{asc}. Therefore, we have $Q_b(\psi(t))<0 $ for any $t\in [0,T^*)$.

Next, we prove the key estimate \eqref{keyestimate partial}.
Fix $t\in[0,T^*)$, and denote $\psi=\psi(t)$. Thus $\psi$ satisfies that $S_{b,\omega}(\psi)<S_\omega(u)$, $K_{b,\omega}(\psi)<0$ and $Q_b(\psi)<0$.
For $\lambda>0$, let $\psi_\lambda=\lambda^{\frac{3}{p+2}}\psi(\lambda x)$, it easily follows that
\begin{align*}
K_{b,\omega}(\psi_\lambda)=& \frac{5}{2}\lambda^{\frac{4-p}{p+2}}\|\nabla u\|_{L^2}^2+\frac{3\omega}{2}\lambda^{\frac{-3p}{p+2}}\int_{\mathbb{R}^3} |u|^{2}dx+\frac{b^2}{2}\lambda^{\frac{-5p-4}{p+2}}\int_{\mathbb{R}^3} (x_1^2+x_2^2)|u|^{2}dx\\&
+\frac{7\lambda_2}{4}\lambda^{\frac{2-5p}{p+2}}\int_{\mathbb{R}^3} (|x|^{-1}\ast|u|^{2})|u|^{2}dx -\frac{\lambda_3(3p+3)}{p+2}\|u\|_{L^{p+2}}^{p+2},
\end{align*}
\begin{align*}
Q_b(\psi_\lambda)=&\lambda^{\frac{4-p}{p+2}}\|\nabla \psi\|_{L^2}^2-\lambda^{\frac{-5p-4}{p+2}}b^2\int_{\mathbb{R}^3} (x_1^2+x_2^2)|\psi|^{2}dx
\\&
+\frac{\lambda_2\lambda^{\frac{2-5p}{p+2}}}{4}\int_{\mathbb{R}^3} (|x|^{-1}\ast|\psi|^{2})|\psi|^{2}dx-
\frac{3\lambda_3p}{2(p+2)}\|\psi\|_{L^{p+2}}^{p+2}.
\end{align*}
Thus, $Q_b(\psi)<0$ implies that there exists $\lambda_*>1$ such that $Q_b(\psi_{\lambda_*})=0$, and $Q_b(\psi_{\lambda})<0$ for all
$\lambda\in [1,\lambda_*)$. For $\lambda\in [1,\lambda_*]$, due to $K_\omega(\psi)<0$, $K_\omega(\psi_\lambda)$ has the following
two possibilities:

(i) $K_{b,\omega}(\psi_\lambda)<0$ for all $\lambda\in [1,\lambda_*]$.

(ii) There exists $\mu\in (1,\lambda_*]$ such that $K_{b,\omega}(\psi_\mu)=0$.

For the case (i), we have $Q_b(\psi_{\lambda_*})=0$ and $K_{b,\omega}(\psi_{\lambda_*})<0$. This implies that $S_{b,\omega}(\psi_{\lambda_*})\geq \alpha(\omega)\geq S_{b,\omega}(u)$. Moreover, we have
\begin{align}\label{suscaling par}
S_{b,\omega}(\psi)&-S_{b,\omega}(\psi_\lambda)= \frac{1}{2}(1-\lambda^{\frac{4-p}{p+2}})\|\nabla \psi\|_{L^2}^2+\frac{\omega}{2}(1-\lambda^{\frac{-3p}{p+2}})\int_{\mathbb{R}^3} |\psi|^{2}dx\nonumber\\&+\frac{b^2}{2}(1-\lambda^{\frac{-5p-4}{p+2}})\int_{\mathbb{R}^3} (x_1^2+x_2^2)|\psi|^{2}dx
+\frac{\lambda_2}{4}(1-\lambda^{\frac{2-5p}{p+2}})\int_{\mathbb{R}^3} (|x|^{-1}\ast|\psi|^{2})|\psi|^{2}dx,
\end{align}
and
\begin{align}\label{quscaling003par}
Q_b(\psi)-Q_b(\psi_\lambda)=&(1-\lambda^{\frac{4-p}{p+2}})\|\nabla \psi\|_{L^2}^2-(1-\lambda^{\frac{-5p-4}{p+2}})b^2\int_{\mathbb{R}^3} (x_1^2+x_2^2)|\psi|^{2}dx\nonumber\\&
+\frac{\lambda_2}{4}(1-\lambda^{\frac{2-5p}{p+2}})\int_{\mathbb{R}^3} (|x|^{-1}\ast|\psi|^{2})|\psi|^{2}dx.
\end{align}
Thus, it follows from $\frac{4}{3}\leq p<4$ and $\lambda_*>1$ that
\begin{equation}\label{key estimate31}
S_{b,\omega}(\psi)-S_{b,\omega}(\psi_{\lambda_*})\geq \frac{1}{2}(Q_b(\psi)-Q_b(\psi_{\lambda_*}))=\frac{1}{2}Q_b(\psi).
\end{equation}
For the case (ii), we have $K_{b,\omega}(\psi_\mu)=0$ and $Q_b(\psi_\mu)\leq0$. Applying Proposition \ref{proposition ground state partial}, it follows
that $S_{b,\omega}(\psi_\mu)\geq S_{b,\omega}(u)$. And referring to \eqref{suscaling par} and \eqref{quscaling003par}, we can obtain
\begin{equation}\label{key estimate32}
S_{b,\omega}(\psi)-S_{b,\omega}(\psi_\mu)\geq \frac{1}{2}(Q_b(\psi)-Q_b(\psi_\mu))\geq\frac{1}{2}Q_b(\psi).
\end{equation}
Since $S_{b,\omega}(\psi_{\lambda_*})\geq S_\omega(u)$ and $S_{b,\omega}(\psi_\mu)\geq S_{b,\omega}(u)$, from \eqref{key estimate31} and \eqref{key estimate32}, we all get
\begin{equation*}
Q_b(\psi(t))\leq 2(S_{b,\omega}(\psi_0)-S_{b,\omega}(u)),
\end{equation*}
for all $t\in [0,T^*)$. This completes the proof.
\end{proof}

\begin{proof}[\textbf{Proof of Theorem \ref{Theorem instability partial}.}]
When $\frac{4}{3}\leq p<2$,  let $u$ be the ground state related to \eqref{elliptic partial}, under the assumption $\partial_\lambda^2S_{b,\omega}(u_\lambda)|_{\lambda=1}\leq 0$ with $u_\lambda(x)=\lambda^{\frac{3}{2}}u(\lambda x)$,
we see from \eqref{scaling17} that
\[
S_{b,\omega}(u_{\lambda})<S_{b,\omega}(u),~~K_{b,\omega}(u_{\lambda})<0,~~Q_b(u_{\lambda})<0,
\]
 for all $\lambda>1$. By the same argument as Theorem \ref{th instability no partial}, we can obtain the initial data $\chi_{M_0}u_{\lambda_0}\in \Sigma\cap \mathcal{K}_\omega$ such that the corresponding solution $\psi_{\lambda_0}(t)$ of \eqref{equation partial}
 blows up in finite time. Hence, we can prove this theorem.

  When $2\leq p<4$, we see from Lemma 6.2 that
 \[
 S_{b,\omega}(\lambda u)<S_{b,\omega}(u),~~
 K_{b,\omega}(\lambda u)<0~~ and~~
Q_b(\lambda u)<0,
\]
for all $\lambda>1$.
Therefore, we can prove the strong instability along the above lines by replacing $u_{\lambda_0}$ by $\lambda_0u$.
 This completes the proof.
\end{proof}

\vspace{0.5cm}

{\bf Acknowledgments}

Binhua Feng is supported by the National Natural Science Foundation of China (No. 11601435). Tingjian Luo would like to thank Professor Louis Jeanjean for helpful discussion on this manuscript, and he is supported by the National Natural Science Foundation of China (11501137), and the Guangdong Basic and Applied Basic Research Foundation (2016A030310258, 2020A1515011019).

\end{document}